\newtheorem{theorem}{Theorem}[section]
\newtheorem{lemma}[theorem]{Lemma}
\newtheorem{corollary}[theorem]{Corollary}
\theoremstyle{definition}
\newtheorem{definition}[theorem]{Definition}
\newcommand{\ddt}[1]{\frac{\mathrm{d}#1}{\mathrm{d}t}}
\newtheorem{remark}[theorem]{Remark}
\newtheorem{conj}[theorem]{Conjecture}
\title{Characterization of Infinite Ideal Polyhedra in Hyperbolic 3-Space via Combinatorial Ricci Flow}
\author{Huabin Ge, Bobo Hua, Hao Yu and Puchun Zhou}
\date{}
\begin{document}

\maketitle

\begin{abstract}

In his seminal work \cite{Ri96}, Rivin characterized finite ideal polyhedra in three-dimensional hyperbolic space. However, the characterization of infinite ideal polyhedra, as proposed by Rivin, has remained a long-standing open problem. In this paper, we introduce the combinatorial Ricci flow for infinite ideal circle patterns, a discrete analogue of Ricci flow on non-compact Riemannian manifolds, and prove a characterization of such circle patterns under certain combinatorial conditions.  Our results provide affirmative solutions to Rivin's problem.

\end{abstract}
\tableofcontents

\section{Introduction}
\setcounter{theorem}{-1}
The main objective of this paper is to investigate ideal polyhedra in hyperbolic 3-space within the framework of infinite combinatorial structures.

The problem of characterizing convex polyhedra is a classical one, with many interesting results. One of the most well-known is {Cauchy’s Rigidity Theorem}, which asserts that an Euclidean polytope with rigid faces is rigid, i.e., it cannot be deformed. For more modern developments, we refer the reader to the works of Alexandrov \cite{Alex05,Alex42}, Andreev \cite{An70A,An70B}, Bao-Banahon \cite{Bao}, Bobenko-Izmestiev \cite{BI06}, Bobenko-Pinkall-Springborn \cite{BPS2015}, Bobenko-Springborn \cite{BS04}, Bowers-Bowers-Pratt \cite{BBP}, Huang-Liu \cite{HL17}, Liu-Zhou \cite{Liu-Zhou}, Marden-Rodin \cite{MR90}, Rivin \cite{Ri94,Ri96,Ri03, Ri04}, Rivin-Hodgson \cite{RH93}, Roeder-Hubbard-Dunbar \cite{RHD07}, Rousset \cite{Rou04}, Schlenker \cite{Sch00,Sch05}, Schramm \cite{schramm2}, Springborn \cite{Springborn1,Springborn2} and Zhou \cite{Zhou23}.

The focus of this paper is on {ideal polyhedra}, which is a convex polyhedra inscribed in a sphere. The question of characterizing which polyhedra can be realized as convex polyhedra inscribed in a sphere dates back nearly 200 years. It was posed at the end of Jakob Steiner’s book \cite{Steiner}, though it can be traced further back to unpublished work by René Descartes around 1630 \cite{Fed}.

The first result was obtained by Steinitz in 1928 \cite{Steinitz}, who provided examples of polyhedra that cannot be realized with either an insphere or a circumsphere. Let $\mathcal{D} = (V, E, F)$ be a cellular decomposition of the sphere $\mathbb{S}^2$.   We denote by $\Theta_e$ the exterior dihedral angle of a polyhedron at the edge $e$. Subsequent progress remained scant until Rivin's landmark work \cite{Ri96}, where he proved the following theorem. 
\begin{theorem}[Rivin]\label{riv}
	Let $\mathcal{D} = (V, E, F)$ be a finite   cellular decomposition of the sphere.
    Suppose the angle $\Theta_e$, assigned to each edge $e$, satisfies the following:
	\begin{itemize}
		\item[(1)] $0 < \Theta_e < \pi$ for all $e \in E$;
		\item[(2)] $\sum_{e \in \gamma} \Theta_e = 2\pi$ for any loop $\gamma$ that is the coboundary of a single vertex;
		\item[(3)] $\sum_{e \in \gamma} \Theta_e > 2\pi$ for every edge cocycle $\gamma$ not bounding a single vertex.
	\end{itemize}
	Then there exists a unique (up to isometry) finite ideal hyperbolic polyhedron that is combinatorially equivalent to $\mathcal{D}$ and has exterior dihedral angles $\Theta_e$.
\end{theorem}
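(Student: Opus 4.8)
The plan is to recast the statement as a problem about circle patterns on the sphere at infinity and then solve that problem with the combinatorial Ricci flow, equivalently with a convex variational principle. If $P$ is an ideal polyhedron combinatorially equivalent to $\mathcal{D}$, then each face $f\in F$ lies in a totally geodesic plane that meets $\mathbb{S}^2=\partial\mathbb{H}^3$ in a circle $C_f$; two faces $f,f'$ sharing an edge $e=f\cap f'$ give circles meeting exactly at the two ideal endpoints of $e$, and passing to the upper half-space model based at such an endpoint shows that the angle between $C_f$ and $C_{f'}$ there equals the interior dihedral angle of $P$ along $e$, so $\Theta_e=\pi-\angle(C_f,C_{f'})$. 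Moreover, every edge at a vertex $v\in V$ has the ideal point $p_v$ as an endpoint, so all the circles $C_f$ with $f\ni v$ pass through the \emph{single} point $p_v$; on the horospherical link of $v$ this says precisely that a Euclidean polygon with exterior angles $\{\Theta_e:e\ni v\}$ closes up, i.e.\ condition~(2). Conversely, any configuration of points $\{p_v\}_{v\in V}$ and circles $\{C_f\}_{f\in F}$ with the incidences prescribed by $\mathcal{D}$ and pairwise intersection angles $\pi-\Theta_e$ reconstructs, via the ball model, a convex ideal polyhedron combinatorially equivalent to $\mathcal{D}$ with exterior dihedral angles $\Theta_e$; and since $\mathrm{Isom}^{+}(\mathbb{H}^3)$ acts on $\mathbb{S}^2$ as the M\"obius group, which preserves circles and angles, ``unique up to isometry'' becomes ``unique up to M\"obius transformations.'' So the theorem is equivalent to the assertion that under conditions (1)--(3) there is such an \emph{ideal circle pattern}, and that it is unique modulo M\"obius.

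To build the pattern I would introduce a finite-dimensional parameter space for candidate configurations, with parameter vector $u$ --- concretely a vector of generalized radii, or dually of renormalized edge-length parameters, modulo the residual M\"obius normalization --- together with a convex ``action'' functional $\mathcal{F}(u)$ whose gradient $\nabla\mathcal{F}$ records the \emph{discrete curvatures}, i.e.\ for each $v$ the deviation from $2\pi$ of the appropriate angle sum. The structural input is a discrete Schl\"afli-type identity ensuring that this curvature $1$-form is closed (so that $\mathcal{F}$ exists) and that $\mathcal{F}$ is convex, with Hessian a positive semidefinite weighted graph Laplacian whose kernel is precisely the residual normalization direction; condition~(1), $0<\Theta_e<\pi$, is what keeps every elementary two-circle building block nondegenerate and gives the Laplacian weights the right sign. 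The combinatorial Ricci flow studied in this paper is then, up to reparametrization, the negative gradient flow $\dot u=-\nabla\mathcal{F}(u)$, so that a zero of the discrete curvature is a critical point of $\mathcal{F}$, which is exactly the desired ideal circle pattern and hence the polyhedron. Thus the proof reduces to producing a critical point of $\mathcal{F}$, equivalently to showing that the flow converges.

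Existence is the crux --- and the step I expect to be the main obstacle --- because one must show that $\mathcal{F}$, restricted to a slice transverse to the normalization direction, is proper, i.e.\ has compact sublevel sets, so that a minimizer exists in the interior of the configuration space rather than escaping toward a degenerate boundary configuration. Conditions (1)--(3) are consumed here. First, (1)--(3) make the polytope they cut out in $\mathbb{R}^E$ nonempty --- a Farkas/linear-programming check, essentially Rivin's construction of a coherent angle system, which also furnishes admissible initial data for the flow. Then comes properness: running to the boundary, or to infinity along the slice, the configuration can degenerate only by having some circles collapse to points or by having the pattern ``tear'' along a simple closed curve on $\mathbb{S}^2$ that separates some vertices from others, and every such degeneration is controlled by an edge cocycle $\gamma$ along which a Gauss--Bonnet / isoperimetric estimate forces $\sum_{e\in\gamma}\Theta_e\le 2\pi$ in the limit. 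Since the prescribed angles satisfy $\sum_{e\in\gamma}\Theta_e>2\pi$ for every cocycle $\gamma$ not bounding a single vertex, the only cocycles along which a degeneration is not immediately excluded are vertex coboundaries, where the apparent ``degeneration'' is merely the defining concurrence of the circles through that vertex and not a loss of the pattern; hence $\mathcal{F}$ is proper on the slice, a minimizer exists, and the Ricci flow converges as $t\to\infty$. The delicate part is organizing this: enumerating the strata of the boundary of the configuration space and the cocycles that govern them, and verifying that the strictness in (3) excludes exactly the borderline collapses.

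Uniqueness then follows at once from the convexity already in play: on the transversal slice $\mathcal{F}$ is strictly convex, since the Laplacian Hessian is positive definite after quotienting out the normalization direction, so $\mathcal{F}$ has at most one critical point; the ideal circle pattern is therefore unique modulo M\"obius, and $P$ is unique up to isometry of $\mathbb{H}^3$. This positivity of the discrete Laplacian is also exactly what makes the combinatorial Ricci flow converge with a unique limit, so the flow-theoretic and the variational arguments are two presentations of one proof. Finally, the necessity of (1)--(3) for the existence of an ideal polyhedron realizing $\mathcal{D}$ with angles $\Theta_e$ --- the other half of the characterization --- is the easy direction: (1) and (2) are read off directly from the geometry of a convex polyhedron and its horospherical vertex links, and (3) follows from a Gauss--Bonnet estimate on the cylindrical subsurface of $\partial P$ cut off by $\gamma$, the inequality being strict precisely because that subsurface is genuinely present when $P$ is $3$-dimensional.
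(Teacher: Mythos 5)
The paper does not prove Theorem~\ref{riv}; it is stated as classical background and attributed to Rivin~\cite{Ri96}, so there is no in-paper proof to compare against. Taken on its own terms, your sketch follows the standard modern strategy: translate to ideal circle patterns on $\mathbb{S}^2$ via the half-space correspondence, parametrize by generalized radii, exhibit a convex functional whose gradient records the discrete curvature and whose Hessian is a weighted graph Laplacian, view the combinatorial Ricci flow as its negative gradient flow, and deduce uniqueness from strict convexity modulo normalization. This is essentially the route of Bobenko--Springborn~\cite{BS04} and, for ideal circle patterns specifically, Ge--Hua--Zhou~\cite{GHZ21}; Rivin's own argument maximizes hyperbolic volume over a polytope of angle structures, a close cousin.

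There is, however, a genuine gap at the step you yourself flag as the main obstacle: properness of $\mathcal{F}$ on a transversal slice. You name the right idea --- degenerations should be governed by edge cocycles, a Gauss--Bonnet estimate should force $\sum_{e\in\gamma}\Theta_e\le 2\pi$ in any degenerate limit, and condition~(3) should rule these out --- but carrying this out is precisely the content of Rivin's theorem: one must classify the boundary strata of the parameter space (which components of $u$ escape to $\pm\infty$ and at what relative rates), attach to each stratum the cocycle $\gamma$ that governs it, and prove the limiting angle-sum estimate along $\gamma$. Absent that, the argument only reduces the statement to one of equal difficulty. Two secondary points: the ``Farkas/linear-programming check'' is presented as routine, but Rivin's construction of a coherent angle system from (1)--(3) is itself a substantive combinatorial lemma, and as phrased the claim about ``the polytope they cut out in $\mathbb{R}^E$'' conflates the fixed data $(\Theta_e)$ with the space of angle structures, which lives in a larger corner-angle space. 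And the kernel of the Laplacian Hessian for the planar ideal-circle-pattern energy (after stereographic projection fixing a face at infinity) is one-dimensional, whereas $\mathrm{M\ddot{o}b}(\mathbb{S}^2)$ is six-dimensional; the proposal needs to spell out which transversal slice is used and why the residual symmetry of the functional accounts for exactly the isometry ambiguity claimed in the theorem.
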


Here a \emph{cocycle} is a cycle in the Poincar\'e dual cellular decomposition, faces of which correspond to vertices of the original decomposition, vertices to faces, and edges to edges. The concept dual to
the boundary of a face is the \emph{coboundary} of a vertex.
Furthermore, Rivin also showed that these conditions are necessary, thus providing a complete characterization of finite ideal polyhedra in hyperbolic 3-space. He posed several open problems in \cite{Ri96}:  
(i) Extending Theorem \ref{riv} to {polyhedra with infinitely many vertices} (assuming all vertices have finite degree), which are called infinite ideal polyhedra (IIP) in this paper;  
(ii) Developing {efficient algorithms} to construct convex ideal polyhedra with prescribed dihedral angles.  

In this paper we consider infinite cellular decompositions of a surface. A cellular decomposition $\mathcal{D}=(V,E,F)$ is called \emph{locally finite} if every face in $F$ is combinatorially adjacent to only finitely many other faces in $F$. We say that $\mathcal{D}$ is \emph{connected} if for any pair of faces $f_1, f_2 \in F$, there exists a finite sequence of faces connecting $f_1$ to $f_2$, where each consecutive pair of faces shares a common edge. 
 Let $v_\infty \in V$ be a distinguished vertex, and let $F_\infty \subset F$ denote the set of faces incident to $v_\infty$. 
We write $f' \sim f$ if $f$ and $f'$ share an edge $e = [f', f] \in E$. Throughout this paper, we assume that any two faces share at most one edge, and any face has at least three boundary edges.  We denote by $\partial({F\backslash F_\infty})$ the set of faces belonging to $F\backslash F_\infty$ which are incident to a face in $F_\infty,$ and by $\mathrm{int}(F\backslash F_\infty)$ its complement in $F\backslash F_\infty$. For $ r \in (0, + \infty)^{ F \setminus F_\infty}$, a positive function on $F \setminus F_\infty$,
we define
\begin{align}
	\hat{K}_f &=
	\begin{cases}		\hspace{6.7em} 0,& \text{if } f \in \operatorname{int}(F \setminus F_\infty), \\[6pt]
		\sum\limits_{f' \in F_\infty, \,f \sim f'} 2\Theta_{[f', f]}, & \text{if } f \in \partial(F \setminus F_\infty),
	\end{cases} \label{K_euc_result} \\
	K_f(r) &= \sum_{f' \sim f} \alpha(f,f') \arcsin \left( \frac{r_{f'} \sin \Theta_{[f', f]}}{\sqrt{r_f^2 + r_{f'}^2 - 2\cos \Theta_{[f', f]} r_f r_{f'}}} \right), \label{K_euc_curvature_result}
\end{align}
with
\begin{equation}\label{K_euc}
	\alpha(f,f') =
	\begin{cases}
		1, & \text{if } f, f' \in \partial(F \setminus F_\infty), \\[6pt]
		2, & \text{otherwise}.
	\end{cases}
\end{equation}

In this paper, we provide an answer to Rivin's problem (i).
\begin{theorem}[Existence of IIP]\label{polyhedra_euc}
	Let $\mathcal{D} = (V, E, F)$ be a locally-finite connected infinite   cellular decomposition of the sphere, and suppose that an angle $\Theta_e \in (0, \pi)$ is assigned to each edge $e \in E$.  Define $\hat{K}$ and $K$ as in \eqref{K_euc_result} and \eqref{K_euc_curvature_result}. Assume the following hold:
	\begin{itemize}
		\item[(1)] There exists a universal constant $\delta > 0$ such that $\epsilon < \Theta_e < \pi - \delta$ for all $e \in E$;
		\item[(2)] $\sum_{e \in \gamma} \Theta_e = 2\pi$ for any loop $\gamma$ that is the coboundary of a single vertex;
		\item[(3)] $\sum_{e \in \gamma} \Theta_e \leq  (|\gamma|-2)\pi +  \hat{K}_f $ whenever $\gamma$ is the boundary of a face $f \in F \setminus F_\infty$;
		\item[(4)] there exists $r \in (0, + \infty)^{ F \setminus F_\infty}$ such that $K(r)\geq \hat K$.
	\end{itemize}
	Then there exists an infinite ideal hyperbolic polyhedron that is combinatorially equivalent to $\mathcal{D}$ and has exterior dihedral angles $\Theta_e$.
\end{theorem}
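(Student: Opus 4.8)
The plan is to reinterpret the realization of an infinite ideal polyhedron as the construction of a locally finite Euclidean circle pattern, to produce that circle pattern by running a combinatorial Ricci flow on the infinite complex $F\setminus F_\infty$, and to force convergence of the flow from an exhaustion argument together with uniform a priori bounds on the radii extracted from hypotheses (1)--(4).

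First I would set up the reduction. By Rivin's duality between convex ideal polyhedra in $\mathbb{H}^3$ and circle patterns on $\partial\mathbb{H}^3=\mathbb{S}^2$, realizing a polyhedron combinatorially equivalent to $\mathcal{D}$ with exterior dihedral angles $\Theta_e$ is equivalent to building a circle pattern whose nerve is the dual complex of $\mathcal{D}$, with prescribed intersection angle $\Theta_e$ across the edge dual to $e$, and whose circles through (the point dual to) each vertex $v\neq v_\infty$ are concurrent with total angle $2\pi$ --- which is exactly condition (2). Puncturing $\mathbb{S}^2$ at $v_\infty$ and projecting stereographically, this becomes the problem of finding radii $r\in(0,+\infty)^{F\setminus F_\infty}$ for a locally finite planar circle pattern realizing the combinatorics of $F\setminus F_\infty$ with interior intersection angles $\Theta_e$ and with the behaviour along $\partial(F\setminus F_\infty)$ governed by the boundary data $\hat K$ that records the circles meeting at $v_\infty$; the law of sines applied to the Euclidean triangles spanned by pairs of adjacent circle centres and a point of their intersection identifies $K_f(r)$ from \eqref{K_euc_curvature_result} with the total angle accumulated around the centre of the circle labelled $f$ (the factors $\alpha$ and the $2\Theta$'s in \eqref{K_euc_result} being exactly the bookkeeping needed for the faces near $v_\infty$). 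Thus the whole problem reduces to solving the discrete nonlinear system $K_f(r)=\hat K_f$ for all $f\in F\setminus F_\infty$: from a solution one develops the circle pattern face by face --- single-valued because of the closing-up equations and condition (2) --- and reverses the duality, with condition (3), an Andreev-type inequality for the boundary cycle of each face, ensuring that the developed pattern is embedded (the circles' flowers do not overlap) and glues back across $v_\infty$ to a bona fide circle pattern on $\mathbb{S}^2$.

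To solve $K(r)=\hat K$ I would pass to the variable $u_f=\log r_f$ and run the combinatorial Ricci flow
\[
\ddt{u_f}=K_f(u)-\hat K_f,\qquad f\in F\setminus F_\infty,
\]
with initial value $u_f(0)=\log r_f$ where $r$ is the function provided by hypothesis (4). Since $\mathcal{D}$ is locally finite, each $K_f$ depends on only finitely many of the $u$'s, so the right-hand side is locally Lipschitz and a short-time solution exists. A direct computation from \eqref{K_euc_curvature_result} gives the ``discrete elliptic'' sign pattern $\partial K_f/\partial u_f<0$, $\partial K_f/\partial u_{f'}>0$ for $f'\sim f$, with all of these derivatives bounded above and away from zero \emph{uniformly} in $f$ thanks to the pinching $\epsilon<\Theta_e<\pi-\delta$ of hypothesis (1); consequently $w_f:=\ddt{u_f}$ evolves by a linear cooperative system, and since hypothesis (4) gives $w_f(0)=K_f(u(0))-\hat K_f\geq 0$ for every $f$, a parabolic maximum principle keeps $w_f\geq 0$, i.e. the flow is monotone non-decreasing in each coordinate. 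The remaining ingredient is uniform two-sided control $0<c\leq r_f(t)\leq C<\infty$ of the radii: the lower bound $r_f(t)\geq r_f(0)$ is immediate from monotonicity, while the upper bound is where conditions (2) and (3) enter, forbidding any radius from running off to infinity (a too-large circle would push the angle sum on the face cycle through $f$ past the limit imposed by condition (3)). With these bounds the flow exists for all time; monotone bounded convergence gives $u_f(t)\to u_f^\infty$ coordinatewise, and since $K_f$ is continuous and $\ddt{u_f}\to 0$ along a convergent monotone solution, the limiting radii satisfy $K(r^\infty)=\hat K$. Reversing the circle-pattern reduction with $r^\infty$ would then produce the desired infinite ideal polyhedron --- modulo justifying, in the infinite setting, the two global ingredients just used.

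The main obstacle is precisely that those two ingredients --- the maximum principle that yields monotonicity, and the uniform radius bounds --- are genuinely statements on an infinite complex, where extrema can escape to infinity and ``uniform'' is not for free. To make them rigorous I would fix an exhaustion $\mathcal{D}_1\subset\mathcal{D}_2\subset\cdots$ of $\mathcal{D}$ by finite connected subcomplexes, run on each $\mathcal{D}_n$ the corresponding finite combinatorial Ricci flow with the outermost circles of $\mathcal{D}_n$ pinned to their values from hypothesis (4), and invoke the finite theory (long-time existence and convergence of the finite flow with Dirichlet boundary data) to obtain solutions $r^{(n)}$; on each finite $\mathcal{D}_n$ the monotonicity and the estimates above are the classical maximum principle, and conditions (1)--(3) make the resulting bounds $c\leq r^{(n)}_f\leq C$ uniform in $n$. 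A diagonal extraction --- legitimate because $\mathcal{D}$ is locally finite --- then yields a subsequential limit $r^\infty\in(0,+\infty)^{F\setminus F_\infty}$ solving $K(r^\infty)=\hat K$, and the uniform bounds guarantee both that the limit is non-degenerate and that the associated circle pattern is locally finite; condition (3) once more supplies embeddedness and the gluing across $v_\infty$, after which reversing the duality gives an infinite ideal polyhedron combinatorially equivalent to $\mathcal{D}$ with exterior dihedral angles $\Theta_e$. Carrying out this uniform-in-$n$ estimate --- in particular extracting the lower bound on the radii from the interplay of conditions (1) and (3) --- is the hard part; the rest is the standard machinery of combinatorial Ricci flow together with the circle-pattern dictionary of Theorem \ref{riv}.
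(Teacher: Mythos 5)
Your proposal follows essentially the paper's own route: reduce via the Thurston correspondence of Subsection \ref{Correspondence} to an ideal circle pattern on $\mathbb{S}^2$, project stereographically from the star vertex of $f_\infty$ so that the circles indexed by $F_\infty$ become straight lines, delete them, and solve the remaining planar problem with prescribed boundary data $\hat K$ by the prescribed-curvature combinatorial Ricci flow \eqref{flowwithprescribe}, run on an exhaustion by finite subcomplexes with the data pinned outside, with hypothesis (4) propagated by the maximum principle to give coordinatewise monotonicity, hypothesis (3) (the character-type inequality, after dualizing) supplying the uniform radius bound on the other side, and Arzel\`a--Ascoli plus a diagonal argument producing the limit. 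This is exactly what Subsection \ref{ideal_poly_subsec} does, by rerunning the proof of Theorem \ref{ch_euclid} (the analogues of Lemmas \ref{euc_lower_bound} and \ref{eucid_noninc}) for the flow \eqref{flowwithprescribe} and then invoking Theorem \ref{infiniteICP} and the correspondence.

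Two caveats. First, you claim twice that condition (3) yields embeddedness of the developed pattern and the gluing across $v_\infty$. The paper neither uses nor proves this: condition (3) enters only as the character inequality controlling the radii along the flow, and the possibility that the resulting infinite ideal circle pattern has self-intersections is explicitly acknowledged and deferred to the forthcoming work \cite{GYZ} (see the remark following Theorem \ref{infiniteICP}). As written, that step of your argument is an unsupported assertion rather than a consequence of the hypotheses. Second, a bookkeeping slip (partly invited by the statement's own notation): the system you propose to solve, $K_f(r)=\hat K_f$ with $K_f$ the angle sum \eqref{K_euc_curvature_result} and $\hat K_f=0$ on interior faces, cannot be the target, since the left side is a sum of strictly positive terms; what is actually prescribed is the discrete Gaussian curvature of the truncated planar pattern, i.e.\ total cone angle $2\pi$ at interior face-circles and $2\pi$ minus the contribution of the deleted lines at boundary ones, and hypothesis (4) must be read with this normalization. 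With that correction your scheme (sign of the curvature defect preserved by the cooperative linearized system, one-sided bound from (3), the other bound free from monotonicity) coincides with the paper's argument, merely written in the angle-sum rather than the curvature variable; also note that the uniform positive lower bound you claim for $\partial\theta_{ij}/\partial u_j$ is false in general, but only upper bounds are needed for the infinite-graph maximum principle (Lemma \ref{maximumprin}).
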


We further introduce a special type of infinite ideal hyperbolic polyhedron, referred to as a \emph{half-infinite ideal polyhedron (HIIP)}. By definition, an infinite ideal polyhedron $P = (V, E, F)$ is called \emph{half-infinite} if there exists a unique face $f_\infty$ such that  $\partial f_\infty$ is the subset of the set of limit points of $V \subset \mathbb{S}^2$. Note that there is no finite analog for a half-infinite ideal polyhedron. In the Poincaré ball model, such a polyhedron necessarily contains a sequence of infinitesimally small faces accumulating toward a point $f_{\infty}$ on the boundary.

\begin{theorem}[Existence of HIIP]\label{polyhedra_hyp}
	Let $\mathcal{D} = (V, E, F)$ be an infinite   cellular decomposition of the sphere. There exists a unique face $f_\infty \in F$ such that $\partial f_\infty$ is the subset of the set of limit points of $V \subset \mathbb{S}^2$. Let $\mathcal{D}$ be locally finite and connected except $f_\infty$, and $\Theta_e \in (0, \pi)$ be assigned to each edge $e$. Suppose there exists a constant ${c} > 0$ such that
	\begin{itemize}
		\item[(1)] $0 < \Theta_e < \pi$ for all $e \in E$;
		\item[(2)] $\sum_{e \in \gamma} \Theta_e = 2\pi$ for any loop $\gamma$ that is the coboundary of a single vertex;
		\item[(3)] $\sum_{e \in \gamma} \Theta_e \leq (|\gamma|-2)\pi + |\gamma| {c}$ whenever $\gamma$ is the boundary of a face.
	\end{itemize}
	Then there exists a half-infinite ideal hyperbolic polyhedron combinatorially equivalent to $\mathcal{D}$ with exterior dihedral angles $\Theta_e$.
\end{theorem}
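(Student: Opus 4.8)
The plan is to dualize the polyhedron problem to a circle-pattern problem and then to produce the pattern by running a combinatorial Ricci flow on a finite exhaustion. Recall Rivin's correspondence: a convex ideal polyhedron inscribed in $\mathbb{S}^{2}=\partial\mathbb{H}^{3}$ is the convex hull of the vertex set of a circle pattern on $\mathbb{S}^{2}$ whose circles are indexed by the faces, with $C_{f}$ and $C_{f'}$ meeting at an angle that is an explicit monotone function of $\Theta_{[f,f']}$ whenever $f\sim f'$, and with the circles around each vertex concurrent --- the last property being exactly what condition (2) encodes. In the half-infinite case one arranges that the distinguished face $f_{\infty}$ degenerates, its circle collapsing onto the limit set of the vertex set, with the remaining circles filling the complementary region and accumulating there. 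The task thus becomes to construct an infinite, locally finite circle pattern $(C_{f})_{f\in F\setminus\{f_{\infty}\}}$ with the prescribed intersection angles and the prescribed accumulation behaviour; equivalently, to find radii $r\in(0,+\infty)^{F\setminus\{f_{\infty}\}}$ at which the combinatorial curvature $K(r)$ --- the analogue of \eqref{K_euc_curvature_result} appropriate to this setting --- attains its target value at every face, that is, a fixed point of the combinatorial Ricci flow $\dot{u}_{f}=\bar{K}_{f}-K_{f}(u)$ with $u_{f}=\log r_{f}$.

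I would first fix an exhaustion $\mathcal{D}_{1}\subset\mathcal{D}_{2}\subset\cdots$ of $\mathcal{D}\setminus\{f_{\infty}\}$ by finite, simply connected subcomplexes, and on each $\mathcal{D}_{n}$ run the combinatorial Ricci flow with the prescribed interior angles together with a suitable Dirichlet-type boundary curvature on $\partial\mathcal{D}_{n}$. Conditions (1)--(3) are precisely what is needed so that the resulting target curvature lies in the admissible polytope for $\mathcal{D}_{n}$; the finite-dimensional theory (in the vein of Rivin's Theorem \ref{riv} and the convexity of the associated discrete potential) then gives that the flow on $\mathcal{D}_{n}$ is defined for all time and converges to a unique finite circle pattern $P_{n}$ realizing the prescribed angles on $\operatorname{int}\mathcal{D}_{n}$. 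One cannot shortcut this by reducing to Theorem \ref{polyhedra_euc}: coning off $f_{\infty}$ produces a vertex of infinite degree at which, by (2), the angles must tend to $0$, violating the uniform hypothesis (1) of Theorem \ref{polyhedra_euc}; this is exactly the loss that must be paid for by the quantitative slack $|\gamma|\,c$ in condition (3).

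The heart of the matter is a uniform estimate: after normalizing $r$ at a fixed base face, there should be a constant, independent of $n$, such that for every fixed face $f$ the radius $r_{f}^{(n)}$ of $C_{f}$ in $P_{n}$ lies in a fixed compact subset of $(0,+\infty)$ once $n$ is large. The mechanism I have in mind is a discrete maximum (comparison) principle for the curvature map $r\mapsto K(r)$: condition (1) keeps every edge term nondegenerate, while condition (3), through its explicit constant $c$, caps the total angle that a face can absorb, so that at a face where the ratios $r_{f}^{(n)}/r_{f'}^{(n)}$ are extremal the curvature equation together with (3) forces those ratios to be bounded; local finiteness then propagates the bound outward from the base face along chains of successively adjacent faces, with a per-step loss controlled uniformly because $c>0$. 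This gives equicontinuity of the families $\{u^{(n)}\}$ on every finite subset of $F\setminus\{f_{\infty}\}$. I expect this to be the main obstacle: the weak form of hypothesis (1) provides no a priori bound, so everything hinges on turning the inequality (3) into a genuinely quantitative comparison estimate.

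With the estimate in place, a diagonal subsequence gives $r^{(n)}\to r^{(\infty)}\in(0,+\infty)^{F\setminus\{f_{\infty}\}}$ pointwise; since each $K_{f}$ depends on only finitely many radii, $K_{f}(r^{(\infty)})$ equals the target value at every face, so $r^{(\infty)}$ determines an infinite, locally finite circle pattern $C^{(\infty)}$ on $\mathbb{S}^{2}$ with combinatorics $\mathcal{D}\setminus\{f_{\infty}\}$ and the prescribed intersection angles. Using condition (3) and local finiteness once more, one shows by a rigidity argument of circle-packing type that $C^{(\infty)}$ is non-degenerate and complete --- its carrier is exactly the complement of the limit set of its vertex set, which therefore plays the role of $f_{\infty}$ --- since otherwise one would extract a limiting circle of positive radius, or a face boundary $\gamma=\partial f$ along which (3) is violated. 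Taking the convex hull in $\mathbb{H}^{3}$ of the vertex set of $C^{(\infty)}$ produces a convex ideal polyhedron $P$ that is combinatorially equivalent to $\mathcal{D}$ with $f_{\infty}$ realized as that limit set; its exterior dihedral angles are the prescribed $\Theta_{e}$ because the intersection angles of $C^{(\infty)}$ were chosen accordingly, and $\partial f_{\infty}$ maps onto the set of limit points of the vertex set. Hence $P$ is a half-infinite ideal hyperbolic polyhedron with the required combinatorics and dihedral angles.
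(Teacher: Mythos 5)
Your overall reduction (dualize the face--circle correspondence and build the pattern as a limit of finite approximations) is in the right spirit, but the proposal has a genuine gap at exactly the point you yourself flag: the uniform a priori estimate on the radii $r^{(n)}_f$ across the exhaustion is never proved, only hoped for. Nothing in hypotheses (1)--(3) hands you such a bound by a routine maximum-principle argument: (1) is not uniform (no $\delta$-separation from $0$ and $\pi$), so the ``per-step loss'' in your propagation scheme is not controlled, and the slack $|\gamma|c$ in (3) is a condition on angle sums along face boundaries, not on ratios of radii of exact solutions of Dirichlet problems on truncations; turning it into a comparison estimate for the finite patterns $P_n$ is precisely the hard analytic content, and it is missing. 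A second unproved step is the solvability of the finite boundary-value problems themselves: Rivin's theorem and the convexity of the finite potential concern closed decompositions of the sphere, and you assert without argument that your ``Dirichlet-type boundary curvature'' on $\partial\mathcal{D}_n$ lies in the admissible region for every $n$. Finally, the closing ``rigidity argument of circle-packing type'' for non-degeneracy and completeness of the limit is also only asserted.

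The paper avoids your missing estimate by a different mechanism. After dualizing (faces of $\mathcal{D}\setminus\{f_\infty\}$ become vertices, the angles are complemented so that condition (2) becomes the ideal-pattern condition (C1)), condition (3) becomes a lower bound on the average normalized character $\bar{\mathcal{L}}_i$ at every dual vertex. In \emph{hyperbolic} background geometry this lets one take a single global initial metric with constant radii $t\vec{1}$, $t$ small, whose curvature is non-positive everywhere (Lemma \ref{character_theta_dec} plus the character bound, i.e.\ Theorem \ref{ch_hyper_exist}); then the infinite combinatorial Ricci flow (long-time existence via Arzel\`a--Ascoli on truncated flows, Theorem \ref{existence}) preserves $K\le 0$ by the infinite-graph maximum principle, so $u(t)$ is monotone non-decreasing, while hyperbolic geometry supplies the upper barrier for free ($u<c_1<0$, Lemma \ref{hyperconstr_u}: large radii force positive curvature). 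Monotonicity plus this barrier gives convergence to a zero-curvature ideal pattern (Theorem \ref{convergence1}), with no need for uniform estimates on exact finite solutions. If you want to salvage your route, you would have to either prove the quantitative comparison estimate you postulate or replace the exact finite Dirichlet solutions by a monotone scheme with a built-in barrier, which is essentially what the paper's hyperbolic flow does.
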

Moreover, we have the following conjecture.
\begin{conj}
    Condition (3) in Theorems~\ref{polyhedra_euc} and~\ref{polyhedra_hyp} can be further relaxed such that it only needs to hold outside a finite sub-complex of $\mathcal{D}$.
\end{conj}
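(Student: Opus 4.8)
\emph{Towards the conjecture.}
The plan is to isolate the finite exceptional sub-complex and to exploit that condition~(3) enters the existence arguments only through local, face-by-face estimates: it serves to place each discrete curvature $K_f(r)$ of \eqref{K_euc_curvature_result} in the range for which the associated Ricci potential is convex and proper, and for which a limiting configuration actually assembles into an ideal polyhedron. Write $S\subset\mathcal D$ for the finite sub-complex outside of which (3) holds, and let $F_S\subset F\setminus F_\infty$ be the finite set of faces meeting $S$. Since (3) now fails on at most $|F_S|$ faces, the idea is to run the combinatorial Ricci flow of the original proof essentially verbatim on the ``good'' part $F\setminus F_S$ and to treat the finitely many bad faces as a bounded, localized correction.

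Concretely I would proceed by an exhaustion argument. Choose a nested exhaustion $S\subset\mathcal D_1\subset\mathcal D_2\subset\cdots$ of $\mathcal D$ by finite connected sub-complexes, cap off each $\mathcal D_n$ along its boundary and assign boundary curvature exactly as in \eqref{K_euc_result}, and apply the finite realization step inside the proof of Theorem~\ref{polyhedra_euc} (or Theorem~\ref{riv} after the capping) to obtain a finite ideal polyhedron $P_n$ combinatorially equivalent to $\mathcal D_n$ with the prescribed interior angles. On $\mathcal D_n\setminus F_S$ this construction only ever invokes conditions (1), (2), (4) and (3) restricted to good faces, so the radius bounds of the original argument — upper bounds from the supersolution in (4), lower bounds from the uniform angle gap in (1) — hold for $P_n$ uniformly in $n$ away from $S$. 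Passing to a subsequence, the $P_n$ then converge locally uniformly on $\mathcal D\setminus S$ to a configuration solving all the curvature equations there, and the surrounding limit pins down Rivin-type boundary data along $\partial F_S$; it remains only to control the finitely many radii $\{r_f:f\in F_S\}$ in the limit.

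This last point is the crux, and the step I expect to be the main obstacle. One must rule out degeneration ($r_f\to 0$ or $r_f\to\infty$) of the finitely many faces in $F_S$ along the exhaustion, and condition~(3) was precisely the hypothesis doing this job, so a substitute is needed on $S$. Since $F_S$ is finite this is a finite-dimensional compactness problem, which I would attack by analyzing the possible collapse modes of a finite ideal polyhedron whose boundary combinatorics and (by the first limit) boundary geometry are fixed, aiming to show — via the necessity direction of Rivin's Theorem~\ref{riv} applied locally, or a direct hyperbolic-geometric estimate — that any such degeneration forces a violation of condition (1) or (2) at an edge or vertex of $S$, contradicting the standing hypotheses. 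Making this dichotomy precise, in particular ruling out partial collapses of proper sub-configurations of $F_S$ and adapting the estimate to the hyperbolic-background setting of Theorem~\ref{polyhedra_hyp} with its weaker condition~(3), is the real work. A variant bypassing the capping construction is a continuity method: deform the given $\Theta$ within the affine slice cut out by the linear constraint (2), supported only on the finitely many edges of $S$, starting from an assignment satisfying all hypotheses of Theorem~\ref{polyhedra_euc} everywhere; openness of the realizable set would rest on the local infinitesimal rigidity of ideal polyhedra and closedness on the same non-degeneration estimate on $F_S$, so the essential difficulty is unchanged.
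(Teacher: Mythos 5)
The statement you are addressing is stated in the paper as a \emph{conjecture}: the authors give no proof, only the heuristic motivation that transience/recurrence of a graph is unaffected by modifying a finite subgraph, together with pointers to He--Schramm and Oh. So your text would have to be a complete proof to settle it, and by your own account it is not: the step you correctly identify as ``the crux'' --- ruling out degeneration $r_f\to 0$ or $r_f\to\infty$ on the finitely many exceptional faces without condition (3) available there --- is precisely the mathematical content of the conjecture, and you leave it as a program (``analyze the collapse modes\dots aiming to show\dots'') rather than an argument. Neither the capping/exhaustion variant nor the continuity-method variant discharges it; as you note, both reduce to the same unproved non-degeneration estimate, so the proposal establishes nothing beyond the heuristic the authors already state.

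There is also a structural mismatch with how condition (3) actually functions in the paper, which undermines the scaffolding around the gap. The existence results are not proved by a ``finite realization step'' that one could run face-by-face on the good part: Theorem~\ref{polyhedra_euc} is obtained by dualizing and passing to the punctured decomposition $\mathcal{D}'$, where conditions (2)--(3) become the vertex character inequality $\sum_{j\sim i}\Theta_{ij}+\hat K_i\geq 2\pi$ required at \emph{every} vertex (Theorem~\ref{infiniteICP}), and this pointwise sign condition is what feeds the maximum-principle and monotonicity arguments for the infinite combinatorial Ricci flow (Lemmas~\ref{euc_lower_bound}, \ref{eucid_noninc}, and their hyperbolic analogues behind Theorem~\ref{polyhedra_hyp}). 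If the inequality fails at even finitely many vertices, the curvature there can have the wrong sign, and because the flow couples each vertex to its neighbors, Lemma~\ref{maximumprin} no longer yields $K\leq 0$ (resp.\ $K\geq 0$) on the approximating flows; the defect is not confined to a ``bounded, localized correction'' but propagates through the comparison argument. Any genuine proof would need either a replacement for the maximum principle that tolerates a finite exceptional set (e.g.\ a barrier or capacity-type argument in the spirit of the recurrence/transience stability the authors cite, which is presumably the intended route via their forthcoming work), or the finite-dimensional compactness statement you sketch --- and that statement is exactly what is missing.
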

This conjecture is motivated by the fact that the transience (resp. recurrence) of a graph remains unchanged when a finite subgraph is modified. In fact, the works of He–Schramm~\cite{He2} and Oh~\cite{Oh} also provide supporting evidence for this conjecture. The conditions presented in Theorems \ref{polyhedra_euc} and \ref{polyhedra_hyp} have the advantage of being local, seperating, and readily verifiable, whereas those in Rivin's theorem are intricately linked to the underlying combinatorial structure, rendering them more challenging to verify.

Our approach for proving Theorems \ref{polyhedra_euc} and \ref{polyhedra_hyp} relies on the theory of {circle patterns}, a method introduced by Thurston \cite{Th76}, who observed a deep connection between Andreev’s theorem \cite{An70A, An70B} and Koebe’s circle packing theorem \cite{Ko36}. Thurston further conjectured that conformal maps could be approximated by graph isomorphisms between circle packings, a result later proved by Rodin-Sullivan \cite{ro2}. Consequently, circle patterns are regarded as a discrete analog of conformal mappings and a powerful tool in 3-dimensional geometric topology \cite{st}. 
The existence of IIP and HIIP can be interpreted as the existence of infinite ideal circle patterns. A key technique in studying circle patterns is the use of {differential equations}. In 2002, inspired by Hamilton's work on Ricci flow \cite{H82, H98}, Chow-Luo \cite{CL03} introduced \emph{combinatorial Ricci flow} as a new method for analyzing circle patterns, formulated as
\begin{equation}\label{flow}
	\frac{\mathrm{d} r_i}{\mathrm{d} t} = -K_i \cdot s(r_i), \quad \forall v_i \in V,
\end{equation}
where $K_i$ is the \emph{discrete Gaussian curvature} at the vertex $v_i$, and $s(r_i) = r_i$ in the Euclidean background and $s(r_i) = \sinh r_i$ in the hyperbolic case. They showed that for surfaces of positive genus, the solution converges exponentially to a Thurston's circle pattern.

The first, second authors, and Zhou \cite{GHZ21} onstudied the combinatorial Ricci flow for {ideal circle patterns}, focusing on finite cellular decompositions. However, to establish Theorems \ref{polyhedra_euc} and \ref{polyhedra_hyp}, we must extend their theory to the infinite setting. Since many methods, including combinatorial and topological ones, fail in the infinite setting, the problem becomes particularly challenging. Moreover, the combinatorial Ricci potential,  the key energy functional for the problem in the finite case, becomes infinite for infinite circle packings. As a result, the variational methods developed in \cite{Ve91} are no longer applicable to infinite combinatorial Ricci flows. This necessitates the development of new ideas and methods suitable for our setting. 

Very recently, the first, second, and fourth authors first introduced the infinite combinatorial Ricci flows \cite{Ge-Hua-Zhou-infinite}, extending results of Chow-Luo \cite{CL03} and He \cite{He}. After that, Li, Lu and the third author \cite{Li-Lu-Yu}, Zhao and the fourth author \cite{Zhao-Zhou} introduced two infinite flows for total geodesic curvatures in spherical and hyperbolic background geometries, respectively.

Let $\mathcal{D} = (V, E, F)$ be an infinite cellular decomposition on a non-compact surface $S$, and let $\Theta$ be a function on $E$. Denote by $d_i$ the degree of a vertex $v_i \in V$, i.e., the number of edges connected to $v_i$. Let $G$ be the 1-dimensional skeleton of $\mathcal{D}$. For any $v_i, v_j \in V$, the combinatorial distance $d(v_i, v_j)$ is defined as the graph distance in $G$. Throughout this paper, we assume $d(v_i, v_j) < +\infty$ for all $v_i, v_j \in V$. For an ideal circle pattern, we need a condition:

\begin{itemize}
	\item[(C1)] If \( e_1 + \cdots + e_m \) forms the boundary of a face \( f \), then \( \sum_{l=1}^{m} \Theta(e_l) = (m - 2) \pi \).
\end{itemize}

We first establish the long-time existence of the flow \eqref{flow}.

\begin{theorem}[Long-Time Existence]\label{existence}
	In both hyperbolic and Euclidean background geometries, if $\Theta \in (0, \pi)^E$ and satisfies (C1), then for any initial value $r(0)$, there exists a solution $r(t)$ to equation \eqref{flow} that exists for all time $t \geq 0$.
\end{theorem}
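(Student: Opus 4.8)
The plan is to establish that the flow \eqref{flow} is a well-posed ODE with solution defined for all $t \ge 0$ by ruling out finite-time blow-up of the conformal factors $r_i$ in either direction. First I would observe that the right-hand side $-K_i \cdot s(r_i)$ is a smooth (indeed real-analytic) function of the radii in a neighborhood of any admissible configuration, because each $K_i$ is built from the angle functions of ideal circle-pattern triangles, which depend smoothly on the radii as long as every such triangle is non-degenerate. Condition (C1) guarantees the intersection angles $\Theta$ are compatible around each face, so the relevant inversive-distance/ideal configurations are geometrically realizable and the angle functions are smooth on the open region where all radii are positive (hyperbolic case) or positive (Euclidean case, modulo overall scaling). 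Hence by the Picard–Lindelöf theorem a unique maximal smooth solution $r(t)$ exists on some interval $[0, T)$, and it suffices to show $T = +\infty$.

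The core of the argument is an a priori estimate: along the flow, each $r_i(t)$ stays bounded away from $0$ and from $+\infty$ on any finite time interval, and crucially the bound on a fixed vertex depends only on finitely many neighboring vertices (this locality is what makes the infinite setting tractable). I would exploit the sign/monotonicity structure of the discrete curvature: when $r_i$ is very large relative to its neighbors, $K_i$ has a controlled sign that forces $\ddt{r_i} = -K_i s(r_i)$ to push $r_i$ back, and symmetrically when $r_i$ is very small. More precisely, in the Euclidean normalization one shows $|K_i| \le d_i \pi$ (or a similar uniform bound from the number of incident faces), so $\left| \frac{\mathrm{d}}{\mathrm{d}t} \log r_i \right| = |K_i| \le C_i$ in the Euclidean case, giving $r_i(0) e^{-C_i t} \le r_i(t) \le r_i(0) e^{C_i t}$ directly — no blow-up in finite time. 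In the hyperbolic case $s(r_i) = \sinh r_i$ and one instead bounds $\left|\frac{\mathrm{d}}{\mathrm{d}t} r_i\right| \le C_i \sinh r_i$, which integrates to a finite-time bound on $r_i$ via $\mathrm{arctanh}$ or $\log\tanh$ type estimates; the lower bound $r_i > 0$ is preserved because as $r_i \to 0^+$, $\sinh r_i \to 0$ faster than $K_i$ can blow up (one checks $K_i$ stays bounded, or at worst $K_i \sinh r_i \to 0$), so $r_i$ cannot reach $0$ in finite time.

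The main obstacle I anticipate is making the curvature bound $|K_i| \le C_i$ genuinely uniform and the degeneration analysis clean near the boundary of the admissible region — that is, confirming that the ideal circle-pattern angle functions remain well-defined and that $K_i$ does not blow up as the configuration approaches a degenerate triangle, and that this does not happen in finite time. I would handle this by deriving explicit formulas for the angle that a face contributes at a vertex in terms of the two radii and the edge angle $\Theta_e \in (0,\pi)$, checking that this angle lies in $(0,\pi)$ and varies monotonically and smoothly, and extracting a bound independent of the radii (depending only on the local combinatorics, i.e., $d_i$). Once the finite-time non-degeneration and the two-sided exponential-type bounds on each $r_i$ are in hand, the maximal solution cannot terminate at any finite $T$, since at such a $T$ the limit $r(T)$ would again be an admissible configuration from which the flow extends — contradiction. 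Therefore $T = +\infty$ and the solution exists for all $t \ge 0$.
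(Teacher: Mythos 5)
The central gap is your appeal to Picard--Lindel\"of for the infinite-dimensional system. The paper explicitly flags that this theorem does not directly apply when $V$ is infinite: the map $u \mapsto K(u)$ is not evidently locally Lipschitz on any natural Banach space of functions on $V$, since $\sup_i |K_i| \le 2\pi(1+d_i)$ can be unbounded (no uniform degree bound is assumed in Theorem~\ref{existence}) and the angle-derivative estimates from Lemma~\ref{key1} blow up as $\Theta_e \to 0$ or $\pi$ (again, only $\Theta\in(0,\pi)^E$ is assumed). The paper's actual mechanism is an exhaustion argument: one takes an increasing sequence of finite sub-decompositions $\mathcal{D}^{[n]}$, solves the truncated finite-dimensional flows \eqref{flow_finite} (where Picard--Lindel\"of \emph{does} apply), establishes uniform $C^1$-bounds on compact time intervals for the sequence $\{u^{[n]}_i\}$ together with a bound on $|\frac{d}{dt}K_i^{[n]}|$, and then invokes Arzel\`a--Ascoli plus a diagonal argument to extract a subsequence converging, locally uniformly in $t$, to a solution of the full infinite flow. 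Your a priori bounds are essentially the ones the paper derives \emph{for the truncated flows}, but without the exhaustion and compactness step you never actually produce a solution, and the ``maximal solution cannot terminate at finite $T$'' continuation argument presupposes the local existence theory you have not justified.

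A secondary issue is the hyperbolic upper bound. In $u_i = \ln\tanh(r_i/2)\in(-\infty,0)$, the estimate $|du_i/dt|\le 2\pi(1+d_i)$ gives a lower bound on $u_i$ over any finite interval, but by itself permits $u_i$ to reach $0$ (i.e.\ $r_i\to\infty$) in finite time; likewise a direct integration of $|dr_i/dt|\le C\sinh r_i$ does \emph{not} rule out finite-time blow-up (the ODE $\dot r=\sinh r$ blows up in finite time). The paper instead uses Lemma~\ref{hyperconstr_u}: once all $u_j$ near $v_i$ exceed a threshold $\delta<0$, the incident angles $\theta_{ij}$ are small, forcing $K_i\ge \pi/2>0$, so $u_i$ decreases; this sign-mechanism yields a uniform bound $u_i\le c_1<0$ keeping the flow in the admissible region. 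You gesture at this (``$K_i$ has a controlled sign that \ldots{} pushes $r_i$ back''), but the quantitative step you wrote down (``integrates to a finite-time bound via arctanh\ldots'') does not deliver it.
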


The proof of Theorem \ref{existence} is inspired by the work of Shi \cite{shi}. Under additional conditions, we can also guarantee the uniqueness of the solution.

\begin{theorem}[Uniqueness I]
Let $R$ denote the set of all functions $r\in (0,\infty)^{V\times [0,\infty)}$ whose discrete Gaussian curvatures are uniformly bounded on $V \times [0, \infty)$.
	In both hyperbolic and Euclidean background geometries, let $\delta > 0$ be an arbitrarily small constant.  If $\Theta \in (\delta, \pi - \delta)^E$ satisfies \textup{(C1)}, then for any initial value $r(0)$ with bounded discrete Gaussian curvature,  equation \eqref{flow} admits a unique solution $r\in R$.
\end{theorem}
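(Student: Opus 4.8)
The plan is to prove uniqueness by showing that any two solutions in $R$ must coincide, using a maximum-principle / Grönwall-type argument combined with the long-time existence already established in Theorem~\ref{existence}. First I would fix the background geometry and set up the relevant function-analytic framework: given $r\in R$, the curvature bound $|K_i(r(t))|\le M$ for all $v_i\in V$ and all $t\ge 0$ translates, via the flow equation \eqref{flow}, into a bound on the logarithmic derivative $|\frac{d}{dt}\log u_i|\le M$ where $u_i=r_i$ in the Euclidean case and $u_i=\log\tanh(r_i/2)$ (or a similar change of variable making the curvature a gradient) in the hyperbolic case. This gives a priori control: along any solution in $R$, the radii stay within a time-dependent but uniform multiplicative band of their initial values, so the solution never degenerates (no $r_i\to 0$ or $r_i\to\infty$ in finite time) and all the geometric quantities $\Theta$-angle functions appearing in $K_i$ remain in a compact regime with uniformly bounded derivatives.

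Next I would carry out the core comparison. Let $r(t)$ and $\tilde r(t)$ be two solutions in $R$ with $r(0)=\tilde r(0)$, and set $w_i(t)=\log u_i(t)-\log\tilde u_i(t)$. Differentiating and using \eqref{flow}, $\dot w_i = -(K_i(r)-K_i(\tilde r))\cdot(\text{positive bounded factor})$. The key structural input, inherited from the variational/convexity theory of ideal circle patterns in \cite{GHZ21} and used in \cite{Ge-Hua-Zhou-infinite}, is that the map $u\mapsto K(u)$ has a symmetric negative-semidefinite (in fact, in the appropriate coordinates, monotone) Jacobian with locally bounded entries and with the crucial property that $\frac{\partial K_i}{\partial u_j}$ is supported on edges and summable with a uniform bound $\sum_j|\frac{\partial K_i}{\partial u_j}|\le C$ (this uses (C1), condition $\Theta\in(\delta,\pi-\delta)$, and local finiteness). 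Writing $K_i(r)-K_i(\tilde r)=\sum_j b_{ij}(t)\,w_j(t)$ with $b_{ij}$ bounded and $\sum_j|b_{ij}|\le C$, one obtains the differential inequality $\frac{d}{dt}\big(\sup_i w_i^2\big)\le C'\sup_i w_i^2$ — or more carefully, working with $\sup_i|w_i|$ as a Lipschitz function of $t$ and using a Gronwall argument on it — whence $w\equiv 0$ for all $t$ since $w(0)=0$. Here one must be slightly careful that $\sup_i|w_i|$ might be infinite a priori; but the uniform curvature bound forces $|w_i(t)|\le 2Mt$, so the sup is finite for each $t$, and the Gronwall argument applies on each finite interval.

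The main obstacle I anticipate is precisely this issue of uniform control over infinitely many vertices: in the finite case one simply appeals to continuous dependence on initial data for ODEs, but in the infinite-dimensional setting the system \eqref{flow} is not covered by standard ODE uniqueness, and one genuinely needs the restriction to $R$ (bounded curvature) to get a working comparison. The delicate point is establishing that the coefficients $b_{ij}(t)$ are uniformly bounded in both $i,j$ and $t$ and that the row sums $\sum_j|b_{ij}(t)|$ are uniformly bounded — this requires showing that the relevant arcsin-type derivative expressions in $K_i$ stay away from their singularities, which in turn follows from the a priori band on the radii and the $\delta$-separation of the angles from $0$ and $\pi$. A secondary technical point, in the hyperbolic case, is choosing the coordinate change that simultaneously (i) makes the factor multiplying $K_i$ in $\dot w_i$ uniformly bounded above and below on the relevant radius band, and (ii) keeps the Jacobian entries controlled; I expect $u_i=-\log\tanh(r_i/2)$ to serve this purpose, as it does in \cite{CL03,Ge-Hua-Zhou-infinite}. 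Once these uniform bounds are in hand, the Gronwall step is routine, and combined with Theorem~\ref{existence} it yields existence and uniqueness of a solution in $R$.
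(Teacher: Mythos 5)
Your overall strategy (linearize $K(u^{[1]})-K(u^{[2]})$ by integrating the Jacobian along the segment $u^{[s]}=su^{[1]}+(1-s)u^{[2]}$, use the a priori bound $|u_i(t)-u_i(0)|\le Mt$ coming from the curvature bound, and close with a comparison argument) is the same as the paper's, but the step you yourself flag as delicate --- the uniform bound on the coefficients $b_{ij}$ and on the row sums $\sum_j|b_{ij}|$ --- is exactly where your justification fails, and for two distinct reasons. First, this theorem does \emph{not} assume uniformly bounded degree (that is the separate Uniqueness II statement), so the ingredients you invoke (local finiteness, $\Theta\in(\delta,\pi-\delta)$, and the multiplicative band around the arbitrary initial radii) only bound each individual entry $|\partial K_i/\partial u_j|$ by a constant $C(\delta)$; summing over the neighbors gives $C(\delta)\,d_i$, which is not uniform in $i$. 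The genuine row-sum bound requires the finer estimate $0<\partial\theta_{ij}/\partial u_j\le\theta_{ij}/\sin\Theta_{ij}$ (Lemma \ref{key1}), combined with the fact that $\sum_j 2\theta_{ij}=2\pi-K_i\le 2\pi+M$ along an \emph{actual} solution, and then a transfer of this bound from $u^{[1]}$ to the interpolated points $u^{[s]}$ via monotonicity in $u_i$ and the perturbation Lemmas \ref{eucconstri} and \ref{hyperconstri}; this transfer is only valid while $|u^{[s]}_j-u^{[1]}_j|\le\epsilon$, i.e.\ on a short interval $[0,\epsilon/(2M)]$, after which the argument must be iterated. None of this appears in your sketch, and without it the claimed inequality $\sum_j|b_{ij}|\le C$ is unsupported.

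Second, your symmetric Gr\"onwall-on-$\sup_i|w_i|$ (or $\sup_i w_i^2$) formulation needs a \emph{two-sided} uniform bound including the diagonal coefficient $b_{ii}\sim\int_0^1\partial K_i/\partial u_i\,ds$, and in the hyperbolic background this is false without bounded degree: each term $\cosh l_{ij}\sinh d_{ij}/\sinh l_{ij}$ is of size comparable to $\tan(\Theta_{ij}/2)$ when both radii are large (even though $\theta_{ij}\to 0$ there), so $|\partial K_i/\partial u_i|$ can be of order $d_i$ while the curvature $K_i$ stays bounded; it is \emph{not} controlled by $C\sum_j\theta_{ij}$ (compare Lemma \ref{key3}). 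The paper avoids this by writing the linearized equation as $\frac{df}{dt}=\Delta_{\omega(t)}f+g(t)f$ with $\omega_{ij}\ge0$ and $g_i\le0$ (Lemma \ref{vari}), so the problematic diagonal enters only through $g$, for which merely an \emph{upper} bound is needed in the infinite-graph maximum principle (Lemma \ref{maximumprin}, Corollary \ref{cor0}); boundedness of $f$ itself comes from $|f_i(t)|\le 2Mt$, as you correctly note. To repair your proof you should either adopt this one-sided maximum-principle formulation (which also takes care of the fact that $\sup_i w_i$ need not be attained over the infinite vertex set --- handled in the paper by the $-\delta d^{[v_0]}-C\delta t$ perturbation), or at least run your Gr\"onwall argument on the positive part with the sign of the diagonal term exploited; and in either case you must supply the curvature-based, short-time row-sum estimate described above rather than deriving it from $\delta$-separation and the radius band alone.
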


\begin{theorem}[Uniqueness II]\label{unique_for_uni_degree}
	In both hyperbolic and Euclidean background geometries, let $\delta$ be an arbitrarily small positive constant. If $\Theta \in (\delta, \pi - \delta)^E$ satisfies (C1), and there exists a positive constant $\hat{d}$ such that $d_i \leq \hat{d}$ for all $v_i \in V$, then for any initial value $r(0)$, the solution $r(t)$ to equation \eqref{flow} is unique.
\end{theorem}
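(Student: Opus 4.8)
The plan is to obtain Theorem~\ref{unique_for_uni_degree} from the preceding uniqueness theorem (Uniqueness~I) by showing that the bounded-degree hypothesis forces the discrete Gaussian curvature of \emph{every} solution of \eqref{flow}, as well as of the initial datum, to be uniformly bounded on $V\times[0,\infty)$; then every solution automatically lies in the class $R$ in which uniqueness is already established, and we are done.

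The key point is structural. For an ideal circle pattern the discrete Gaussian curvature at a vertex $v_i$ is a sum of $d_i$ angle-type contributions, one per face (equivalently, per edge) incident to $v_i$: in the Euclidean model $K_i = 2\pi - \sum_{f\ni v_i}\theta_i^f$ with each inner angle $\theta_i^f\in(0,\pi)$, and analogously in the hyperbolic model. Each such contribution is a smooth function of the radii of the incident circles, and --- this is the one place where the circle-pattern geometry, rather than soft ODE reasoning, is used --- it takes values in a fixed interval bounded away from the degenerate endpoints, \emph{uniformly in the radii}, for all positive radii; verifying this uniform non-degeneration of the corner angles is where the hypotheses $\Theta\in(\delta,\pi-\delta)^E$ and (C1) enter, via the relevant generalized polygon inequalities. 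Since the flow \eqref{flow} keeps each $r_i(t)$ positive for all $t\ge 0$ (part of Theorem~\ref{existence}) and $d_i\le\hat d$, we obtain a constant $C=C(\hat d)$ with $|K_i(r(t))|\le C$ for every solution $r(t)$ and all $(i,t)$, and likewise $|K_i(r(0))|\le C$. In particular $\log r_i$ grows at most linearly in $t$, so every solution is global.

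With these bounds the conclusion is immediate: $r(0)$ has bounded discrete Gaussian curvature, so Uniqueness~I yields a unique solution in $R$; but the a priori bound shows that every solution of \eqref{flow} already lies in $R$, hence the solution is unique. Equivalently, one can run the proof of Uniqueness~I directly, using the automatic curvature bound in place of the hypothesis that $r(0)$ has bounded curvature: that proof uses the bound only to get linear-in-$t$ control of $\log r_i$, hence locally uniform two-sided radius bounds, and thereafter local Lipschitz estimates for $r\mapsto K_i(r)$ together with a Gr\"onwall/exhaustion argument, all of which carry over verbatim. I do not anticipate any obstacle beyond the uniform angle estimate mentioned above; once the curvature is controlled, Uniqueness~I does all the work, and the value of stating Theorem~\ref{unique_for_uni_degree} separately is precisely that its hypothesis is combinatorial and directly checkable rather than requiring a priori knowledge of curvature bounds.
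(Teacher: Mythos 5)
Your proposal is correct and follows the paper's argument: bounded degree forces a uniform bound on every solution's curvature (and on $K(r(0))$), so every solution lies in the class $R$ and Uniqueness~I applies. One mischaracterization: you attribute the curvature bound to the hypotheses $\Theta\in(\delta,\pi-\delta)^E$ and (C1) via ``generalized polygon inequalities,'' but no such input is needed. By construction $K_i = 2\pi - 2\sum_{j\sim i}\theta_{ij}$ and each $\theta_{ij}$ is an inner angle of a genuine triangle, hence lies in $(0,\pi)$ for \emph{any} positive radii; this alone gives $2\pi(1-d_i) < K_i < 2\pi$, and bounded degree then yields the uniform bound $|K_i|\le 2\pi\max\{\hat d - 1,\,1\}$. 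The hypotheses on $\Theta$ and (C1) are not consumed at this stage --- they are already needed inside Theorem~\ref{uniq} (Uniqueness~I), where they produce the uniform weight bounds required for the maximum principle (via Lemmas~\ref{key1}, \ref{eucconstri}, \ref{hyperconstri}). So the logic is exactly right, but the circle-pattern geometry you invoke is not where the work happens: the new observation of Uniqueness~II over Uniqueness~I is just the trivial angle bound above.
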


Since convergence of the Ricci flow typically implies the existence of a \emph{good ideal circle pattern}—namely, a circle packing metric with zero discrete Gaussian curvature—it is important to analyze the behavior of $r(t)$ as $t \to +\infty$.

In hyperbolic background geometry, we have the following result.

\begin{theorem}[Convergence I]\label{convergence1}
	In hyperbolic background geometry, if $\Theta \in (0, \pi)^E$ satisfies (C1) and the initial curvature satisfies $K_i(r(0), \Theta) \leq 0$ for all $v_i \in V$, then the solution $r(t)$ to equation \eqref{flow} converges to a good ideal circle pattern.
\end{theorem}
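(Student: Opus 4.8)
The plan is to combine a parabolic maximum principle, the monotonicity forced by the sign condition $K\le 0$, and an argument ruling out escape of the radii to infinity. It is convenient to pass to the coordinate $u_i=\log\tanh(r_i/2)\in(-\infty,0)$, under which the flow $\frac{\mathrm{d}r_i}{\mathrm{d}t}=-K_i\sinh r_i$ becomes $\dot u_i=-K_i$. First I would show that the sign condition propagates in time: $K_i(r(t))\le 0$ for all $i\in V$ and all $t\ge 0$. Differentiating the flow gives the linear system $\dot K_i=-\sum_j\frac{\partial K_i}{\partial u_j}K_j$, whose coefficient matrix $\bigl(\frac{\partial K_i}{\partial u_j}\bigr)$ is the Hessian of the combinatorial Ricci potential: it is symmetric, positive semi-definite, sparse ($\partial K_i/\partial u_j=0$ unless $j=i$ or $j\sim i$), with positive diagonal and non-positive off-diagonal entries. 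Since the solution $r(t)$ from Theorem~\ref{existence} is built as a limit of solutions of the flow on a finite exhaustion $\mathcal{D}_1\subset\mathcal{D}_2\subset\cdots$ of $\mathcal{D}$, one may run the maximum principle for this parabolic system on each finite $\mathcal{D}_n$ and pass to the limit.

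Given $K\le 0$, the flow in the $u$-coordinate gives $\dot u_i=-K_i\ge 0$, so each $u_i(t)$ is non-decreasing and converges to some $u_i^{\ast}\in(-\infty,0]$; equivalently $r_i(t)\uparrow r_i^{\ast}\in(0,+\infty]$. Integrating the flow yields the a priori bound
\[
\int_0^{\infty}\bigl|K_i(r(t))\bigr|\,\mathrm{d}t=u_i^{\ast}-u_i(0)\le -u_i(0)<\infty,\qquad i\in V.
\]

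The main step, and the principal obstacle, is to show $r_i^{\ast}<\infty$ (equivalently $u_i^{\ast}<0$) for every $i$. In hyperbolic background geometry, if $r_i(t)\to+\infty$ then $v_i$ recedes to the ideal boundary and the angle each incident face subtends at $v_i$ degenerates to $0$, forcing $K_i(r(t))\to 2\pi$; this contradicts $K_i\le 0$, so a single radius cannot diverge with the others staying bounded. Propagating this along edges and using that $\mathcal{D}$ is connected, the only alternative to boundedness is that \emph{all} radii diverge simultaneously; in that regime the hyperbolic geometry degenerates, after normalization, to Euclidean geometry, so the normalized configurations subconverge to a Euclidean ideal circle pattern on $\mathcal{D}$ whose discrete Gaussian curvature must vanish (by the a priori bound above), and a connected locally finite $\mathcal{D}$ cannot carry both such a Euclidean pattern and, as hypothesized via $r(0)$, a hyperbolic one with $K\le 0$. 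Hence all $r_i^{\ast}$ are finite. In the finite case boundedness of the radii is automatic from properness and convexity of the Ricci potential; here that potential is an infinite sum, which is exactly why this step must instead be handled by the geometric/degeneration reasoning above, and it is where non-compactness genuinely bites. (An alternative route is to bound $r(t)$ by the monotone limit of the finite-domain good circle patterns and to prove that this limit stays bounded.)

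Finally, since $r^{\ast}=(r_i^{\ast})\in(0,+\infty)^V$ is a genuine circle packing metric and $r(t)\to r^{\ast}$ pointwise by monotonicity, the continuity of each $K_i$ in the finitely many radii on which it depends gives $K_i(r(t))\to K_i(r^{\ast})$. If $K_i(r^{\ast})<0$ for some $i$, then $\int_0^{\infty}|K_i(r(t))|\,\mathrm{d}t=\infty$, contradicting the a priori bound; therefore $K_i(r^{\ast})=0$ for all $i$. Thus $r(t)$ converges to $r^{\ast}$, a circle packing metric with vanishing discrete Gaussian curvature, i.e.\ a good ideal circle pattern.
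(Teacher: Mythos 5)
Your overall architecture matches the paper's: preserve $K\le 0$ along the flow via a maximum principle applied on the finite exhaustion, deduce that $u_i=\ln\tanh(r_i/2)$ is non-decreasing, show the radii stay bounded, and then conclude $K_i\to 0$ from the integral/linear-growth argument. Steps one, two and four are essentially the paper's proof. The problem is your third step, which is exactly the step where non-compactness matters. After correctly observing that a single diverging radius forces $K_i\to 2\pi$, you restrict this to the case where the neighboring radii stay bounded, and you dispose of the remaining case (``all radii diverge simultaneously'') by asserting that the normalized configurations subconverge to a flat Euclidean ideal circle pattern and that a connected locally finite $\mathcal{D}$ cannot carry both such a Euclidean pattern and a hyperbolic metric with $K\le 0$. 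That last assertion is an infinite-pattern type/rigidity statement (a parabolic-versus-hyperbolic dichotomy) which is nowhere proved in this paper, is not elementary, and is in fact the subject of the authors' conjecture and their forthcoming work \cite{GYZ}; as written this is a genuine gap, not a routine compactness argument. The subconvergence claim itself is also unjustified (no normalization is specified, and the a priori bound $\int_0^\infty|K_i|\,dt<\infty$ does not by itself force the limiting curvature of a rescaled limit to vanish).

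The gap is avoidable, and the paper's route shows how: Lemma \ref{hyperconstr} (equivalently Lemma \ref{hyperconstr_u}) is a purely hyperbolic statement that $\theta_{ij}<\epsilon$ as soon as $r_i>L(\epsilon)$, \emph{uniformly in $r_j$} (the constant depends only on $\Theta_{ij}$). Hence if $u_i(t)$ gets close enough to $0$ then $K_i=2\pi-2\sum_{j\sim i}\theta_{ij}\ge \pi/2>0$ regardless of what the neighboring radii do, so $\dot u_i=-K_i<0$ and $u_i$ cannot approach $0$; this barrier argument gives a uniform bound $u_i(t)\le c_1<0$ already along the finite truncated flows (it is Part II of the proof of Lemma \ref{flow_finite_exi}, inherited by the limit solution in Theorem \ref{existence_u}), with no case analysis and no contradiction argument at $t=\infty$. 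In particular your dichotomy ``others bounded'' versus ``all diverge'' is unnecessary: the single-vertex estimate alone rules out divergence of any radius, so you should replace the degeneration-to-Euclidean paragraph by the uniform version of your own first observation. With that replacement (and noting, as you implicitly do, that the convergence statement is for the solution constructed from the exhaustion), your argument closes and coincides with the paper's.
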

If there exists an initial value $r(0)$ with non-positive discrete Gaussian curvature, by the above theorem, the cellular decomposition $\mathcal{D}$ admits a good ideal circle pattern.
This gives a criterion for the existence of a good ideal circle pattern, and provides an algorithm for finding the desired ideal circle pattern using the combinatorial Ricci flow.  This answers Rivin's problem (ii).

In Euclidean background geometry, consider the infinite cellular decomposition \( \mathcal{D} = (V, E, F) \) of the plane $\mathbb{R}^2$, whose 1-dimensional skeleton $G = (V, E)$ corresponds to a lattice structure as shown in Figure~\ref{fig:1}. 
\begin{figure}[h]
	\centering
	\includegraphics[width=0.5\textwidth]{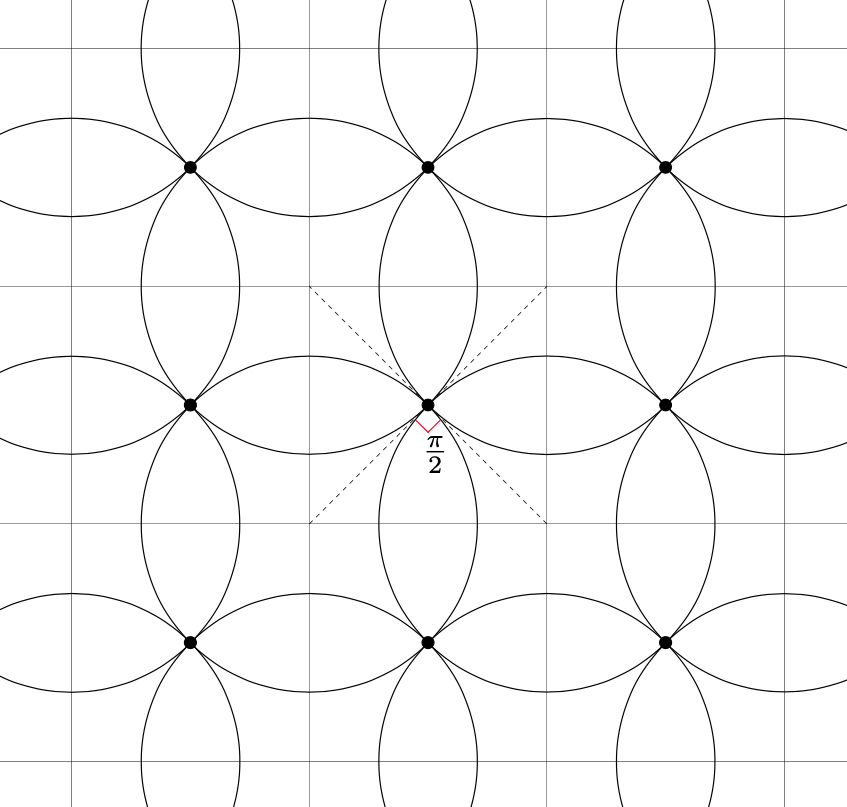}
	\caption{An infinite ideal circle pattern with $\Theta_e = \frac{\pi}{2}$ in $\mathbb{R}^2$}
	\label{fig:1}
\end{figure}
\begin{theorem}[Convergence II]\label{convergence2}
	In Euclidean background geometry, for the cellular decomposition \( \mathcal{D} = (V, E, F) \) shown in Figure~\ref{fig:1}, if $\Theta_e = \frac{\pi}{2}$ for all $e \in E$, then there exists a universal constant $\epsilon > 0$ such that for any initial data $r(0)$ satisfying $\|\ln r(0)\|_{l^2} \leq \epsilon$, the solution $r(t)$ to equation \eqref{flow} converges to the regular circle packing metric
	\[
	r_{\text{reg}} \equiv 1.
	\]
	Moreover, the logarithmic variable $u(t) = \ln r(t)$ satisfies $u \in C^1([0, \infty), l^2(V))$,
	\[
	\|u(t)\|_{l^\infty} \to 0 \quad \text{as } t \to \infty,
	\]
	and
	\[
	\mathcal{E}(u(t)) := \sum_{[i,j] \in E} |u(i,t) - u(j,t)|^2 \leq C(1 + t)^{-1},
	\]
	where $C = C(\epsilon)$.
\end{theorem}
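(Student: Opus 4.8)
The plan is to treat the flow $\dot u = -K$ (in the logarithmic variable $u = \ln r$, Euclidean background, so $s(r_i)=r_i$ gives $\dot u_i = -K_i$) as a nonlinear perturbation of its linearization at the regular packing $u \equiv 0$. First I would record that $u\equiv 0$ (i.e.\ $r_{\text{reg}}\equiv 1$) is a stationary solution: for the lattice of Figure~\ref{fig:1} with all $\Theta_e = \pi/2$ the regular pattern has zero curvature, so $K(0)=0$. Next I would compute the Jacobian $L := DK(0)$, which by the standard circle-pattern formulas is a weighted graph Laplacian $(Lu)_i = \sum_{j\sim i} w_{ij}(u_i - u_j)$ with strictly positive, bounded, and bounded-below weights $w_{ij}$ depending only on $\Theta$ (here all equal by symmetry). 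Because $G$ is a uniformly locally finite lattice (bounded degree) and the weights are comparable to the combinatorial Laplacian, $L$ is a bounded, self-adjoint, nonnegative operator on $\ell^2(V)$; crucially, for $\mathbb{Z}^2$-type lattices the combinatorial Laplacian has $0$ in its spectrum but satisfies no spectral gap, only the Nash–Sobolev / on-diagonal heat-kernel decay $\|e^{-tL}\|_{\ell^1\to\ell^\infty}\lesssim (1+t)^{-1}$, equivalently $\|e^{-tL}f\|_{\ell^2}^2$ and $\mathcal E(e^{-tL}f)$ decay polynomially. This $(1+t)^{-1}$ heat-kernel bound on $\mathbb{Z}^2$ is exactly the source of the $(1+t)^{-1}$ rate in the statement.

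The core estimates I would carry out: (i) write $K(u) = Lu + Q(u)$ where the remainder $Q$ is smooth and satisfies $\|Q(u)\|_{\ell^2} \le C\|u\|_{\ell^\infty}\|u\|_{\ell^2}$ and $\|Q(u)\|_{\ell^\infty}\le C\|u\|_{\ell^\infty}^2$ on a neighborhood $\{\|u\|_{\ell^\infty}\le\rho_0\}$ — this uses uniform bounds on the circle-pattern functions and their derivatives away from degenerate configurations, valid because the degrees are bounded by $\hat d$ and $\Theta_e\equiv\pi/2$ is bounded away from $0,\pi$; (ii) Duhamel: $u(t) = e^{-tL}u(0) - \int_0^t e^{-(t-s)L}Q(u(s))\,ds$; (iii) run a continuity/bootstrap argument on the functional quantities $\|u(t)\|_{\ell^\infty}$, $\|u(t)\|_{\ell^2}$, and $\mathcal E(u(t))$, using the $\ell^2\to\ell^2$, $\ell^2\to\ell^\infty$ and $\mathcal E$-decay bounds for the semigroup $e^{-tL}$ together with the quadratic nature of $Q$ to close the estimates provided $\|u(0)\|_{\ell^2}\le\epsilon$ is small enough. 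The smallness of $\epsilon$ guarantees the solution stays in the neighborhood where the perturbative bounds hold (so long-time existence from Theorem~\ref{existence} applies and the solution never leaves the good region), gives $\|u(t)\|_{\ell^\infty}\to 0$, and yields $\mathcal E(u(t))\le C(\epsilon)(1+t)^{-1}$; the $C^1([0,\infty),\ell^2(V))$ regularity of $u$ then follows from the Duhamel formula and boundedness of $L$ on $\ell^2$, since $\dot u = -Lu - Q(u) \in \ell^2$ continuously in $t$.

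I expect the main obstacle to be step (iii) — closing the bootstrap with the correct decay rates. Because $L$ has no spectral gap on the $\mathbb{Z}^2$ lattice, one cannot get exponential decay; instead one must exploit the interplay between the $(1+t)^{-1}$-type on-diagonal decay of $e^{-tL}$ and the \emph{quadratic} (rather than merely linear) smallness of the nonlinearity $Q(u)$, and verify that the Duhamel integral $\int_0^t (1+t-s)^{-1}\cdot(\text{quadratic in the decaying norms of }u(s))\,ds$ does not lose the rate. Getting the exponents to match — in particular showing $\mathcal E(u(t))=O((1+t)^{-1})$ rather than a worse power, and handling the logarithmic borderline behavior typical of $\mathbb{Z}^2$ — will require care in choosing the right combination of norms to bootstrap (likely weighted time-norms $\sup_t (1+t)^{a}\|u(t)\|_{\ell^2}$ etc.\ for suitable $a$) and a precise statement of the heat-kernel decay for the weighted Laplacian $L$, which I would derive from the Nash inequality on the lattice (stable under the bi-Lipschitz change of weights). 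A secondary technical point is establishing the remainder bounds in (i)–(ii) with constants uniform over the infinite complex, which hinges on the uniform geometry assumptions ($\Theta_e\equiv\pi/2$, bounded degree) built into the hypothesis.
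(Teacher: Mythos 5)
Your starting point coincides with the paper's: for this lattice with $\Theta_e\equiv\pi/2$ the flow \eqref{flow2} becomes a semilinear heat equation $\ddt{u_i}=\Delta u_i+\tilde F_i(u)$ with $\tilde F_i(u)=\sum_{j\sim i}F(u_j-u_i)$, $F(x)=2\arctan(e^x)-x-\tfrac{\pi}{2}$, and $F$ quadratically small near $0$ (Lemma~\ref{analysis_lemma}). But the core of your plan has concrete gaps. First, your remainder bound $\|Q(u)\|_{l^2}\le C\|u\|_{l^\infty}\|u\|_{l^2}$ discards the essential structure: the nonlinearity depends only on the edge differences $u_j-u_i$, hence is controlled by the Dirichlet energy, $\|\tilde F(u)\|_{l^2}\le C\,N_{1,2}^2(u)$ (this is \eqref{Fhes} in the paper). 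Neither $\|u(t)\|_{l^2}$ nor $\|u(t)\|_{l^\infty}$ carries any quantitative decay rate in this problem (the theorem claims only boundedness of the former and unquantified convergence of the latter), so a Duhamel bootstrap fed with $\|u\|_{l^\infty}\|u\|_{l^2}$ has no decaying quantity to iterate and cannot produce the $(1+t)^{-1}$ rate for $\mathcal{E}$. Second, the Nash/on-diagonal bound of the type $\|e^{t\Delta}\|_{l^1\to l^\infty}\lesssim(1+t)^{-1}$ is not usable here: the data is only in $l^2$, and on the square lattice there is no uniform rate of decay of $\|e^{t\Delta}\phi\|_{l^2}$ over $l^2$ data. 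The only linear decay available (and the only one needed) is the smoothing of the Dirichlet seminorm, $N_{1,2}^2(e^{t\Delta}\phi)\lesssim(1+t)^{-1}\|\phi\|_{l^2}^2$, which follows already from the elementary energy identities $\ddt{N_{0,2}^2}=-N_{1,2}^2$ and $\ddt{N_{1,2}^2}=-N_{2,2}^2$, with no heat-kernel input.

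Third, the borderline you flag is genuine, and the norms you propose do not resolve it: with $\|\tilde F(u(s))\|_{l^2}\lesssim N_{1,2}^2(u(s))\lesssim(1+s)^{-1}$ the Duhamel integral produces $\log(1+t)$, while weighted sup-norms $\sup_t(1+t)^{a}\|u(t)\|_{l^2}$ are false for every $a>0$. The paper closes exactly this point by a contraction-mapping (not bootstrap) argument in the space $\mathcal{N}_{T,\delta}$ built from the norm $\|u\|_{T}=\sup_{t\le T}N_{0,2}(u)+\sup_{t\le T}(1+t)^{1/2}N_{1,2}(u)+\bigl(\int_0^T N_{1,2}^2(u)\,d\tau\bigr)^{1/2}+\bigl(\int_0^T(1+\tau)N_{2,2}^2(u)\,d\tau\bigr)^{1/2}$, i.e.\ by including the time-integrated first- and second-order energies; then $\int_0^T\|\tilde F(h)\|_{l^2}\,d\tau\le C\int_0^T N_{1,2}^2(h)\,d\tau\le C\|h\|_T^2$ with no logarithm, the map $h\mapsto S(h)$ (solving the linear inhomogeneous problem, well defined by the maximum principle, Corollary~\ref{cor0}) is a contraction for small $\delta,\epsilon$, uniqueness of the nonlinear flow comes from Theorem~\ref{unique_for_uni_degree}, the fixed point satisfies $\sup_t(1+t)^{1/2}N_{1,2}(u)\le\delta$, hence $\mathcal{E}(u(t))\le C(1+t)^{-1}$, and $\|u(t)\|_{l^\infty}\to0$ follows from the elementary Lemma~\ref{final_lemma_for} (bounded $l^2$ norm plus vanishing Dirichlet energy). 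If you replace your remainder estimate by the difference-based one and enlarge your iteration space to contain these square-integrated-in-time energies (equivalently, run the paper's energy estimates inside your Duhamel scheme), your outline can be completed; as written, step (iii) does not close.
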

\begin{remark}
    The infinite ideal circle pattern we obtained in the above theorem is the regular $SG$ pattern, which was systematically studied by Schramm \cite{schramm}.
\end{remark}

In the following, we want to find some local combinatorial conditions to guarantee the existence of an initial metric with non-positive curvatures, yielding a good ideal circle pattern.
To address this problem, following the work of the first author and Lin \cite{GL24} and Li-Lin-Shen \cite{Li-Lin-Shen}, we consider the \emph{character} $\mathcal{L}_i(\mathcal{D}, \Theta)$ for an infinite decomposition at each vertex \(v_i \in V\)
\[
\mathcal{L}_i(\mathcal{D}, \Theta) = \sum_{j \sim i} \Theta_{ij},
\]
where the sum runs over all vertices \(v_j\) adjacent to \(v_i\). In this paper, we further introduce the \emph{average normalized character} $\bar{\mathcal{L}}_i(\mathcal{D}, \Theta)$ 
\[
\bar{\mathcal{L}}_i(\mathcal{D}, \Theta) = \frac{\mathcal{L}_i(\mathcal{D}, \Theta) - 2\pi}{d_i}.
\]

\begin{theorem}\label{ch_hyper_exist}
	In hyperbolic background geometry, if $\Theta \in (0, \pi)^E$ satisfies (C1), and there exists a constant $\hat{c} > 0$ such that for all $v_i \in V$,
	\[
	\bar{\mathcal{L}}_i(\mathcal{D}, \Theta) \geq \hat{c},
	\]
	then there exists a good ideal circle pattern.
\end{theorem}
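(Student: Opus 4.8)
The plan is to deduce Theorem~\ref{ch_hyper_exist} from the convergence result. Since $\Theta\in(0,\pi)^{E}$ satisfies (C1) by hypothesis, Theorem~\ref{convergence1} tells us that it is enough to exhibit a single circle packing metric $r^{*}\in(0,+\infty)^{V}$ with non-positive discrete Gaussian curvature, i.e. $K_i(r^{*},\Theta)\le 0$ for every $v_i\in V$: indeed, the flow \eqref{flow} started from $r^{*}$ exists for all time by Theorem~\ref{existence} and, by Theorem~\ref{convergence1}, converges to a good ideal circle pattern, which is exactly the assertion. Thus the whole task is the ``static'' one of turning the combinatorial character bound $\bar{\mathcal L}_i(\mathcal D,\Theta)\ge\hat c$ into the existence of such an initial metric.

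To build $r^{*}$, I would, following \cite{GL24, Li-Lin-Shen}, search among the constant metrics $r\equiv\lambda$. Recall that in hyperbolic background geometry the discrete Gaussian curvature has the familiar shape $K_i(r,\Theta)=2\pi-\sum_{f\ni i}\theta_i^{f}(r,\Theta)$, where $\theta_i^{f}$ is an inner angle that is a smooth function of the radii on the combinatorial star of $f$ and of the weights on $\partial f$ (the hyperbolic analogue of \eqref{K_euc_curvature_result}). The two structural facts I would use are: (i) as $\lambda\to 0^{+}$ each $\theta_i^{f}(\lambda,\dots,\lambda)$ converges to the scale-invariant Euclidean model angle $\theta_i^{f,0}=\theta_i^{f,0}(\Theta|_{\partial f})$, so $K_i(\lambda)\to K_i^{0}:=2\pi-\sum_{f\ni i}\theta_i^{f,0}$; and (ii) this convergence is uniform in $i$ once one controls the model angles — which is where (C1), pinning the weights on each face together, enters to keep the model polygons non-degenerate. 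Granting a uniform bound $K_i^{0}\le -2c'$ with $c'=c'(\hat c)>0$, one picks $\lambda=\lambda(\hat c)>0$ small enough that $|K_i(\lambda)-K_i^{0}|\le c'$ for all $i$, whence $r^{*}\equiv\lambda$ has $K_i(r^{*})\le -c'<0$ at every vertex, and Theorem~\ref{convergence1} applies.

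The crux is the combinatorial inequality $\bar{\mathcal L}_i\ge\hat c\ \Longrightarrow\ \sum_{f\ni i}\theta_i^{f,0}\ge 2\pi+2c'd_i$, with a constant uniform in $i$. This is not a formal rearrangement: every edge $[i,j]$ lies on exactly two faces at $v_i$, so $\sum_{f\ni i}\bigl(\Theta_{e}+\Theta_{e'}\bigr)=2\mathcal L_i$, where $e,e'$ are the two edges of $f$ meeting $v_i$; but $\theta_i^{f,0}$ is a nonlinear function of the weights on $\partial f$ which (for a triangular face) is at most $\tfrac12(\Theta_{e}+\Theta_{e'})$, with equality only in the symmetric case, so passing from the additive quantity $\mathcal L_i$ to $\sum_{f\ni i}\theta_i^{f,0}$ requires a sharp lower bound for $\theta_i^{f,0}$ in terms of the weights of $f$ at $v_i$, valid over the entire admissible range $\Theta\in(0,\pi)$, together with the observation that the surplus produced by $\hat c>0$ outweighs the accumulated angle deficit even at vertices of arbitrarily large degree. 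This uniformity in $d_i$ — which does not arise in the finite theory of \cite{GHZ21} and is the reason for normalising the character to $\bar{\mathcal L}_i$ — is the heart of the proof; once it is in hand, Theorems~\ref{existence} and \ref{convergence1} close the argument. (Alternatively, one may exhaust $\mathcal D$ by finite subcomplexes, apply the finite existence criterion on each, and extract a subsequential limit of the resulting good circle patterns; this again succeeds only on the strength of the same uniform estimate, now needed for compactness.)
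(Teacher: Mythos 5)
Your high-level plan — exhibit a constant initial metric $r\equiv\lambda$ with $K_i(\lambda\vec 1)\le 0$ everywhere, then invoke Theorems~\ref{existence} and~\ref{convergence1} — is exactly the paper's. But the step you single out as ``the crux'' rests on a misreading of the angle model. In Thurston's construction, a face $f$ contributes to the cone angle at $v_i$ the amount $\theta_{ij}+\theta_{ik}$, where $j,k$ are the two neighbors of $i$ along $\partial f$, and $\theta_{ij}$ depends only on $r_i$, $r_j$ and $\Theta_{ij}$ (not on the rest of $\partial f$). By Lemma~\ref{character_theta_dec}, $\theta_{ij}(t,t)=\Theta_{ij}/2$ identically in the Euclidean background, and $\theta_{ij}(t,t)\to\Theta_{ij}/2$ as $t\to 0^+$ in the hyperbolic background, with no symmetry hypothesis on the weights. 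Hence your ``Euclidean model angle'' is $\theta_i^{f,0}=\tfrac12(\Theta_e+\Theta_{e'})$ \emph{exactly}, not an upper bound with equality only in the symmetric case; consequently $K_i^0=2\pi-\sum_{j\sim i}\Theta_{ij}=2\pi-\mathcal L_i$, and the implication $\bar{\mathcal L}_i\ge\hat c\Rightarrow K_i^0\le-\hat c\,d_i$ is precisely the formal rearrangement you insist it is not. There is no sharp nonlinear comparison inequality to establish, and (C1) plays no role in bounding $\theta_i^{f,0}$ — its role is only to make the cone metric smooth at the star vertex.

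What actually requires an argument, and what you gesture at but do not supply, is that the convergence $\theta_{ij}(t,t)\to\Theta_{ij}/2$ is uniform over $\Theta_{ij}\in(0,\pi)$, so that a single threshold $\lambda=\lambda(\hat c)$ works at every vertex simultaneously. This follows from the explicit formula $\sin\theta_{ij}(t,t)=\sin(\Theta_{ij}/2)\big/\sqrt{1+\cos^2(\Theta_{ij}/2)\sinh^2 t}$ obtained in the proof of Lemma~\ref{character_theta_dec}, which yields $\Theta_{ij}/2-\theta_{ij}(t,t)\le C\sinh^2 t$ for a universal constant $C$; then $K_i(\lambda\vec 1)\le \hat c\,d_i-(\mathcal L_i-2\pi)\le 0$ once $\sinh^2\lambda\le\hat c/C$. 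As written, your proposal stops short of a proof: the ``crux'' it leaves open is not the actual obstacle, and the claim it makes about $\theta_i^{f,0}$ is false.
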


In the proof of Theorem~\ref{ch_hyper_exist}, we construct an initial metric $r(0)$ such that $K_i(r(0), \Theta) \leq 0$. By the combinatorial Ricci flow we find a good ideal circle pattern, however, the convergence rate is generally unknown.
 By imposing some natural conditions, one can ensure the exponential convergence of the flow, yielding an effective algorithm for the construction of such patterns.

\begin{theorem}\label{ch_hyper}
	In hyperbolic background geometry, let $\delta > 0$ be arbitrarily small. If $\Theta \in (\delta, \pi - \delta)^E$ satisfies (C1), and there exist positive constants $\hat{c}, \hat{d}$ such that for all $v_i \in V$,
	\begin{itemize}
		\item[(1)] $\bar{\mathcal{L}}_i(\mathcal{D}, \Theta) \geq \hat{c}$,
		\item[(2)] $d_i \leq \hat{d}$,
	\end{itemize}
	then for any two constants $\hat{r}_1, \hat{r}_2 > 0$, if the initial data $r(0)$ satisfies $\hat{r}_1 \leq r_i(0) \leq \hat{r}_2$ for all $v_i \in V$, then the solution $r(t)$ to equation \eqref{flow} is unique and converges exponentially to a good ideal circle pattern.
\end{theorem}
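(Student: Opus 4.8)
The statement has two parts. \emph{Uniqueness} is immediate from Theorem~\ref{unique_for_uni_degree}: its hypotheses---$\Theta\in(\delta,\pi-\delta)^E$, condition (C1), and a uniform degree bound $d_i\le\hat d$---are all assumed here, so \eqref{flow} has a unique solution with the given initial data $r(0)$. It remains to prove exponential convergence, and the plan is to compare the flow with a stationary good ideal circle pattern in the logarithmic coordinate $u_i=\ln\tanh(r_i/2)$, in which the hyperbolic flow \eqref{flow} becomes simply $\dot u_i=-K_i$. All constants below are allowed to depend on $\delta,\hat c,\hat d,\hat r_1,\hat r_2$.

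\emph{Step 1: a uniformly bounded good ideal circle pattern.} By Theorem~\ref{ch_hyper_exist}, condition (1) produces a good ideal circle pattern $\bar r$, i.e. $K_i(\bar r)=0$ for all $i$. The first task is to upgrade this to uniform two-sided bounds $0<a\le\bar r_i\le b<\infty$. This should follow from a ``curvature controls radii'' estimate: vanishing curvature everywhere, together with the degree bound (2) and the character lower bound (1), prevents a radius from escaping to $0$ or $\infty$---quantitatively, if some $\bar r_i$ were extreme relative to its neighbours the explicit angle functions behind \eqref{K_euc_curvature_result} would force $K_i(\bar r)\ne0$. By Theorem~\ref{unique_for_uni_degree}, this uniformly bounded good pattern is moreover the only one in the natural class, hence the only possible limit of $r(t)$.

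\emph{Step 2: a uniform spectral gap.} With $u_i=\ln\tanh(r_i/2)$, recall the monotonicity of the curvature of an ideal circle pattern, $\partial K_i/\partial u_i>0$ and $\partial K_i/\partial u_j<0$ for $j\sim i$. The quantity governing convergence in the hyperbolic (as opposed to Euclidean) background is the row sum
\[
q_i(u):=\sum_{k\in\{i\}\cup\{j:\,j\sim i\}}\frac{\partial K_i}{\partial u_k}(u)=\frac{\mathrm d}{\mathrm ds}\Big|_{s=0}K_i(u+s\mathbf 1),
\]
the derivative of $K_i$ under a simultaneous rescaling of all radii, which is strictly positive in hyperbolic geometry. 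The key lemma is that (1) and (2) force $q_i(u)\ge\lambda>0$ uniformly in $i$ and in $u$ with radii in the bounded range actually visited by the flow (controlled by $a,b,\hat r_1,\hat r_2$). Converting the combinatorial inequality $\bar{\mathcal L}_i(\mathcal D,\Theta)\ge\hat c$ into this analytic lower bound is the technical core of the argument.

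\emph{Step 3: comparison and exponential decay.} Put $\bar u_i=\ln\tanh(\bar r_i/2)$, $\varphi_i(t)=u_i(t)-\bar u_i$, and $S(t)=\sup_i\varphi_i(t)$. At a vertex $v_i$ where $\varphi$ (nearly) attains $S(t)$ one has $u_i(t)=\bar u_i+S(t)$ and $u_j(t)\le\bar u_j+S(t)$ for $j\sim i$; since $K_i$ is non-increasing in each $u_j$, monotonicity gives
\[
K_i(u(t))\ \ge\ K_i\big(\bar u+S(t)\mathbf 1\big)\ =\ K_i(\bar u)+\int_0^{S(t)}q_i(\bar u+\sigma\mathbf 1)\,\mathrm d\sigma\ \ge\ \lambda\,S(t)
\]
whenever $S(t)\ge0$, using $K_i(\bar u)=0$ and Step 2 (reading $K_i$ as its monotone limit where the shifted metric leaves the domain). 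Hence $\dot u_i(t)=-K_i(u(t))\le-\lambda S(t)$, so $\tfrac{\mathrm d^+}{\mathrm dt}S(t)\le-\lambda S(t)$, and a symmetric argument at the infimum gives the matching lower bound. Because on an infinite graph these extrema need not be attained, the argument is run on an exhaustion of $\mathcal D$ together with a barrier, as in the proof of Theorem~\ref{existence} following Shi; the outcome is $\sup_i|u_i(t)-\bar u_i|\le e^{-\lambda t}\sup_i|u_i(0)-\bar u_i|$, with finite right-hand side by Step 1 and $\hat r_1\le r_i(0)\le\hat r_2$. Translating back, $r(t)\to\bar r$ uniformly and exponentially, and $\bar r$ is a good ideal circle pattern. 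The two genuinely delicate points are Steps 1 and 2---the uniform bounds on the limiting pattern and the uniform spectral gap $q_i\ge\lambda>0$ extracted from the character hypothesis---since both must replace the compactness and variational tools available in the finite case by quantitative estimates on the angle functions, and this, together with the recurring need to run the maximum principle on an exhaustion-plus-barrier, is where the main effort lies.
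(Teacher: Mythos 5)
Your decomposition is genuinely different from the paper's, although the key mechanism (a uniform ``hyperbolic spectral gap'') is the same. The paper does \emph{not} construct a good ideal circle pattern $\bar r$ in advance. Instead it first shows (Lemmas~\ref{hyper_lower_bound} and~\ref{hyper_upper_bound}) that the finite-approximation flows, hence the limit flow $u(t)$, stay in a fixed box $C_1\le u_i(t)\le C_2$ depending only on $\hat c,\hat d,\hat u_1,\hat u_2$. Because $\partial K_i/\partial u_j$ and $\partial K_i/\partial u_i$ are smooth and the flow is confined to a compact region of parameter space, one gets uniform bounds $\omega_{ij}\le C_5$ and---crucially---a uniform negative bound $g_i=-\bigl(\partial K_i/\partial u_i+\sum_{j\sim i}\partial K_i/\partial u_j\bigr)<-\gamma<0$ (your $q_i\ge\lambda$ is exactly $-g_i\ge\gamma$). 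The paper then writes $\dot K=\Delta_\omega K+gK$, sets $\tilde K=e^{\gamma t}K$, and applies the maximum-principle corollary to conclude $|K_i(t)|\le Ce^{-\gamma t}$; integrability of $K$ along the flow then gives convergence of $u(t)$, and the exponential rate comes for free. So the paper runs the maximum principle on the curvature evolution, whereas you run it on the difference $u-\bar u$ against a pre-constructed stationary solution. Your route buys a more ``geometric'' statement (direct $C^0$ decay of $u-\bar u$), but at the cost of having to build and bound $\bar u$ first.

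That cost is where the gaps lie. Step~1 asserts uniform bounds $0<a\le\bar r_i\le b$ on the good pattern without proof; this can be salvaged precisely by Lemmas~\ref{hyper_lower_bound}/\ref{hyper_upper_bound} applied to the flow that produces $\bar r$, but as written it is a claim, not an argument. The parenthetical appeal to Theorem~\ref{unique_for_uni_degree} for uniqueness of the good pattern is incorrect: that theorem gives uniqueness of solutions to the ODE, not rigidity of ideal circle patterns (which the paper explicitly defers to the forthcoming work \cite{GYZ}); fortunately your Step~3, if completed, makes this claim unnecessary. Step~2 is labelled ``the technical core'' but not carried out; again, the paper handles it by first confining the flow to a compact box and then invoking smoothness and strict sign of $g_i$ there, rather than by converting $\bar{\mathcal L}_i\ge\hat c$ directly into an analytic estimate---the character hypothesis enters only through the a priori $u$-bounds. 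Finally, in Step~3 the segment $\bar u+\sigma\mathbf 1$, $\sigma\in[0,S(t)]$, may exit the admissible region $(-\infty,0)^V$ at vertices $j$ that are not near the supremum; ``reading $K_i$ as its monotone limit'' does not by itself guarantee the spectral-gap lower bound $q_i(\bar u+\sigma\mathbf 1)\ge\lambda$ along the whole segment. The paper's curvature-evolution route avoids this domain issue entirely, since it never shifts a metric outside the region actually visited by the flow.
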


In the Euclidean case, the character function also yields structural information.

\begin{theorem}\label{ch_euclid}
	In Euclidean background geometry, let $\delta > 0$ be arbitrarily small. If $\Theta \in (\delta, \pi - \delta)^E$ satisfies (C1), then there exists a good ideal circle pattern if
	\begin{itemize}
		\item[(1)] $\bar{\mathcal{L}}_i(\mathcal{D}, \Theta) \geq 0$ for all $v_i \in V$;
		\item[(2)] there exists a $\mathcal{D}$-type circle packing metric $\tilde{r}$ with non-negative discrete Gaussian curvature.
	\end{itemize}
	Moreover, for any initial data $r(0)$ such that
	\begin{itemize}
		\item[(i)] $r_i(0) \geq \hat{r}_1 > 0$ for all $v_i \in V$;
		\item[(ii)] $K_i(r(0)) \geq 0$ for all $v_i \in V$,
	\end{itemize}
	there exists a solution $r(t)$ to equation \eqref{flow} that converges to a good ideal circle pattern.
\end{theorem}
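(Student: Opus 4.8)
The plan is to establish the ``Moreover'' clause first and then deduce the existence statement from it. Three ingredients from earlier in the paper will be used throughout: long-time existence of \eqref{flow} (Theorem~\ref{existence}); the fact that, in the logarithmic variable $u=\ln r$, the discrete curvature evolves along \eqref{flow} by a graph heat equation $\frac{\mathrm{d}}{\mathrm{d}t}K_i=\sum_{j\sim i}a_{ij}(t)(K_j-K_i)$ with weights $a_{ij}=-\partial_{u_j}K_i\ge 0$ --- this follows from the sign pattern $\partial_{u_i}K_i\ge 0$, $\partial_{u_j}K_i\le 0$ for $j\sim i$, together with the scale-invariance identity $\sum_j\partial_{u_j}K_i=0$, and it yields a comparison principle for solutions of \eqref{flow} --- and, again using scale-invariance of $K_i$ in the Euclidean background, a direct computation at a constant metric giving $K_i(\lambda\mathbf{1})=2\pi-\mathcal{L}_i(\mathcal{D},\Theta)$, so that condition (1), i.e.\ $\bar{\mathcal{L}}_i\ge 0$, says precisely that every constant metric is non-positively curved.

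\emph{Proof of the ``Moreover'' clause.} Let $r(0)$ satisfy (i) $r_i(0)\ge\hat r_1>0$ and (ii) $K_i(r(0))\ge 0$, and (Theorem~\ref{existence}) let $r(t)$ solve \eqref{flow} on $[0,\infty)$ with this datum. Step 1: the region $\{K\ge 0\}$ is invariant. Put $T^{\ast}=\sup\{T: K_i(r(t))\ge 0\ \text{for all }i,\ t\le T\}$. On $[0,T]$ for $T<T^{\ast}$ we have $\dot r_i=-K_ir_i\le 0$ and, using the structural bound $K_i<2\pi$, also $\dot r_i\ge -2\pi r_i$, so $\hat r_1e^{-2\pi T}\le r_i(t)\le r_i(0)$; with these bounds and $0\le K_i<2\pi$, a maximum principle for bounded solutions of the above heat equation --- run on the infinite graph in the spirit of the proof of Theorem~\ref{convergence1} --- gives $K\ge 0$ up to and beyond $T^{\ast}$, hence $T^{\ast}=\infty$. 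Step 2: the radii stay bounded below. Now each $r_i(t)$ is non-increasing, so $r_i(t)\downarrow r_i^{\ast}\in[0,r_i(0)]$. Let $\rho(t)$ solve \eqref{flow} with $\rho(0)\equiv\hat r_1$; by condition (1) this datum has $K\le 0$, preserved by the mirror of Step 1, so $\rho$ is non-decreasing with $\rho_i(t)\ge\hat r_1$; since \eqref{flow} is order-preserving and $\rho(0)\le r(0)$, we get $\rho(t)\le r(t)$, hence $r_i(t)\ge\hat r_1$ and $r_i^{\ast}\ge\hat r_1>0$. Step 3: the limit is good. From $\int_0^{\infty}K_ir_i\,\mathrm{d}t=r_i(0)-r_i^{\ast}<\infty$ and $r_i\ge\hat r_1$ we get $\int_0^{\infty}K_i(r(t))\,\mathrm{d}t<\infty$; since $K_i(r(\cdot))\ge 0$ has bounded time-derivative (the radii of $v_i$ and its neighbours lie in a fixed compact box), it is uniformly continuous, so $K_i(r(t))\to 0$, i.e.\ $K_i(r^{\ast})=0$. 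Thus $r(t)\to r^{\ast}$, a good ideal circle pattern.

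\emph{Deducing existence.} Let $\tilde r$ be the metric furnished by condition (2), with $K(\tilde r)\ge 0$. If $\inf_i\tilde r_i>0$, rescale (scale-invariance of $K$) so that $\tilde r_i\ge 1$ for all $i$; then $r(0):=\tilde r$ satisfies (i) with $\hat r_1=1$ and (ii), and the ``Moreover'' clause produces a solution of \eqref{flow} converging to a good ideal circle pattern. (Equivalently, the flow from $\mathbf 1$ has non-decreasing radii by condition (1); comparison with the flow from the rescaled $\tilde r\ge\mathbf 1$ keeps its radii $\le\tilde r$, and being bounded below by $1$ it converges to a good pattern.)

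\emph{Main obstacle.} Two points require genuine work. The analytic heart is the invariance of $\{K\ge 0\}$ in Step 1: the maximum principle must be carried out on an infinite graph carrying no assumed degree bound, so the geometry of the time-dependent weights $a_{ij}(t)$ is controlled only through $K_i<2\pi$ and the crude radii bounds --- this is exactly the place where the Shi-type a priori estimates behind Theorem~\ref{existence} must be sharpened. The combinatorial heart is the case $\inf_i\tilde r_i=0$, where the rescaling above breaks down: one then passes to an exhaustion $\Omega_1\subset\Omega_2\subset\cdots$ of $\mathcal{D}$, solves $K\equiv 0$ in $\mathrm{int}\,\Omega_n$ with boundary radii frozen at $1$ (so condition (1), via the finite comparison principle of \cite{GHZ21}, gives the uniform lower bound $r^{(n)}\ge 1$), extracts a subsequential limit by a normal-families argument, and --- the delicate step --- invokes condition (2) \emph{globally}, rather than as a pointwise barrier, to prevent this limit from escaping to $+\infty$; this is in the spirit of He--Schramm \cite{He2}, and is where I expect the main difficulty to lie.
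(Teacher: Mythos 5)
Your overall strategy---invariance of $\{K\ge 0\}$, a lower bound on the radii, and then $K(\infty)=0$ from the monotone convergence---matches the paper's, but the specific tools you invoke to realize the middle step contain a genuine gap. In Step~2 you appeal to order-preservation of the infinite flow (``since \eqref{flow} is order-preserving and $\rho(0)\le r(0)$, we get $\rho(t)\le r(t)$''), which you use as a black box but do not establish. On a finite graph the Kamke sign condition $\partial_{u_j}(-K_i)\ge 0$ for $j\ne i$ does give order-preservation, but on an infinite graph this requires a version of Lemma~\ref{maximumprin} applied to $h=u^\rho-u^r$, whose hypotheses in turn demand a uniform bound on $\sum_j\omega_{ij}$ with $\omega_{ij}=\int_0^1-\partial_{u_j}K_i(u^{[s]})\,ds$ \emph{along the interpolating path} $u^{[s]}$---and for two flows that start far apart, the intermediate configurations $u^{[s]}$ need not carry metrics with controlled curvature, so the argument of Theorem~\ref{uniq} (which crucially uses that the two solutions stay within $\epsilon$ of each other over a short interval) does not transfer. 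The paper avoids this entirely: Lemma~\ref{euc_lower_bound} obtains the lower bound $u_i^{[n]}(t)\ge\hat u_1$ directly, on the finite approximations $u^{[n]}$, by tracking $g(t)=\min_{i\in V^{[n]}}u_i^{[n]}(t)$; at a minimizing vertex Lemma~\ref{character_min_ineq} forces $\theta_{ij}(u^{[n]})\ge\theta_{ij}(u_i^{[n]}\vec 1)=\Theta_{ij}/2$, so the character hypothesis $\mathcal{L}_i\ge 2\pi$ gives $K_i\le 0$ there, and Lemma~\ref{ghlemma} (from \cite{GH20}) concludes $g$ cannot descend below its initial value. No comparison of two flows is needed, and the curvature hypothesis $K(r(0))\ge 0$ is not even used in that lemma.

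A second, smaller structural difference: the paper's Lemma~\ref{eucid_noninc} (your Step~1) runs the maximum principle on the \emph{finite} approximating flows $u^{[n]}$, where the weight-sum bound is automatic because $V^{[n]}$ is finite, and then passes to the limit; you instead propose to apply the maximum principle directly on the infinite graph and flag this as a difficulty. In fact this particular concern is addressable without degree bounds---by Lemma~\ref{key1}, $\sum_j|\partial_{u_j}K_i|\le\tfrac{2}{\sin\delta}\sum_j\theta_{ij}=\tfrac{2\pi-K_i}{\sin\delta}\le\tfrac{2\pi}{\sin\delta}$ using only $\Theta\in(\delta,\pi-\delta)^E$ and $K\ge 0$---but you do not make that observation, whereas the paper simply sidesteps the question. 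Finally, your worry about the case $\inf_i\tilde r_i=0$ in the existence part is legitimate: the paper's ``equivalent lemma'' silently assumes $\hat u_1\le u_i(0)$, so conditions~(1)--(2) alone do not reduce to it unless $\tilde r$ is bounded away from zero; your exhaustion/normal-families sketch points in the right direction for closing this, but neither you nor the paper carries it out.
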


In the forthcoming work \cite{GYZ}, we will consider
the rigidity and uniformization for infinite ideal circle patterns, where we generalize He \cite{He} and He-Schramm’s work \cite{He2} on infinite ideal circle patterns.
We say that a cellular decomposition $\mathcal{D}$ with intersection angles $\Theta \in (0,\pi)^E$ is \emph{ICP-parabolic} if there exists a locally finite, embedded ideal circle pattern of $\mathcal{D}$ in $\mathbb{R}^2$ with intersection angles $\Theta$. Similarly, $(\mathcal{D}, \Theta)$ is called \emph{ICP-hyperbolic} if there exists a locally finite, embedded ideal circle pattern in the unit disk with intersection angles $\Theta$. Motivated by the discussion in~\cite{Ge23}, we formulate the following conjecture.
\begin{conj}
\label{con-ge}
    Let $\mathcal{D}=(V,E,F)$ be a cellular decomposition of the sphere with dihedral angle $\Theta \in\left(0, \pi\right)^E$, then the following statements are equivalent.
    \begin{itemize}
        \item[(A)] There exists IIP (resp. HIIP) with dihedral angle $\Theta$.
        \item[(B)] The 1-skeleton of $\mathcal{D}$ is VEL-parabolic (resp. VEL-hyperbolic).
        \item[(C)] The 1-skeleton of $\mathcal{D}$ is recurrent (resp. transient).
        \item[(D)] There exists a locally finite, embedded ideal circle pattern of $\mathcal{D}$ in $\mathbb{R}^2$ (resp. unit disk) with intersection angle $\Theta$.
        \item[(E)] The corresponding Ricci flow for ideal circle patterns in Euclidean background geometry (resp. hyperbolic background geometry) converges.
    \end{itemize}
\end{conj}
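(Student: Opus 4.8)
We outline a strategy toward the conjecture. The plan is to prove it by a cycle of implications organized into three blocks: (i) the geometric--analytic equivalence $(A)\Leftrightarrow(D)\Leftrightarrow(E)$ linking ideal polyhedra, embedded ideal circle patterns, and convergence of the combinatorial Ricci flow; (ii) the probabilistic equivalence $(B)\Leftrightarrow(C)$ relating vertex extremal length to recurrence of the simple random walk on the $1$-skeleton $G$; and (iii) a ``type problem'' bridge $(C)\Leftrightarrow(D)$ that glues the two blocks together. For the quantitative steps one expects to need bounded-geometry (ring-lemma type) estimates, and hence, presumably, an angle-separation hypothesis $\Theta\in(\delta,\pi-\delta)^E$ as in Theorems~\ref{polyhedra_euc}, \ref{ch_hyper}, \ref{ch_euclid}; pinning down exactly which regularity is genuinely required is itself part of the problem.

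For block (i): the equivalence $(A)\Leftrightarrow(D)$ is the infinite version of Thurston's correspondence \cite{Th76} between ideal polyhedra in $\mathbb{H}^3$ and circle patterns on $\partial_\infty\mathbb{H}^3=\mathbb{S}^2$. Each face $f$ of an IIP spans a totally geodesic hyperbolic plane with boundary circle $C_f\subset\mathbb{S}^2$; the circles $C_f,C_{f'}$ of adjacent faces meet at the angle $\Theta_{[f,f']}$, and local finiteness and embeddedness transfer between the two pictures. The IIP versus HIIP (equivalently, $\mathbb{R}^2$ versus unit-disk) dichotomy corresponds to whether the closure of $\bigcup_f C_f$ is all of $\mathbb{S}^2$ — removing one ideal vertex then yields a locally finite embedded pattern in $\mathbb{R}^2$ — or is contained in the closed disk bounded by $\partial f_\infty$, in which case the complementary disk, normalized to the unit disk, carries the pattern. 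For $(D)\Leftrightarrow(E)$: if the flow~\eqref{flow} converges, the limit is a metric with vanishing discrete Gaussian curvature (a good ideal circle pattern), which by the rigidity and embedding statements to appear in \cite{GYZ} (in the spirit of He~\cite{He} and He--Schramm~\cite{He2}) is realized by a locally finite embedded pattern of the correct type; conversely, an embedded locally finite ideal circle pattern gives a zero-curvature $\mathcal{D}$-type metric, and then long-time existence (Theorem~\ref{existence}) together with maximum-principle and monotonicity arguments should force convergence — in the hyperbolic setting this is Theorem~\ref{convergence1} once one has an initial metric with $K(r(0))\le 0$, which under the character hypothesis is provided by Theorem~\ref{ch_hyper_exist}, and Theorem~\ref{ch_euclid} plays the analogous role in the Euclidean case.

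For block (ii) and the bridge: the equivalence $(B)\Leftrightarrow(C)$ — VEL-parabolicity of $G$ iff the simple random walk on $G$ is recurrent — is, under suitable combinatorial hypotheses, the He--Schramm criterion \cite{He2}. The genuinely new ingredient is the bridge $(C)\Leftrightarrow(D)$, a \emph{type problem for ideal circle patterns}: a recurrent $1$-skeleton should force every locally finite embedded ideal circle pattern to live in $\mathbb{R}^2$, while a transient one forces it into the unit disk. The argument would mirror He--Schramm's dichotomy for circle packings: compare the combinatorial vertex extremal length of $G$ with the conformal extremal length of the carrier of the pattern, using that the pattern distorts extremal length by a bounded factor; the bounded-distortion estimate is precisely where angle separation and local finiteness enter, through a ring lemma adapted to circle patterns (cf.\ \cite{schramm,schramm2}). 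Combining the three blocks then closes the cycles $(A)\Leftrightarrow(D)\Leftrightarrow(E)$ and $(A)\Leftrightarrow(C)\Leftrightarrow(B)$.

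I expect two hard points. First, unconditional convergence of the Ricci flow in $(E)$: the available convergence theorems (Theorems~\ref{convergence1}, \ref{convergence2}, \ref{ch_hyper}) require non-positive initial curvature, bounded degree, or a special lattice, so ``$(D)\Rightarrow(E)$'' in full generality needs either a robust convergence result with no sign assumption or a separate argument producing a good initial metric from the mere existence of a zero-curvature pattern. Since the combinatorial Ricci potential is infinite for infinite patterns, the standard Lyapunov functional is unavailable, and one must work with localized energies and an exhaustion/diagonal argument — this seems to be the principal obstacle. Second, the type problem $(C)\Leftrightarrow(D)$ without a degree bound: circle patterns with true intersection angles are far less rigid than tangency packings, so the extremal-length comparison rests delicately on angle separation, and the rigidity needed to make ``good ideal circle pattern $\Rightarrow$ embedded of the correct type'' precise is the content of the forthcoming \cite{GYZ}, which a self-contained proof would have to absorb.
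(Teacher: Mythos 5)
There is nothing in the paper to check your argument against: the statement you were asked to prove is explicitly labeled a \emph{conjecture}, and the paper offers no proof of it. The authors only remark that the equivalences among the VEL, recurrence/transience, ICP, and IIP/HIIP notions will be established in the forthcoming work \cite{GYZ}, and that \emph{together with} the results of the present paper this would resolve the conjecture. So your submission cannot be judged as ``the same approach as the paper'' or ``a different approach'' --- the paper has no approach to compare with, and what you have written is, by your own admission, a strategy outline rather than a proof.

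As a proof the proposal has genuine gaps, and they are exactly the ones the paper itself leaves open. First, the implication $(D)\Rightarrow(E)$ (and more generally unconditional convergence of the flow) is not available: the convergence results actually proved (Theorems~\ref{convergence1}, \ref{convergence2}, \ref{ch_hyper}, \ref{ch_euclid}) all require a signed initial curvature, a character bound, a degree bound, or the special square lattice, and the conjecture as stated carries no such hypotheses; your own caveat that one ``presumably'' needs $\Theta\in(\delta,\pi-\delta)^E$ already concedes that you are not proving the stated equivalence. Second, the bridge $(C)\Leftrightarrow(D)$ (the type problem for ideal circle patterns) and the rigidity/embeddedness step needed to pass from a good ideal circle pattern produced by the flow to a \emph{locally finite, embedded} pattern --- and hence to an IIP or HIIP --- are precisely the content deferred to \cite{GYZ}; the paper's remark after Theorem~\ref{infiniteICP} even warns that the patterns obtained here may have self-intersections, so $(E)\Rightarrow(D)$ and $(D)\Rightarrow(A)$ do not follow from anything proved in this paper. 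Citing a forthcoming, unavailable paper for the key lemmas means the cycle of implications is not closed. The outline is a reasonable roadmap and correctly identifies where the difficulties lie, but none of the hard implications is actually established.
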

In~\cite{GYZ}, we will establish the equivalence between the notions of VEL-parabolic (resp. VEL-hyperbolic), VEL-parabolic (resp. VEL-hyperbolic), recurrent (resp. transient), ICP-parabolic (resp. ICP-hyperbolic), and IIP (resp. HIIP). Together with the results of this paper, this provides a satisfactory solution to Conjecture \ref{con-ge}.

The organization of the paper is as follows. In Section \ref{Sec:2}, we introduce some basic concepts of ideal circle patterns. Moreover, we study maximum principles for parabolic equations on infinite graphs. In Section \ref{Sec:3} we will introduce the combinatorial Ricci flows for infinite ideal circle patterns, and establish the well-posedness and convergence results. Finally, in Section \ref{ideal_polyhe}, we apply the existence results of infinite ideal circle patterns to prove the existence of IIP and HIIP.

\section{Definitions and preliminaries}\label{Sec:2}

\subsection{Ideal circle pattern and vertex curvature}

Let \( S \) be a non-compact surface, and let \( \mathcal{D} = (V, E, F) \) be an infinite cellular decomposition of \( S \), where \( V \), \( E \), and \( F \) denote the sets of vertices, edges, and faces, respectively. Given a metric \( \mu \) on \( S \), we denote the surface as \( (S, \mu) \).

A \emph{circle pattern} \( \mathcal{P} \) on \( (S, \mu) \) is a collection of disks \( \{D_v\}_{v \in V} \). It is called a \emph{\( \mathcal{D} \)-type circle pattern} if there exists a geodesic cellular decomposition \( \mathcal{D}(\mu) \) such that:
\begin{enumerate}
	\item \( \mathcal{D}(\mu) \) is isotopic to \( \mathcal{D} \);
	\item the vertex set of \( \mathcal{D}(\mu) \) coincides with the centers of the disks in \( \mathcal{P} \).
\end{enumerate}
\begin{figure}[h]
	\centering
	\includegraphics[width=0.5\textwidth]{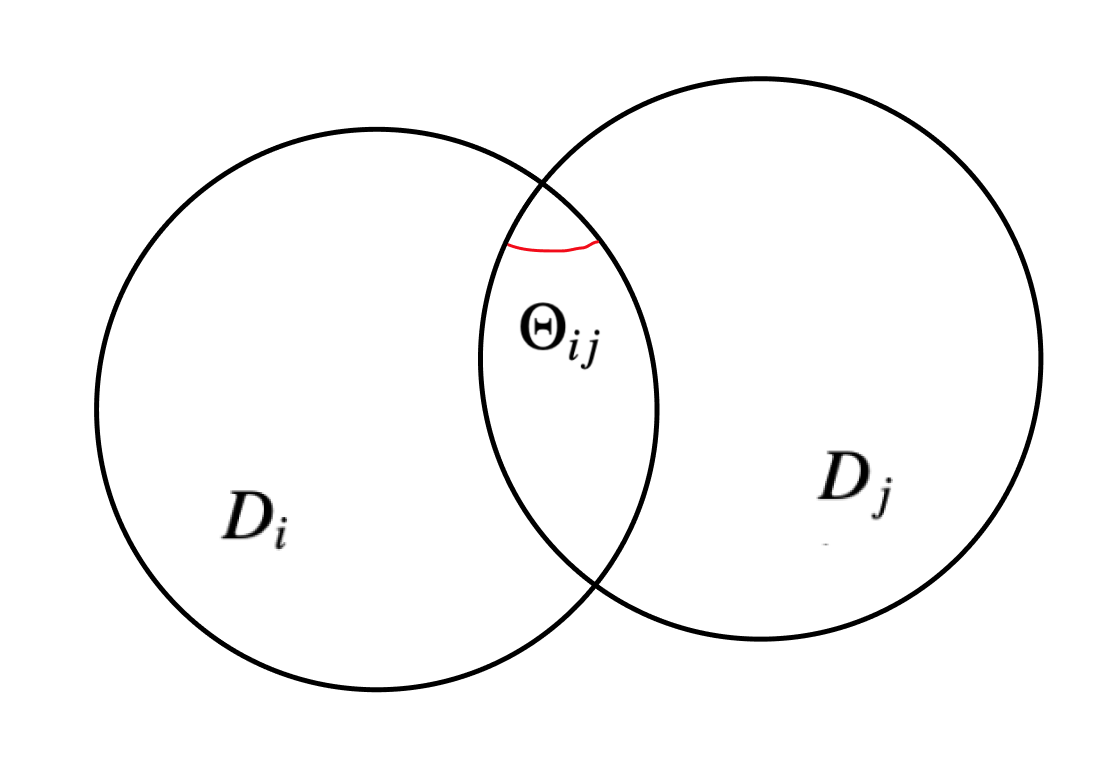}
	\caption{The intersection angle $\Theta_{ij}$ between disks $D_i$ and $D_j$}
	\label{fig:angle}
\end{figure}
The angle \( \Theta_{ij} \), as shown in Figure~\ref{fig:angle}, is called the \emph{intersection angle} between disks \( D_i \) and \( D_j \). A circle pattern \( \mathcal{P} \) is called a \emph{\( \mathcal{D} \)-effective circle pattern} if for every edge \( [v_i, v_j] \in E \), the corresponding disks \( D_i \) and \( D_j \) intersect. For such patterns, we define the \emph{intersection angle function}:
\[
\Theta: E \to [0, \pi), \quad \Theta([v_i, v_j]) = \Theta_{ij}.
\]
In this paper, all circle patterns are assumed to be \( \mathcal{D} \)-effective.

Let \( \mathbb{D}_v \) denote the interior of disk \( D_v \). An \emph{interstice} is a connected component of \( S \setminus \bigcup_{v \in V} \mathbb{D}_v \).

\begin{definition}
	A \( \mathcal{D} \)-type circle pattern \( \mathcal{P} \) is called an \emph{ideal circle pattern} if:
	\begin{itemize}
		\item[(i)] There is a one-to-one correspondence between the interstices and the faces in \( \mathcal{D}(\mu) \).
		\item[(ii)] Each interstice contains exactly one point.
	\end{itemize}
\end{definition}

For an ideal circle pattern, the following conditions hold:
\begin{itemize}
	\item[(C1)] If \( e_1 + \cdots + e_m \) bounds a face \( f \), then \( \sum_{l=1}^{m} \Theta(e_l) = (m - 2)\pi \).
	\item[(C2)] \( \Theta(e) \in (0, \pi) \) for all \( e \in E \).
\end{itemize}

We now describe Thurston’s construction for ideal circle patterns. For each face \( f \in F \), select a point \( v_f \in f \) and connect it to each vertex of \( f \) to form a triangulation \( \mathcal{T}(\mathcal{D}) = (V^*, E^*, F^*) \), where:
\[
V^* = V \cup F, \quad E^* = E \cup \{[v, v_f] \mid v \text{ is a vertex of } f\},
\]
\[
F^* = \{[e, v_f] \mid e \text{ is an edge of } f\}.
\]
The elements in \( V^* \setminus V \) are called \emph{star vertices}, and those in \( V \) are called \emph{primal vertices}. 

Let \( r: V \to \mathbb{R}_+ \) be a \emph{circle packing metric}, and let \( \Theta: E \to (0, \pi) \) be an intersection angle function. We write \( r_i = r(v_i) \) and \( \Theta_{ij} = \Theta([v_i, v_j]) \).

\begin{figure}[h]
	\centering
	\includegraphics[width=0.5\textwidth]{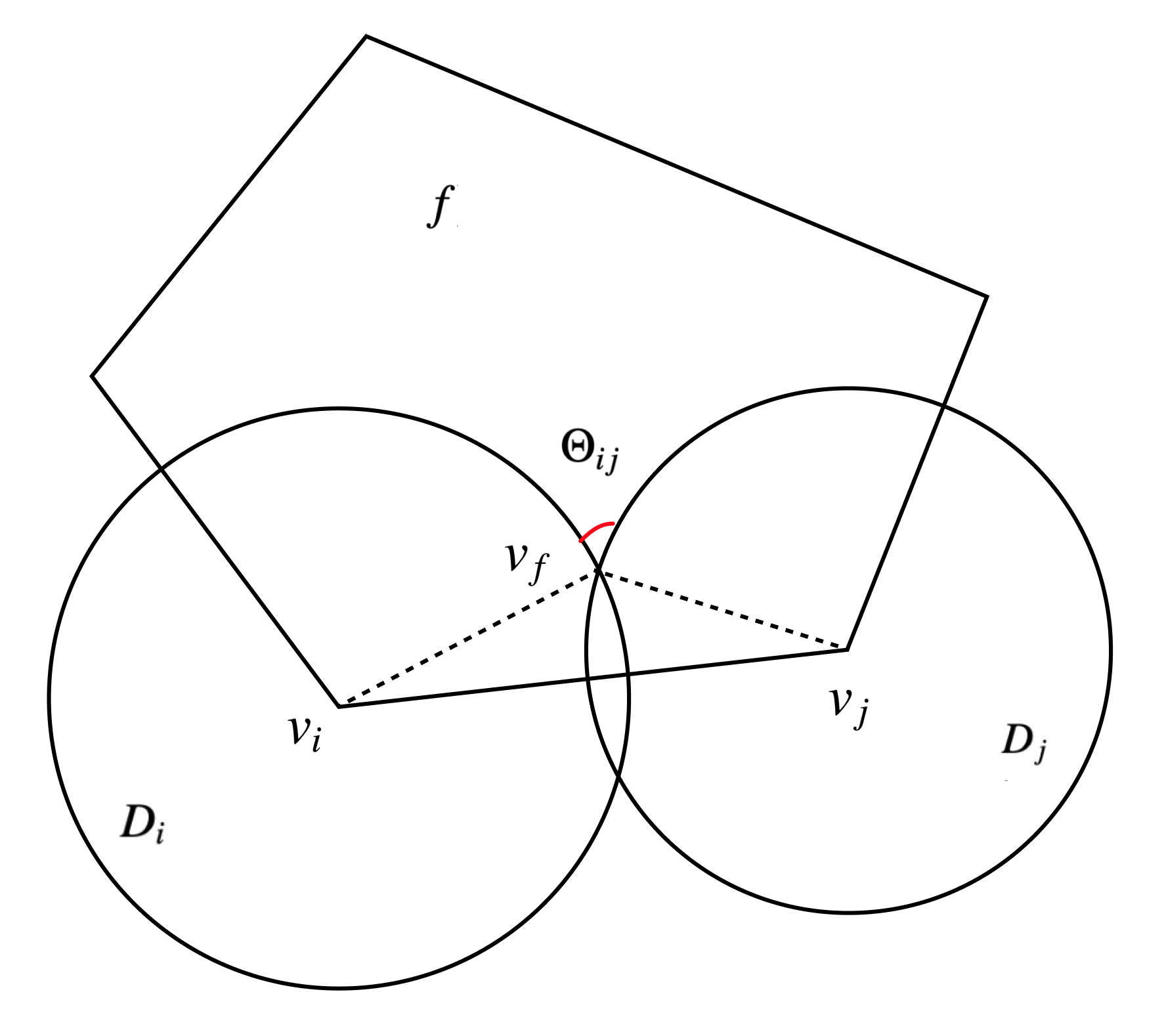}
	\caption{Triangle $\triangle(v_i v_j v_f)$ within a face $f$}
	\label{fig:thurstonconstructionidealcirclepatterntriangle}
\end{figure}
As illustrated in Figure~\ref{fig:thurstonconstructionidealcirclepatterntriangle}, for a face \( f \in F \), select an edge \( \overline{v_i v_j} \subset \partial f \) and the point \( v_f \in f \). This defines the triangle \( \triangle(v_i v_j v_f) \). Let \( l_{ij}, l_{jf}, l_{fi} \) denote the side lengths of \( \overline{v_i v_j}, \overline{v_j v_f}, \) and \( \overline{v_i v_f} \), respectively, where:
\[
l_{jf} = r_i, \quad l_{fi} = r_j.
\]
In hyperbolic geometry:
\[
l_{ij} = \cosh^{-1}(\cosh r_i \cosh r_j + \sinh r_i \sinh r_j \cos \Theta_{ij}).
\]
In Euclidean geometry:
\[
l_{ij} = \sqrt{r_i^2 + r_j^2 + 2 r_i r_j \cos \Theta_{ij}}.
\]

By gluing all such triangles in \( \mathcal{T}(\mathcal{D}) \) along common edges, we obtain a cone metric \( \mu(r, \Theta) \) on \( S \). At each star vertex \( v_f \), if \( e_1, \ldots, e_m \) are the edges of face \( f \), then by (C1), the total angle around \( v_f \) is:
\[
\sum_{i=1}^{m} (\pi - \Theta(e_i)) = m\pi - \sum_{i=1}^{m} \Theta(e_i) = m\pi - (m - 2)\pi = 2\pi,
\]
so \( \mu(r, \Theta) \) is smooth at star vertices. However, it may have singularities at primal vertices.

Let \( \sigma(v_i) \) be the total angle around a primal vertex \( v_i \). Define the \emph{vertex curvature} at \( v_i \) as:
\begin{equation}\label{vertex_curvature}
	K_i = 2\pi - \sigma(v_i).
\end{equation}
This defines a curvature function:
\[
\begin{aligned}
	K: V & \to \mathbb{R} \\
	v_i & \mapsto K_i.
\end{aligned}
\]
Thurston’s construction thus yields a curvature map:
\[
K(r, \Theta): \mathbb{R}_+^V \times (0, \pi)^E \to \mathbb{R}^V.
\]
For simplicity, we do not distinguish between \( K \) and \( K(r, \Theta) \). A circle packing metric with zero discrete Gaussian curvature is called a \emph{good ideal circle pattern}.

\subsection{Geometry of two-circle configurations}

In this section, we give some results regarding ideal circle patterns. As shown in Figure~\ref{fig:twocircleconf}, we denote by $\triangle(v_i v_j v_f)$ the triangle formed by two centers $v_i, v_j$ and one point $v_f$, which is an intersection point of the circles $D_i, D_j$. Let $\theta_{ij}, \theta_{ji}$ be the corresponding inner angles at the centers, $l_{ij}$ the length of $\overline{v_i v_j}$, and $d_{ij}$ the length of the altitude to the side $\overline{v_i v_j}$ of $\triangle(v_i v_j v_f)$.

\begin{figure}[htbp]
	\centering
	\includegraphics[width=0.8\textwidth]{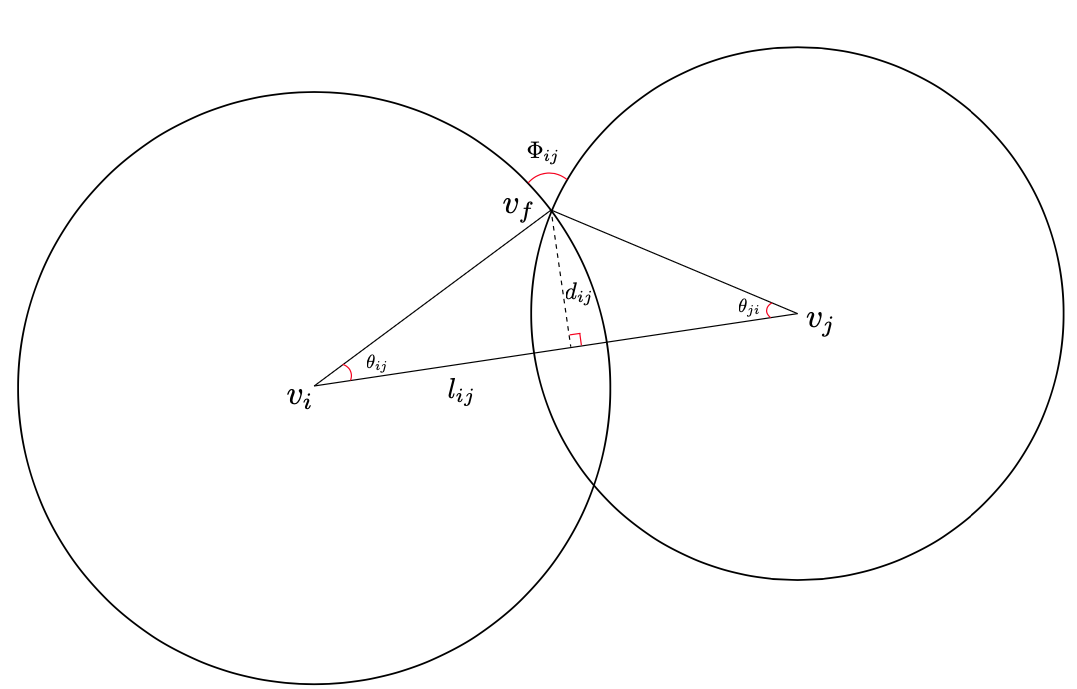}
	\caption{Two-circle configuration}
	\label{fig:twocircleconf}
\end{figure}

\begin{lemma} (\cite[Lemma 2.1]{GHZ21})
	In both Euclidean and hyperbolic geometries, given $\Theta_{ij} \in (0, \pi)$ and two positive numbers $r_i, r_j$, there exists a configuration of two intersecting circles as shown in Figure~\ref{fig:twocircleconf}, which is unique up to isometries, with radii $r_i, r_j$ and meeting at exterior intersection angle $\Theta_{ij}$.
\end{lemma}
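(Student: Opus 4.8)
The plan is to recognize the two-circle configuration of Figure~\ref{fig:twocircleconf} as the boundary data of an elementary side-angle-side construction, carried out once in $\mathbb{E}^2$ and once in $\mathbb{H}^2$.

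First I would pin down the angle at $v_f$. If $v_f$ lies on both circles, then $\overline{v_f v_i}$ and $\overline{v_f v_j}$ are radii of $D_i$ and $D_j$, hence orthogonal to the tangent lines of $D_i$ and $D_j$ at $v_f$; rotating both tangent directions by a right angle shows that the interior angle $\angle v_i v_f v_j$ of the triangle $\triangle(v_i v_j v_f)$ is the supplement of the exterior intersection angle, namely $\pi - \Theta_{ij}$, which lies in $(0,\pi)$ because $\Theta_{ij}\in(0,\pi)$. Thus producing the configuration amounts to producing a triangle with two sides of prescribed positive lengths $r_i$ and $r_j$ issuing from $v_f$ and meeting there at the prescribed included angle $\pi-\Theta_{ij}$.

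Second I would carry out the SAS construction. In either background geometry, fixing a point $v_f$, drawing two distinct geodesic rays from it making angle $\pi-\Theta_{ij}$, and marking off geodesic distances $r_i$ and $r_j$ along them yields points $v_i$ and $v_j$ and hence a nondegenerate triangle; this triangle is unique up to an isometry of the ambient space, since the isometry group acts transitively on flags (a point together with a tangent direction and a side). In the hyperbolic case the third side is given explicitly by the hyperbolic law of cosines $\cosh l_{ij} = \cosh r_i\cosh r_j + \sinh r_i\sinh r_j\cos\Theta_{ij}$, and one checks this quantity is always $>1$: it is at least $\cosh r_i\cosh r_j - \sinh r_i\sinh r_j = \cosh(r_i-r_j)\ge 1$, with strict inequality since $\Theta_{ij}<\pi$ forces $\cos\Theta_{ij}>-1$, so $l_{ij}$ is a genuine positive length and no degeneracy occurs; the Euclidean formula $l_{ij}=\sqrt{r_i^2+r_j^2+2r_ir_j\cos\Theta_{ij}}$ is handled identically.

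Third I would recover and characterize the circles. Let $D_i$ be the circle of radius $r_i$ about $v_i$ and $D_j$ the circle of radius $r_j$ about $v_j$; by construction $v_f\in D_i\cap D_j$, and reflecting $v_f$ across the geodesic through $v_i$ and $v_j$ produces a second common point, so $D_i$ and $D_j$ intersect exactly as in Figure~\ref{fig:twocircleconf}, with exterior intersection angle $\Theta_{ij}$ by the computation of the first step. For uniqueness, any configuration of two circles with radii $r_i, r_j$ and exterior intersection angle $\Theta_{ij}$ determines, after choosing one of its two intersection points, a triangle with exactly the SAS data above, hence is isometric to the one just built. The argument is entirely elementary; the only point calling for a small verification is confirming in the hyperbolic case that the law of cosines never yields a degenerate triangle over the whole range $r_i,r_j>0$, $\Theta_{ij}\in(0,\pi)$, which is precisely the inequality $\cosh l_{ij}>1$ noted above.
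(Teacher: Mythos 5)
Your proof is correct, and it is the standard argument for this fact: reduce the two-circle configuration to the triangle $\triangle(v_i v_j v_f)$ with sides $r_i, r_j$ and included angle $\pi-\Theta_{ij}$ at $v_f$, apply side-angle-side existence and uniqueness in $\mathbb{E}^2$ or $\mathbb{H}^2$, and recover the circles from the centers. The paper itself offers no proof — the lemma is quoted from \cite[Lemma 2.1]{GHZ21} — but your construction is exactly the one implicit in the paper's own law-of-cosines formulas for $l_{ij}$ in Section~\ref{Sec:2}, so there is nothing to object to.
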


Set $u_i = \ln r_i$ in the Euclidean background geometry and $u_i = \ln \tanh \frac{r_i}{2}$ in the hyperbolic background geometry. We have:

\begin{lemma}(\cite[Lemma 2.2]{GHZ21})\label{vari}
	Let $\Theta_{ij} \in (0, \pi)$ be fixed. In hyperbolic background geometry,
	\[
	\frac{\partial \theta_{ij}}{\partial u_i} = -\frac{\cosh l_{ij} \sinh d_{ij}}{\sinh l_{ij}} < 0, \quad \frac{\partial \theta_{ij}}{\partial u_j} = \frac{\partial \theta_{ji}}{\partial u_i} = \frac{\sinh d_{ij}}{\sinh l_{ij}} > 0, \quad \frac{\partial \operatorname{Area}(\triangle v_i v_j v_f)}{\partial u_i} > 0.
	\]
	In Euclidean background geometry,
	\[
	\frac{\partial \theta_{ij}}{\partial u_i} = -\frac{d_{ij}}{l_{ij}} < 0, \quad \frac{\partial \theta_{ij}}{\partial u_j} = \frac{\partial \theta_{ji}}{\partial u_i} = \frac{d_{ij}}{l_{ij}} > 0.
	\]
\end{lemma}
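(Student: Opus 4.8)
The plan is to differentiate the trigonometric relations of the triangle $\triangle(v_iv_jv_f)$ directly and then pass from the radius $r_i$ to the logarithmic variable $u_i$ by the chain rule; since only first derivatives appear, nothing beyond elementary (Euclidean and hyperbolic) triangle trigonometry is needed. First I would fix conventions. Because $v_f$ lies on both circles, the two sides of $\triangle(v_iv_jv_f)$ issuing from $v_f$ have lengths $r_i=|v_iv_f|$ and $r_j=|v_jv_f|$, and, the circles meeting at exterior angle $\Theta_{ij}$, the interior angle of the triangle at $v_f$ equals $C:=\pi-\Theta_{ij}$, which is held fixed throughout. I would then record the data needed: the law of cosines ($l_{ij}^2=r_i^2+r_j^2-2r_ir_j\cos C$ in the Euclidean case, $\cosh l_{ij}=\cosh r_i\cosh r_j-\sinh r_i\sinh r_j\cos C$ in the hyperbolic case, both consistent with the formulas of Section~\ref{Sec:2} since $\cos C=-\cos\Theta_{ij}$), the law of sines, and the altitude relations $d_{ij}=r_i\sin\theta_{ij}=r_j\sin\theta_{ji}$ (Euclidean) resp.\ $\sinh d_{ij}=\sinh r_i\sin\theta_{ij}=\sinh r_j\sin\theta_{ji}$ (hyperbolic), which come from the two right subtriangles cut off by the altitude from $v_f$ onto $\overline{v_iv_j}$.

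The computational heart is a single fact about the third side: holding $r_j$ and $\Theta_{ij}$ fixed, $\partial l_{ij}/\partial r_i=\cos\theta_{ij}$, in both geometries. I would obtain this from the first-variation-of-distance formula (sliding $v_i$ along the geodesic through $v_f$ at unit speed changes $|v_iv_j|$ at rate equal to the cosine of the angle at $v_i$ between the geodesics to $v_f$ and to $v_j$), or, staying purely elementary, by differentiating the law of cosines and collapsing the result with the ``angle form'' $\cosh r_j=\cosh r_i\cosh l_{ij}-\sinh r_i\sinh l_{ij}\cos\theta_{ij}$ (resp.\ its Euclidean analogue $r_j^2=r_i^2+l_{ij}^2-2r_il_{ij}\cos\theta_{ij}$). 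With this in hand, writing the law of sines as $\sin\theta_{ij}=r_j\sin C/l_{ij}$ (Euclidean) or $\sin\theta_{ij}=\sinh r_j\sin C/\sinh l_{ij}$ (hyperbolic), differentiating in $r_i$, substituting $\partial l_{ij}/\partial r_i=\cos\theta_{ij}$, and cancelling the common factor $\cos\theta_{ij}$ gives $\partial\theta_{ij}/\partial r_i$ in closed form; rewriting it through the altitude relations of the first step and multiplying by $\partial r_i/\partial u_i$ — which is $r_i$ in the Euclidean case and $\sinh r_i$ in the hyperbolic case (from $u_i=\ln\tanh(r_i/2)$) — makes the leftover factor of $r_i$, resp.\ $\sinh r_i$, cancel exactly and produces $\partial\theta_{ij}/\partial u_i=-d_{ij}/l_{ij}$, resp.\ $-\cosh l_{ij}\sinh d_{ij}/\sinh l_{ij}$. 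Both expressions are manifestly negative.

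For the mixed derivative I would run the same differentiation but in $r_j$, now using $\partial l_{ij}/\partial r_j=\cos\theta_{ji}$ together with the identity $l_{ij}-r_j\cos\theta_{ji}=r_i\cos\theta_{ij}$ (Euclidean; read off from the foot of the altitude on the segment $\overline{v_iv_j}$) or its hyperbolic counterpart extracted from the law of cosines at $v_j$; this yields $\partial\theta_{ij}/\partial u_j=d_{ij}/l_{ij}$, resp.\ $\sinh d_{ij}/\sinh l_{ij}$. Interchanging the roles of $i$ and $j$ and using that $d_{ij}$ and $l_{ij}$ are symmetric in $i,j$ gives the same value for $\partial\theta_{ji}/\partial u_i$, which establishes both the claimed equality $\partial\theta_{ij}/\partial u_j=\partial\theta_{ji}/\partial u_i$ and its positivity. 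Finally, in the hyperbolic case the area is $\operatorname{Area}(\triangle v_iv_jv_f)=\pi-\theta_{ij}-\theta_{ji}-C$ by Gauss--Bonnet; since $C$ is fixed, the two formulas just obtained give $\partial\operatorname{Area}/\partial u_i=(\cosh l_{ij}-1)\sinh d_{ij}/\sinh l_{ij}>0$.

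I do not expect a genuine obstacle here — this is a computation — and the main risk is bookkeeping: pinning down once and for all which side of the triangle is opposite which vertex, the identification $C=\pi-\Theta_{ij}$, and, in the hyperbolic setting, replacing the Euclidean ``projection identity'' $l_{ij}=r_i\cos\theta_{ij}+r_j\cos\theta_{ji}$ (which has no literal hyperbolic analogue) by the correct relation coming from the hyperbolic law of cosines. The only point that is not purely mechanical is the twofold exact cancellation noted in the second paragraph — of $\cos\theta_{ij}$, and then of $r_i$ (or $\sinh r_i$) against the denominator produced by the altitude relation — which is precisely what makes the final expressions so clean and is, in effect, the real content of the lemma.
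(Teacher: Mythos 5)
Your derivation is correct: the paper gives no proof of this lemma (it is quoted from \cite[Lemma 2.2]{GHZ21}), and your computation reconstructs exactly the standard argument behind it — the first-variation identity $\partial l_{ij}/\partial r_i=\cos\theta_{ij}$ (which I checked directly from both laws of cosines), differentiation of the law of sines with the projection/law-of-cosines identity at $v_j$, the chain-rule factors $r_i$ resp.\ $\sinh r_i$, and Gauss--Bonnet for the area term, all of which reproduce the stated formulas, including $\partial\operatorname{Area}/\partial u_i=(\cosh l_{ij}-1)\sinh d_{ij}/\sinh l_{ij}>0$. The only cosmetic point is the cancellation of $\cos\theta_{ij}$, which degenerates when $\theta_{ij}=\pi/2$; this is harmless (argue by continuity or differentiate the $\cos\theta_{ij}$ relation instead), and note in passing that the paper's line ``$l_{jf}=r_i,\ l_{fi}=r_j$'' is an index typo — your convention $|v_iv_f|=r_i$ is the one consistent with $d_{ij}=r_i\sin\theta_{ij}$ used elsewhere in the paper.
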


\begin{lemma}\label{key1}
	In both hyperbolic and Euclidean background geometries, let $\Theta_{ij} \in (0, \pi)$ be fixed. Then
	\[
	0 < \frac{\partial \theta_{ij}}{\partial u_j} \leq C \theta_{ij},
	\]
	where $C$ depends only on $\Theta_{ij}$.
\end{lemma}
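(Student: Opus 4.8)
The plan is to reduce the inequality to the explicit formulas of Lemma~\ref{vari} and then exploit the fact that $\theta_{ij}$ is bounded below by a positive multiple of $\sinh d_{ij}/\sinh l_{ij}$ (hyperbolic case) or $d_{ij}/l_{ij}$ (Euclidean case) in the regime that matters, while it is also bounded below away from $0$ whenever $r_i,r_j$ stay in a compact range. The positivity $0<\partial\theta_{ij}/\partial u_j$ is exactly Lemma~\ref{vari}, so only the upper bound requires work. First I would treat the Euclidean case: here $\partial\theta_{ij}/\partial u_j = d_{ij}/l_{ij}$, and $d_{ij}$ is the altitude from $v_f$ in the triangle $\triangle(v_i v_j v_f)$, so $d_{ij} = r_i\sin\theta_{ij}$ and also $d_{ij}=r_j\sin\theta_{ji}$. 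Thus $d_{ij}/l_{ij} = (r_i\sin\theta_{ij})/l_{ij} \le (r_i/l_{ij})\,\theta_{ij}$. Since $l_{ij} = \sqrt{r_i^2+r_j^2+2r_ir_j\cos\Theta_{ij}} \ge r_i\sqrt{1-\cos^2\Theta_{ij}} = r_i\sin\Theta_{ij}$ (completing the square, using $\cos\Theta_{ij}\in(-1,1)$), we get $r_i/l_{ij}\le 1/\sin\Theta_{ij}$, hence $\partial\theta_{ij}/\partial u_j \le \theta_{ij}/\sin\Theta_{ij}$, giving $C = 1/\sin\Theta_{ij}$, which depends only on $\Theta_{ij}$.

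For the hyperbolic case the same idea works with hyperbolic trigonometry in the right triangle obtained by dropping the altitude $d_{ij}$ from $v_f$ onto $\overline{v_iv_j}$. Splitting $l_{ij}=p_i+p_j$ along the foot of the altitude, the right-triangle relations give $\sinh d_{ij} = \sinh r_i \sin\theta_{ij}$ and $\tanh p_i = \tanh r_i \cos\theta_{ij}$, and similarly with $i,j$ swapped. Then
\[
\frac{\partial\theta_{ij}}{\partial u_j} = \frac{\sinh d_{ij}}{\sinh l_{ij}} = \frac{\sinh r_i\,\sin\theta_{ij}}{\sinh l_{ij}} \le \frac{\sinh r_i}{\sinh l_{ij}}\,\theta_{ij},
\]
so it remains to bound $\sinh r_i/\sinh l_{ij}$ by a constant depending only on $\Theta_{ij}$. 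Using the hyperbolic law of cosines $\cosh l_{ij} = \cosh r_i\cosh r_j + \sinh r_i\sinh r_j\cos\Theta_{ij}$ one checks that $\cosh l_{ij} \ge \cosh r_i$ when $\cos\Theta_{ij}\ge 0$, and more generally, writing $\cosh l_{ij} - \cosh r_i = \cosh r_i(\cosh r_j - 1) + \sinh r_i\sinh r_j\cos\Theta_{ij}$, one can estimate from below by a positive multiple (depending on $\Theta_{ij}$) of a quantity comparable to $\sinh r_i$ or to $1$; the clean way is to observe $\sinh^2 l_{ij} = \cosh^2 l_{ij}-1$ and expand, extracting a factor $\sinh^2 r_i$ with coefficient bounded below by $\sin^2\Theta_{ij}$ after completing the square in $\cosh r_j$ (exactly mirroring the Euclidean computation, since $\sinh l_{ij}^2 = \sinh^2 r_i + \sinh^2 r_j + 2\sinh r_i\sinh r_j\cosh(\cdot) \cdot(\ldots)$-type identity). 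This yields $\sinh r_i/\sinh l_{ij} \le 1/\sin\Theta_{ij}$ and hence $C=1/\sin\Theta_{ij}$ again.

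The main obstacle is the hyperbolic estimate $\sinh r_i/\sinh l_{ij}\le C(\Theta_{ij})$: unlike the Euclidean identity, $\sinh l_{ij}$ is not a clean quadratic form in $\sinh r_i,\sinh r_j$, and when $\cos\Theta_{ij}<0$ one must be careful that the cross term does not cause cancellation making $l_{ij}$ small even when $r_i$ is large. I expect the resolution is to use the half-angle substitution $u_i=\ln\tanh(r_i/2)$ that the paper has already set up: in that variable the two-circle configuration has an explicit description (as in Beardon or as used in He's work), the relevant ratios become rational functions of $e^{u_i},e^{u_j}$, and the bound becomes an elementary inequality uniform over all $u_i,u_j<0$ with the constant depending only on $\Theta_{ij}$ through $\cos\Theta_{ij}$. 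Alternatively, one can split into the two regimes ``$r_i$ bounded'' (where $\theta_{ij}$ is bounded below by a positive constant and the claim is trivial since $\partial\theta_{ij}/\partial u_j\le 1$) and ``$r_i$ large'' (where $\sinh l_{ij}\gtrsim \sinh r_i$ because at least one of $l_{ij}\ge$ const $\cdot\, r_i$ holds by the triangle inequality on the altitude decomposition), and patch the two constants together.
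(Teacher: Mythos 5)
Your overall strategy matches the paper exactly: both reduce the claim, via the formulas of Lemma~\ref{vari} and the right-triangle identity $\sinh d_{ij}=\sinh r_i\sin\theta_{ij}$ (resp.\ $d_{ij}=r_i\sin\theta_{ij}$), to the single inequality $\sinh r_i/\sinh l_{ij}\le 1/\sin\Theta_{ij}$ (resp.\ $r_i/l_{ij}\le 1/\sin\Theta_{ij}$) together with $\sin\theta_{ij}\le\theta_{ij}$. Your Euclidean argument (completing the square in $r_j$) is correct and is just a rephrasing of the Euclidean law of sines, so that half is fine.

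The gap is the hyperbolic estimate, which you flag yourself and do not actually close. The expansion of $\sinh^2 l_{ij}$ is not a clean quadratic form in $\sinh r_i,\sinh r_j$ (there are $\cosh r_i\cosh r_j$ cross terms), and you leave it at the level of an expectation. The half-angle substitution remark is also unsubstantiated. Your fallback, the regime split on $r_i$, is in fact incorrect as stated: in the region where $r_i$ is bounded, $\theta_{ij}$ is \emph{not} bounded below by a positive constant uniformly in $r_j$ (letting $r_j\to 0$ sends $\theta_{ij}\to 0$), so the inequality $\partial\theta_{ij}/\partial u_j\le 1\le C\theta_{ij}$ does not follow ``trivially'' there. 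What the paper actually uses (implicitly) is the hyperbolic law of sines in the triangle $\triangle(v_iv_jv_f)$: the side $\overline{v_iv_f}$ has length $r_i$ and the angle at $v_f$ is $\pi-\Theta_{ij}$, so
\[
\frac{\sinh l_{ij}}{\sin\Theta_{ij}}=\frac{\sinh r_i}{\sin\theta_{ji}}\ \Longrightarrow\ \sinh l_{ij}\ge \sinh r_i\sin\Theta_{ij},
\]
with no case split on the sign of $\cos\Theta_{ij}$. Replacing your tentative paragraphs with this one line finishes the hyperbolic case and makes your proof identical in substance to the paper's.
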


\begin{proof}
	In hyperbolic background geometry,
	\[
	\frac{\partial \theta_{ij}}{\partial u_j} = \frac{\sinh d_{ij}}{\sinh l_{ij}} = \frac{\sinh r_i \sin  \theta_{ij}}{\sinh l_{ij}}.
	\]
	Since $\sin  \theta_{ij} \leq  \theta_{ij}$ and $\sinh r_i \sin \Theta_{ij} \leq \sinh l_{ij}$, we get
	\[
	\frac{\partial  \theta_{ij}}{\partial u_j} \leq \frac{1}{\sin \Theta_{ij}}  \theta_{ij}.
	\]
	
	In Euclidean geometry,
	\[
	\frac{\partial  \theta_{ij}}{\partial u_j} = \frac{d_{ij}}{l_{ij}} = \frac{r_i \sin  \theta_{ij}}{l_{ij}}.
	\]
	Similarly, using $\sin  \theta_{ij} \leq  \theta_{ij}$ and $r_i \sin \Theta_{ij} \leq l_{ij}$, we get
	\[
	\frac{\partial  \theta_{ij}}{\partial u_j} \leq \frac{1}{\sin \Theta_{ij}}  \theta_{ij}.
	\]
\end{proof}
By same reason, we have 
\begin{lemma}\label{key3}
	In hyperbolic geometry, let $\Theta_{ij} \in (0, \pi)$ be fixed. Then
	\[
	-C  \theta_{ij} \cosh(r_i + r_j) \leq \frac{\partial  \theta_{ij}}{\partial u_i} < 0,
	\]
	where $C$ depends only on $\Theta_{ij}$.
\end{lemma}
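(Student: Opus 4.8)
The plan is to reduce Lemma~\ref{key3} to Lemmas~\ref{vari} and~\ref{key1} by a direct factorization, with no genuinely new input. From Lemma~\ref{vari}, in the hyperbolic background geometry,
\[
\frac{\partial \theta_{ij}}{\partial u_i} = -\frac{\cosh l_{ij}\,\sinh d_{ij}}{\sinh l_{ij}}, \qquad \frac{\partial \theta_{ij}}{\partial u_j} = \frac{\sinh d_{ij}}{\sinh l_{ij}} > 0,
\]
so that $\dfrac{\partial \theta_{ij}}{\partial u_i} = -\cosh l_{ij}\cdot\dfrac{\partial \theta_{ij}}{\partial u_j}$. In particular $\partial \theta_{ij}/\partial u_i < 0$, which is the easy half of the claim, and $\bigl|\partial \theta_{ij}/\partial u_i\bigr| = \cosh l_{ij}\cdot\bigl(\partial \theta_{ij}/\partial u_j\bigr)$.

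Next I would invoke Lemma~\ref{key1}, which gives a constant $C_1$ depending only on $\Theta_{ij}$ (indeed $C_1 = 1/\sin\Theta_{ij}$ from its proof) with $0 < \partial \theta_{ij}/\partial u_j \le C_1\,\theta_{ij}$. It then remains to control $\cosh l_{ij}$ by $\cosh(r_i + r_j)$. For this I would use the hyperbolic law of cosines for the edge length,
\[
\cosh l_{ij} = \cosh r_i \cosh r_j + \sinh r_i \sinh r_j \cos\Theta_{ij}.
\]
If $\cos\Theta_{ij}\ge 0$ this is at most $\cosh r_i \cosh r_j + \sinh r_i \sinh r_j = \cosh(r_i + r_j)$; if $\cos\Theta_{ij} < 0$ it is at most $\cosh r_i \cosh r_j \le \cosh(r_i + r_j)$. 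Either way, $\cosh l_{ij} \le \cosh(r_i + r_j)$.

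Combining the three bounds yields
\[
0 < -\frac{\partial \theta_{ij}}{\partial u_i} = \cosh l_{ij}\cdot\frac{\partial \theta_{ij}}{\partial u_j} \le C_1\,\theta_{ij}\,\cosh(r_i + r_j),
\]
which is exactly the assertion, with $C = C_1 = 1/\sin\Theta_{ij}$ depending only on $\Theta_{ij}$. I do not expect any real obstacle here: the only point that needs a moment's care is the uniform bound $\cosh l_{ij} \le \cosh(r_i + r_j)$, handled by the case split on the sign of $\cos\Theta_{ij}$ above; the sign conventions and the identity $\sinh d_{ij} = \sinh r_i \sin\theta_{ij}$ are already pinned down by the two-circle configuration of Figure~\ref{fig:twocircleconf} and need not be re-derived.
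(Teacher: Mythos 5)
Your proposal is correct and coincides with the paper's intended argument: the paper omits the proof of Lemma~\ref{key3}, stating it follows ``by the same reason'' as Lemma~\ref{key1}, and unwinding that remark gives exactly your factorization $\partial\theta_{ij}/\partial u_i=-\cosh l_{ij}\,\partial\theta_{ij}/\partial u_j$, the bound $\partial\theta_{ij}/\partial u_j\le \theta_{ij}/\sin\Theta_{ij}$, and $\cosh l_{ij}\le\cosh(r_i+r_j)$ from the law of cosines (your case split on $\cos\Theta_{ij}$ is not even needed, since $\cos\Theta_{ij}\le 1$ gives it at once). No gaps.
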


\begin{lemma}\label{key2}
	In Euclidean geometry, let $\Theta_{ij} \in (0, \pi)$ be fixed. Then
	\[
	-C  \theta_{ij} \leq \frac{\partial  \theta_{ij}}{\partial u_i} < 0,
	\]
	where $C$ depends only on $\Theta_{ij}$.
\end{lemma}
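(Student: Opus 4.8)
The plan is to read both inequalities off Lemmas~\ref{vari} and~\ref{key1} essentially without new computation, the point being that in the Euclidean background geometry the two derivatives $\partial\theta_{ij}/\partial u_i$ and $\partial\theta_{ij}/\partial u_j$ are exact negatives of one another. Indeed, Lemma~\ref{vari} gives, in the Euclidean case,
\[
\frac{\partial \theta_{ij}}{\partial u_i} = -\frac{d_{ij}}{l_{ij}}, \qquad \frac{\partial \theta_{ij}}{\partial u_j} = \frac{d_{ij}}{l_{ij}},
\]
where $d_{ij}>0$ is the altitude of $\triangle(v_iv_jv_f)$ to the side $\overline{v_iv_j}$ and $l_{ij}>0$ is the length of that side. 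Since $d_{ij}$ and $l_{ij}$ are single well-defined quantities, these two identities together say $\partial\theta_{ij}/\partial u_i = -\,\partial\theta_{ij}/\partial u_j < 0$, which already gives the upper inequality $\partial\theta_{ij}/\partial u_i < 0$.

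For the lower bound I would invoke Lemma~\ref{key1}: there is a constant $C = C(\Theta_{ij})$ with $0 < \partial\theta_{ij}/\partial u_j \le C\,\theta_{ij}$. Combining with the antisymmetry above yields $\partial\theta_{ij}/\partial u_i = -\,\partial\theta_{ij}/\partial u_j \ge -C\,\theta_{ij}$, which is the asserted bound with the \emph{same} constant. If one prefers a self-contained estimate, $C = 1/\sin\Theta_{ij}$ works directly: $d_{ij} = r_i\sin\theta_{ij} \le r_i\theta_{ij}$, while $l_{ij}^2 = r_i^2 + r_j^2 + 2r_ir_j\cos\Theta_{ij} = (r_j + r_i\cos\Theta_{ij})^2 + r_i^2\sin^2\Theta_{ij} \ge r_i^2\sin^2\Theta_{ij}$, so $d_{ij}/l_{ij} \le \theta_{ij}/\sin\Theta_{ij}$.

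There is no genuine obstacle; the only thing requiring care is the bookkeeping that turns the argument into a one-liner, namely that the symbol $d_{ij}$ appearing in the formula for $\partial\theta_{ij}/\partial u_i$ in Lemma~\ref{vari} is literally the same altitude as the one appearing in the formula for $\partial\theta_{ij}/\partial u_j$, so that the exact antisymmetry $\partial\theta_{ij}/\partial u_i = -\,\partial\theta_{ij}/\partial u_j$ holds and Lemma~\ref{key1} transfers verbatim. It is worth contrasting this with the hyperbolic analogue in Lemma~\ref{key3}: there $\partial\theta_{ij}/\partial u_i$ carries the extra factor $\cosh l_{ij}$ relative to $\partial\theta_{ij}/\partial u_j$, so the antisymmetry fails and one is forced to accept the weaker bound with the $\cosh(r_i+r_j)$ factor, whereas in the Euclidean picture no such factor is needed.
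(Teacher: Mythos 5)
Your proposal is correct and matches the paper's intent: the paper gives no separate argument for this lemma, stating only ``by the same reason'' as Lemma~\ref{key1}, i.e.\ $\partial\theta_{ij}/\partial u_i=-d_{ij}/l_{ij}$ with $d_{ij}=r_i\sin\theta_{ij}\le r_i\theta_{ij}$ and $l_{ij}\ge r_i\sin\Theta_{ij}$, which is exactly your self-contained estimate giving $C=1/\sin\Theta_{ij}$. Your observation that in the Euclidean case $\partial\theta_{ij}/\partial u_i=-\,\partial\theta_{ij}/\partial u_j$ (a consequence of scale invariance, and visible in Lemma~\ref{vari}) is a clean way to transfer Lemma~\ref{key1} verbatim, but it is only a light repackaging of the same computation.
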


\begin{lemma}(\cite[Lemma 2.3]{GHZ21}\label{hyperconstr}
	In hyperbolic geometry, for any $\epsilon > 0$, there exists a positive number $L(\epsilon)$ such that $ \theta_{ij} < \epsilon$ whenever $r_i > L(\epsilon)$.
\end{lemma}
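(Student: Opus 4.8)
The plan is to reduce the statement to elementary hyperbolic trigonometry, the only real subtlety being uniformity in $r_j$. Consider the two-circle configuration of Figure~\ref{fig:twocircleconf}: the geodesic triangle $\triangle(v_iv_jv_f)$ has $|v_iv_f|=r_i$, $|v_jv_f|=r_j$, $|v_iv_j|=l_{ij}$, and its interior angle at $v_f$ equals $\pi-\Theta_{ij}$ — this is consistent, via the hyperbolic law of cosines, with the identity $\cosh l_{ij}=\cosh r_i\cosh r_j+\sinh r_i\sinh r_j\cos\Theta_{ij}$ recorded in Section~\ref{Sec:2}. I will use two consequences of this picture. First, the hyperbolic law of sines gives
\[
\sin\theta_{ij}=\frac{\sinh r_j\,\sin\Theta_{ij}}{\sinh l_{ij}}.
\]
Second, the angle sum of a hyperbolic triangle is $<\pi$, so $\theta_{ij}+\theta_{ji}+(\pi-\Theta_{ij})<\pi$, which yields the a priori bound $\theta_{ij}<\Theta_{ij}<\pi$. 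The role of this bound is to upgrade ``$\sin\theta_{ij}$ small'' to ``$\theta_{ij}$ small'': for $\epsilon\in(0,\Theta_{ij})$, the function $\sin$ has a positive minimum $m(\epsilon)$ on the compact interval $[\epsilon,\Theta_{ij}]$, so $\sin\theta_{ij}<m(\epsilon)$ together with $\theta_{ij}<\Theta_{ij}$ forces $\theta_{ij}<\epsilon$.

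It therefore remains to prove $\sin\theta_{ij}\to 0$ as $r_i\to+\infty$, \emph{uniformly in} $r_j$. The obvious estimate $\sinh l_{ij}\ge\sinh r_i$ is useless here because the numerator $\sinh r_j$ can be arbitrarily large; the fix is to bound $\cosh l_{ij}$ from below by a \emph{product} rather than a sum. Factoring,
\[
\cosh l_{ij}=\cosh r_i\cosh r_j\bigl(1+\tanh r_i\tanh r_j\cos\Theta_{ij}\bigr),
\]
and since $\tanh r_i\tanh r_j\in[0,1)$ one checks (splitting on the sign of $\cos\Theta_{ij}$) that $1+\tanh r_i\tanh r_j\cos\Theta_{ij}\ge c_1:=\min\{1,1+\cos\Theta_{ij}\}>0$, a constant depending only on $\Theta_{ij}$. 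Hence $\cosh l_{ij}\ge c_1\cosh r_i\cosh r_j$, and for $r_i$ large enough that $c_1\cosh r_i\ge 2$ this gives $\sinh l_{ij}=\sqrt{\cosh^2 l_{ij}-1}\ge\tfrac{1}{2}c_1\cosh r_i\cosh r_j\ge\tfrac{1}{2}c_1\cosh r_i\sinh r_j$. Substituting into the law-of-sines identity, the factor $\sinh r_j$ cancels and
\[
\sin\theta_{ij}\le\frac{2\sin\Theta_{ij}}{c_1\cosh r_i},
\]
which tends to $0$ as $r_i\to+\infty$, with no dependence on $r_j$.

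Putting the pieces together: given $\epsilon>0$ (we may assume $\epsilon<\Theta_{ij}$, since otherwise $\theta_{ij}<\Theta_{ij}\le\epsilon$ automatically), choose $L(\epsilon)$ so large that $r_i>L(\epsilon)$ forces both $c_1\cosh r_i\ge 2$ and $2\sin\Theta_{ij}/(c_1\cosh r_i)<m(\epsilon)$; then $r_i>L(\epsilon)$ implies $\theta_{ij}<\epsilon$, as desired. The one place that requires genuine care — and the reason a crude argument fails — is the regime where $r_i$ and $r_j$ are both large: an additive lower bound on $\cosh l_{ij}$ loses control there, whereas the multiplicative bound $\cosh l_{ij}\ge c_1\cosh r_i\cosh r_j$ keeps the estimate uniform. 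Everything else is routine. Geometrically this reflects the fact that as $v_i$ recedes to the ideal boundary the visual angle it subtends decreases, but quantifying this uniformly when the object being viewed — the side $\overline{v_jv_f}$ — may itself be long is precisely what the multiplicative estimate supplies.
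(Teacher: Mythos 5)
Your argument is correct and complete. Note that the paper itself offers no proof of Lemma \ref{hyperconstr} — it simply quotes \cite[Lemma 2.3]{GHZ21} — so there is nothing internal to compare against; your write-up is a valid self-contained substitute. The two ingredients you use are consistent with the paper's own conventions (the law of sines $\sin\theta_{ij}=\sinh r_j\sin\Theta_{ij}/\sinh l_{ij}$ appears verbatim in the proof of Lemma \ref{character_theta_dec}, and the apex angle $\pi-\Theta_{ij}$ matches the stated law of cosines), and the genuinely delicate point — uniformity in $r_j$ — is handled correctly by the multiplicative bound $\cosh l_{ij}\ge c_1\cosh r_i\cosh r_j$ with $c_1=\min\{1,1+\cos\Theta_{ij}\}$, together with the a priori bound $\theta_{ij}<\Theta_{ij}$ from the angle-sum inequality, which converts smallness of $\sin\theta_{ij}$ into smallness of $\theta_{ij}$. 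One remark: your $L(\epsilon)$ depends on $\Theta_{ij}$ (through $c_1$, $\sin\Theta_{ij}$ and $m(\epsilon)$), and this dependence is unavoidable — taking $r_i=r_j$ fixed and $\Theta_{ij}\to\pi$ forces $\theta_{ij}\to\pi/2$, so no bound uniform in $\Theta_{ij}\in(0,\pi)$ can hold — which is consistent with the convention of the surrounding lemmas that $\Theta_{ij}$ is fixed and constants may depend on it; this is worth keeping in mind where the lemma is later invoked simultaneously for infinitely many edges, where the hypotheses $\Theta\in(\delta,\pi-\delta)^E$ supply the needed uniformity.
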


This is equivalent to the following reformulation:

\begin{lemma}\label{hyperconstr_u}
	In hyperbolic geometry, for any $\epsilon > 0$, there exists a negative number $\delta(\epsilon)$ such that $ \theta_{ij} < \epsilon$ whenever $0 > u_i > \delta(\epsilon)$.
\end{lemma}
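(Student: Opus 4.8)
The plan is simply to transport Lemma~\ref{hyperconstr} through the change of variables $r_i \mapsto u_i$. Recall that in the hyperbolic background geometry $u_i = \ln\tanh\frac{r_i}{2}$. First I would record the elementary fact that the map
\[
\phi:(0,+\infty)\to(-\infty,0),\qquad \phi(r)=\ln\tanh\tfrac{r}{2},
\]
is a strictly increasing bijection: indeed $\phi'(r)=\frac{1}{2\sinh(r/2)\cosh(r/2)}=\frac{1}{\sinh r}>0$, while $\phi(r)\to 0^{-}$ as $r\to+\infty$ and $\phi(r)\to-\infty$ as $r\to 0^{+}$. In particular $u_i<0$ always holds, which explains the redundant ``$0>u_i$'' in the statement, and $u_i>\phi(r_0)$ is equivalent to $r_i>r_0$ for any $r_0>0$.

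Given $\epsilon>0$, let $L(\epsilon)>0$ be the constant furnished by Lemma~\ref{hyperconstr}, so that $\theta_{ij}<\epsilon$ whenever $r_i>L(\epsilon)$. I would then set
\[
\delta(\epsilon):=\phi\big(L(\epsilon)\big)=\ln\tanh\tfrac{L(\epsilon)}{2}\in(-\infty,0).
\]
If $0>u_i>\delta(\epsilon)$, then applying the strictly increasing inverse $\phi^{-1}$ to $u_i>\delta(\epsilon)=\phi(L(\epsilon))$ gives $r_i=\phi^{-1}(u_i)>L(\epsilon)$, and hence $\theta_{ij}<\epsilon$ by Lemma~\ref{hyperconstr}. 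This proves the claim with the stated negative constant $\delta(\epsilon)$.

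There is no genuine obstacle here: the content is entirely contained in Lemma~\ref{hyperconstr}, and the only point requiring (a one-line) verification is the monotonicity and range of $r\mapsto\ln\tanh\frac{r}{2}$, which guarantees that the threshold condition on $r_i$ translates cleanly into a threshold condition on $u_i$. One could equally phrase the argument without introducing $\phi$ explicitly, simply noting that $r_i\mapsto u_i$ is order-preserving and that $u_i\to 0^-$ precisely as $r_i\to+\infty$.
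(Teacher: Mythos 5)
Your proposal is correct and matches the paper's approach: the paper simply states Lemma~\ref{hyperconstr_u} as an equivalent reformulation of Lemma~\ref{hyperconstr}, and your argument supplies exactly the one-line verification (monotonicity and range of $r\mapsto\ln\tanh\frac{r}{2}$) that underlies that equivalence.
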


\begin{lemma}\label{eucconstri}
	In Euclidean geometry, for any $C > 1$, there exists an $\epsilon > 0$ such that if $|u_i' - u_i| \leq \epsilon$, $|u_j' - u_j| \leq \epsilon$, then
	\[
	 \theta_{ij}(u_i', u_j') \leq C  \theta_{ij}(u_i, u_j).
	\]
\end{lemma}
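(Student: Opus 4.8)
The plan is to show that $\log\theta_{ij}$, regarded as a function of $(u_i,u_j)\in\mathbb{R}^2$ with $\Theta_{ij}$ fixed, has a gradient bounded by a constant depending only on $\Theta_{ij}$, and then to convert this uniform Lipschitz bound on $\log\theta_{ij}$ into the desired multiplicative estimate via the fundamental theorem of calculus along a line segment.

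First I would note that since $\Theta_{ij}\in(0,\pi)$, the two-circle configuration produces a genuinely non-degenerate triangle $\triangle(v_iv_jv_f)$, so $\theta_{ij}(u_i,u_j)>0$ for every $(u_i,u_j)\in\mathbb{R}^2$ and $\log\theta_{ij}$ is a well-defined smooth function on all of $\mathbb{R}^2$. Next, Lemma \ref{key1} gives $0<\partial\theta_{ij}/\partial u_j\leq C_1\theta_{ij}$ and Lemma \ref{key2} gives $-C_1\theta_{ij}\leq\partial\theta_{ij}/\partial u_i<0$, where (after enlarging to a common value) $C_1$ depends only on $\Theta_{ij}$. Dividing by $\theta_{ij}>0$ yields
\[
\left|\frac{\partial\log\theta_{ij}}{\partial u_i}\right|\leq C_1,\qquad \left|\frac{\partial\log\theta_{ij}}{\partial u_j}\right|\leq C_1 .
\]

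Then, given $(u_i',u_j')$ with $|u_i'-u_i|\leq\epsilon$ and $|u_j'-u_j|\leq\epsilon$, I would integrate $\nabla\log\theta_{ij}$ along the straight segment from $(u_i,u_j)$ to $(u_i',u_j')$, which stays inside $\mathbb{R}^2$, and use the bound above to get
\[
\bigl|\log\theta_{ij}(u_i',u_j')-\log\theta_{ij}(u_i,u_j)\bigr|\leq C_1\bigl(|u_i'-u_i|+|u_j'-u_j|\bigr)\leq 2C_1\epsilon .
\]
Exponentiating gives $\theta_{ij}(u_i',u_j')\leq e^{2C_1\epsilon}\,\theta_{ij}(u_i,u_j)$, so it suffices to choose $\epsilon\leq\frac{\log C}{2C_1}$ (possible since $C>1$ forces $\log C>0$), which ensures $e^{2C_1\epsilon}\leq C$ and finishes the argument. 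I do not expect a genuine obstacle; the only thing needing care is to keep the constant $C_1$ coming from Lemmas \ref{key1} and \ref{key2} notationally distinct from the prescribed target $C>1$, and to observe that these two one-sided derivative estimates together control the full gradient of $\log\theta_{ij}$.
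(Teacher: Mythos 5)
Your proposal is correct and follows essentially the same route as the paper: the paper also sets $f=\ln\theta_{ij}$, bounds both partial derivatives of $f$ by a constant depending only on $\Theta_{ij}$ via Lemmas \ref{key1} and \ref{key2}, integrates along a path between the two points, and exponentiates to get $\theta_{ij}(u_i',u_j')\leq e^{2C_0\epsilon}\theta_{ij}(u_i,u_j)$. Your explicit choice $\epsilon\leq\frac{\log C}{2C_1}$ just makes precise the final step the paper leaves implicit.
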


\begin{proof}
	Let $f(u_i, u_j) = \ln  \theta_{ij}(u_i, u_j)$. From Lemmas~\ref{key1} and~\ref{key2}, we have
	\begin{equation}\label{aiqa}
			\left| \frac{\partial f}{\partial u_i} \right| = \left| \frac{1}{ \theta_{ij}} \frac{\partial  \theta_{ij}}{\partial u_i} \right| \leq C_0, \quad 
		\left| \frac{\partial f}{\partial u_j} \right| = \left| \frac{1}{ \theta_{ij}} \frac{\partial  \theta_{ij}}{\partial u_j} \right| \leq C_0,
	\end{equation}
	for some constant $C_0$ depending only on $\Theta_{ij}$. Then
	\begin{equation}\label{aiqa2}
			\ln \frac{ \theta_{ij}(u_i', u_j')}{ \theta_{ij}(u_i, u_j)} = f(u_i', u_j') - f(u_i, u_j) = \int_{(u_i, u_j)}^{(u_i', u_j')} \left( \frac{\partial f}{\partial u_i} du_i + \frac{\partial f}{\partial u_j} du_j \right),
	\end{equation}
	so if $|u_i' - u_i| < \epsilon$ and $|u_j' - u_j| < \epsilon$, combining \eqref{aiqa} and \eqref{aiqa2}, then
	\[
	\left| \ln \frac{ \theta_{ij}(u_i', u_j')}{ \theta_{ij}(u_i, u_j)} \right| \leq 2C_0 \epsilon,
	\]
	which implies
	\[
	 \theta_{ij}(u_i', u_j') \leq e^{2C_0 \epsilon}  \theta_{ij}(u_i, u_j).
	\]
\end{proof}
By same reason, we have 
\begin{lemma}\label{hyperconstri}
	In hyperbolic geometry, for any $C > 1$, there exists an $\epsilon > 0$ such that if $|u_j' - u_j| \leq \epsilon$, then
	\[
	 \theta_{ij}(u_i, u_j') \leq C  \theta_{ij}(u_i, u_j).
	\]
\end{lemma}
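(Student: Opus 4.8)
The plan is to mimic the proof of Lemma~\ref{eucconstri}, but exploiting the crucial asymmetry that here we perturb only the variable $u_j$ while keeping $u_i$ fixed. Fix $u_i$ and consider $g(u_j) := \ln \theta_{ij}(u_i, u_j)$ as a function of $u_j$ alone. First I would invoke Lemma~\ref{key1}, which asserts $0 < \frac{\partial \theta_{ij}}{\partial u_j} \leq C_0\, \theta_{ij}$ with $C_0$ depending only on $\Theta_{ij}$ (in fact one may take $C_0 = 1/\sin\Theta_{ij}$, as shown in the proof of that lemma). Dividing through by $\theta_{ij} > 0$, this is exactly the statement that $\left| \frac{d g}{d u_j} \right| = \left| \frac{1}{\theta_{ij}} \frac{\partial \theta_{ij}}{\partial u_j} \right| \leq C_0$, a Lipschitz bound on $\ln\theta_{ij}$ in the $u_j$-direction that is uniform in both $u_i$ and $u_j$.

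Next I would integrate this bound. For any admissible $u_j'$,
\[
\ln \frac{\theta_{ij}(u_i, u_j')}{\theta_{ij}(u_i, u_j)} = g(u_j') - g(u_j) = \int_{u_j}^{u_j'} \frac{d g}{d s}\, d s,
\]
hence $\left| \ln \frac{\theta_{ij}(u_i, u_j')}{\theta_{ij}(u_i, u_j)} \right| \leq C_0 |u_j' - u_j|$. Given $C > 1$, it then suffices to choose $\epsilon \leq \frac{\ln C}{C_0}$: whenever $|u_j' - u_j| \leq \epsilon$ we obtain $\theta_{ij}(u_i, u_j') \leq e^{C_0 \epsilon}\, \theta_{ij}(u_i, u_j) \leq C\, \theta_{ij}(u_i, u_j)$, which is the claim. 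Since $C_0$ depends only on $\Theta_{ij}$, so does the resulting $\epsilon$.

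I do not expect any real obstacle here; the lemma is a direct consequence of Lemma~\ref{key1}. The one point worth flagging is \emph{why} the statement only perturbs $u_j$, unlike the Euclidean Lemma~\ref{eucconstri}: the available estimate in the $u_i$-direction is Lemma~\ref{key3}, which gives only $\left| \frac{\partial}{\partial u_i} \ln \theta_{ij} \right| \leq C \cosh(r_i + r_j)$, a bound that degenerates as the radii grow. Thus no metric-uniform Lipschitz control of $\ln\theta_{ij}$ in $u_i$ is possible, and the argument above genuinely requires the fixed-$u_i$ hypothesis. Keeping careful track that every constant produced depends only on the fixed intersection angle $\Theta_{ij}$, and not on the circle packing metric, is the only bookkeeping needed.
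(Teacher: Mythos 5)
Your proof is correct and is essentially the paper's own argument: the paper proves this lemma "by the same reason" as Lemma~\ref{eucconstri}, i.e., by the Lipschitz bound $\bigl|\partial_{u_j}\ln\theta_{ij}\bigr|\leq C_0$ from Lemma~\ref{key1} integrated in the $u_j$-variable, exactly as you do. Your remark explaining why only $u_j$ may be perturbed (the $u_i$-derivative bound of Lemma~\ref{key3} degenerates like $\cosh(r_i+r_j)$) is also the correct reading of the asymmetry.
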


\subsection{Characters of ideal circle patterns}
Let \(\mathcal{D} = (V, E, F)\) be an infinite cellular decomposition on a non-compact surface \(S\), and let \(\Theta \in (0, \pi)^E\) be a weight function on the edges. We define the characters of ideal circle patterns as follows.

\begin{definition}
	For a weighted cellular decomposed surface \((S, \mathcal{D}, \Theta)\), the \emph{character} \(\mathcal{L}_i(\mathcal{D}, \Theta)\) at each vertex \(v_i \in V\) is defined by
	\[
	\mathcal{L}_i(\mathcal{D}, \Theta) = \sum_{j \sim i} \Theta_{ij},
	\]
	where the sum runs over all vertices \(v_j\) adjacent to \(v_i\).
\end{definition}

\begin{lemma}
	For a weighted cellular decomposed surface \((S, \mathcal{D}, \Theta)\), the character \(\mathcal{L}_i(\mathcal{D}, \Theta)\) coincides with the cone angle \(\sigma(v_i)\) of the circle pattern with radii \(r = (1, 1, 1, \ldots)\) in the Euclidean background geometry.
\end{lemma}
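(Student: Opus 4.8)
The plan is to show that when all radii equal $1$, each inner angle $\theta_{ij}$ of the triangle $\triangle(v_i v_j v_f)$ at the center $v_i$ reduces exactly to $\tfrac{\pi}{2} - \tfrac{\Theta_{ij}}{2}$, and then to sum these contributions around $v_i$ and invoke condition (C1) together with the fact that at a star vertex $v_f$ the total angle closes up to $2\pi$. Concretely, first I would recall Thurston's construction: around a primal vertex $v_i$ the cone angle $\sigma(v_i)$ is the sum, over all faces $f$ incident to $v_i$, of the two angles $\theta_{ij}$ and $\theta_{ik}$ at $v_i$ inside the two triangles of $f$ touching $v_i$ (where $v_j, v_k$ are the neighbours of $v_i$ along $\partial f$). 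Equivalently, grouping by edges rather than by faces, each edge $[v_i, v_j]$ contributes two angles at $v_i$ — one from the triangle on each side of the edge — unless we are on the boundary, which does not occur here since $\mathcal{D}$ decomposes a surface without boundary; so $\sigma(v_i) = \sum_{j \sim i} 2\theta_{ij}$ when $r \equiv 1$, because the two triangles flanking an edge are congruent when the radii are equal.

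Next I would compute $\theta_{ij}$ in the special case $r_i = r_j = 1$. In the Euclidean picture, $\triangle(v_i v_j v_f)$ has $l_{jf} = r_i = 1$, $l_{fi} = r_j = 1$, and the angle at $v_f$ equals $\pi - \Theta_{ij}$ (this is how the exterior intersection angle enters Thurston's construction); since the triangle is isosceles with the two unit sides meeting at $v_f$, the base angles are equal, each equal to $\tfrac{1}{2}(\pi - (\pi - \Theta_{ij})) = \tfrac{\Theta_{ij}}{2}$. Wait — I should be careful about which angle is which: $\theta_{ij}$ is the angle at the \emph{center} $v_i$, and the side opposite to it is $\overline{v_j v_f} = l_{jf} = r_i = 1$, while the side opposite $\theta_{ji}$ is $\overline{v_i v_f} = l_{fi} = r_j = 1$; so indeed $\theta_{ij} = \theta_{ji}$, and with the apex angle $\pi - \Theta_{ij}$ at $v_f$ we get $\theta_{ij} = \tfrac{\Theta_{ij}}{2}$. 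The same isosceles-triangle argument works verbatim in hyperbolic geometry, since there too the sum of angles considerations give $2\theta_{ij} + (\pi - \Theta_{ij}) \ne \pi$ in general — here I must instead use the explicit hyperbolic law of cosines with $r_i = r_j = 1$ and apex angle $\pi - \Theta_{ij}$, from which symmetry again forces $\theta_{ij} = \theta_{ji}$; but the numerical value will \emph{not} be $\Theta_{ij}/2$ in the hyperbolic case. So the clean statement $\theta_{ij} = \Theta_{ij}/2$ is special to the Euclidean setting, which is exactly what the lemma asserts ("in the Euclidean background geometry").

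Assembling the pieces: with $r \equiv 1$ in Euclidean geometry,
\[
\sigma(v_i) \;=\; \sum_{j \sim i} 2\theta_{ij} \;=\; \sum_{j \sim i} 2 \cdot \frac{\Theta_{ij}}{2} \;=\; \sum_{j \sim i} \Theta_{ij} \;=\; \mathcal{L}_i(\mathcal{D}, \Theta),
\]
which is the claim. The one point requiring care — and the main (minor) obstacle — is justifying the combinatorial bookkeeping $\sigma(v_i) = \sum_{j\sim i} 2\theta_{ij}$, i.e. that each edge at $v_i$ accounts for exactly two congruent center-angles in the cone metric $\mu(1,\Theta)$. This follows because each face $f$ incident to $v_i$ contributes a wedge at $v_i$ split by the spoke $[v_i, v_f]$ into $\theta_{ij}$ and $\theta_{ik}$ with $v_j, v_k$ the two $\partial f$-neighbours of $v_i$; reindexing this face-sum as an edge-sum, every edge $[v_i,v_j]$ borders exactly two faces (the surface has no boundary), each contributing one copy of $\theta_{ij}$, and these two copies are equal because the flanking triangles are isometric when $r\equiv 1$. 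Local finiteness of $\mathcal{D}$ ensures all sums are finite or at worst countable with well-defined (possibly infinite) value, and no convergence subtlety arises in the pointwise identity at a fixed vertex $v_i$ of finite degree $d_i$.
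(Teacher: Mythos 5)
Your proposal is correct and follows essentially the same route as the paper: the paper's proof simply notes that $r\equiv 1$ forces $\theta_{ij}=\Theta_{ij}/2$ (your isosceles-triangle computation with apex angle $\pi-\Theta_{ij}$ at $v_f$ makes this explicit) and then sums $\sigma(v_i)=2\sum_{j\sim i}\theta_{ij}=\sum_{j\sim i}\Theta_{ij}=\mathcal{L}_i(\mathcal{D},\Theta)$, exactly your bookkeeping of two congruent center-angles per edge. The only cosmetic remarks are that the two triangles flanking an edge are congruent for arbitrary radii (since each is determined by $r_i,r_j,\Theta_{ij}$ alone), not just for $r\equiv 1$, and your hyperbolic aside, while slightly muddled, correctly concludes that the identity $\theta_{ij}=\Theta_{ij}/2$ is special to the Euclidean setting.
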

\begin{proof}
	Since \(r_i = 1\) for every vertex \(v_i \in V\), it follows that \( \theta_{ij} = \Theta_{ij} / 2\). By the definition of the character, we have
	\[
	\mathcal{L}_i(\mathcal{D}, \Theta) = \sum_{j \sim i} \Theta_{ij} = 2 \sum_{j \sim i} \theta_{ij} = \sigma(v_i).
	\]
\end{proof}

To facilitate the computation of the character, we use the following lemmas. (See Figure \ref{fig:twocircleconf} for the two-circle configuration.) When \((r_i, r_j) = (t, t)\), we have:

\begin{lemma}\label{character_theta_dec}
	For a weighted cellular decomposed surface \((S, \mathcal{D}, \Theta)\):
	\begin{itemize}
		\item In Euclidean background geometry,
		\[
		\theta_{ij}(t, t) = \frac{\Theta_{ij}}{2}, \quad \forall t > 0.
		\]
		\item In hyperbolic background geometry, \(\theta_{ij}(t, t)\) is a continuously differentiable and strictly decreasing function for \(t \in (0, +\infty)\). Moreover,
		\[
		\lim_{t \to 0} \theta_{ij}(t, t) = \frac{\Theta_{ij}}{2}, \quad \text{and} \quad \lim_{t \to +\infty} \theta_{ij}(t, t) = 0.
		\]
	\end{itemize}
\end{lemma}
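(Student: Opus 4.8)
The plan is to reduce both statements to the geometry of a single isoceles triangle. When $r_i=r_j=t$, the two sides of $\triangle(v_iv_jv_f)$ meeting at $v_f$ have equal length $t$, and by the same angle computation used above to show that $\mu(r,\Theta)$ is smooth at star vertices, the angle of this triangle at $v_f$ equals $\pi-\Theta_{ij}$. Thus $\triangle(v_iv_jv_f)$ is isoceles with apex $v_f$, and the reflection symmetry exchanging $v_i\leftrightarrow v_j$ gives $\theta_{ij}(t,t)=\theta_{ji}(t,t)$.

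In the Euclidean case this already settles everything: the angle sum of a Euclidean triangle is $\pi$, so each base angle of the isoceles triangle equals $\tfrac12\bigl(\pi-(\pi-\Theta_{ij})\bigr)=\Theta_{ij}/2$, independently of $t$, and the claimed limit is immediate.

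In the hyperbolic case the key step is to extract a closed form for $\theta_{ij}(t,t)$. I would apply the hyperbolic dual (second) law of cosines at $v_i$: the angle $\theta_{ij}$ there is opposite the side of length $t$, and the other two angles are $\theta_{ij}$ (at $v_j$, by symmetry) and $\pi-\Theta_{ij}$ (at $v_f$). This yields $\cos\theta_{ij}=\cos\theta_{ij}\cos\Theta_{ij}+\sin\theta_{ij}\sin\Theta_{ij}\cosh t$, which, using $\tfrac{\sin\Theta_{ij}}{1-\cos\Theta_{ij}}=\cot\tfrac{\Theta_{ij}}{2}$, rearranges to
$$\cot\theta_{ij}(t,t)=\cot\!\Bigl(\tfrac{\Theta_{ij}}{2}\Bigr)\cosh t.$$
(The same identity can be obtained from the ordinary law of cosines together with $\cosh l_{ij}=\cosh^2 t+\sinh^2 t\cos\Theta_{ij}$; alternatively, strict monotonicity alone follows from Lemma~\ref{vari}, since $\tfrac{d}{dt}\theta_{ij}(t,t)=\bigl(\partial_{u_i}\theta_{ij}+\partial_{u_j}\theta_{ij}\bigr)\big/\sinh t$ and $\partial_{u_i}\theta_{ij}+\partial_{u_j}\theta_{ij}=\tfrac{\sinh d_{ij}}{\sinh l_{ij}}\bigl(1-\cosh l_{ij}\bigr)<0$.)

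Everything else then reads off the closed form. Since $\Theta_{ij}\in(0,\pi)$ we have $\cot(\Theta_{ij}/2)>0$; the map $t\mapsto\cosh t$ is smooth, positive and strictly increasing on $(0,\infty)$ with $\cosh 0=1$ and $\cosh t\to+\infty$; and $\operatorname{arccot}$ is a smooth, strictly decreasing bijection of $\mathbb{R}$ onto $(0,\pi)$. Hence $\theta_{ij}(t,t)=\operatorname{arccot}\bigl(\cot(\Theta_{ij}/2)\cosh t\bigr)$ is continuously differentiable (indeed $C^\infty$) and strictly decreasing on $(0,+\infty)$, with $\lim_{t\to0^+}\theta_{ij}(t,t)=\operatorname{arccot}\bigl(\cot(\Theta_{ij}/2)\bigr)=\Theta_{ij}/2$ and $\lim_{t\to+\infty}\theta_{ij}(t,t)=0$. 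I do not anticipate a genuine obstacle; the only point requiring care is correctly identifying the apex angle as $\pi-\Theta_{ij}$ and keeping track of which side is opposite which angle when invoking the laws of cosines.
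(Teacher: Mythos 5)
Your proof is correct, and it recovers the same trigonometric content as the paper by a slightly different route. The paper handles the hyperbolic monotonicity and differentiability exactly as in your parenthetical aside: it invokes Lemma \ref{vari}, noting that along the diagonal $\frac{d}{dt}\theta_{ij}(t,t)=\bigl(\partial_{u_i}\theta_{ij}+\partial_{u_j}\theta_{ij}\bigr)/\sinh t$ with $\partial_{u_i}\theta_{ij}+\partial_{u_j}\theta_{ij}=\frac{\sinh d_{ij}}{\sinh l_{ij}}\bigl(1-\cosh l_{ij}\bigr)<0$, and then it gets the limits from the first law of cosines combined with the law of sines, arriving at $\sin^2\theta_{ij}(t,t)=\frac{\sin^2\Theta_{ij}}{2(1+\cos\Theta_{ij})+(1+\cos\Theta_{ij})^2\sinh^2 t}$, from which $t\to 0$ gives $\sin^2(\Theta_{ij}/2)$ and $t\to+\infty$ gives $0$. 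Your dual-law-of-cosines computation instead produces the closed form $\cot\theta_{ij}(t,t)=\cot\bigl(\Theta_{ij}/2\bigr)\cosh t$, which is algebraically equivalent to the paper's $\sin^2$ identity but has the advantage of yielding smoothness, strict monotonicity and both limits in a single stroke, without separately citing Lemma \ref{vari}; the paper's splitting simply reuses machinery it needs elsewhere. Your identification of the apex angle at $v_f$ as $\pi-\Theta_{ij}$ is consistent with the paper's construction (it is what makes the cone angle at star vertices equal $2\pi$ and what underlies the sign $+\cos\Theta_{ij}$ in the length formulas), and your Euclidean angle-sum argument fills in the case the paper dismisses as immediate.
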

\begin{proof}
	We only prove the hyperbolic case, since the Euclidean case is immediate.
	
	In the hyperbolic case, by Lemma \ref{vari}, \(\theta_{ij}(t, t)\) is continuously differentiable and strictly decreasing in \(t \in (0, +\infty)\). Recall that
	\[
	\cosh l_{ij} = \cosh r_i \cosh r_j + \cos \Theta_{ij} \sinh r_i \sinh r_j,
	\]
	and
	\[
	\frac{\sinh l_{ij}}{\sin \Theta_{ij}} = \frac{\sinh r_j}{\sin \theta_{ij}}.
	\]
	From these, we deduce
	\[
	\sin^2 \theta_{ij}(t, t) = \frac{\sin^2 \Theta_{ij}}{2(1 + \cos \Theta_{ij}) + (1 + \cos \Theta_{ij})^2 \sinh^2 t}.
	\]
	Evaluating the limit as \(t \to 0\),
	\[
	\lim_{t \to 0} \sin^2 \theta_{ij}(t, t) = \frac{\sin^2 \Theta_{ij}}{2(1 + \cos \Theta_{ij})} = \sin^2 \frac{\Theta_{ij}}{2},
	\]
	which implies
	\[
	\lim_{t \to 0} \theta_{ij}(t, t) = \frac{\Theta_{ij}}{2}.
	\]
	Also, as \(t \to +\infty\), the denominator tends to infinity, so
	\[
	\lim_{t \to +\infty} \theta_{ij}(t, t) = 0.
	\]
\end{proof}

Recall that \(u_i = \ln r_i\) in Euclidean background geometry and \(u_i = \ln \tanh \frac{r_i}{2}\) in hyperbolic background geometry. From Lemma \ref{vari}, we derive the following comparison principles:

\begin{lemma}\label{character_min_ineq}
	In both Euclidean and hyperbolic background geometries, the following holds:
	\begin{itemize}
		\item[(1)] If \(r_i \leq r_j\), then
		\[
		\theta_{ij}(r_i, r_j) \geq \theta_{ij}(r_i, r_i), \quad \text{and} \quad \theta_{ij}(r_i, r_j) \geq \theta_{ij}(r_j, r_j).
		\]
		\item[(2)] If \(r_i \geq r_j\), then
		\[
		\theta_{ij}(r_i, r_j) \leq \theta_{ij}(r_i, r_i), \quad \text{and} \quad \theta_{ij}(r_i, r_j) \leq \theta_{ij}(r_j, r_j).
		\]
	\end{itemize}
\end{lemma}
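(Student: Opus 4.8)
The statement to prove is Lemma~\ref{character_min_ineq}: a monotonicity comparison for the inner angle $\theta_{ij}(r_i,r_j)$ when one radius dominates the other. The plan is to reduce everything to the sign information in Lemma~\ref{vari}, rewritten in the $u$-variables. Recall from Lemma~\ref{vari} that $\partial\theta_{ij}/\partial u_i<0$ and $\partial\theta_{ij}/\partial u_j>0$ in both the Euclidean and hyperbolic background geometries, and that $u_i$ is a strictly increasing function of $r_i$ in each geometry (since $r\mapsto\ln r$ and $r\mapsto\ln\tanh\frac r2$ are both strictly increasing). Thus $\theta_{ij}$ is strictly decreasing in $r_i$ and strictly increasing in $r_j$. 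I will prove part~(1); part~(2) is identical after swapping inequalities.

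Assume $r_i\le r_j$. For the first inequality $\theta_{ij}(r_i,r_j)\ge\theta_{ij}(r_i,r_i)$, I freeze the first slot at $r_i$ and vary only the second slot from $r_i$ up to $r_j$; since $\theta_{ij}$ is nondecreasing in its second argument, the value cannot drop, giving $\theta_{ij}(r_i,r_j)\ge\theta_{ij}(r_i,r_i)$. For the second inequality $\theta_{ij}(r_i,r_j)\ge\theta_{ij}(r_j,r_j)$, I instead freeze the second slot at $r_j$ and decrease the first slot from $r_j$ down to $r_i$; since $\theta_{ij}$ is nonincreasing in its first argument, decreasing $r_i$ can only increase the angle, so $\theta_{ij}(r_i,r_j)\ge\theta_{ij}(r_j,r_j)$. (Equivalently one may phrase both steps as a single application of the fundamental theorem of calculus to $t\mapsto\theta_{ij}(r_i, t)$ and $t\mapsto\theta_{ij}(t, r_j)$ respectively, using the sign of the partial derivative under the integral sign; but the monotonicity statement suffices and avoids writing the integral.) The case $r_i\ge r_j$ of part~(2) follows by reversing each monotonicity conclusion.

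I do not anticipate a genuine obstacle here: the only mild point of care is to make sure the monotonicity is transferred correctly between the $r$-variables and the $u$-variables, i.e.\ to confirm that the change of variable $r\mapsto u$ is orientation-preserving in both geometries so that ``decreasing in $u_i$'' is the same as ``decreasing in $r_i$''. In the hyperbolic case $u_i=\ln\tanh\frac{r_i}{2}$ is negative but still strictly increasing on $(0,\infty)$, so no sign flip occurs. Everything else is an immediate consequence of Lemma~\ref{vari}, and the proof is short.
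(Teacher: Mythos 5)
Your proof is correct and matches the paper's approach: the paper itself introduces this lemma with the line ``From Lemma \ref{vari}, we derive the following comparison principles,'' which is exactly your strategy of reading off the signs $\partial\theta_{ij}/\partial u_i<0$, $\partial\theta_{ij}/\partial u_j>0$ and transferring them to monotonicity in $r_i$ and $r_j$ via the increasing changes of variable. Nothing is missing.
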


\subsection{A maximum principle for infinite graphs}
Given an undirected infinite graph \( G = (V, E) \), where \( V \) and \( E \) are the vertex and edge sets of \( G \) respectively, we write \( v_i \sim v_j \) if \( v_i \) and \( v_j \) are connected by an edge in \( E \). For a function \( f: V \to \mathbb{R} \), the discrete Laplacian operator \(\Delta_G\) is given by
\begin{equation}\label{delta_lap}
	\Delta_G f_i = \sum_{j\sim i} \omega_{ij} (f_j - f_i),
\end{equation}
where \(\omega_{ij}\) is the weight on the edge \(E\).

\begin{lemma}[Maximum principle for infinite graphs]\label{maximumprin}
	Let \( G = (V, E) \) be an undirected infinite graph, and \(\omega_{ij}(t)\) be weights on \(E\) for \( t \in [0, T] \) with \( T > 0 \). Assume there exists a uniform constant \( C \) such that
	\[
	\sum_{j\sim i} \omega_{ij}(t) < C,
	\]
	for any \( v_i \in V \) and any \( t \in [0, T] \). Suppose a function \( f: V \times [0, T] \to \mathbb{R} \) satisfies
	\[
	\frac{d f}{d t} \leq \Delta_{\omega(t)} f + g f,
	\]
	where \( g \leq C_0 \) for some constant \( C_0 \). If \( f \) is bounded on \( V \times [0, T] \) and \( f(0) \leq 0 \), then
	\[
	f(t) \leq 0, \quad \forall t \in [0, T].
	\]
\end{lemma}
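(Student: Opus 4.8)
The plan is to adapt the classical parabolic maximum principle on $\mathbb{R}^n$ to the discrete, infinite-graph setting, where the main subtlety is the lack of compactness: a bounded function on $V\times[0,T]$ need not attain its supremum. The standard device is to introduce an auxiliary exponentially-in-time-decaying comparison function together with a spatial cutoff, argue by contradiction, and then remove the auxiliary terms by a limiting argument. Concretely, first I would reduce to the case $g\le 0$: replacing $f$ by $\tilde f=e^{-C_0 t}f$ gives $\frac{d\tilde f}{dt}\le \Delta_{\omega(t)}\tilde f + (g-C_0)\tilde f$ with $g-C_0\le 0$, and $\tilde f\le 0$ is equivalent to $f\le 0$, so WLOG $g\le 0$.

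Next, for $\varepsilon>0$ consider $f_\varepsilon(v_i,t) := f(v_i,t) - \varepsilon e^{(C+1)t}$, where $C$ is the uniform bound on $\sum_{j\sim i}\omega_{ij}(t)$. Since $f$ is bounded and we only need the conclusion on $[0,T]$, it suffices to show $f_\varepsilon\le 0$ on $V\times[0,T]$ for every $\varepsilon>0$ and then let $\varepsilon\to 0$. Suppose not; then $M:=\sup_{V\times[0,T]} f_\varepsilon > 0$. Because $f$ is bounded, $f_\varepsilon\to -\infty$ as $t\to\infty$ uniformly, but more importantly on the finite strip $[0,T]$ the function $f_\varepsilon$ is bounded, so one can pick a sequence $(v_{i_k},t_k)$ with $f_\varepsilon(v_{i_k},t_k)\to M$. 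The lack of a genuine maximizer is handled by an Ekeland-type / almost-maximum argument: choose the point so that $f_\varepsilon(v_{i_k},t_k) > M - \eta_k$ with $\eta_k\downarrow 0$, note $t_k>0$ for $k$ large (since $f_\varepsilon(\cdot,0)=f(\cdot,0)-\varepsilon\le -\varepsilon<0$), and at such a near-maximum point estimate, using $f_\varepsilon(v_j,t_k)\le M$ for all $j$,
\[
\Delta_{\omega(t_k)} f_\varepsilon(v_{i_k},t_k) = \sum_{j\sim i_k}\omega_{i_k j}(t_k)\big(f_\varepsilon(v_j,t_k)-f_\varepsilon(v_{i_k},t_k)\big) \le C\,\eta_k,
\]
while $\frac{d}{dt}f_\varepsilon(v_{i_k},t) = \frac{d}{dt}f(v_{i_k},t) - \varepsilon(C+1)e^{(C+1)t_k}$; combined with the differential inequality for $f$ and $g\le 0$ this forces, at a time where $f_\varepsilon(v_{i_k},\cdot)$ is close to its strip-supremum, a contradiction of the form $0 \lesssim \frac{d}{dt}f_\varepsilon \le C\eta_k + g f - \varepsilon(C+1)e^{(C+1)t_k} < 0$ once $\eta_k$ is small, using also that $gf_\varepsilon\le 0$ near the positive maximum and controlling the discrepancy $g(f-f_\varepsilon)=g\varepsilon e^{(C+1)t}$ which is harmless since it has a sign. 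Making the "near-maximum in both $t$ and $V$ simultaneously" precise is the step requiring care; the clean way is to first fix $t_k$ realizing $\sup_t \sup_V f_\varepsilon$ up to $\eta_k$, then use that on the slice $t=t_k$ one has an $\eta_k$-maximizer in $V$, and then invoke that $t\mapsto f_\varepsilon(v_{i_k},t)$ cannot be increasing through $t_k$ faster than a controlled rate — here one may need to perturb $t_k$ slightly or work with $\sup_{s\le t}$ to get a one-sided derivative bound.

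The main obstacle, as indicated, is exactly this: on an infinite graph there is no maximizing vertex, so one cannot directly write "at the maximum, $\Delta f\le 0$ and $\frac{df}{dt}\ge 0$." The exponential barrier $\varepsilon e^{(C+1)t}$ is designed so that the inequality at a near-maximum has strict slack proportional to $\varepsilon$, which dominates the $O(\eta_k)$ errors from using an approximate rather than exact maximizer; this is the standard trick (cf. the Phragmén–Lindelöf / Karp–Li style arguments for parabolic equations on complete manifolds and the graph analogues in the Ricci-flow literature). Once the contradiction is reached we conclude $f_\varepsilon\le 0$ on $V\times[0,T]$ for all $\varepsilon>0$, and letting $\varepsilon\to 0$ yields $f\le 0$ on $V\times[0,T]$. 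Undoing the reduction $\tilde f = e^{-C_0 t}f$ completes the proof. I would also remark that the hypothesis $\sum_{j\sim i}\omega_{ij}(t)<C$ is used twice — once to bound $\Delta_{\omega(t)}f_\varepsilon$ from above at a near-maximum, and once implicitly to guarantee the barrier's growth rate $(C+1)$ beats the diffusion — and that boundedness of $f$ is essential and cannot be dropped, as the standard Tychonoff-type counterexamples for the heat equation show.
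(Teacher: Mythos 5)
Your proposal diverges from the paper in a crucial way and the divergence creates a genuine gap. The paper does not use a temporal exponential barrier at all: it sets $f^\delta := f - \delta d^{[v_0]} - C\delta t$, where $d^{[v_0]}$ is the graph distance to a fixed base vertex. Because $f$ is bounded, this \emph{spatial} barrier forces $f^\delta_i \to -\infty$ as $d^{[v_0]}_i\to\infty$, so on $V\times[0,T]$ the supremum of $f^\delta$ is actually \emph{attained} at some $(v_i,t_0)$ (the sublevel set where $f^\delta > 0$ lies in a finite vertex ball cross a compact time interval). At that honest maximum one has $\tfrac{d}{dt}f^\delta_i(t_0)\ge 0$ and $\Delta_{\omega}f^\delta_i(t_0)\le 0$, and the key computation $\Delta_{\omega(t)}d^{[v_0]}_i \le \sum_{j\sim i}\omega_{ij}(t) < C$ is exactly what makes the time term $-C\delta t$ large enough to produce a strict inequality and hence a contradiction. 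Letting $\delta\to 0$ finishes.

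Your barrier $\varepsilon e^{(C+1)t}$ decays in neither variable spatially, so $f_\varepsilon$ remains bounded on the whole graph and its supremum need not be attained. You recognize this and propose an approximate-maximizer (Ekeland-type) argument, but this is where the gap lies: an $\eta_k$-approximate maximizer of a function gives \emph{no} control whatsoever on the time derivative at that point. Your Laplacian estimate $\Delta_{\omega} f_\varepsilon \le C\eta_k$ is fine, but the step ``$\tfrac{d}{dt}f_\varepsilon(v_{i_k},t_k)\gtrsim 0$'' does not follow from near-maximality (think of $-|t-t_k|$, whose value near $t_k$ is near the max but whose one-sided derivatives are $\pm 1$). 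You gesture at ``perturb $t_k$ or work with $\sup_{s\le t}$,'' but making $\psi(t):=\sup_{V\times[0,t]}f_\varepsilon$ useful requires a uniform Lipschitz bound on $t\mapsto f_\varepsilon(v_i,t)$, and the hypotheses only give a one-sided differential inequality, so $\tfrac{df}{dt}$ has no a priori lower bound. In short, without a spatially coercive auxiliary term the compactness you need is missing, and the approximate-maximum device does not substitute for it here. The fix is precisely the one the paper uses: perturb by $\delta d^{[v_0]}$ in space (together with $C\delta t$ in time), which restores attainment of the supremum without requiring any Omori--Yau-type machinery.
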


\begin{proof}
	Without loss of generality, we may assume \( C_0 = 0 \). Indeed, setting \(\tilde{f} = e^{-C_0 t} f\) and \(\tilde{g} = g - C_0\), the problem reduces to this case.
	
	Choose a vertex \( v_0 \in V \), and define the distance function
	\[
	d_i^{[v_0]} := d(v_i, v_0),
	\]
	where \( d(v_i, v_0) \) is the graph distance in \( G \). Then,
	\[
	\Delta_{\omega(t)} d_i^{[v_0]} \leq \sum_{j\sim i} \omega_{ij}(t) < C,
	\]
	for all \( v_i \in V \) and \( t \in [0, T] \).
	
	For \(\delta > 0\), define
	\[
	f^\delta := f - \delta d^{[v_0]} - C \delta t.
	\]
	We compute
	\[
	\begin{aligned}
		\frac{d f^\delta}{d t} &= \frac{d f}{d t} - C \delta \\
		&\leq \Delta_{\omega(t)} f + g f - C \delta \\
		&= \Delta_{\omega(t)} f^\delta + \delta \Delta_{\omega(t)} d^{[v_0]} + g f - C \delta \\
		&< \Delta_{\omega(t)} f^\delta + g f \\
		&\leq \Delta_{\omega(t)} f^\delta + g f^\delta.
	\end{aligned}
	\]
	
	Since \( f \) is bounded, we have
	\begin{equation}\label{confd}
		f_i^\delta \to -\infty \quad \text{as} \quad d_i^{[v_0]} \to \infty.
	\end{equation}
	
	We claim that \( f^\delta \leq 0 \) on \( V \times [0, T] \). Suppose not. Then by \eqref{confd} there exists \( (v_i, t_0) \in V \times [0, T] \) where \( f^\delta \) attains a positive maximum, i.e., \( f_i^\delta(t_0) > 0 \).
	
	Consider two cases:
	\begin{itemize}
		\item If \( t_0 = 0 \), then
		\[
		0 < f_i^\delta(t_0) \leq f_i(0) \leq 0,
		\]
		which is a contradiction.
		\item If \( t_0 > 0 \), then
		\[
		\frac{d f_i^\delta}{d t}(t_0) \geq 0, \quad \text{and} \quad \Delta_{\omega(t_0)} f_i^\delta (t_0) \leq 0.
		\]
		Hence,
		\[
		0 \leq \frac{d f_i^\delta}{d t}(t_0) < \Delta_{\omega(t_0)} f_i^\delta(t_0) + g f_i^\delta(t_0) \leq 0,
		\]
		which is a contradiction.
	\end{itemize}
	
	Therefore, \( f^\delta \leq 0 \) for all \( (v_i, t) \). Since $\delta$ is arbitrary, it follows  \( f \leq 0 \).
\end{proof}

\begin{corollary}\label{cor0}
	If \( f(t) \) is a bounded solution to
	\[
	\frac{d f}{d t} = \Delta_{\omega(t)} f + g f,
	\]
	with initial condition \( f(0) = 0 \), and \( g \leq C_0 \) for some constant \( C_0 \), then
	\[
	f(t) = 0, \quad \forall t \in [0, T].
	\]
\end{corollary}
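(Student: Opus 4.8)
The plan is to deduce the corollary from two applications of the maximum principle, Lemma~\ref{maximumprin}, one to $f$ and one to $-f$. Since $f$ solves the \emph{equality} $\frac{df}{dt} = \Delta_{\omega(t)} f + g f$, it in particular satisfies the differential inequality $\frac{df}{dt} \leq \Delta_{\omega(t)} f + g f$. Moreover $f$ is bounded on $V \times [0,T]$, we have $g \leq C_0$, and $f(0) = 0 \leq 0$. Assuming the weights obey the uniform bound $\sum_{j \sim i} \omega_{ij}(t) < C$ required in Lemma~\ref{maximumprin} (which holds in every situation where this corollary will be invoked), Lemma~\ref{maximumprin} yields $f(t) \leq 0$ for all $t \in [0,T]$.

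Next I would run the same argument with $h := -f$ in place of $f$. It solves $\frac{dh}{dt} = \Delta_{\omega(t)} h + g h$, hence also $\frac{dh}{dt} \leq \Delta_{\omega(t)} h + g h$; it is bounded on $V \times [0,T]$; it has the same coefficient $g \leq C_0$; and $h(0) = -f(0) = 0 \leq 0$. Lemma~\ref{maximumprin} therefore gives $h(t) \leq 0$, i.e.\ $f(t) \geq 0$, for all $t \in [0,T]$. Combining the two one-sided bounds gives $f(t) = 0$ for all $t \in [0,T]$, which is the claim.

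There is essentially no obstacle here: the corollary is a direct symmetrization of Lemma~\ref{maximumprin}. The only points meriting a word of care are (i) confirming that the standing hypotheses of Lemma~\ref{maximumprin}—the uniform bound on $\sum_{j\sim i}\omega_{ij}(t)$ and the upper bound on $g$—are indeed in force, and (ii) interpreting ``solution'' in the classical (pointwise-differentiable in $t$) sense, so that the pointwise extremum argument inside the proof of Lemma~\ref{maximumprin} applies verbatim. Once these are noted, the proof is complete.
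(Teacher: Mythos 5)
Your proof is correct and is exactly the argument the paper intends: the corollary is left without an explicit proof precisely because it follows from applying Lemma~\ref{maximumprin} to $f$ and to $-f$, the same symmetrization used in the proof of the subsequent corollary. Your remarks about the implicit standing hypotheses (the uniform weight bound and the classical notion of solution) are appropriate and consistent with how the corollary is invoked later in the paper.
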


\begin{corollary}
	If \( f(t) \) is a bounded solution to
	\begin{equation}\label{equequi}
		\frac{d f}{d t} = \Delta_{\omega(t)} f + g f,
	\end{equation}
	with
	\[
	C_1 \leq f_i(0) \leq C_2, \quad \forall i \in V,
	\]
	where \( C_1<0 ,  C_2 > 0 \) and \( g \leq 0 \), then
	\[
	C_1 \leq f_i(t) \leq C_2, \quad \forall i \in V, \quad t \in [0, T].
	\]
\end{corollary}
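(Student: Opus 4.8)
The plan is to deduce both inequalities from the one-sided maximum principle of Lemma~\ref{maximumprin} by translating $f$ by the appropriate constant. For the upper bound I would set $h := f - C_2$. Then $h$ is bounded (since $f$ is) and $h(0) \le 0$. Because the discrete Laplacian $\Delta_{\omega(t)}$ annihilates constants, $\Delta_{\omega(t)} h = \Delta_{\omega(t)} f$, so
\[
\ddt{h} = \Delta_{\omega(t)} f + g f = \Delta_{\omega(t)} h + g h + g C_2 .
\]
Here the sign hypotheses enter: since $g \le 0$ and $C_2 > 0$ we have $g C_2 \le 0$, hence $\ddt{h} \le \Delta_{\omega(t)} h + g h$, with $g \le 0 =: C_0$. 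Lemma~\ref{maximumprin} then gives $h \le 0$ on $V \times [0,T]$, i.e. $f_i(t) \le C_2$.

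For the lower bound I would apply the same device to $h := C_1 - f$, which is again bounded with $h(0) \le 0$. Using $\Delta_{\omega(t)} h = -\Delta_{\omega(t)} f$ and $f = C_1 - h$,
\[
\ddt{h} = -\Delta_{\omega(t)} f - g f = \Delta_{\omega(t)} h + g h - g C_1 .
\]
Now $g \le 0$ together with $C_1 < 0$ forces $-g C_1 \le 0$, so once more $\ddt{h} \le \Delta_{\omega(t)} h + g h$ with $g \le 0$; Lemma~\ref{maximumprin} yields $h \le 0$, that is $C_1 \le f_i(t)$. Combining the two bounds proves the corollary.

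The only point needing care—and it is genuinely the crux—is that the extra zeroth-order terms $g C_2$ and $-g C_1$ produced by the translation have the correct sign, so that the shifted functions still satisfy a differential inequality of exactly the form required by Lemma~\ref{maximumprin}. This is precisely why the hypothesis $g \le 0$ (rather than merely $g$ bounded above) together with $C_2 > 0 > C_1$ is used: otherwise one would only obtain $\ddt{h} \le \Delta_{\omega(t)} h + g h + (\text{bounded})$, to which the lemma does not directly apply. One should also record that the standing hypothesis of Lemma~\ref{maximumprin}, namely the uniform bound $\sum_{j \sim i} \omega_{ij}(t) < C$, is assumed to hold in this setting as well, exactly as in the preceding corollary.
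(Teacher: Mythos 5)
Your proof is correct and follows exactly the paper's argument: translate by $C_2$ (resp.\ $C_1$) and use the sign hypotheses $g\le 0$, $C_2>0$, $C_1<0$ to absorb the resulting zeroth-order term so that Lemma~\ref{maximumprin} applies directly. You merely spell out the lower-bound computation that the paper dismisses with ``similarly,'' and your added remark about the hidden hypothesis $\sum_{j\sim i}\omega_{ij}(t)<C$ is a fair observation.
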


\begin{proof}
	Set \( h := f - C_2 \). Then \( h(0) \leq 0 \) and from \eqref{equequi} we have
	\[
	\frac{d h}{d t} = \Delta_{\omega(t)} h + g h + g C_2 \leq \Delta_{\omega(t)} h + g h.
	\]
	By Lemma \ref{maximumprin},
	\[
	h(t) \leq 0, \quad \forall t \in [0, T],
	\]
	which implies
	\[
	f(t) \leq C_2, \quad \forall t \in [0, T].
	\]
	
	Similarly, setting \( k :=C_1-f \) and applying the same argument yields
	\[
	C_1 \leq f(t), \quad \forall t \in [0, T].
	\]
\end{proof}

\subsection{Discrete Laplacian and analysis on lattice}

Consider the infinite cellular decomposition \( \mathcal{D} = (V, E, F) \) of the plane \(\mathbb{R}^2\), where the 1-dimensional skeleton \(G = (V, E)\) is given by
\[
V = \{ v_{m,n} = (m,n) \in \mathbb{R}^2 \mid m, n \in \mathbb{Z} \},
\]
and
\[
E = \{ [v_{m,n}, v_{m', n'}] \mid |v_{m,n} - v_{m', n'}| = 1 \}.
\]

For \( p \in [1, \infty] \), let \( l^p(V) \) denote the space of functions \( u : V \to \mathbb{R} \) endowed with the counting measure, equipped with the norm
\[
N_{0,p}(u):=\|u\|_{l^p}=
\begin{cases}
	\Big( \sum\limits_{(m,n) \in \mathbb{Z}^2} |u_{m,n}|^p \Big)^{\frac{1}{p}}, & 1 \leq p < \infty, \\[8pt]
	\hspace{2.3em}\sup\limits_{(m,n) \in \mathbb{Z}^2} |u_{m,n}|, & p = \infty.
\end{cases}
\]
Note that the \( l^p \)-norms satisfy the monotonicity property: for \(1 \leq p \leq q \leq \infty\),
\[
\|u\|_{l^q} \leq \|u\|_{l^p}.
\]

For convenience, we write \( u = (u_{m,n})_{m,n \in \mathbb{Z}} \). Define the following discrete difference operators acting on functions \(u\):
\[
\begin{cases}
	D_1 u_{m,n} = u_{m+1,n} - u_{m,n}, \\[6pt]
	D_3 u_{m,n} = -D_1 u_{m-1,n} = u_{m-1,n} - u_{m,n}, \\[6pt]
	D_2 u_{m,n} = u_{m,n+1} - u_{m,n}, \\[6pt]
	D_4 u_{m,n} = -D_2 u_{m,n-1} = u_{m,n-1} - u_{m,n}.
\end{cases}
\]

We refer to these four operators collectively as the \emph{basic difference operators} on \(G\), and denote the set
\[
\mathcal{D} = \{ D_1, D_2, D_3, D_4 \}.
\]

It is straightforward to verify that each \(D_i\) maps \(l^p(V)\) into itself and that the operators commute pairwise, i.e.,
\begin{equation}\label{commute}
	D_i D_j u = D_j D_i u, \quad \forall u \in l^p(V), \quad \forall D_i, D_j \in \mathcal{D}.
\end{equation}

For \( p \in [1, \infty) \), define the seminorms
\begin{equation}\label{N1p}
	N_{1,p}(u) := \bigg( \sum_{D_i \in \mathcal{D}} \|D_i u\|_{l^p}^p \bigg)^{\frac{1}{p}}
\end{equation}
and
\begin{equation}\label{N2p}
	N_{2,p}(u) := \bigg( \sum_{D_i, D_j \in \mathcal{D}} \|D_i D_j u\|_{l^p}^p \bigg)^{\frac{1}{p}}.
\end{equation}

One readily checks that
\begin{equation}\label{N1N0}
	N_{1,p}^p(u) = \sum_{D_i \in \mathcal{D}} N_{0,p}^p(D_i u).
\end{equation}
and 
\begin{equation}\label{N2N1}
	N_{2,p}^p(u) = \sum_{D_i \in \mathcal{D}} N_{1,p}^p(D_i u).
\end{equation}

It follows from the boundedness of finite differences that there exists a universal constant \(C_1 > 0\) such that
\begin{equation}\label{sob}
	N_{1,p}^p(u) \leq C_1 \|u\|_{l^p}^p, \quad
	N_{2,p}^p(u) \leq C_1 \|u\|_{l^p}^p, \quad \forall u \in l^p(V).
\end{equation}

The standard inner product on \(l^2(V)\) is given by
\[
(f,g) = \sum_{(m,n) \in \mathbb{Z}^2} f_{m,n} g_{m,n}.
\]

\begin{lemma}
	There is a formula:
	\begin{equation}\label{dfdg}
		\sum_{D_i \in \mathcal{D}}    (D_i f, D_i g) = 2 \sum_{[i,j] \in E} (f(i) - f(j))(g(i) - g(j))
	\end{equation}
\end{lemma}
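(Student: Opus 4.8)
The statement to prove is the formula
\[
\sum_{D_i \in \mathcal{D}} (D_i f, D_i g) = 2 \sum_{[i,j] \in E} (f(i) - f(j))(g(i) - g(j)),
\]
which relates the sum over the four basic difference operators to a sum over unoriented edges of the lattice $G$.

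\textbf{Approach.} The plan is a direct computation, unwinding the definitions of the basic difference operators $D_1, D_2, D_3, D_4$ and the $l^2$ inner product, and then comparing the two sides term by term. First I would write the left-hand side explicitly as
\[
\sum_{D_i \in \mathcal{D}} (D_i f, D_i g) = \sum_{(m,n) \in \mathbb{Z}^2} \Big[ (D_1 f_{m,n})(D_1 g_{m,n}) + (D_2 f_{m,n})(D_2 g_{m,n}) + (D_3 f_{m,n})(D_3 g_{m,n}) + (D_4 f_{m,n})(D_4 g_{m,n}) \Big].
\]
Using $D_3 u_{m,n} = -D_1 u_{m-1,n}$ and the fact that the outer sum ranges over all of $\mathbb{Z}^2$, a shift of the summation index $m \mapsto m+1$ shows that $\sum_{(m,n)} (D_3 f_{m,n})(D_3 g_{m,n}) = \sum_{(m,n)} (D_1 f_{m,n})(D_1 g_{m,n})$; similarly $\sum_{(m,n)} (D_4 f_{m,n})(D_4 g_{m,n}) = \sum_{(m,n)} (D_2 f_{m,n})(D_2 g_{m,n})$. (This index shift is legitimate because all series converge absolutely when $f, g \in l^2(V)$, by Cauchy--Schwarz and \eqref{sob}.) Hence the left-hand side equals $2 \sum_{(m,n)} \big[ (D_1 f_{m,n})(D_1 g_{m,n}) + (D_2 f_{m,n})(D_2 g_{m,n}) \big]$.

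\textbf{Matching the right-hand side.} It remains to observe that the sum $\sum_{(m,n)} (D_1 f_{m,n})(D_1 g_{m,n}) + \sum_{(m,n)} (D_2 f_{m,n})(D_2 g_{m,n})$ is exactly $\sum_{[i,j] \in E} (f(i)-f(j))(g(i)-g(j))$: each horizontal edge $[v_{m,n}, v_{m+1,n}]$ is counted once by the $D_1$-term at $(m,n)$, and each vertical edge $[v_{m,n}, v_{m,n+1}]$ once by the $D_2$-term at $(m,n)$, and since the quantity $(f(i)-f(j))(g(i)-g(j))$ is symmetric in $i,j$ there is no orientation issue. Every edge in $E$ is of one of these two types and is hit exactly once, so the bijection between $\{(m,n), D_1\text{-term}\} \cup \{(m,n), D_2\text{-term}\}$ and $E$ gives the identity. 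Combining with the previous paragraph yields the claimed factor of $2$.

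\textbf{Main obstacle.} There is no real obstacle here; the only point requiring a word of care is the justification of reindexing and regrouping infinite sums, which is handled by absolute convergence (each $\|D_i f\|_{l^2} \le$ const $\cdot \|f\|_{l^2}$ by \eqref{sob}, so every term is a convergent series and Fubini/rearrangement applies). One should also state at the outset that the formula is understood for $f, g \in l^2(V)$ so that both sides are finite. I would present the proof in three short displayed computations following exactly the steps above.
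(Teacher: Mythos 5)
Your proposal is correct and takes essentially the same approach as the paper; the paper's own proof swaps the order of summation and then asserts that ``each undirected edge contributes twice,'' whereas you make that double-counting precise by pairing $D_3$ with $D_1$ and $D_4$ with $D_2$ via an index shift and then exhibiting the bijection between the remaining $D_1$- and $D_2$-terms and the horizontal and vertical edges. This is a cleaner and more verifiable write-up of the same idea, and your remark about absolute convergence justifying the reindexing (via the $l^2$ hypothesis and \eqref{sob}) is a welcome detail that the paper leaves implicit.
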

\begin{proof}
	By definition,
	$$
	\begin{aligned}
			 \sum_{D_i \in \mathcal{D}}    (D_i f, D_i g)&= \sum_{D_i \in \mathcal{D}} \sum_{(m,n)} (D_i f)_{m,n}(D_i g)_{m,n}\\
			 & = \sum_{(m,n)} \sum_{D_i \in \mathcal{D}}  (D_i f)_{m,n}(D_i g)_{m,n}\\
	\end{aligned}
	$$
	By the structure of the discrete derivative operators \(D_i\), each undirected edge \([i,j]\in E\) contributes twice to the total sum, which yields the desired formula~\eqref{dfdg}.
\end{proof}

\begin{corollary}
	For \(p=2\), the seminorm \(N_{1,2}\) has the explicit form
	\begin{equation}\label{n1pformula}
		N_{1,2}^2(u) = 2 \sum_{e = [i,j] \in E} (u(i) - u(j))^2,
	\end{equation}
	where each edge is counted only once.
\end{corollary}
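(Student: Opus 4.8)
The plan is to deduce the corollary directly from the polarization-type identity \eqref{dfdg} established in the preceding lemma, by specializing it to the diagonal case $f=g=u$. The proof is essentially a one-line computation, so the main task is simply to thread together the definitions that are already in place.

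First I would unwind the definition \eqref{N1p} of the seminorm $N_{1,2}$: taking $p=2$ gives $N_{1,2}^2(u)=\sum_{D_i\in\mathcal{D}}\|D_iu\|_{l^2}^2$, and since $\|D_iu\|_{l^2}^2=(D_iu,D_iu)$ with respect to the standard inner product on $l^2(V)$, this becomes $N_{1,2}^2(u)=\sum_{D_i\in\mathcal{D}}(D_iu,D_iu)$. Next I would apply \eqref{dfdg} with $f=g=u$, which immediately yields $\sum_{D_i\in\mathcal{D}}(D_iu,D_iu)=2\sum_{[i,j]\in E}(u(i)-u(j))^2$. Chaining the two equalities gives the asserted formula \eqref{n1pformula}.

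The only point that needs a moment's care is the combinatorial bookkeeping behind the constant $2$ and the phrase ``each edge is counted only once'': the four operators $D_1,\dots,D_4$ come in the two antipodal pairs $\{D_1,D_3\}$ and $\{D_2,D_4\}$, and after reindexing the lattice sums one sees that $\|D_1u\|_{l^2}^2=\|D_3u\|_{l^2}^2$ equals the sum of $(u(i)-u(j))^2$ over all horizontal edges, and likewise $\|D_2u\|_{l^2}^2=\|D_4u\|_{l^2}^2$ equals the sum over all vertical edges; since the horizontal and vertical edges together exhaust $E$ without repetition, the factor $2$ in \eqref{n1pformula} is precisely the factor already present in \eqref{dfdg}. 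There is no substantive obstacle here — the content of the statement was absorbed into the lemma — so the proposal is merely to record this specialization.
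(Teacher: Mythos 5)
Your proposal is correct and follows exactly the paper's own argument: unwind the definition $N_{1,2}^2(u)=\sum_{D_i\in\mathcal{D}}(D_iu,D_iu)$ and specialize \eqref{dfdg} to $f=g=u$. The extra remark about the antipodal pairs $\{D_1,D_3\}$, $\{D_2,D_4\}$ accounting for the factor $2$ is a fine elaboration of the same bookkeeping already done in the lemma.
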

\begin{proof}
	By definition and \ref{dfdg},
	\[
	N_{1,2}^2(u) = \sum_{D_i \in \mathcal{D}} \|D_i u\|_{l^2}^2 = \sum_{D_i \in \mathcal{D}}    (D_i u, D_i u) = 2 \sum_{e = [i,j] \in E} (u(i) - u(j))^2.
	\]
\end{proof}

On the lattice \(G\), consider the discrete Laplacian with constant weights \(w_{ij} = 1\) for all edges \([i,j] \in E\). By definition,
\begin{equation}\label{delta_def}
	\Delta = \sum_{i=1}^4 D_i.
\end{equation}
\begin{lemma}
	There is the discrete Green's formula:
	\begin{equation}\label{stokes1}
		(f, \Delta g) = - \sum_{e = [i,j] \in E} (f(i) - f(j))(g(i) - g(j)), \quad \forall f,g \in \ell^2(V).
	\end{equation}
\end{lemma}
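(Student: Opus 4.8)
The plan is to deduce the Green's formula \eqref{stokes1} from the already-established identity \eqref{dfdg} by means of a summation-by-parts (adjointness) computation for the four basic difference operators. The key algebraic fact is that on $\ell^2(V)$ the operators $D_1,\dots,D_4$ come in adjoint pairs, and that $\sum_{D_i\in\mathcal{D}}D_i^\ast D_i$ collapses to a multiple of $\Delta$.

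First I would record the adjoint relations. For $f,g\in\ell^2(V)$, shifting the summation index once gives
\[
(f, D_1 g) = \sum_{m,n} f_{m,n}\,(g_{m+1,n}-g_{m,n}) = \sum_{m,n} (f_{m-1,n}-f_{m,n})\,g_{m,n} = (D_3 f, g),
\]
and symmetrically $(f,D_2 g)=(D_4 f,g)$, so that $D_1^\ast=D_3$, $D_2^\ast=D_4$, $D_3^\ast=D_1$, $D_4^\ast=D_2$. All these manipulations—index shifts and interchange of the order of summation—are legitimate because each $D_i$ is a bounded operator on $\ell^2(V)$, hence by Cauchy–Schwarz every double series that appears is absolutely convergent.

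Next I would compute the second-order operators $D_i^\ast D_i$ using \eqref{commute} and the explicit formulas for the $D_i$: a direct calculation gives $D_3D_1 u_{m,n} = 2u_{m,n}-u_{m+1,n}-u_{m-1,n} = -(D_1+D_3)u_{m,n}$, and likewise (using commutativity) $D_1D_3 = -(D_1+D_3)$, $D_4D_2 = D_2D_4 = -(D_2+D_4)$. Summing over the four operators and invoking the definition \eqref{delta_def} of $\Delta$,
\[
\sum_{D_i\in\mathcal{D}} D_i^\ast D_i \;=\; -2(D_1+D_2+D_3+D_4) \;=\; -2\Delta .
\]
Combining this with the adjoint relations yields, for all $f,g\in\ell^2(V)$,
\[
\sum_{D_i\in\mathcal{D}} (D_i f, D_i g) \;=\; \sum_{D_i\in\mathcal{D}} (f, D_i^\ast D_i g) \;=\; (f,-2\Delta g) \;=\; -2\,(f,\Delta g).
\]
On the other hand, \eqref{dfdg} tells us that $\sum_{D_i\in\mathcal{D}} (D_i f, D_i g) = 2\sum_{e=[i,j]\in E}(f(i)-f(j))(g(i)-g(j))$. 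Equating the two expressions for $\sum_{D_i\in\mathcal{D}} (D_i f, D_i g)$ and dividing by $-2$ gives exactly \eqref{stokes1}.

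There is no serious obstacle here: the proof is a short bookkeeping computation with the four operators $D_1,\dots,D_4$. The only point that genuinely requires attention is the analytic justification of the index shifts and the rearrangements of infinite sums on the lattice, which is supplied by the boundedness of the $D_i$ on $\ell^2(V)$ (equivalently, absolute convergence of the relevant double sums); everything else is finite algebra.
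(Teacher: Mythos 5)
Your proof is correct, but it takes a somewhat different route from the paper's. The paper proves \eqref{stokes1} directly: it pairs the horizontal operators $D_1,D_3$ (and likewise $D_2,D_4$), performs the index shift inside $(f,D_1g)+(f,D_3g)$, and reads off the edge sum $-\sum_{[i,j]\in E}(f(i)-f(j))(g(i)-g(j))$ with each undirected edge counted once. You instead promote the same index shift to the operator-level adjoint relations $D_1^\ast=D_3$, $D_2^\ast=D_4$, verify the factorization $\sum_{D_i\in\mathcal{D}}D_i^\ast D_i=-2\Delta$ using \eqref{commute} and \eqref{delta_def}, and then import the edge-sum identity from the earlier lemma \eqref{dfdg}; in effect you first establish the identity $(f,\Delta g)=-\tfrac12\sum_{D_i}(D_if,D_ig)$, which the paper only obtains afterwards as the corollary \eqref{fdeltag} by combining \eqref{stokes1} with \eqref{dfdg}. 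This reversal of logical order is harmless — there is no circularity, since your derivation of the operator identity does not use \eqref{stokes1} or its corollaries — and your explicit remark that the index shifts and rearrangements are justified by boundedness of the $D_i$ on $\ell^2(V)$ (absolute convergence via Cauchy–Schwarz) is a point the paper leaves implicit. What your approach buys is a cleaner structural statement (the Laplacian as $-\tfrac12\sum D_i^\ast D_i$, immediately giving self-adjointness and nonpositivity); what the paper's direct computation buys is independence from \eqref{dfdg} and a transparent accounting of why each edge appears exactly once.
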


\begin{proof}
	We illustrate the argument using two directional differences, \(D_1\) and \(D_3\), corresponding to horizontal edges in the positive and negative directions, respectively. Note that:
	\[
	\begin{aligned}
		(f, D_1 g) &= \sum_{(m,n) \in \mathbb{Z}^2} f_{m,n} (g_{m+1,n} - g_{m,n}) \\
		&= \sum_{(m,n) \in \mathbb{Z}^2} f_{m-1,n} (g_{m,n} - g_{m-1,n}),
	\end{aligned}
	\]
	after a change of variable \(m \mapsto m-1\) in the summation.
	
	Similarly,
	\[
	(f, D_3 g) = \sum_{(m,n) \in \mathbb{Z}^2} f_{m,n} (g_{m-1,n} - g_{m,n}) = - \sum_{(m,n) \in \mathbb{Z}^2} f_{m,n} (g_{m,n} - g_{m-1,n}).
	\]
	
	Adding the two gives:
	\[
	\begin{aligned}
		(f, D_1 g) + (f, D_3 g)
		&= \sum_{(m,n)} f_{m-1,n}(g_{m,n} - g_{m-1,n}) - \sum_{(m,n)} f_{m,n}(g_{m,n} - g_{m-1,n}) \\
		&= - \sum_{(m,n)} (f_{m,n} - f_{m-1,n})(g_{m,n} - g_{m-1,n}).
	\end{aligned}
	\]
	
	Analogous identities hold for the vertical directions \(D_2\) and \(D_4\). Summing over all four directions, and observing that each undirected edge \([i,j]\in E\) is counted exactly once, we obtain:
	\[
	(f, \Delta g) = - \sum_{[i,j] \in E} (f(i) - f(j))(g(i) - g(j)),
	\]
	which proves the discrete Green's formula \eqref{stokes1}.
\end{proof}

\begin{corollary}
	For \(u \in l^2(V)\),
	\begin{equation}\label{fdeltag}
			(f, \Delta g) = -\frac{1}{2}\sum_{D_i \in \mathcal{D}}    (D_i f, D_i g) 
	\end{equation}
\end{corollary}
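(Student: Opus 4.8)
The plan is to obtain \eqref{fdeltag} by directly combining the two identities already proved: the discrete Green's formula \eqref{stokes1} and the bilinear identity \eqref{dfdg}. First I would recall that \eqref{stokes1} gives $(f,\Delta g) = -\sum_{e=[i,j]\in E}(f(i)-f(j))(g(i)-g(j))$, where each undirected edge is counted exactly once. Next I would apply \eqref{dfdg}, namely $\sum_{D_i\in\mathcal{D}}(D_i f, D_i g) = 2\sum_{[i,j]\in E}(f(i)-f(j))(g(i)-g(j))$, which identifies the edge-indexed sum in Green's formula as one half of $\sum_{D_i\in\mathcal{D}}(D_i f, D_i g)$. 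Substituting the second relation into the first immediately yields $(f,\Delta g) = -\tfrac12\sum_{D_i\in\mathcal{D}}(D_i f, D_i g)$, which is exactly \eqref{fdeltag}; here one reads $f,g\in\ell^2(V)$, the hypothesis ``$u\in\ell^2(V)$'' in the statement being understood with both arguments in $\ell^2(V)$.

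The only technical point is the absolute convergence of the doubly-infinite sums involved, so that the rearrangement underlying the passage between the $E$-indexed and the $\mathcal{D}$-indexed expressions is legitimate. Since each $D_i$ maps $\ell^2(V)$ into $\ell^2(V)$, every inner product $(D_i f, D_i g)$ is a finite, absolutely convergent series by Cauchy--Schwarz, and the same bound shows $\sum_{e}(f(i)-f(j))(g(i)-g(j))$ converges absolutely; this is already implicit in the proofs of \eqref{stokes1} and \eqref{dfdg}, so nothing new is required. There is no genuine obstacle here: the corollary is a bookkeeping consequence of the two preceding lemmas, and the ``hard part'' amounts only to checking that the factor $2$ coming from the double-counting of each edge by the operators in $\mathcal{D}$ and the sign from Green's formula combine into the asserted coefficient $-\tfrac12$.
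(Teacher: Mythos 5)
Your proposal is correct and follows exactly the paper's own route: the corollary is obtained by substituting the identity \eqref{dfdg} into the discrete Green's formula \eqref{stokes1}, with the factor $-\tfrac12$ arising from the double-counting of edges. The extra remark on absolute convergence via Cauchy--Schwarz is harmless and already implicit in the earlier lemmas.
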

\begin{proof}
	Combining \eqref{stokes1} and \eqref{dfdg}, we can get \eqref{fdeltag}.
\end{proof}

\begin{corollary}
	In particular, for \(u \in l^2(V)\),
	\begin{equation}\label{stokes}
		(u, \Delta u) = -\frac{1}{2} N_{1,2}^2(u).
	\end{equation}
\end{corollary}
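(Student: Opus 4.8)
The plan is to obtain this directly from the discrete Green's formula already established in the form \eqref{fdeltag}. First I would specialize that identity to the diagonal case $f = g = u$, which gives
\[
(u, \Delta u) = -\frac{1}{2}\sum_{D_i \in \mathcal{D}} (D_i u, D_i u).
\]
The rest is purely a matter of recognizing notation: by the definition of the $l^2$ inner product we have $(D_i u, D_i u) = \|D_i u\|_{l^2}^2$ for each $D_i$, so the sum on the right equals $\sum_{D_i \in \mathcal{D}} \|D_i u\|_{l^2}^2$, which is exactly $N_{1,2}^2(u)$ by the definition \eqref{N1p} specialized to $p = 2$. Substituting yields \eqref{stokes}.

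There is essentially no obstacle here. The only point worth a word of justification is convergence: since $u \in l^2(V)$ and each basic difference operator $D_i$ maps $l^2(V)$ into itself (as noted after \eqref{commute}), every quantity $(D_i u, D_i u)$ is finite and all the rearrangements of sums used to derive \eqref{stokes1} and then \eqref{fdeltag} are legitimate. As an alternative route one could bypass \eqref{fdeltag} entirely and combine \eqref{stokes1} (with $f = g = u$) with the explicit formula \eqref{n1pformula}, $N_{1,2}^2(u) = 2\sum_{e=[i,j]\in E}(u(i)-u(j))^2$, which again produces the factor $-\tfrac{1}{2}$; but passing through \eqref{fdeltag} is the shortest path.
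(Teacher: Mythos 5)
Your proof is correct and amounts to the same short computation as the paper's, just routed through \eqref{fdeltag} rather than directly through \eqref{stokes1} together with \eqref{n1pformula}; since \eqref{fdeltag} was itself obtained by combining \eqref{stokes1} and \eqref{dfdg}, the two routes are trivially equivalent, and indeed your closing remark already identifies the paper's path as the alternative.
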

\begin{proof}
	Let $f=g=u$, combining \eqref{stokes1} and \eqref{n1pformula}, we can get \eqref{stokes}.
\end{proof}

\begin{lemma}
	For all \( u \in l^2(V) \),
	\begin{equation}\label{N02N22}
		N_{0,2}(\Delta u)=  \frac{1}{2} N_{2,2}(u)
	\end{equation}
\end{lemma}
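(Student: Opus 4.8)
The plan is to expand both quantities through the adjoint relations for the basic difference operators and the self-adjointness of $\Delta$. First I would record, via the same change of variables in the summation used in the proof of the discrete Green's formula \eqref{stokes1}, that on $l^2(V)$ one has $D_1^*=D_3$ and $D_2^*=D_4$ (hence also $D_3^*=D_1$, $D_4^*=D_2$); consequently $\Delta^*=\sum_{D_i\in\mathcal{D}}D_i^*=\Delta$, so $\Delta$ is self-adjoint. Since each $D_i$, and therefore $\Delta$, is bounded on $l^2(V)$, all the series below converge absolutely, and
\[
N_{0,2}^2(\Delta u)=(\Delta u,\Delta u)=(u,\Delta^2u),\qquad u\in l^2(V).
\]

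The heart of the argument is the operator identity $\sum_{D_i\in\mathcal{D}}D_i^*D_i=-2\Delta$. To obtain it, note that $D_i^*D_i=D_{\iota(i)}D_i$, where $\iota$ swaps $1\leftrightarrow 3$ and $2\leftrightarrow 4$; a one-line computation gives $(D_1D_3u)_{m,n}=2u_{m,n}-u_{m+1,n}-u_{m-1,n}=-(D_1+D_3)u_{m,n}$, and likewise $D_2D_4=-(D_2+D_4)$. Using $D_3D_1=D_1D_3$ and $D_4D_2=D_2D_4$ (from \eqref{commute}) this yields
\[
\sum_{D_i\in\mathcal{D}}D_i^*D_i=2D_1D_3+2D_2D_4=-2(D_1+D_3)-2(D_2+D_4)=-2\Delta.
\]
Now I expand the left-hand side of \eqref{N02N22}, using $(D_iD_j)^*=D_j^*D_i^*=D_i^*D_j^*$ and the pairwise commutativity of all the operators:
\[
N_{2,2}^2(u)=\sum_{D_i,D_j\in\mathcal{D}}\|D_iD_ju\|_{l^2}^2=\sum_{D_i,D_j\in\mathcal{D}}\bigl(u,\,D_i^*D_i\,D_j^*D_j\,u\bigr)=\Bigl(u,\Bigl(\sum_{D_i\in\mathcal{D}}D_i^*D_i\Bigr)^2u\Bigr)=\bigl(u,4\Delta^2u\bigr).
\]
Combining with the first paragraph, $N_{2,2}^2(u)=4(u,\Delta^2u)=4(\Delta u,\Delta u)=4N_{0,2}^2(\Delta u)$, and taking square roots gives \eqref{N02N22}.

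The computation is routine once the adjoint relations are recorded; the only step deserving a little care is the verification of $\sum_{D_i\in\mathcal{D}}D_i^*D_i=-2\Delta$ — i.e.\ the identities $D_1D_3=-(D_1+D_3)$ and $D_2D_4=-(D_2+D_4)$ — together with the observation that boundedness of $\Delta$ on $l^2(V)$ makes all the rearrangements of series legitimate. A slicker alternative is to pass to the Fourier transform on the $2$-torus, where $D_1,D_2,D_3,D_4$ act as multiplication by $e^{i\xi}-1,\ e^{i\eta}-1,\ e^{-i\xi}-1,\ e^{-i\eta}-1$; then $\sum_{D_i\in\mathcal{D}}|\widehat{D_i}|^2=4(2-\cos\xi-\cos\eta)$ and $|\widehat{\Delta}|^2=4(2-\cos\xi-\cos\eta)^2$, so $\bigl(\sum_{D_i\in\mathcal{D}}|\widehat{D_i}|^2\bigr)^2=4|\widehat{\Delta}|^2$, and Parseval's identity yields $N_{2,2}^2(u)=4N_{0,2}^2(\Delta u)$ at once.
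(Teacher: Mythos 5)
Your proof is correct, and it reaches \eqref{N02N22} by a route that differs in organization from the paper's. The paper simply iterates identities it has already established: it writes $N_{0,2}^2(\Delta u)=(\Delta u,\Delta u)$, uses the Green-type formula \eqref{fdeltag} together with $D_i\Delta=\Delta D_i$ to get $-\tfrac12\sum_i(D_iu,\Delta D_iu)$, then applies \eqref{stokes} to each $D_iu$ and \eqref{N2N1} to conclude $(\Delta u,\Delta u)=\tfrac14 N_{2,2}^2(u)$. Your first argument is the same summation-by-parts mechanism repackaged in operator language: the adjoint relations $D_1^*=D_3$, $D_2^*=D_4$ and the identity $\sum_i D_i^*D_i=-2\Delta$ (verified correctly via $D_1D_3=-(D_1+D_3)$, $D_2D_4=-(D_2+D_4)$) are exactly the polarized/operator form of \eqref{stokes}, and expanding $N_{2,2}^2(u)=\bigl(u,\bigl(\sum_i D_i^*D_i\bigr)^2u\bigr)=4(u,\Delta^2u)=4N_{0,2}^2(\Delta u)$ replaces the paper's two successive applications of its Green formulas by a single operator computation. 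What your version buys is a clean, reusable identity and an explicit record of the self-adjointness of $\Delta$ and the legitimacy of the rearrangements (boundedness on $l^2$), which the paper uses only implicitly; what the paper's version buys is economy, needing nothing beyond the lemmas already on record. Your Fourier-multiplier alternative is genuinely different in method: the symbol computation $\sum_i|\widehat{D_i}|^2=4(2-\cos\xi-\cos\eta)$, $|\widehat{\Delta}|^2=4(2-\cos\xi-\cos\eta)^2$, hence $\bigl(\sum_i|\widehat{D_i}|^2\bigr)^2=4|\widehat{\Delta}|^2$, gives the identity at once via Parseval, at the cost of importing harmonic-analysis machinery the paper avoids; it also exploits the translation-invariance of the lattice, whereas the paper's Green-formula route is the one that generalizes to non-constant weights. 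All steps in both of your arguments check out.
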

\begin{proof}
	By the definition,
	\begin{equation}\label{hdqdqdq}
		N_{0,2}^2(\Delta u)= \|\Delta  u\|_{l^2}^2= (\Delta u, \Delta  u)
	\end{equation}
	From \eqref{fdeltag} and \eqref{stokes}, 
	\begin{equation}
			\begin{aligned}\label{fenkfkq}
			(\Delta u, \Delta  u)& = -\frac{1}{2}\sum_{D_i \in \mathcal{D}}    ( D_i \Delta  u , D_i u) \\
			&=  -\frac{1}{2}\sum_{D_i \in \mathcal{D}}    (  D_i u,  \Delta  (D_i u))\\
			&= \frac{1}{4} \sum_{D_i \in \mathcal{D}}   N_{1,2}^2(D_i  u).
		\end{aligned}
	\end{equation}
	From \eqref{N2N1}, and combining \eqref{hdqdqdq} and \eqref{fenkfkq}, we have 
	$$
	N_{0,2}^2(\Delta u) = (\Delta u, \Delta  u)= \frac{1}{4} \sum_{D_i \in \mathcal{D}}  = \frac{1}{4} N_{1,2}^2(D_i  u)=  \frac{1}{4} N_{2,2}^2(u).
	$$
\end{proof}

\section{Infinite combinatorial Ricci flows for ideal circle patterns}\label{Sec:3}

\subsection{The long time existence of the flow}

Recall that \(\mathcal{D} = (V, E, F)\) is an infinite cellular decomposition of a surface \(S\). Let \(r \in \mathbb{R}_{+}^V\) be a radius vector for \(\mathcal{D}\). We define
\begin{equation}\label{tr1}
	u_i = \ln \tanh \left(\frac{r_i}{2}\right), \quad \forall v_i \in V
\end{equation}
in hyperbolic background geometry, and
\begin{equation}\label{tr2}
	u_i = \ln r_i, \quad \forall v_i \in V
\end{equation}
in Euclidean background geometry.

The combinatorial Ricci flow \eqref{flow} is equivalent to the system
\begin{equation}\label{flow2}
	\frac{\mathrm{d} u_i}{\mathrm{d} t} = -K_i, \quad \forall v_i \in V.
\end{equation}
Since \(V\) is infinite, the classical Picard–Lindelöf theorem does not directly apply to guarantee existence and uniqueness of solutions to \eqref{flow2}.

To address this, we apply the Arzelà–Ascoli theorem. Consider an exhaustive sequence of finite, simple, connected cellular decompositions
\[
\{\mathcal{D}^{[n]} = (V^{[n]}, E^{[n]}, F^{[n]})\}_{n=1}^{\infty}
\]
satisfying
\[
\mathcal{D}^{[n]} \subset \mathcal{D}^{[n+1]}, \quad \text{and} \quad \bigcup_{n=1}^\infty \mathcal{D}^{[n]} = \mathcal{D}.
\]
The sequence must exist, since we have assumed $d(v_i, v_j) < +\infty$ for all $v_i, v_j \in V$.

We study the corresponding finite-dimensional flows on \(\mathcal{D}\):
\begin{equation}\label{flow_finite}
	\begin{cases}
		\dfrac{\mathrm{d} u_i^{[n]}(t)}{\mathrm{d} t} = -K_i^{[n]}, & \forall v_i \in V^{[n]}, \quad t > 0, \\[7pt]
		u_i^{[n]}(0) = u_i(0), & \forall v_i \in V^{[n]}, \\[7pt]
		u_i^{[n]}(t) = u_i(0), & \forall v_i \notin V^{[n]}, \quad t \geq 0,
	\end{cases}
\end{equation}
where \(K_i^{[n]}(t) = K_i(u^{[n]}(t))\). By definition, for any \(v_i \in V\), the curvature \(K_i^{[n]}\) depends only on \(u_j^{[n]}\) with \(v_j = v_i\) or \(v_j \sim v_i\). Since \(V^{[n]}\) is finite, \eqref{flow_finite} is a finite-dimensional ODE system. By the Picard–Lindelöf theorem, it admits a unique solution on some interval.

\begin{lemma}\label{flow_finite_exi}
	For each \(n\), the flow \eqref{flow_finite} admits a unique solution \(u^{[n]}(t)\) that exists for all \(t \geq 0\).
\end{lemma}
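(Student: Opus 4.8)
The statement is that each finite-dimensional flow \eqref{flow_finite} has a unique global-in-time solution. Local existence and uniqueness on a maximal interval $[0, T_n)$ is immediate from Picard–Lindelöf, since on $V^{[n]}$ the right-hand side $-K_i^{[n]}(u^{[n]})$ is a smooth (indeed real-analytic) function of the finitely many variables $\{u_j^{[n]} : v_j \in V^{[n]}\}$ — smoothness of $K$ in the $u$-variables follows from the explicit formulas for $l_{ij}$ and the angle functions $\theta_{ij}$ in Section \ref{Sec:2}, together with the fact that each $\theta_{ij} \in (0,\pi)$ stays in the interior of its range. So the real content is the \emph{a priori} bound: I must show that along the flow the variables $u_i^{[n]}(t)$ cannot escape to $\pm\infty$ (equivalently, that the radii $r_i^{[n]}(t)$ stay in a compact subset of $(0,\infty)$) in finite time, which by the standard ODE continuation criterion forces $T_n = \infty$.

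\textbf{Upper bound on $u_i^{[n]}$.} Here I would use that the curvature is controlled from below by a structural bound. Along the flow, $\frac{d u_i^{[n]}}{dt} = -K_i^{[n]} = \sigma(v_i) - 2\pi$. Since $\sigma(v_i) = \sum_{j \sim i} \theta_{ij} > 0$ always, we get $\frac{d u_i^{[n]}}{dt} \geq -2\pi$ for every $i$, so $u_i^{[n]}(t) \geq u_i(0) - 2\pi t$; this gives a lower bound on $u_i^{[n]}$ that is finite on any finite time interval, hence $r_i^{[n]}(t)$ stays bounded below away from $0$ on $[0,T]$ (in the hyperbolic normalization $u_i = \ln\tanh(r_i/2)$ this bounds $r_i$ below). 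For the upper bound on $u_i^{[n]}$ — equivalently an upper bound on $r_i^{[n]}$ — I would bound $\sigma(v_i)$ from above: each inner angle $\theta_{ij} < \pi$, and moreover only vertices in $V^{[n]}$ move while the rest are frozen, so one can get $\sigma(v_i) \le d_i^{[n]}\pi$, giving $K_i^{[n]} \geq 2\pi - d_i\pi$ and hence $\frac{d u_i^{[n]}}{dt} \leq (d_i - 2)\pi$, so $u_i^{[n]}(t) \le u_i(0) + (d_i-2)\pi t$, again finite on $[0,T]$. In the Euclidean case $u_i = \ln r_i$ and these two linear-in-$t$ bounds directly pin $r_i^{[n]}(t)$ inside a compact subinterval of $(0,\infty)$ on every $[0,T]$; in the hyperbolic case one checks that the corresponding range of $u_i = \ln\tanh(r_i/2) \in (-\infty,0)$ stays in a compact subset of $(-\infty,0)$, which is what is needed to keep $r_i$ away from $0$ and $\infty$.

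\textbf{Conclusion.} With these bounds, on any finite $[0,T]$ the solution $u^{[n]}(\cdot)$ remains inside a fixed compact subset $\mathcal{K}_T \subset (-\infty,0)^{V^{[n]}}$ (hyperbolic) or $\mathbb{R}^{V^{[n]}}$ with $r^{[n]} \in \mathcal{K}_T \subset (0,\infty)^{V^{[n]}}$ (Euclidean) on which the vector field is smooth and bounded; by the escape-time criterion for ODEs the maximal existence time $T_n$ cannot be finite, so $T_n = +\infty$. Uniqueness is inherited from Picard–Lindelöf. The main obstacle, such as it is, is purely bookkeeping: making sure the elementary curvature bounds $0 < \sigma(v_i) < d_i \pi$ are genuinely uniform along the flow (they are, since they hold pointwise for \emph{any} admissible radius assignment with $\Theta \in (0,\pi)^E$ satisfying (C1)), and verifying that in the hyperbolic change of variables the two-sided linear control of $u_i$ translates into $r_i$ staying in a compact subset of $(0,\infty)$ — both are routine given Lemmas \ref{vari}–\ref{hyperconstr}.
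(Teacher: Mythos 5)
Your outline gets the overall shape right (local existence via Picard–Lindelöf, then an a priori bound that invokes the continuation criterion), and the Euclidean case is handled correctly — the two-sided linear-in-$t$ bound on $u_i^{[n]}$ does pin $r_i^{[n]}(t) = e^{u_i^{[n]}(t)}$ inside a compact subinterval of $(0,\infty)$ on every $[0,T]$. But the hyperbolic upper bound has a genuine gap: you claim that the linear bound $u_i^{[n]}(t)\le u_i(0) + (d_i-2)\pi t$, combined with the lower linear bound, keeps $u_i^{[n]}(t)$ in a compact subset of $(-\infty,0)$. It does not. The domain in the hyperbolic normalization $u_i = \ln\tanh(r_i/2)$ is $(-\infty,0)^{V^{[n]}}$; the linear upper bound grows without restriction and in particular is allowed to cross $0$, so it does nothing to rule out $u_i^{[n]}(t)\to 0^-$ (equivalently $r_i^{[n]}\to\infty$) in finite time, which is exactly the escape you must exclude. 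Your closing sentence defers to Lemmas~\ref{vari}--\ref{hyperconstr} as ``routine,'' but the argument you actually wrote is the linear bound, and that argument is insufficient; the statement ``the two-sided linear control of $u_i$ translates into $r_i$ staying in a compact subset of $(0,\infty)$'' is false in the hyperbolic case.

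What the paper does instead for the hyperbolic upper bound is a barrier argument that you must carry out, not merely cite. By Lemma~\ref{hyperconstr_u} (the $u$-variable version of Lemma~\ref{hyperconstr}), for $\epsilon$ chosen small relative to the maximal degree in the finite set $V^{[n]}$, there is $\delta<0$ such that whenever $u_i^{[n]}$ lies in $(\delta,0)$ all angles $\theta_{ij}$ at $v_i$ are $<\epsilon$, hence $K_i^{[n]} = 2\pi - 2\sum_{j\sim i}\theta_{ij} \ge \pi/2 > 0$, hence $\frac{du_i^{[n]}}{dt} = -K_i^{[n]} < 0$. So the band $(\delta,0)$ is repelling from above: $u_i^{[n]}$ cannot cross any level $c_1\in(\delta,0)$ strictly greater than $\max\{u_i(0),\delta\}$, giving a uniform negative ceiling $c_1<0$ on $u_i^{[n]}(t)$ for all $t$. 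That qualitative input — angles shrink as $r_i\to\infty$, so large circles get pushed back — is the content that distinguishes the hyperbolic case from the Euclidean one, and it cannot be replaced by the crude bound $K_i^{[n]}\ge 2\pi - d_i\pi$, which can be negative and allows $u_i^{[n]}$ to increase toward the boundary.

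Two smaller remarks. First, your phrase ``each inner angle $\theta_{ij}<\pi$'' is not the sharpest statement (in Thurston's construction the inner angle at the primal vertex satisfies $\theta_{ij} < \pi - \Theta_{ij}$, and in fact $\theta_{ij}<\pi/2$ when $\Theta_{ij}\in(0,\pi)$), but the cruder bound is fine for the Euclidean argument. Second, the paper's lower bound is framed slightly differently: it uses a single constant $c$ with $|K_i^{[n]}|<c$ on the finite set $V^{[n]}$ (coming from $2\pi(1-\deg(v_i))\le K_i^{[n]}<2\pi$) and the two-sided estimate $-ct\le u_i^{[n]}(t)-u_i(0)\le ct$; your version is equivalent.
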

\begin{proof}
	Since each \(K_i^{[n]}\) is a smooth function of \(u^{[n]}\), it is locally Lipschitz continuous. By classical ODE theory, there exists a unique solution \(u^{[n]}(t)\) on some interval \([0, \epsilon)\), \(\epsilon > 0\). To extend the solution to \([0, \infty)\), it suffices to prove uniform boundedness of \(u^{[n]}(t)\) on finite time intervals.
	
	\emph{Part I: we prove that \( u^{[n]}(t) \) has a lower bound in $V^{[n]}$.}
	
	By definition, the curvature satisfies
	\[
	2\pi (1 - \deg(v_i)) \leq K_i^{[n]} < 2\pi,
	\]
	where \(\deg(v_i)\) is the degree of vertex \(v_i\) in \(\mathcal{D}^{[n]}\). Since \(\mathcal{D}\) is finite, there exists a uniform constant \(c > 0\) such that \(|K_i^{[n]}| < c\) for all \(i\in V^{[n]} \).
	
	From \eqref{flow_finite}, integrating over time yields
	\begin{equation}\label{keyine}
		-c t \leq u_i^{[n]}(t) - u_i(0) \leq c t, \quad \forall t \in [0,T],\quad \forall i\in V^{[n]},
	\end{equation}
	which implies
	\[
	u_i^{[n]}(t) \geq u_i(0) - c T, \quad \forall i\in V^{[n]}.
	\]
	
	\emph{Part II: we prove that \( u^{[n]}(t) \) has an up bound in $V^{[n]}$.}
	
	In the hyperbolic setting (using \eqref{tr1}), we need to show there exists a constant \(c_1 < 0\) such that \(u_j^{[n]}(t) < c_1\) for all \(j \in V^{[n]}\). By Lemma \ref{hyperconstr_u}, there exists \(\delta < 0\) such that if \(\delta < u_j^{[n]}(t) < 0\) for all \(v_j \in V^{[n]}\), then
	\[
	K_i^{[n]} \geq \frac{\pi}{2}, \quad \forall i \in V^{[n]}.
	\]
	From \eqref{flow_finite}, this implies that \(u_j^{[n]}(t)\) is decreasing when \(u_j^{[n]}(t) \in (\delta, 0)\), ensuring the existence of such a uniform upper bound \(c_1 < 0\).
	
	In the Euclidean case (using \eqref{tr2}), the upper bound follows directly from \eqref{keyine}:
	\[
	u_i^{[n]}(t) \leq u_i(0) + c T, \quad \forall i\in V^{[n]}.
	\]
\end{proof}

Since \( u^{[n]}(t) \) evolves according to the flow \eqref{flow_finite}, for any \( v_i \in V^{[n]} \), we have 
\begin{equation}\label{curvature_finite_flow}
	\frac{d}{dt} K_i^{[n]}(t) = - \frac{\partial K_i^{[n]}}{\partial u_i} K_i^{[n]}(t) - \sum_{\substack{v_j \sim v_i \\ v_j \in V^{[n]}}} \frac{\partial K_i^{[n]}}{\partial u_j} K_j^{[n]}(t).
\end{equation}

Let \( V(v_i) \) denote the set of vertices \( v_k \) such that either \( v_k = v_i \) or \( v_k \sim v_i \), and let \( E(v_i) \) denote the set of edges incident to \( v_i \).

\begin{theorem}\label{existence_u}
	The flow \eqref{flow2} exists for all time \( t \geq 0 \).
\end{theorem}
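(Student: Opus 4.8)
The plan is to construct a solution of \eqref{flow2} on $[0,\infty)$ as a subsequential limit of the finite–dimensional solutions $u^{[n]}(t)$ produced by Lemma~\ref{flow_finite_exi}, using the Arzel\`a--Ascoli theorem together with a diagonal argument over the countable vertex set $V$. All the content of the argument lies in obtaining a priori bounds on the $u^{[n]}$ that are \emph{uniform in $n$} on each finite time interval; once these are in hand, compactness and the passage to the limit are essentially routine.

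First I would fix $T>0$ and an arbitrary vertex $v_i\in V$. Since $K_i^{[n]}$ is the discrete curvature of the whole decomposition $\mathcal D$ evaluated at $u^{[n]}$, and the total angle at $v_i$ involves only its $d_i$ neighbours, with $d_i<\infty$ by local finiteness, one has
\[
2\pi(1-d_i)\le K_i^{[n]}(t)<2\pi \qquad \text{for all }n \text{ and all } t\ge 0,
\]
with $d_i$ independent of $n$. Hence $\bigl|\,\mathrm du_i^{[n]}/\mathrm dt\,\bigr|=|K_i^{[n]}|\le c_i:=2\pi\max(d_i-1,1)$, so each curve $t\mapsto u_i^{[n]}(t)$ is $c_i$-Lipschitz on $[0,T]$ and satisfies $|u_i^{[n]}(t)-u_i(0)|\le c_iT$. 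In the Euclidean background this already confines $u_i^{[n]}(\cdot)$ to a fixed compact interval; in the hyperbolic background I additionally invoke the barrier estimate from the proof of Lemma~\ref{flow_finite_exi} (itself based on Lemma~\ref{hyperconstr_u}): once $u_i^{[n]}$ comes near $0$ the curvature $K_i^{[n]}$ becomes positive, so $u_i^{[n]}$ is then decreasing, yielding a uniform-in-$n$ upper bound $u_i^{[n]}(t)\le c_i'<0$. Thus $\{u_i^{[n]}|_{[0,T]}\}_n$ lies in a compact subset of the admissible range and is equicontinuous.

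Next I would enumerate $V=\{v_1,v_2,\dots\}$ and exhaust time by $T_m=m$, and extract by Arzel\`a--Ascoli and the usual diagonal procedure a subsequence, still denoted $u^{[n]}$, such that for every $v_i\in V$ the functions $u_i^{[n]}$ converge uniformly on every $[0,T]$ to a limit $u_i\in C([0,\infty))$ (with $u_i<0$ in the hyperbolic case, so that $r_i=r_i(u_i)$ is well defined and positive). To see that $u=(u_i)$ solves \eqref{flow2}, fix $v_i$: the curvature $K_i$ is a continuous function of the finitely many variables $(u_j)_{v_j\in V(v_i)}$ — immediate from the explicit formulas for $l_{ij}$ and for the angles $\theta_{ij}$ — so $K_i^{[n]}(t)\to K_i(u(t))$ uniformly on $[0,T]$. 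Passing to the limit in the integrated flow
\[
u_i^{[n]}(t)=u_i(0)-\int_0^t K_i^{[n]}(s)\,\mathrm ds
\]
gives $u_i(t)=u_i(0)-\int_0^t K_i(u(s))\,\mathrm ds$; the right-hand side is $C^1$ in $t$, hence $u_i\in C^1([0,\infty))$ with $\mathrm du_i/\mathrm dt=-K_i$. Since $T$ was arbitrary, $u$ solves \eqref{flow2} on all of $[0,\infty)$, and via the changes of variable \eqref{tr1}, \eqref{tr2} it yields a solution $r(t)$ of \eqref{flow} for all $t\ge 0$.

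The genuinely delicate point is the uniformity of the a priori bounds in $n$. The curvature lower bound must be read off from the degree of $v_i$ \emph{in $\mathcal D$}, which is finite for each fixed $v_i$ but need not be bounded over $V$; this gives a bound uniform in $n$ but vertex-dependent, which is exactly what Arzel\`a--Ascoli applied vertex-by-vertex requires, whereas a bound uniform over $V$ would be false. In the hyperbolic background one must moreover keep $u_i^{[n]}$ bounded away from $0$, i.e.\ prevent the radius $r_i^{[n]}$ from escaping to $+\infty$ in finite time, which is precisely the barrier argument already established in the proof of Lemma~\ref{flow_finite_exi}; with that input the remaining compactness and limit-passage steps present no further difficulty.
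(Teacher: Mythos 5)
Your proposal is correct and takes essentially the same route as the paper: the exhaustion by finite decompositions and the finite flows \eqref{flow_finite}, vertex-wise curvature bounds $2\pi(1-d_i)\le K_i^{[n]}<2\pi$ uniform in $n$, the hyperbolic barrier from Part II of Lemma~\ref{flow_finite_exi} (via Lemma~\ref{hyperconstr_u}), and then Arzel\`a--Ascoli with a diagonal argument over the countable vertex set and over times $T_\tau\to\infty$. The only (harmless) deviation is in the limit passage: you use the integrated equation together with continuity of $K_i$ in the finitely many variables $(u_j)_{v_j\in V(v_i)}$, whereas the paper additionally bounds $\tfrac{d}{dt}K_i^{[n]}$ to get uniform convergence of the derivatives; both arguments are valid.
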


\begin{proof}
	Fix any \( T > 0 \) and consider the interval \( t \in [0, T] \).
	
	For any vertex \( v_i \in V \), there exists a sufficiently large \( N \) such that \( v_i \in V^{[n]} \) for all \( n \geq N \). Denote \( K_i^{[n]}(t) := K_i(u^{[n]}(t)) \). By the definition of \( K_i \), we have the uniform bound
	\begin{equation}\label{e1}
		|K_i^{[n]}(t)| \leq 2\pi (1 + \deg(v_i)), \quad t \in [0, T].
	\end{equation}
	
	From the flow equation \eqref{flow_finite}, it follows that
	\begin{equation}\label{e2}
		u_i^{[n]}(t) \geq u_i(0) - 2\pi (1 + \deg(v_i)) T, \quad t \in [0, T].
	\end{equation}
	
	In the hyperbolic background geometry, by Lemma \ref{hyperconstr_u} and the argument in Part II of the proof of Lemma \ref{flow_finite_exi}, for all \( v_j \in V(v_i) \) we have the uniform upper bound
	\begin{equation}\label{hyper_finite_constr}
		u_j^{[n]}(t) < c_1 < 0,
	\end{equation}
	where the constant \( c_1 \) depends only on the degrees of vertices in \( V(v_i) \).
	
	In the Euclidean background geometry, similarly,
	\begin{equation}\label{e9}
		u_i^{[n]}(t) \leq u_i(0) + 2\pi (1 + \deg(v_i)) T, \quad t \in [0, T].
	\end{equation}
	
	Combining \eqref{e1}, \eqref{e2}, \eqref{hyper_finite_constr}, and \eqref{e9}, we deduce the uniform estimate
	\begin{equation}\label{keyesti}
		\begin{aligned}
			\|u_i^{[n]}(t)\|_{C^1[0,T]} 
			&= \sup_{t \in [0,T]} |u_i^{[n]}(t)| + \sup_{t \in [0,T]} \Big| \frac{d u_i^{[n]}(t)}{dt} \Big| \\[5pt]
			&= \sup_{t \in [0,T]} |u_i^{[n]}(t)| + \sup_{t \in [0,T]} |K_i^{[n]}(t)| \\[5pt]
			&\leq |u_i(0)| + 2\pi (1 + \deg(v_i)) (T + 1).
		\end{aligned}
	\end{equation}
	
	Since \( K_i \) is smooth in \( u \), the partial derivatives \( \frac{\partial K_i}{\partial u_i} \) and \( \frac{\partial K_i}{\partial u_j} \) are also smooth. Using the bounds in \eqref{e2}, \eqref{hyper_finite_constr}, and \eqref{e9}, there exists a constant \( C > 0 \), depending only on \( u_i(0) \), \( \deg(v_i) \), and (in the hyperbolic case) the degrees of vertices in \( V(v_i) \), such that for all \( j \sim i \),
	\[
	\max \left\{ \bigg| \frac{\partial K_i^{[n]}}{\partial u_i^{[n]}} \bigg|,\; \bigg| \frac{\partial K_i^{[n]}}{\partial u_j^{[n]}} \bigg| \right\} \leq C.
	\]
	
	Therefore, from \eqref{curvature_finite_flow} and \eqref{e1}, there exists a constant \( C' > 0 \) such that
	\begin{equation}\label{keyesti2}
		\bigg| \frac{d}{dt} K_i^{[n]}(t) \bigg| \leq C'.
	\end{equation}
	
	By \eqref{e1}, \eqref{keyesti}, and \eqref{keyesti2}, for each vertex \( v_i \in V \), the sequence \( \{ u_i^{[n]}(t) \}_{n=1}^\infty \) is uniformly bounded and equicontinuous on \([0,T]\) for all sufficiently large \( n \geq N \) (depending on \( v_i \)).
	
	Since \( V \) is countable, applying the Arzel\`a–Ascoli theorem and a diagonal argument, there exists a subsequence \( \{ u^{[n_{m(T)}]}(t) \}_{m(T)=1}^\infty \) that converges uniformly to some function \( \hat{u}_T(t) \) on \([0,T]\), with derivatives also converging uniformly. Hence, \( \hat{u}_T(t) \) satisfies the flow \eqref{flow2} on \([0,T]\).
	
	Let \( \{ T_\tau \}_{\tau=1}^\infty \) be an increasing sequence with \( T_\tau \to +\infty \). Repeating the diagonal argument on these intervals, we obtain a subsequence \( \{ u^{[n_m]}(t) \}_{m=1}^\infty \) converging to \( \hat{u}(t) \) on every finite interval \( [0,T_\tau] \), and thus on \([0,+\infty)\). Therefore, \( \hat{u}(t) \) exists globally and solves the flow \eqref{flow2} for all \( t \geq 0 \).
	
	Finally, in the hyperbolic background geometry, the configuration space is \( (-\infty,0)^V \) by \eqref{tr1}. Since \eqref{hyper_finite_constr} implies \( u_i^{[n]}(t) \leq c_1 < 0 \) uniformly, the limit satisfies \( \hat{u}_i(t) < 0 \) for all \( v_i \in V \) and all \( t \geq 0 \), ensuring \( \hat{u}(t) \) remains in the configuration space.
	
\end{proof}

\subsection{The uniqueness of the flow}
\begin{theorem}\label{uniq}
	Suppose $u^{[1]}(t)$ and $u^{[2]}(t)$ are two solutions of equation \eqref{flow2}, and that $K(u^{[1]}(t))$ and $K(u^{[2]}(t))$ are uniformly bounded. Then
	\[
	u^{[1]}(t) \equiv u^{[2]}(t) \quad \text{for all } t \in [0,+\infty).
	\]
\end{theorem}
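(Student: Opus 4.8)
The plan is to subtract the two solutions, linearize the resulting equation into a heat-type equation on the $1$-skeleton $G$, and then invoke the maximum principle of Lemma~\ref{maximumprin} (in the form of Corollary~\ref{cor0}). Fix $T>0$ and work on $[0,T]$; since $T$ is arbitrary this yields the conclusion on $[0,+\infty)$. Let $C$ be a uniform bound for $|K_i(u^{[1]}(t))|$ and $|K_i(u^{[2]}(t))|$ over $V\times[0,\infty)$, and set $w(t):=u^{[1]}(t)-u^{[2]}(t)$; since the two solutions share the initial value $u(0)$ (as in the uniqueness statement), $w(0)=0$. From \eqref{flow2} one has $|u_i^{[k]}(t)-u_i(0)|\le Ct$, so $w$ is bounded on $V\times[0,T]$ with $\|w(t)\|_{l^\infty}\le 2CT$.

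Now linearize. Because $K_i$ depends only on $u_i$ and on the finitely many $u_j$ with $v_j\sim v_i$, the fundamental theorem of calculus along the segment $s\mapsto u^{[2]}+sw$, $s\in[0,1]$, gives $K_i(u^{[1]})-K_i(u^{[2]})=a_i(t)w_i+\sum_{j\sim i}b_{ij}(t)w_j$, where $a_i(t)$, $b_{ij}(t)$ are the $s$-integrals over $[0,1]$ of $\partial K_i/\partial u_i$, $\partial K_i/\partial u_j$ along that segment. Using $K_i=2\pi-2\sum_{k\sim i}\theta_{ik}$ we get $\partial K_i/\partial u_j=-2\,\partial\theta_{ij}/\partial u_j$ for $j\sim i$, so with $\omega_{ij}(t):=-b_{ij}(t)=2\int_0^1(\partial\theta_{ij}/\partial u_j)(u^{[2]}+sw)\,ds>0$ (positivity by Lemma~\ref{vari}) one rewrites
\[
\frac{\mathrm d w_i}{\mathrm d t}=-\bigl(K_i(u^{[1]})-K_i(u^{[2]})\bigr)=\Delta_{\omega(t)}w_i+g_i(t)\,w_i,
\]
where
\[
g_i(t)=\sum_{j\sim i}\omega_{ij}(t)-a_i(t)=2\int_0^1\sum_{j\sim i}\Bigl(\frac{\partial\theta_{ij}}{\partial u_i}+\frac{\partial\theta_{ij}}{\partial u_j}\Bigr)\,ds.
\]
By Lemma~\ref{vari}, $\partial\theta_{ij}/\partial u_i+\partial\theta_{ij}/\partial u_j\le 0$ in both geometries (it vanishes in the Euclidean case and equals $\sinh d_{ij}(1-\cosh l_{ij})/\sinh l_{ij}\le 0$ in the hyperbolic case), hence $g_i\le 0$.

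The heart of the proof is the uniform bound $\sum_{j\sim i}\omega_{ij}(t)\le C_*$ required to apply Lemma~\ref{maximumprin}. By Lemma~\ref{key1}, $(\partial\theta_{ij}/\partial u_j)(v)\le C_{ij}\theta_{ij}(v)$ with $C_{ij}=1/\sin\Theta_{ij}\le 1/\sin\delta=:C_\delta$ (this is where the standing assumption $\Theta\in(\delta,\pi-\delta)^E$ is used). To control $\theta_{ij}$ at an interior point $u^{[2]}+sw$ of the segment, I use the monotonicity in Lemma~\ref{vari} — $\theta_{ij}$ decreases in $u_i$ and increases in $u_j$ — to dominate it by the value of $\theta_{ij}$ at the corner $(\min\{u_i^{[1]},u_i^{[2]}\},\ \max\{u_j^{[1]},u_j^{[2]}\})$, and then the elementary bound $0<\partial(\ln\theta_{ij})/\partial u_j\le C_{ij}$ (again Lemma~\ref{key1}), integrated over an interval of length $\le|w_j|\le 2CT$, to compare that corner value with $\theta_{ij}(u^{[1]})$ or $\theta_{ij}(u^{[2]})$. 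This gives $\theta_{ij}(u^{[2]}+sw)\le e^{C_\delta|w_j|}\bigl(\theta_{ij}(u^{[1]})+\theta_{ij}(u^{[2]})\bigr)$; summing over $j\sim i$ and using $\sum_{j\sim i}\theta_{ij}(u^{[k]})=\pi-\tfrac{1}{2}K_i(u^{[k]})\le\pi+\tfrac{C}{2}$ yields $\sum_{j\sim i}\omega_{ij}(t)\le 2C_\delta e^{2C_\delta CT}(2\pi+C)=:C_*$, uniformly in $v_i\in V$ and $t\in[0,T]$. Thus $w$ satisfies all the hypotheses of Corollary~\ref{cor0} on $[0,T]$ — bounded, $w(0)=0$, zero-order coefficient $g_i\le 0$, edge weights summing to at most $C_*$ — so $w\equiv 0$ on $[0,T]$; letting $T\to\infty$ finishes the proof. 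The argument applies verbatim in both background geometries.

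The step I expect to be the main obstacle is exactly this uniform weight bound. The weights $\omega_{ij}$ are integrals of $\partial\theta_{ij}/\partial u_j$ along the segment joining the two (a priori unrelated) solutions, and along the interior of that segment no curvature bound, hence no direct control of the angle sum $\sum_{j\sim i}\theta_{ij}$, is available; control is recovered only after trading the segment value for a corner value by monotonicity and paying an exponential factor $e^{O(|w_j|)}$, which is harmless on finite time intervals precisely because $\|w\|_{l^\infty}\le 2CT$ there and $\Theta$ is bounded away from $0$ and $\pi$. In the hyperbolic case one must resist using the cruder estimate of Lemma~\ref{key3} in the $u_i$-direction, which carries an unbounded $\cosh(r_i+r_j)$; the monotonicity argument is exactly what circumvents that difficulty.
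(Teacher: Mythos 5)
Your proof is correct and follows essentially the same route as the paper's: linearize $w=u^{[1]}-u^{[2]}$ via the fundamental theorem of calculus along the segment between the two solutions, read off nonnegative edge weights $\omega_{ij}$ and a nonpositive zero-order coefficient $g_i$ from Lemma~\ref{vari}, obtain the uniform bound on $\sum_{j\sim i}\omega_{ij}$ by combining Lemma~\ref{key1} with the monotonicity of $\theta_{ij}$ in $u_i$ and the exponential comparison in $u_j$ (Lemmas~\ref{eucconstri} and~\ref{hyperconstri}), feed in the curvature bound to control $\sum_{j\sim i}\theta_{ij}(u^{[k]})$, and close with Corollary~\ref{cor0}. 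The only cosmetic difference is that the paper works on the short interval $[0,\epsilon/2M]$ so that $\|w\|_{l^\infty}\le\epsilon$ and then remarks that $\epsilon$ is arbitrary, whereas you work directly on an arbitrary $[0,T]$ at the cost of a $T$-dependent constant $C_*$; both are valid. Your remark that the hypothesis $\Theta\in(\delta,\pi-\delta)^E$ (so that $1/\sin\Theta_{ij}$ is uniformly bounded) is genuinely needed to make the Lemma~\ref{key1} constant uniform is a point the paper's proof of Theorem~\ref{uniq} glosses over, though it appears explicitly in the statement of Uniqueness~I in the introduction.
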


\begin{proof}
	Define $f(t) = u^{[1]}(t) - u^{[2]}(t)$ and for $s \in [0,1]$ set 
	\[
	u^{[s]}(t) = s u^{[1]}(t) + (1 - s) u^{[2]}(t).
	\]
	Then, from \eqref{flow2},
	\begin{equation}\label{diff_eq}
		\frac{d f(t)}{dt} = - \bigl( K(u^{[1]}(t)) - K(u^{[2]}(t)) \bigr).
	\end{equation}
	By the fundamental theorem of calculus and the chain rule,
	\[
	\begin{aligned}
		K_i(u^{[1]}) - K_i(u^{[2]}) 
		&= \int_{u^{[2]}}^{u^{[1]}} \nabla K_i \cdot d r \\
		&= \int_0^1 \nabla K_i(u^{[s]}) \cdot f \, ds \\
		&= \int_0^1 \bigg( \frac{\partial K_i}{\partial u_i}(u^{[s]}) f_i + \sum_{v_j \sim v_i} \frac{\partial K_i}{\partial u_j}(u^{[s]}) f_j \bigg) ds \\
		&= \sum_{v_j \sim v_i} \bigg( \int_0^1 \frac{\partial K_i}{\partial u_j}(u^{[s]}) ds \bigg) (f_j - f_i) 
		+ \int_0^1 \bigg( \frac{\partial K_i}{\partial u_i} + \sum_{v_j \sim v_i} \frac{\partial K_i}{\partial u_j} \bigg)(u^{[s]}) f_i \, ds.
	\end{aligned}
	\]
	
	Set
	\[
	\omega_{ij}(t) := - \int_0^1 \frac{\partial K_i}{\partial u_j}(u^{[s]}) ds, \quad 
	g_i(t) := - \int_0^1 \bigg( \frac{\partial K_i}{\partial u_i} + \sum_{v_j \sim v_i} \frac{\partial K_i}{\partial u_j} \bigg)(u^{[s]}) ds.
	\]
	
	By Lemma \ref{vari}, we have $\omega_{ij}(t) \geq 0$ and $g_i(t) \leq 0$. Thus \eqref{diff_eq} can be rewritten as
	\[
	\frac{d f}{dt} = \Delta_{\omega(t)} f + g(t) f,
	\]
	where $\Delta_{\omega(t)}$ is the weighted graph Laplacian associated with weights $\omega_{ij}(t)$.
	
	Since $K(u^{[1]}(t))$ and $K(u^{[2]}(t))$ are uniformly bounded, there exist constants $M_1, M_2$ such that
	\[
	|K_i(u^{[1]}(t))| \leq M_1, \quad |K_i(u^{[2]}(t))| \leq M_2, \quad \forall v_i \in V, \quad t \geq 0.
	\]
	Set $M = \max\{M_1, M_2\}$. Then for $t \in \left[0, \frac{\epsilon}{2M}\right]$,
	\[
	|u_i^{[1]}(t) - u_i(0)| \leq \frac{\epsilon}{2}, \quad |u_i^{[2]}(t) - u_i(0)| \leq \frac{\epsilon}{2}, \quad \forall v_i \in V.
	\]
	
	Since $u^{[s]}(t) = s u^{[1]}(t) + (1-s) u^{[2]}(t)$, we have
	\[
	\min\{u_i^{[1]}(t), u_i^{[2]}(t)\} \leq u_i^{[s]}(t) \leq \min\{u_i^{[1]}(t), u_i^{[2]}(t)\} + \epsilon.
	\]
		
From Lemma~\ref{key1}, we have
\begin{equation}\label{hadhqi}
	\begin{aligned}
		\sum_{j\sim i} \omega_{ij}(t)
		&= \sum_{j\sim i} \int_0^1 -\frac{\partial K_i}{\partial u_j}(u^{[s]}) \, ds \\
		&= \sum_{j\sim i} \int_0^1 \bigg| \frac{\partial K_i}{\partial u_j}(u^{[s]}) \bigg| ds \\
		&= 2 \sum_{j\sim i} \int_0^1 \bigg| \frac{\partial \theta_{ij}}{\partial u_j}(u_i^{[s]}, u_j^{[s]}) \bigg| ds \\
		&\leq 2C \sum_{j\sim i} \int_0^1 \theta_{ij}(u_i^{[s]}, u_j^{[s]}) ds,
	\end{aligned}
\end{equation}
where the last inequality follows from Lemma~\ref{vari}. 

For $i \in V$, without loss of generality, we can assume $u_i^{[1]} \leq u_i^{[2]}$. Since \( \partial \theta_{ij} / \partial u_i < 0 \), the function \( \theta_{ij} \) is decreasing in \( u_i \). Together with Lemma \ref{eucconstri} and Lemma \ref{hyperconstri}, it follows
\begin{equation}\label{dqwdhqof}
	\theta_{ij}(u_i^{[s]}, u_j^{[s]}) \leq \theta_{ij}(u_i^{[1]}, u_j^{[s]}) \leq e^{2C\epsilon} \theta_{ij}(u_i^{[1]}, u_j^{[1]}).
\end{equation}
By definition of $K_i$, 
\begin{equation}\label{dnqwdnq}
	\sum_{j\sim i}2 \theta_{ij}(u_i^{[1]}, u_j^{[1]}) \leq |K_i(u^{[1]}(t))|+ 2\pi \leq M+ 2\pi.
\end{equation}

Combining \eqref{hadhqi}, \eqref{dqwdhqof} and \eqref{dnqwdnq}, 
$$
\begin{aligned}
	\sum_{j \sim i} \omega_{i j}(t)& \leq 2C \sum_{j\sim i} \int_0^1 \theta_{ij}(u_i^{[s]}, u_j^{[s]}) ds\\
	&\leq Ce^{2C\epsilon} \sum_{j\sim i} \int_0^1 2\theta_{ij}(u_i^{[1]}, u_j^{[1]}) ds\\
	&\leq Ce^{2C\epsilon} (M+2\pi)\\
\end{aligned}
$$

	Applying Corollary \ref{cor0} (maximum principle for this type of equation), we conclude that
	\[
	f(t) \equiv 0, \quad \text{for } t \in \left[0, \frac{\epsilon}{2M}\right].
	\]
	
	Since $\epsilon > 0$ is arbitrary, the uniqueness extends to all $t \in [0, +\infty)$.

\end{proof}

\begin{corollary}
	If there exists a constant $c$ such that $\deg(v_i) \leq c$ for all $v_i \in V$, then the solution to equation \eqref{flow2} is unique.
\end{corollary}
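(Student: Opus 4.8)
The plan is to deduce this directly from Theorem~\ref{uniq} by showing that the hypothesis ``$K(u^{[1]}(t))$ and $K(u^{[2]}(t))$ are uniformly bounded'' is \emph{automatic} once the vertex degrees are uniformly bounded by $c$. Thus the only thing that really needs checking is a purely combinatorial a~priori bound on the discrete Gaussian curvature.

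First I would record that bound. At any primal vertex $v_i$ the cone angle is $\sigma(v_i) = 2\sum_{j\sim i}\theta_{ij}$, a sum of $\deg(v_i)$ terms, each of which lies in $(0,\pi)$ because $\theta_{ij}$ is an interior angle of the two-circle triangle $\triangle(v_i v_j v_f)$. Hence $0 < \sigma(v_i) < 2\pi \deg(v_i)$, so that
\[
2\pi\bigl(1 - \deg(v_i)\bigr) \le K_i < 2\pi ,
\]
exactly the estimate already used in the proof of Lemma~\ref{flow_finite_exi}. In particular $|K_i(u)| \le 2\pi\bigl(1 + \deg(v_i)\bigr)$ for every admissible radius configuration $u$, with no reference whatsoever to the flow; in the hyperbolic background the estimate holds verbatim for $u$ ranging over the configuration space $(-\infty,0)^V$, so there is no issue there.

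Next, under the hypothesis $\deg(v_i)\le c$ for all $v_i\in V$, this becomes the uniform estimate $|K_i(u)| \le 2\pi(1+c)$ for all $v_i\in V$ and all admissible $u$. Consequently, whenever $u^{[1]}(t)$ and $u^{[2]}(t)$ are solutions of \eqref{flow2}, the functions $K(u^{[1]}(t))$ and $K(u^{[2]}(t))$ are automatically uniformly bounded on $V\times[0,\infty)$ by $2\pi(1+c)$. Theorem~\ref{uniq} then applies directly and gives $u^{[1]}(t)\equiv u^{[2]}(t)$ for all $t\ge 0$; together with the long-time existence of Theorems~\ref{existence} and \ref{existence_u}, this yields a unique global solution of \eqref{flow2} for every initial value. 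There is essentially no analytic obstacle in this corollary: the whole content is the observation that the curvature bound required by Theorem~\ref{uniq} is forced by the bounded-degree assumption, which is immediate from the definition $K_i = 2\pi - \sigma(v_i)$ of the vertex curvature.
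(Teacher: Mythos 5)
Your proposal is correct and follows essentially the same route as the paper: the bounded-degree hypothesis gives a uniform a priori bound on $K_i$ (the paper uses $2\pi - c\pi \le K_i < 2\pi$, you use $|K_i|\le 2\pi(1+c)$; either suffices), so the uniform-curvature hypothesis of Theorem~\ref{uniq} is automatically satisfied and uniqueness follows directly.
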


\begin{proof}
	Since $\deg(v_i) \leq c$, the curvature satisfies
	\[
	2\pi - c \pi \leq K_i(u(t)) < 2\pi, \quad \forall v_i \in V, \quad \forall t \geq 0.
	\]
	Thus $K(u(t))$ is uniformly bounded. The uniqueness then follows directly from Theorem \ref{uniq}.
\end{proof}

\subsection{Convergence of the flow in Hyperbolic background geometry}
\begin{proof}[Proof of Theorem \ref{convergence1}]
	Let $u(t)$ be the solution constructed as the limit of a sequence $\{u^{[n]}(t)\}_{n=1}^\infty$, as in the proof of Theorem \ref{existence_u}, with initial value $u(0)$ satisfying $K(u(0)) \leq 0$.
	
	We first prove that $u(t)$ is non-decreasing. Recall that $K^{[n]}(t) = K(u^{[n]}(t))$, and define
	\[
	f_i^{[n]}(t) = 
	\begin{cases}
		K_i^{[n]}(t) & \text{if } i \in V^{[n]}, \\[5pt]
		0 & \text{if } i \notin V^{[n]}.
	\end{cases}
	\]
	Since $u^{[n]}(t)$ evolves according to the flow \eqref{flow_finite}, for $v_i \in V^{[n]}$ we have
We compute the time derivative of $f_i^{[n]}(t)$ as follows:
\begin{equation} \label{pf1}
	\begin{aligned}
		\frac{d}{dt} f_i^{[n]}(t) 
		&= -\frac{\partial f_i^{[n]}}{\partial u_i} f_i^{[n]}(t) 
		- \sum_{\substack{v_j \sim v_i\\ v_j \in V^{[n]}}} \frac{\partial f_i^{[n]}}{\partial u_j} f_j^{[n]}(t) \\
		&= - \sum_{\substack{v_j \sim v_i\\ v_j \in V^{[n]}}} \frac{\partial f_i^{[n]}}{\partial u_j} \left( f_j^{[n]}(t) - f_i^{[n]}(t) \right) 
		- \bigg( \frac{\partial f_i^{[n]}}{\partial u_i} 
		+ \sum_{\substack{v_j \sim v_i\\ v_j \in V^{[n]}}} \frac{\partial f_i^{[n]}}{\partial u_j} \bigg) f_i^{[n]}(t).
	\end{aligned}
\end{equation}
	For $v_i \notin V^{[n]}$, clearly
	\begin{equation} \label{pf2}
		\frac{d}{dt} f_i^{[n]}(t) = 0.
	\end{equation}
	
We define 
\[
\omega_{ij}(t) =
\begin{cases}
	- \dfrac{\partial f_i^{[n]}}{\partial u_j} & \text{if } i, j \in V^{[n]}, \\
	0 & \text{otherwise,}
\end{cases}
\]
and
\[
g_i(t) =
\begin{cases}
	- \bigg( \dfrac{\partial f_i^{[n]}}{\partial u_i} + \displaystyle\sum_{\substack{v_j \sim v_i\\ v_j \in V^{[n]}}} \dfrac{\partial f_i^{[n]}}{\partial u_j} \bigg) & \text{if } i \in V^{[n]}, \\
	0 & \text{otherwise.}
\end{cases}
\]

	Then equations \eqref{pf1} and \eqref{pf2} together imply
	\[
	\frac{d f^{[n]}}{dt} = \Delta_{\omega(t)} f^{[n]} + g(t) f^{[n]}.
	\]
	
	From Lemma \ref{vari}, we know that $\omega_{ij} \geq 0$. Given any $\tau > 0$, since each $\mathcal{D}^{[n]} = (V^{[n]}, E^{[n]}, F^{[n]})$ is a finite cellular decomposition, and by the estimate \eqref{keyesti2}, there exists a uniform constant $C_1$ such that $|\omega_{ij}(t)| \leq C_1$ for all $t \in [0, \tau]$. Hence, there exists a uniform constant $C$ such that
	\[
	\sum_{v_j \sim v_i} \omega_{ij}(t) \leq C
	\]
	for all $v_i \in V$ and all $t \in [0, \tau]$.
	
	Since $f^{[n]}(0) \geq 0$, the maximum principle (Lemma \ref{maximumprin}) implies $f^{[n]}(t) \geq 0$ for all $t \geq 0$, hence $K_i(u^{[n]}(t)) \leq 0$ for all $v_i \in V^{[n]}$. Passing to the limit, we obtain $K_i(u(t)) \leq 0$ for all $v_i \in V$, which implies, via \eqref{flow2}, that $u(t)$ is non-decreasing.
	
	Next, from Part II of the proof of Theorem \ref{flow_finite_exi}, we know that each $u_i(t)$ is uniformly bounded above. Therefore, the limit $\lim_{t \to \infty} u(t)$ exists.
	
	Finally, we prove that $K_i(\infty) = 0$. Since $K_i(u(t)) \leq 0$ for all $t$, we have $K_i(\infty) \leq 0$. Suppose, for contradiction, that $K_i(\infty) < 0$. Then there exists $T > 0$ such that $K_i(t) \leq K_i(\infty)/2 < 0$ for all $t \geq T$. It follows from \eqref{flow2} that
	\[
	\frac{du_i(t)}{dt} = -K_i(t) \geq -\frac{K_i(\infty)}{2} > 0
	\]
	for all $t \geq T$. Since the right-hand side is bounded away from zero, this implies $u_i(t) \to \infty$ as $t \to \infty$, contradicting the previously established upper bound. Therefore, $K_i(\infty) = 0$, completing the proof.
\end{proof}

\subsection{Convergence of the flow in Euclidean background geometry}

Let $\theta_{i j}$ denote half the inner angle at vertex $v_i$ contributed by the neighboring vertex $v_j$. Since $\Theta(e)=\frac{\pi}{4}$ for all edges $e \in E$, it follows that
$$
\theta_{i j}=\arctan \left(\frac{r_j}{r_i}\right)=\arctan \left(e^{u_j-u_i}\right),
$$
where $u_i = \log r_i$ and $u_j = \log r_j$.

Define $G(x):=\arctan \left(e^x\right)$. Then $G^{\prime}(0)=\frac{1}{2}$. Therefore, the curvature function $K_i(u)$ at vertex $v_i$ becomes
\begin{equation}\label{K_con}
	\begin{aligned}
		K_i(u) & =2 \pi-\sum_{j \sim i} 2 \theta_{i j} \\
		& =-\sum_{j \sim i}\left[2 G^{\prime}(0)\left(u_j-u_i\right)+2 G\left(u_j-u_i\right)-2 G^{\prime}(0)\left(u_j-u_i\right)-\frac{\pi}{2}\right] \\
		& =-\sum_{j \sim i}\left[\left(u_j-u_i\right)+F\left(u_j-u_i\right)\right]
	\end{aligned}
\end{equation}
where $F(x):=2 G(x)-x-\frac{\pi}{2}$. Clearly, $F(0)=0$ and $F^{\prime}(0)=0$.
\begin{lemma}\label{analysis_lemma}
	There exist universal constants $\delta_0, C_0>0$ such that for any $|x| \leq \delta_0$,
	$$
	|F(x)| \leq C_0|x|^2, \quad| F^\prime (x)| \leq C_0|x| .
	$$
\end{lemma}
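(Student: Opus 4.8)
The plan is to deduce the estimates from Taylor's theorem applied at the origin, using that $F$ is smooth on a neighborhood of $0$ and that $F(0)=F'(0)=0$ (both facts already recorded above). The only genuine content is to produce a uniform bound on $F''$ on a fixed neighborhood of the origin; everything else is the standard remainder estimate.

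First I would compute the derivatives in closed form. Since $G(x)=\arctan(e^x)$, we have $G'(x)=\dfrac{e^x}{1+e^{2x}}=\dfrac{1}{2\cosh x}$, and therefore
\[
F'(x)=2G'(x)-1=\frac{1}{\cosh x}-1,\qquad F''(x)=-\frac{\sinh x}{\cosh^2 x}.
\]
In particular $F\in C^\infty(\mathbb{R})$, which re-confirms $F(0)=0$ and $F'(0)=F'(0)=\frac{1}{\cosh 0}-1=0$; moreover $|F''(x)|=\dfrac{|\tanh x|}{\cosh x}\le 1$ for every $x\in\mathbb{R}$, so on $[-1,1]$ (say) there is a finite uniform bound $M:=\sup_{|t|\le 1}|F''(t)|$, in fact $M\le 1$.

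Now I would fix $\delta_0=1$. For $|x|\le\delta_0$, Taylor's formula with integral remainder at the origin, combined with $F(0)=F'(0)=0$, gives
\[
F'(x)=\int_0^x F''(t)\,dt,\qquad F(x)=\int_0^x (x-t)\,F''(t)\,dt,
\]
whence $|F'(x)|\le M|x|$ and $|F(x)|\le\tfrac{M}{2}|x|^2\le M|x|^2$. Taking $C_0:=M$ (one may simply take $C_0=\delta_0=1$) yields both asserted inequalities. There is essentially no obstacle here: the only points needing care are the vanishing of $F'(0)$, which is immediate from the closed form (and was already noted), and the boundedness of $F''$ near $0$, which is equally immediate. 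If one preferred not to use the explicit expression for $F''$, mere continuity of $F''$ on the compact interval $[-\delta_0,\delta_0]$ suffices to define $M$, and the same two Taylor identities finish the argument verbatim.
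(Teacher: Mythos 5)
Your proof is correct and takes essentially the same route as the paper, namely Taylor expansion at the origin using $F(0)=F'(0)=0$ together with boundedness of $F''$ near $0$. You supply more detail than the paper (explicit closed forms for $F'$, $F''$ and the integral-remainder form of Taylor's theorem, giving concrete constants $\delta_0=C_0=1$), but the underlying idea is identical.
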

\begin{proof}
	The Taylor expansion around $x=0$ shows that
	$$
	F(x)=\frac{1}{2} F^{\prime \prime}(0) x^2+o\left(x^2\right)
	$$
	which implies the stated bounds hold for sufficiently small $|x|$.
\end{proof}

Therefore, the flow equation \eqref{flow2} can be reformulated using \eqref{K_con} as the following semilinear parabolic equation:
\begin{equation}\label{semilinear_equation}
	\frac{du_i}{dt}= \Delta u_i + \tilde{F_i}(u),
\end{equation}
where $\tilde{F}_i(u):=\sum_{j \sim i} F\left(u_j-u_i\right)$.

To study equation \eqref{semilinear_equation}, we firstly study the the following homogeneous problem for the heat equation on $V$:
\begin{eqnarray}\label{homogenous}
	\left\{
	\begin{aligned}
		&\frac{d u}{dt}=\Delta u, \\
		&u(0)=\phi \in l^2(V) 
	\end{aligned}\right
	.
\end{eqnarray}
We have the following energy estimate:
\begin{lemma}\label{energy_est}
	Let $u$ be a solution to the homogeneous heat equation \eqref{homogenous}. Then for all $t \geq 0$,
		\begin{equation}\label{energy1}
		\begin{aligned}
			&N_{0,2}^2(u)+(1+t)N_{1,2}^2(u)+ \int_0^t N_{1,2}^2(u(\tau))d\tau+\int_0^t (1+\tau)N_{2,2}^2(u(\tau))d\tau \\
			&\leq (2+C_1) N_{0,2}^2(\phi),
		\end{aligned}
	\end{equation}
	where $C_1$ is the constant in \eqref{sob}.
\end{lemma}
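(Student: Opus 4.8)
The plan is to combine two elementary $l^2$ energy identities — a zeroth-order one and a first-order one, the latter weighted by the factor $1+t$ — and then to close the estimate using the discrete Green formula \eqref{stokes}, the identity \eqref{N02N22}, the telescoping relation \eqref{N2N1} between the seminorms, and the boundedness bound \eqref{sob}. As a preliminary observation I would record that on $\mathbb{Z}^2$ the operator $\Delta$ is bounded and self-adjoint on $l^2(V)$ (each row of its matrix has finitely many entries, all of modulus at most $1$), so \eqref{homogenous} has the solution $u(t)=e^{t\Delta}\phi\in C^\infty([0,\infty),l^2(V))$; moreover $D_iu(t)$ and $D_iD_ju(t)$ again lie in $l^2(V)$, and since $D_i$ commutes with $\Delta$ by \eqref{commute} (hence $D_i\Delta=\Delta D_i$), each $D_iu$ again solves the heat equation. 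This makes every differentiation under an infinite sum or under an integral in the steps below legitimate, and it reduces the lemma to purely algebraic manipulation of the identities already proved.

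\emph{Step 1 (zeroth-order energy).} Differentiating $N_{0,2}^2(u)=(u,u)$, using $\dot u=\Delta u$, self-adjointness, and \eqref{stokes}, one gets $\frac{d}{dt}N_{0,2}^2(u)=2(u,\Delta u)=-N_{1,2}^2(u)$. Integrating over $[0,t]$ yields the conservation identity $N_{0,2}^2(u(t))+\int_0^tN_{1,2}^2(u(\tau))\,d\tau=N_{0,2}^2(\phi)$, which already supplies the first and third terms of \eqref{energy1} with constant $1$ and, in particular, shows $\int_0^tN_{1,2}^2(u(\tau))\,d\tau\le N_{0,2}^2(\phi)$.

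\emph{Step 2 (weighted first-order energy).} Applying the Step 1 computation to each heat solution $D_iu$ and summing over $D_i\in\mathcal D$, and using $N_{1,2}^2(u)=\sum_{D_i}\|D_iu\|_{l^2}^2$ together with \eqref{N2N1}, gives $\frac{d}{dt}N_{1,2}^2(u)=-\sum_{D_i}N_{1,2}^2(D_iu)=-N_{2,2}^2(u)$; equivalently one may differentiate $N_{1,2}^2(u)=-2(u,\Delta u)$ and invoke \eqref{N02N22}. Hence $\frac{d}{dt}\bigl[(1+t)N_{1,2}^2(u)\bigr]=N_{1,2}^2(u)-(1+t)N_{2,2}^2(u)$, and integrating from $0$ to $t$ and rearranging gives
\[
(1+t)N_{1,2}^2(u(t))+\int_0^t(1+\tau)N_{2,2}^2(u(\tau))\,d\tau=N_{1,2}^2(\phi)+\int_0^tN_{1,2}^2(u(\tau))\,d\tau.
\]
Now bound $N_{1,2}^2(\phi)\le C_1N_{0,2}^2(\phi)$ by \eqref{sob} and $\int_0^tN_{1,2}^2(u(\tau))\,d\tau\le N_{0,2}^2(\phi)$ by Step 1, obtaining $(1+t)N_{1,2}^2(u(t))+\int_0^t(1+\tau)N_{2,2}^2(u(\tau))\,d\tau\le(1+C_1)N_{0,2}^2(\phi)$. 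Adding this to the Step 1 identity produces precisely the constant $2+C_1$ on the right-hand side of \eqref{energy1}.

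\emph{Expected difficulty.} There is no deep obstacle here; the only care needed is the justification of differentiating the infinite sums and integrals (handled by boundedness of $\Delta$ on $l^2$) and of the commutation $D_i\Delta=\Delta D_i$. The one genuine choice is the weight $1+t$: its derivative is exactly $1$, so the weighted identity in Step 2 regurgitates precisely the term $\int_0^tN_{1,2}^2$ that Step 1 has already controlled, while the dissipation term $-\int(1+\tau)N_{2,2}^2$ migrates to the left-hand side; any nonnegative weight with derivative $1$ would serve equally well. Thus the proof amounts to bookkeeping the three contributions $N_{0,2}^2(\phi)$ (Step 1), $\int N_{1,2}^2\le N_{0,2}^2(\phi)$ (reused in Step 2), and $N_{1,2}^2(\phi)\le C_1N_{0,2}^2(\phi)$ (Step 2), which sum to $(2+C_1)N_{0,2}^2(\phi)$.
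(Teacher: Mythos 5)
Your proposal is correct and follows essentially the same route as the paper: both differentiate $N_{0,2}^2(u)$ and $N_{1,2}^2(u)$ along the flow using \eqref{stokes}, the commutation \eqref{commute}, and \eqref{N1N0}--\eqref{N2N1}, introduce the weight $1+t$, and close with \eqref{sob}; your separate integration of the two identities (reusing the Step 1 bound on $\int_0^t N_{1,2}^2$) is just a bookkeeping variant of the paper's single combined functional $2N_{0,2}^2(u)+(1+t)N_{1,2}^2(u)$, yielding the same constant $2+C_1$.
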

\begin{proof}
	From \eqref{homogenous} and \eqref{stokes}, we know
	\begin{equation}\label{estimate1}
	\ddt{N_{0,2}^2(u)}=	\ddt{\|u\|_{l^2}^2}=2(u, \ddt{u})=2(u, \Delta u)= -N_{1,2}^2(u).
	\end{equation}
	From \eqref{commute} and \eqref{delta_def}, for all $D_i\in\mathcal{D}$, one has
	\begin{align}\label{homo1}
		\ddt{D_iu}=\Delta (D_i u)
	\end{align}
	For same reason in \eqref{estimate1}, from \eqref{homo1}, we have
	\begin{equation}\label{estimate2}
		\ddt{N_{0,2}^2(D_i u)}=		\ddt{\|D_i u\|_{l^2}^2}= - N_{1,2}^2(D_i u).
	\end{equation}
	Summing over all \(D_i \in \mathcal{D}\), and using \eqref{N1N0} and \eqref{N2N1}, yields
	\begin{equation}\label{estimate3}
		\ddt{N_{1,2}^2(u)}= - N_{2,2}^2(u).
	\end{equation}
	Hence 
	\begin{equation}\label{homo5}
		\ddt{(tN_{1,2}^2(u))}- N_{1,2}^2(u)+tN_{2,2}^2(u)=0.
	\end{equation}
	Combining \eqref{estimate1},\eqref{estimate3} and \eqref{homo5}, it follows
	\begin{equation}\label{homo6}
		\frac{d(2N_{0,2}^2(u)+(1+t)N_{1,2}^2(u))}{dt}+N_{1,2}^2(u)+(1+t)N_{2,2}^2(u)=0
	\end{equation}
	Integrate both sides of \eqref{homo6} over \([0,t]\), we can obtain the estimate
	\begin{equation}
		\begin{aligned}.
					&2N_{0,2}^2(u)+(1+t)N_{1,2}^2(u)-2N_{0,2}^2(\phi)-N_{1,2}^2(\phi)\\
					&+ \int_0^t N_{1,2}^2(u(\tau))d\tau+ \int_0^t (1+\tau)N_{2,2}^2(u(\tau))d\tau=0,
		\end{aligned}
	\end{equation}
	Hence, by \eqref{sob}, we have
	\begin{equation}
		\begin{aligned}
					&2N_{0,2}^2(u)+(1+t)N_{1,2}^2(u)+ \int_0^t N_{1,2}^2(u(\tau))d\tau+\int_0^t (1+\tau)N_{2,2}^2(u(\tau))d\tau \\
					&\leq (2+C_1) N_{0,2}^2(\phi)
		\end{aligned}
	\end{equation}
	which implies \eqref{energy1} naturally.
	
\end{proof}
Secondly, we study the the following non-homogeneous problem for the heat equation on $V$:
\begin{eqnarray}\label{nonhomogenous}
	\left\{
	\begin{aligned}
		&\ddt{u}=\Delta u+ \tilde{F}(h), \\
		&u(0)=0,
	\end{aligned}\right
	.
\end{eqnarray}
where $h$ is a given function in $C^1([0,\infty);l^2(H)).$

\begin{lemma}\label{energy_est2}
	Let $u$ be a solution to the non-homogeneous heat equation \eqref{nonhomogenous}. Then for all $t \geq 0$,
		\begin{equation}\label{energyineq2}
		\begin{aligned}
			&N_{0,2}^2(u)+(1+t)N_{1,2}^2(u)+ \int_0^t N_{1,2}^2(u(\tau))d\tau+\int_0^t (1+\tau)N_{2,2}^2(u(\tau))d\tau \\
			&\leq \int_0^t 4 \|\tilde{F}(h(\tau))\|_{l^2}\cdot N_{0,2}(u(\tau)) d\tau +\int_0^t 2(1+\tau)  \|\tilde{F}(h(\tau))\|_{l^2}\cdot  N_{2,2}(u(\tau))d\tau .
		\end{aligned}
	\end{equation}
\end{lemma}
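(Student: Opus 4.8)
\emph{Proof strategy for Lemma~\ref{energy_est2}.} The plan is to run the same energy computation that produced \eqref{homo6} in the proof of Lemma~\ref{energy_est}, but now for the forced equation \eqref{nonhomogenous}, carrying the source term $\tilde F(h)$ through each step and absorbing it at the very end by Cauchy--Schwarz. Concretely, I would first differentiate $N_{0,2}^2(u)=\|u\|_{l^2}^2$ in time; using \eqref{nonhomogenous} together with the discrete Green identity \eqref{stokes} this gives
\[
\ddt{N_{0,2}^2(u)}=2(u,\Delta u)+2(u,\tilde F(h))=-N_{1,2}^2(u)+2(u,\tilde F(h)).
\]
Since the basic difference operators commute with $\Delta$ (by \eqref{commute} and \eqref{delta_def}), each $D_i u$ satisfies $\ddt{(D_i u)}=\Delta(D_i u)+D_i\tilde F(h)$, so the identical computation yields $\ddt{N_{0,2}^2(D_i u)}=-N_{1,2}^2(D_i u)+2(D_i u,D_i\tilde F(h))$; summing over $D_i\in\mathcal D$ and using \eqref{N1N0}--\eqref{N2N1} gives
\[
\ddt{N_{1,2}^2(u)}=-N_{2,2}^2(u)+2\sum_{D_i\in\mathcal D}(D_i u,D_i\tilde F(h)).
\]

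The one genuinely new ingredient compared with the homogeneous case is to handle $\sum_{D_i}(D_i u,D_i\tilde F(h))$ without ever differentiating $\tilde F(h)$: by \eqref{fdeltag} and the self-adjointness of $\Delta$ (which is immediate from \eqref{stokes1}, whose right-hand side is symmetric in $f,g$), one has $\sum_{D_i}(D_i u,D_i\tilde F(h))=-2(\Delta u,\tilde F(h))$, and $\|\Delta u\|_{l^2}=\tfrac12 N_{2,2}(u)$ by \eqref{N02N22}. Next I would form the combination $2N_{0,2}^2(u)+(1+t)N_{1,2}^2(u)$ exactly as in \eqref{homo6}; collecting the two displays above, its time derivative equals
\[
-N_{1,2}^2(u)-(1+t)N_{2,2}^2(u)+4(u,\tilde F(h))-4(1+t)(\Delta u,\tilde F(h)).
\]
Integrating over $[0,t]$ and using $u(0)=0$ (so $N_{0,2}(u(0))=N_{1,2}(u(0))=0$, and no initial-data / Sobolev term \eqref{sob} is needed here) gives
\[
2N_{0,2}^2(u)+(1+t)N_{1,2}^2(u)+\int_0^t N_{1,2}^2(u)\,d\tau+\int_0^t(1+\tau)N_{2,2}^2(u)\,d\tau=\int_0^t\!\Big(4(u,\tilde F(h))-4(1+\tau)(\Delta u,\tilde F(h))\Big)d\tau.
\]

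To finish, I would bound the right-hand side by Cauchy--Schwarz: $|(u,\tilde F(h))|\le N_{0,2}(u)\,\|\tilde F(h)\|_{l^2}$ and $|(\Delta u,\tilde F(h))|\le\|\Delta u\|_{l^2}\,\|\tilde F(h)\|_{l^2}=\tfrac12 N_{2,2}(u)\,\|\tilde F(h)\|_{l^2}$ by \eqref{N02N22}, so the integrand is at most $4\|\tilde F(h)\|_{l^2}N_{0,2}(u)+2(1+\tau)\|\tilde F(h)\|_{l^2}N_{2,2}(u)$; discarding the extra nonnegative term $N_{0,2}^2(u)$ on the left then yields exactly \eqref{energyineq2}. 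The main obstacle I anticipate is not any deep inequality but the justification of these formal operations on $l^2(V)$-valued functions: namely that the solution $u$ has enough regularity ($u\in C^1([0,\infty);l^2(V))$, with $\Delta u$ bounded on $l^2$ and $N_{1,2}(u),N_{2,2}(u)$ finite — automatic from \eqref{sob} once $u(t)\in l^2(V)$) to differentiate the infinite sums $\|u\|_{l^2}^2$ and $\sum_i\|D_i u\|_{l^2}^2$ termwise and to apply \eqref{stokes}, \eqref{fdeltag}, \eqref{N02N22}; and that $\tilde F(h(\tau))\in l^2(V)$ so that the right-hand side is meaningful (if it is infinite the estimate is trivial). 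This bookkeeping, rather than the algebra, is where the care is required.
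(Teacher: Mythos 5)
Your proposal is correct and follows essentially the same route as the paper's own proof: differentiate $N_{0,2}^2(u)$ and $N_{0,2}^2(D_i u)$, use the commutation \eqref{commute}, convert $\sum_{D_i}(D_i u, D_i\tilde F(h))$ to $-2(\tilde F(h),\Delta u)$ via \eqref{fdeltag}, form the same weighted combination $2N_{0,2}^2+(1+t)N_{1,2}^2$ as in \eqref{homo6}, integrate using $u(0)=0$, and finish with Cauchy--Schwarz together with the identity $\|\Delta u\|_{l^2}=\tfrac12 N_{2,2}(u)$ from \eqref{N02N22}. Your closing remark on the regularity needed to justify the termwise differentiation of infinite sums is a reasonable caveat that the paper leaves implicit.
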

\begin{proof}
	By similar reasoning as in Lemma \ref{energy_est}, we get
	\begin{equation}\label{nohomo1}
		\ddt{N_{0,2}^2(u)}+N_{1,2}^2(u)=2(\tilde{F}(h),u)\\
	\end{equation}
	and for all \(D_i \in \mathcal{D}\),
	\begin{equation}\label{781372}
		\ddt{N_{0,2}^2(D_iu)}+N_{1,2}^2(D_iu)=2(D_i \tilde{F}(h),D_i u),~~\forall D_i\in \mathcal{D}.
	\end{equation}
	From \eqref{stokes1}, we know
	\begin{equation}\label{12e132}
		\sum_{D_i\in \mathcal D}(D_i \tilde{F}(h),D_i u)=- 2( \tilde{F}(h), \Delta u)
	\end{equation}
	From \eqref{781372} and \eqref{12e132}, together with \eqref{N1N0} and \eqref{N2N1}, we find
	\begin{equation}\label{sajqdq}
		\ddt{N_{1,2}^2(u)}+N_{2,2}^2(u)=- 4( \tilde{F}(h), \Delta u)
	\end{equation}
	Thus, combining \ref{nohomo1} and \ref{sajqdq},                                                                                                                                                                                                                                                                                                                                                                                                                                                                                                                                                                                                                                                                                                                                                                                                                                                                                                                                                                                                                                                                                                                                                                            
	\begin{equation}\label{homo12}
		\frac{d(2N_{0,2}^2(u)+(1+t)N_{1,2}^2(u))}{dt}+N_{1,2}^2(u)+(1+t)(N_{2,2}^2(u)+4( \tilde{F}(h), \Delta u))=4(\tilde{F}(h),u)
	\end{equation}
	Integrate both sides of \eqref{homo12} over \([0,t]\), by \eqref{N02N22} and Cauchy-Schwarz inequality, we can obtain the estimate
	\begin{equation}\label{dqdqh}
		\begin{aligned}
					&2N_{0,2}^2(u)+(1+t)N_{1,2}^2(u)+ \int_0^t N_{1,2}^2(u(\tau))d\tau+\int_0^t (1+\tau)N_{2,2}^2(u(\tau))d\tau \\
					\leq & \int_0^t 4 |(\tilde{F}(h),u)| d\tau +\int_0^t 4(1+\tau)  |( \tilde{F}(h), \Delta u))| d\tau \\
					\leq &\int_0^t 4 \|\tilde{F}(h(\tau))\|_{l^2}\cdot N_{0,2}(u(\tau)) d\tau +\int_0^t 2(1+\tau)  \|\tilde{F}(h(\tau))\|_{l^2}\cdot  N_{2,2}(u(\tau))d\tau ,
		\end{aligned}
	\end{equation}
	which implies \eqref{energyineq2} naturally.
\end{proof}

\begin{lemma}\label{final_lemma_for}
	Let \(\{u_n\}_{n=1}^\infty\) be a sequence in \(l^2\) satisfying
	$$
	\left\|u_n\right\|_{l^2} \leq M, \quad \mathcal{E}\left(u_n\right):=\sum_{[i,j]\in E}|u_n(i)-u_n(j)|^2 \rightarrow 0, \quad n \rightarrow \infty
	$$
	Then
	$$
	\left\|u_n\right\|_{l^{\infty}} \rightarrow 0, \quad n \rightarrow \infty
	$$
\end{lemma}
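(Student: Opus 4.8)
The plan is to argue by contradiction, exploiting the vertex-transitivity of the $\mathbb{Z}^2$-lattice $G$ together with the elementary fact that small Dirichlet energy forces $u_n$ to be nearly constant on large combinatorial balls. If $\|u_n\|_{l^\infty}$ does not tend to $0$, then after passing to a subsequence there is a constant $c>0$ with $\|u_n\|_{l^\infty}\ge c$ for all $n$; since $u_n\in l^2(V)$, the values $|u_n(v)|$ decay at infinity and the supremum is attained, so I may choose $v_n\in V$ with $|u_n(v_n)|\ge c$. Because $G$ is the $\mathbb{Z}^2$-lattice, the translation $w\mapsto w-v_n$ is a graph automorphism and preserves both $\|\cdot\|_{l^2}$ and $\mathcal{E}(\cdot)$; replacing $u_n$ by its translate, I may assume $v_n$ is the origin $o$ for every $n$, i.e.\ $|u_n(o)|\ge c$.

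To make the ``near-constancy'' precise, I would fix $R\in\mathbb{N}$ and, for any vertex $w$ with combinatorial distance $d(o,w)\le R$, choose a shortest path $o=x_0\sim x_1\sim\cdots\sim x_k=w$ with $k\le R$ and pairwise distinct edges. The triangle inequality together with Cauchy--Schwarz gives
\[
|u_n(w)-u_n(o)|\le\sum_{l=1}^{k}|u_n(x_l)-u_n(x_{l-1})|\le\sqrt{k}\,\Big(\sum_{l=1}^{k}|u_n(x_l)-u_n(x_{l-1})|^2\Big)^{1/2}\le\sqrt{R\,\mathcal{E}(u_n)},
\]
so $|u_n(w)|\ge c-\sqrt{R\,\mathcal{E}(u_n)}$ for every $w$ in the ball $B_R:=\{w\in V:\ d(o,w)\le R\}$. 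Since $\mathcal{E}(u_n)\to 0$, there is $N=N(R)$ with $\sqrt{R\,\mathcal{E}(u_n)}\le c/2$ for all $n\ge N$, hence $|u_n(w)|\ge c/2$ on all of $B_R$. Using the quadratic volume growth of the lattice, $|B_R|\ge (R+1)^2$, so for $n\ge N(R)$,
\[
M^2\ge\|u_n\|_{l^2}^2\ge\sum_{w\in B_R}|u_n(w)|^2\ge\frac{c^2}{4}\,|B_R|\ge\frac{c^2}{4}(R+1)^2 .
\]
Choosing $R$ so large that $\tfrac{c^2}{4}(R+1)^2>M^2$ and then $n\ge N(R)$ produces a contradiction, establishing $\|u_n\|_{l^\infty}\to 0$.

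The argument is self-contained and uses no compactness or spectral input — only a path estimate, Cauchy--Schwarz, and the growth $|B_R|\gtrsim R^2$. I do not expect a genuine obstacle; the only points requiring care are the \emph{order of quantifiers} (freeze $R$, derive the ballwise lower bound, send $n\to\infty$ to push $\sqrt{R\,\mathcal{E}(u_n)}$ below $c/2$, and only afterwards let $R\to\infty$) and the routine remark that translating by $v_n$ is legitimate because $\|\cdot\|_{l^2}$ and $\mathcal{E}(\cdot)$ are invariant under lattice automorphisms.
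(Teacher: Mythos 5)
Your proposal is correct and rests on the same core idea as the paper's proof (contradiction: a large sup value combined with small Dirichlet energy forces many large values, contradicting the uniform $l^2$ bound), but your write-up is more careful and in fact patches a gap in the paper's argument. The paper asserts that the negation of $\|u_n\|_{l^\infty}\to 0$ produces a \emph{fixed} vertex $v_0$ and a subsequence with $|u_{n_k}(v_0)|>\epsilon$. That does not follow from the hypotheses alone, since the near-maximizing vertex can drift off to infinity with $n$. You correctly handle this by invoking the vertex-transitivity of $\mathbb{Z}^2$: translating by the maximizer $v_n$ leaves both $\|\cdot\|_{l^2}$ and $\mathcal{E}(\cdot)$ invariant, so you may normalize $v_n=o$. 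You also make explicit the two estimates the paper leaves implicit — the path/Cauchy--Schwarz bound $|u_n(w)-u_n(o)|\le\sqrt{R\,\mathcal{E}(u_n)}$ on a ball $B_R$, and the quadratic lower bound $|B_R|\ge (R+1)^2$ — which together give the lower bound on $\|u_n\|_{l^2}^2$ and hence the contradiction. So this is the same route the paper intends, carried out rigorously; the only genuinely new ingredient you add is the translation step, which is exactly what is needed to make the first line of the paper's proof valid.
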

\begin{proof}
	 We prove the lemma by contradiction. If it is not true, then there is a vertex $v_0\in V$  and a subsequence $\left\{u_{n_k}\right\}_{k=1}^{\infty}$ of sequence $\left\{u_n\right\}_{n=1}^{\infty}$ such that
	 $$
	 \left|u_{n_k}\left(v_0 \right)\right|>\epsilon, 
	 $$
	 where $\epsilon$ is a sufficient small positive constant. Since $\lim_{n\rightarrow +\infty} \mathcal{E}\left(u_n\right) \rightarrow 0$, we know for any $N> 0$, there is a $K =K(N)$ such that there are at least N vertices in $V$, which satisfies
	 $$
	 \left|u_{n_k}\left(v \right)\right|>\frac{\epsilon}{2}, \quad k \geq K.
	 $$
	 Hence,
	 $$
	 	\left\|u_{n_k}\right\|_{l^2}> \frac{\epsilon}{2} \sqrt{N}, \quad k \geq K.
	 $$ 
	 which contradicts with $\left\|u_n\right\|_{l^2} \leq M$, since $N$ can be arbitrary large.
\end{proof}

Theorem \ref{convergence2} is equivalent to the following.
\begin{theorem}
	There is a sufficient small positive constant $\epsilon$ such that, for any $\phi \in l^2(H)$ with $\|\phi\|_{l^2} \leq \epsilon$, there is a unique solution $u(t)$ to the equation \eqref{semilinear_equation} with initial value $u(0)=\phi$. Moreover,  $u(t)$ satisfies $u \in C^1([0, \infty), l^2(H))$,
	$$
	\|u(t)\|_{l^\infty} \rightarrow 0, \quad t\rightarrow \infty,
	$$
	and 
	$$
	\mathcal{E}(u(t)):=\sum_{[i,j]\in E}|u(i, t)-u(j, t)|^2 \leq C(1+t)^{-1},
	$$
	where $C= C(\epsilon)$ is a constant depending only on $\epsilon$.
\end{theorem}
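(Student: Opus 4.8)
The plan is to combine the two energy estimates (Lemmas~\ref{energy_est} and~\ref{energy_est2}) in a fixed-point argument to produce the solution, then extract the decay and the $l^\infty$-convergence from the energy bound. First I would set up a contraction mapping: given $h \in C^1([0,\infty); l^2(V))$ with a smallness bound $\sup_{t\ge 0}\big(N_{0,2}^2(h) + (1+t)N_{1,2}^2(h)\big) \le \rho^2$, let $u = \Phi(h)$ be the solution of the linear non-homogeneous problem \eqref{nonhomogenous} with forcing $\tilde F(h)$. By Lemma~\ref{analysis_lemma}, $\|\tilde F(h(\tau))\|_{l^2} \le C_0 N_{1,2}^2(h(\tau))$ (crucially quadratic in the \emph{gradient} of $h$, not in $h$ itself), so $\int_0^t \|\tilde F(h(\tau))\|_{l^2}\,d\tau \le C_0 \int_0^t N_{1,2}^2(h(\tau))\,d\tau \le C_0\rho^2$ by Lemma~\ref{energy_est2}-type control on $h$. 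Feeding this into the right-hand side of \eqref{energyineq2} and absorbing $N_{0,2}(u)$ and $N_{2,2}(u)$ via Cauchy--Schwarz (Young's inequality, splitting off $\tfrac12(1+\tau)N_{2,2}^2(u)$ to the left), I would obtain
\[
N_{0,2}^2(u) + (1+t)N_{1,2}^2(u) + \int_0^t N_{1,2}^2(u) + \int_0^t (1+\tau)N_{2,2}^2(u) \le C\Big(\int_0^t \|\tilde F(h)\|_{l^2}\Big)^2 \le C C_0^2 \rho^4,
\]
so that $\Phi$ maps the ball of radius $\rho$ into the ball of radius $C' \rho^2$; for $\rho$ small this is a self-map. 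A parallel computation on the difference $\Phi(h_1)-\Phi(h_2)$, using that $F$ has Lipschitz derivative near $0$ (again Lemma~\ref{analysis_lemma}), gives a contraction, so there is a unique fixed point $u = \Phi(u)$, which solves \eqref{semilinear_equation} with $u(0)=\phi$; taking $\phi$ with $\|\phi\|_{l^2}\le\epsilon$ small makes the linear part (Lemma~\ref{energy_est}) contribute $\lesssim \epsilon^2$, keeping us inside the small ball. The regularity $u \in C^1([0,\infty); l^2(V))$ follows since $\Delta$ and $\tilde F$ are bounded operators on $l^2(V)$ by \eqref{sob}, so the right-hand side of \eqref{semilinear_equation} is continuous in $t$ with values in $l^2$.

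Once the solution is in hand with the uniform bound $N_{0,2}^2(u(t)) + (1+t)N_{1,2}^2(u(t)) \le C(\epsilon)$, the energy-decay statement is immediate: by \eqref{n1pformula}, $\mathcal{E}(u(t)) = \tfrac12 N_{1,2}^2(u(t)) \le C(\epsilon)(1+t)^{-1}$. For the $l^\infty$-convergence I would argue along a subsequence using Lemma~\ref{final_lemma_for}: for any sequence $t_n \to \infty$, the functions $u(t_n)$ satisfy $\|u(t_n)\|_{l^2} \le \sqrt{C(\epsilon)}$ and $\mathcal{E}(u(t_n)) \to 0$, hence $\|u(t_n)\|_{l^\infty}\to 0$; since the sequence was arbitrary, $\|u(t)\|_{l^\infty}\to 0$ as $t\to\infty$. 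Finally, the identification with the regular packing $r_{\mathrm{reg}}\equiv 1$ is just the statement that $u\equiv 0$ corresponds to $r\equiv 1$, and $\|u(t)\|_{l^\infty}\to 0$ says $r(t)\to 1$ uniformly.

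The main obstacle, and the step I would be most careful about, is closing the contraction in the correct norm. The difficulty is that $\tilde F$ is only quadratic in the discrete gradient, so to control $\|\tilde F(h_1) - \tilde F(h_2)\|_{l^2}$ one needs a bound like $\|\tilde F(h_1)-\tilde F(h_2)\|_{l^2} \lesssim (N_{1,2}(h_1)+N_{1,2}(h_2))\, N_{1,2}(h_1-h_2)$, which forces the working norm to include the weighted gradient quantities $(1+t)^{1/2}N_{1,2}$ rather than just $N_{0,2}$; verifying that the energy estimate \eqref{energyineq2} applied to the difference indeed controls this combined norm, and that the time weights match up so that the $\int_0^t(1+\tau)N_{2,2}^2$ term can always absorb the cross terms from Young's inequality, is the delicate bookkeeping. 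A secondary point requiring attention is that Lemma~\ref{analysis_lemma} is only local ($|x|\le \delta_0$), so one must check a priori that the smallness of $\epsilon$ propagates to keep $\|u(t)\|_{l^\infty}$ — hence all increments $u_j - u_i$ — below $\delta_0$ for all $t$; this is exactly what the uniform bound $N_{0,2}(u(t)) \le \sqrt{C(\epsilon)}$ together with $\|u\|_{l^\infty}\le \|u\|_{l^2}$ provides, so the argument is self-consistent provided $\epsilon$ is chosen small enough at the outset.
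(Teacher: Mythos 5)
Your proposal is correct and follows essentially the same route as the paper: a contraction mapping in the weighted norm combining $N_{0,2}$, $(1+t)^{1/2}N_{1,2}$ and the integrated $N_{1,2}^2$, $(1+\tau)N_{2,2}^2$ quantities, built from Lemmas~\ref{energy_est}, \ref{energy_est2} and \ref{analysis_lemma}, with the decay read off from the $(1+t)^{1/2}N_{1,2}$ component and the $l^\infty$-convergence from Lemma~\ref{final_lemma_for}. Your cautionary points (the quadratic-in-gradient structure forcing the gradient-weighted norm, and the local validity of Lemma~\ref{analysis_lemma} being guaranteed by the smallness of the norm) are exactly the issues the paper's Steps I--II handle.
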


\begin{proof}
	Define a norm
	\begin{equation}
		\begin{aligned}
			\|u\|_{T}&= \sup_{0\le t\le T} N_{0,2}(u(t))+\sup_{0\le t\le T}(1+t)^{\frac{1}{2}}N_{1,2}(u(t))\\
			&+ \left( \int_0^T N_{1,2}^2(u(\tau))d\tau   \right)^{\frac{1}{2}}+\left(\int_0^T (1+\tau)N_{2,2}^2(u(\tau))d\tau\right)^{\frac{1}{2}}
		\end{aligned}
	\end{equation}
	Define the function space
	\begin{align}\label{funciton_space}
		\mathcal{N}_{T,\delta}=\{u\in C^1([0,\infty);l^2(H)):u(0)=\phi,\|u\|_{T}\le\delta\}.
	\end{align}
	Let $u_I$ be the solution to the linear equation \eqref{homogenous}. By Lemma \ref{energy_est}, if $\epsilon\le \frac{\delta}{4\sqrt{2+C_1}}$ and $\|\phi\|_{l^2}=\|u(0)\|_{l^2} \leq \epsilon$, then $\|u_I\|_{T}\le \delta,~\forall T>0$, which means $$u_I\in \mathcal{N}_{T,\delta},~~\forall T>0.$$
	Hence $\mathcal{N}_{T,\delta}\neq\emptyset.$
	
	For any $h\in C^1([0,\infty);l^2(H)),$ we consider the the following non-homogeneous problem
	\begin{eqnarray}\label{contraction_map}
		\left\{
		\begin{aligned}
			&\ddt{u}=\Delta u+ \tilde{F}(h), \\
			&u(0)=\phi\in l^{2}(H),
		\end{aligned}\right
		.
	\end{eqnarray}
	where we require $\|\phi\|_{l^2}\leq \epsilon$, where $\epsilon$ is a positive constant, which will be determined later.
	
	By Corollary \ref{cor0}, the solution of \eqref{contraction_map} is unique. Let $\tilde{u}$ be the solution of \eqref{contraction_map} for $h$.
	One can define an operator $S(h):=\tilde{u}$.  
	
	Now, we are going to prove that $S$ is a contraction map in $\mathcal{N}_{T,\delta}$, when $\delta$ and $ \epsilon$ are sufficiently small.

	\emph{Step I: we prove that, for sufficient small $\delta$, we have $S(\mathcal{N}_{T,\delta})\subset\mathcal{N}_{T,\frac{\delta}{2}}$ for all $T>0.$}

	For any $h\in \mathcal{N}_{T,\delta}$, let $u_{II}$ be the solution to the equation \eqref{nonhomogenous}. 
	Then $\tilde{u}=u_I+u_{II}.$
	By Lemma \ref{energy_est} and \ref{energy_est2}, we have 
	\begin{equation}\label{cauchy1}
		\begin{aligned}
					\|\tilde{u}\|^2_{T}\le &C_2(\|\phi\|_{l^2}^2+\int_0^T 4 \|\tilde{F}(h(\tau))\|_{l^2}\cdot N_{0,2}(u_{II}(\tau)) d\tau \\
			&+\int_0^T 2 (1+\tau)  \|\tilde{F}(h(\tau))\|_{l^2}\cdot  N_{2,2}(u_{II}(\tau))d\tau ),
		\end{aligned}
	\end{equation}
	where constant $C_2$ depends only on $C_1$. 
	From Lemma \ref{analysis_lemma}, we know, when $\delta<\delta_0$, we have 
	\begin{equation}\label{Fhes}
		\|\tilde{F}(h(\tau))\|_{l^2} \leq C_3 N_{1,2}^2(h(\tau)),
	\end{equation}
	where $C_3$ only depends on $C_0$. Hence, 
	\begin{equation}\label{es1}
		\begin{aligned}
					&\int_0^T  \|\tilde{F}(h(\tau))\|_{l^2}\cdot N_{0,2}(u_{II}(\tau)) d\tau \\
					\leq & (\sup_{\tau\in[0,T]}\|\tilde{u}(\tau)\|_{l^2}+ \sup_{\tau\in[0,T]}\|{u_I}(\tau)\|_{l^2})\int_0^T C_3 N_{1,2}^2(h(\tau)) d\tau\\
					\leq &  (\sup_{\tau\in[0,T]}\|\tilde{u}(\tau)\|_{l^2}+(2+C_1) N_{0,2}^2(\phi))\int_0^T C_3 N_{1,2}^2(h(\tau)) d\tau\\
					\leq &   (\|\tilde{u}\|_{T}+(2+C_1) \epsilon^2)C_3 \delta^2
		\end{aligned}
	\end{equation}
	and
	\begin{equation}\label{es2}
		\begin{aligned}
				&	\int_0^T (1+\tau)  \|\tilde{F}(h(\tau))\|_{l^2}\cdot  N_{2,2}(u_{II}(\tau))d\tau\\
				\leq & \frac{1}{\delta_1}\int_0^T (1+\tau)  \|\tilde{F}(h(\tau))\|_{l^2}^2d\tau + \delta_1\int_0^T (1+\tau)  N_{2,2}^2(u_{II}(\tau))d\tau\\
				\leq & \frac{1}{\delta_1}\int_0^T (1+\tau)  C_3^2 N_{1,2}^4(h(\tau))d\tau + \delta_1\int_0^T (1+\tau)  (N_{2,2}(\tilde u(\tau))+N_{2,2}(u_{I}(\tau)) )^2d\tau\\
				\leq & \frac{1}{\delta_1}    C_3^2 \sup_{\tau\in[0,T]}((1+\tau)N_{1,2}^2(h(\tau)) )   \int_0^T  N_{1,2}^2(h(\tau))d\tau + 2\delta_1 \|\tilde{u}\|^2_{T} + 2\delta_1(2+C_1) \epsilon^2\\
				\leq & \frac{1}{\delta_1}    C_3^2 \delta^4 + 2\delta_1 \|\tilde{u}\|^2_{T} + 2\delta_1(2+C_1) \epsilon^2,
		\end{aligned}
	\end{equation}
	where $\delta_1 $ is a arbitrary positive constant.
	Combining \eqref{cauchy1}, \eqref{es1} and \eqref{es2}, we have
	\begin{equation}\label{esfinal}
		\begin{aligned}
			\|\tilde{u}\|^2_{T} &\leq C_2 (\epsilon^2+  4(\|\tilde{u}\|_{T}+(2+C_1) \epsilon^2)C_3 \delta^2+  \frac{2}{\delta_1}    C_3^2 \delta^4 + 4\delta_1 \|\tilde{u}\|^2_{T} + 4\delta_1(2+C_1) \epsilon^2)\\
			&\leq C_2 (  4\delta_1  \|\tilde{u}\|^2_{T} + 4C_3 \delta^2 \|\tilde{u}\|_{T} + 4(2+C_1) \epsilon^2C_3 \delta^2 + \epsilon^2+ \frac{2}{\delta_1}    C_3^2 \delta^4+ 4\delta_1(2+C_1) \epsilon^2  )\\
			&\leq C_2 (  4\delta_1  \|\tilde{u}\|^2_{T} + \delta_2 \|\tilde{u}\|^2_{T}+ \frac{16C^2_3 \delta^4}{\delta_2} + 4(2+C_1) \epsilon^2C_3 \delta^2 + \epsilon^2+ \frac{2}{\delta_1}    C_3^2 \delta^4+ 4\delta_1(2+C_1) \epsilon^2  )\\
			&\leq C_2 (  (4\delta_1 + \delta_2) \|\tilde{u}\|^2_{T}+( \frac{16C^2_3 }{\delta_2}+\frac{2C_3^2 }{\delta_1}  )\delta^4+ 4(2+C_1) \epsilon^2C_3 \delta^2 + (4\delta_1(2+C_1)+1) \epsilon^2  )\\
		\end{aligned}
	\end{equation}
	So, let $\delta_1= \frac{1}{10000 C_2}$, $\delta_2= \frac{1}{10000 C_2}$. Choose a positive constant 
	$$\delta <\frac{1}{100\sqrt{C_2( \frac{16C^2_3 }{\delta_2}+\frac{2C_3^2 }{\delta_1}  )}}$$
	 and a positive constant 
	 $$\epsilon< \min \{ \frac{1}{100\sqrt{(2+C_1)C_2C_3}}, \frac{\delta }{100 \sqrt{(4\delta_1(2+C_1)+1)C_2}}   \}.$$
	Then for any $h\in \mathcal{N}_{T,\delta}$, \eqref{esfinal} implies
	$$\| S(h)\|_{T}^2= \|\tilde{u}\|_{T}^2\le\frac{\delta^2}{4}.$$
	Thus $S(\mathcal{N}_{T,\delta})\subset\mathcal{N}_{T,\frac{\delta}{2}}$ for all $T>0.$
	
	\emph{Step II: we prove that, for sufficient small $\delta$, we have $S$ is a contraction map. }
	
	Now we prove that $S$ is a contraction map. Consider $h_1,h_2\in \mathcal{N}_{T,\delta}$. Let $w=S(h_1)-S(h_2)$, then $w$ satisfies the equation  
	\begin{equation}\label{difference}
		\left\{\begin{array}{l}
			\dfrac{dw}{dt}=\Delta w + \tilde{F}( h_1)-\tilde{F}( h_2), \\[5pt]
			w(0)=0.
		\end{array}\right.
	\end{equation}
	By \eqref{dqdqh}, we have 
	\begin{equation}\label{wes}
	\begin{aligned}
	\|w\|_T^2 & \leq C_4\left[\int_0^T\left|\Big(\tilde{F}\left(h_1\right)-\tilde{F}\left(h_2\right), w\Big)\right| \mathrm{d} \tau\right. \\
	& \left.+\int_0^T(1+\tau)\Big|\left(\tilde{F}\left(h_1\right)-\tilde{F}\left(h_2\right), \Delta w\right)\Big|d \tau \right]
\end{aligned}
	\end{equation}
	where $C_4$ is a universal constant. Since 
	\begin{equation}
		\begin{aligned}
			&|\tilde{F_i}( h_1)-\tilde{F_i}( h_2)|= \Big|\sum_{j\sim i}  \big[F(h_{1,j}-h_{1,i})-F(h_{2,j}-h_{2,i})\big]\Big|\\
			&=\Big| \sum_{j\sim i} \int_0^1 F^\prime(\sigma(h_{1,j}-h_{1,i})+(1-\sigma)(h_{2,j}-h_{2,i}))\cdot ((h_{1,j}-h_{1,i})-(h_{2,j}-h_{2,i}))d\sigma\Big|\\
			&\leq \sum_{j\sim i} C_0\int_0^1 \big|\sigma(h_{1,j}-h_{1,i})+(1-\sigma)(h_{2,j}-h_{2,i})|\cdot|(h_{1,j}-h_{1,i})-(h_{2,j}-h_{2,i})\big|d\sigma\\
			&\leq C_0\sum_{j\sim i}\big(\big|(h_{1,j}-h_{1,i})|+|(h_{2,j}-h_{2,i})|\big)\cdot|(h_{1,j}-h_{1,i})-(h_{2,j}-h_{2,i})\big|\\
		\end{aligned}
	\end{equation}
	Thus, 
		\begin{equation}\label{Fes}
		\begin{aligned}
			&\|\tilde{F}(h_1(\tau))-\tilde{F}(h_2(\tau))\|^2_{l^2}= \sum_i |\tilde{F_i}( h_1)-\tilde{F_i}( h_2)|^2\\
			&\leq \sum_i \big(C_0\sum_{j\sim i}  (|(h_{1,j}-h_{1,i})|+|(h_{2,j}-h_{2,i})|)\cdot|(h_{1,j}-h_{1,i})-(h_{2,j}-h_{2,i})|\big)^2\\
			&\leq 4C_0^2\sum_i \sum_{j\sim i}  (|(h_{1,j}-h_{1,i})|+|(h_{2,j}-h_{2,i})|)^2\cdot|(h_{1,j}-h_{1,i})-(h_{2,j}-h_{2,i})|^2\\
			&\leq 8C_0^2\sum_i \sum_{j\sim i}  (|(h_{1,j}-h_{1,i})|^2+|(h_{2,j}-h_{2,i})|^2)\cdot|(h_{1,j}-h_{2,j})-(h_{1,i}-h_{2,i})|^2\\
			 &\leq 8C_0^2 \big(N_{1,2}^2(h_1)+ N_{1,2}^2(h_2)\big)\cdot N_{1,2}^2(h_1-h_2)\\
		\end{aligned}
	\end{equation}
	So, by Hölder's inequality, we have
	\begin{equation}\label{Fes1}
		\begin{aligned}
		&	\int_0^T\left|\left(\tilde{F}\left(h_1\right)-\tilde{F}\left(h_2\right), w\right)\right| \mathrm{d} \tau \leq\int_0^T  \|\tilde{F}(h_1(\tau))-\tilde{F}(h_2(\tau))\|_{l^2} \cdot   \|w(\tau)\|_{l^2}       \mathrm{d} \tau\\
			&\leq \sqrt{8}C_0 (\sup_{\tau \in [0, T]} \|w(\tau)\|_{l^2} )\int_0^T   \big(N_{1,2}^2(h_1)+ N_{1,2}^2(h_2)\big)^{\frac{1}{2}}\cdot N_{1,2}(h_1-h_2)  \mathrm{d} \tau \\
			& \leq \sqrt{8}C_0 (\sup_{\tau \in [0, T]} \|w(\tau)\|_{l^2} )	\left( \int_0^T \big( N_{1,2}^2(h_1) + N_{1,2}^2(h_2) \big)\, \mathrm{d}\tau \right)^{1/2}
			\left( \int_0^T N_{1,2}^2(h_1 - h_2)\, \mathrm{d}\tau \right)^{1/2} \\
			& \leq 8C_0\delta^2 \left\| h_1 - h_2 \right\|_T^2
		\end{aligned}
	\end{equation}
	and
	\begin{equation}\label{Fes2}
		\begin{aligned}
			&	\int_0^T(1+\tau)\left|\left(\tilde{F}\left(h_1\right)-\tilde{F}\left(h_2\right), \Delta w\right)\right|d \tau  \\
	\leq &\int_0^T   (1+\tau)  \|\tilde{F}(h_1(\tau))-\tilde{F}(h_2(\tau))\|_{l^2} \cdot   \|\Delta w(\tau)\|_{l^2}       \mathrm{d} \tau\\
	\leq &\sqrt{8}C_0  \int_0^T   (1+\tau)   \big(N_{1,2}^2(h_1)+ N_{1,2}^2(h_2)\big)^{\frac{1}{2}}\cdot N_{1,2}(h_1-h_2)   \cdot   N_{2,2}(w)     \mathrm{d} \tau\\
	\leq &\sqrt{8}C_0  ( \sup_{\tau \in [0, T]} (1+\tau)^{\frac{1}{2}}   \big(N_{1,2}^2(h_1)+ N_{1,2}^2(h_2)\big)^{\frac{1}{2}} )\cdot \int_0^T   (1+\tau)^{\frac{1}{2}}  N_{1,2}(h_1-h_2)   \cdot   N_{2,2}(w)     \mathrm{d} \tau\\
	\leq &8C_0 \delta 	\left( \int_0^T (1+\tau) N_{2,2}^2(w)\, \mathrm{d}\tau \right)^{1/2}
	\left( \int_0^T N_{1,2}^2(h_1 - h_2)\, \mathrm{d}\tau \right)^{1/2}\\
	 \leq& 8C_0\delta^2 \left\| h_1 - h_2 \right\|_T^2.
		\end{aligned}
	\end{equation}
	Combining \eqref{wes}, \eqref{Fes1} and \eqref{Fes2}, 
\begin{equation}
	\|S(h_1)-S(h_2)\|_{T}^2 =\|w\|_{T}^2 \leq 16C_0C_4\delta^2 \left\| h_1 - h_2 \right\|_T^2
\end{equation}
	So $S$ is a contraction map when $16C_0C_4\delta^2< 1$.	Therefore, let $\delta$ be small enough, then there exists an $\epsilon$ which is also small enough, such that $S$ is a contraction map on $\mathcal{N}_{T,\delta}$ for any $T>0$.  
	
	\emph{Step III: we finish the proof. }
	
	By the contraction mapping theorem, $S$ has a unique fixed point $u$ in $\mathcal{N}_{T,\delta}$ such that $u$ is the solution of \eqref{semilinear_equation} with initial $u(0)=\phi\in l^{2}(H).$ From Theorem \ref{unique_for_uni_degree}, we know $u$ is unique in $C^1\left([0, \infty) ; l^2(H)\right)$.
	
	Since $u \in \mathcal{N}_{T,\delta}$, we know 
	$$
	\sup_{0\le t\le T}(1+t)^{\frac{1}{2}}N_{1,2}(u(t)) \leq  \| u\|_T \leq \delta,\quad  \forall T >0.
	$$
	and
	$$\sup_{0\le t\le T} N_{0,2}(u(t)) \leq  \| u\|_T \leq \delta,\quad  \forall T >0.$$
	Hence 
	$$
	\mathcal{E}(u(t)):=\sum_{[i,j]\in E}|u(i, t)-u(j, t)|^2 \leq C(1+t)^{-1},
	$$
	where $C$ is a constant. Therefore, by Lemma \ref{final_lemma_for}, we also know 
		$$
	\|u(t)\|_{l^\infty} \rightarrow 0, \quad t\rightarrow \infty.
	$$
\end{proof}

\subsection{Character in hyperbolic background geometry }
Firstly, we prove Theorem \ref{ch_hyper_exist}.
\begin{proof}

	By Theorem \ref{convergence1}, it suffices to construct a circle packing metric $u$ with $K_i(u)\leq 0$ for $i\in V$.
	
	Let $f(x)= \theta_{ij}(x\vec{1})$, from Lemma \ref{character_theta_dec}, we know $f(x)$ is differential and 
	\[
	\lim_{x\rightarrow 0} f(x) = \frac{\Theta_{ij}}{2}, \quad \lim_{x\rightarrow +\infty} f(x) = 0.
	\]
	Therefore, for any $\hat{c}>0$,there is a constant $\tilde C(\hat c)$ such that, if $x\leq\tilde C(\hat c)$, then
	$$f(x)\geq\frac{\Theta_{i j}}{2}-\hat c.$$
	Hence,  when $t \leq \tilde C(\hat c)$, we have
	\begin{equation}
		 \theta_{ij}(t \vec{1}) \geq \frac{\Theta_{i j}}{2}-\hat c
	\end{equation}
	Thus,
	\begin{equation}
		\begin{aligned}
			K_i(t \vec{1})  &= 2\pi - 2\sum_{j \sim i}\theta_{ij}(u^{[n]}(t))\\
			&\leq 2\pi - 2\sum_{j \sim i}\theta_{ij}(u_i^{[n]}(t)\vec{1})\\
			&\leq 2\pi - 2\sum_{j \sim i}( \frac{\Theta_{i j}}{2}-c)\\
			& =c d_j-(\mathcal{L}(\mathcal{D}, \Theta)_i-2 \pi) \\
			& \leq 0
		\end{aligned}
	\end{equation}
	
Choose $u = t\vec{1}$ with $t \leq \tilde{C}(\hat{c})$. Then $u$ is the desired circle packing metric.
	
\end{proof}

In following proof, we will use a lemma from Ge-Hua \cite{GH20}.
\begin{lemma}[\cite{GH20}, Lemma 3.9]\label{ghlemma}
	Let $f:[0,+\infty) \rightarrow \mathbb{R}$ be a locally Lipschitz function. Suppose that there is a constant $C_1$ such that $f^{\prime}(t) \leq 0$, for a.e. $t \in\left\{f>C_1\right\}$, then
	$$
	f(t) \leq \max \left\{f(0), C_1\right\}, \forall t \in[0,+\infty) .
	$$
	Similarly, if $f^{\prime}(t) \geq 0$, for a.e. $t \in\left\{f<C_1\right\}$, then
	$$
	f(t) \geq \min \left\{f(0), C_1\right\}, \forall t \in[0,+\infty)
	$$
\end{lemma}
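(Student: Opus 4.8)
The plan is to argue by contradiction and reduce everything to the fundamental theorem of calculus for absolutely continuous functions. First I would record the standard analytic facts about a locally Lipschitz function $f$ on $[0,+\infty)$ that make this work: $f$ is continuous; on each compact subinterval $f$ is Lipschitz (by a covering argument), hence absolutely continuous; by Rademacher's theorem $f$ is differentiable almost everywhere; and therefore $f(b)-f(a)=\int_a^b f'(\tau)\,\mathrm d\tau$ for all $0\le a\le b$. These are the only inputs needed.

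For the first assertion, set $M=\max\{f(0),C_1\}$ and suppose toward a contradiction that $f(t_0)>M$ for some $t_0>0$. The set $A:=\{t\in[0,t_0]:f(t)\le M\}$ is nonempty (it contains $0$), closed (by continuity of $f$), bounded, and does not contain $t_0$; let $s:=\sup A$. Since $A$ is closed we have $s\in A$, so $s<t_0$ and $f(s)\le M$; moreover $f(t)>M$ for every $t\in(s,t_0]$, so letting $t\to s^+$ gives $f(s)=M$. In particular $f(t)>M\ge C_1$ on $(s,t_0]$, so the open interval $(s,t_0)$ is contained in the open set $\{f>C_1\}$, and hence the hypothesis yields $f'(\tau)\le 0$ for a.e. $\tau\in(s,t_0)$.

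Applying the fundamental theorem of calculus on $[s,t_0]$ then gives
$$f(t_0)-f(s)=\int_s^{t_0} f'(\tau)\,\mathrm d\tau\le 0,$$
so $f(t_0)\le f(s)=M$, contradicting $f(t_0)>M$. Hence $f(t)\le M=\max\{f(0),C_1\}$ for all $t\ge 0$, which is the first inequality. The second assertion follows by applying the first to $\tilde f:=-f$ with the constant $-C_1$: the hypothesis $f'\ge 0$ a.e. on $\{f<C_1\}$ becomes $\tilde f'\le 0$ a.e. on $\{\tilde f>-C_1\}$, so $\tilde f(t)\le\max\{\tilde f(0),-C_1\}$, which rearranges to $f(t)\ge\min\{f(0),C_1\}$.

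I do not expect a genuine obstacle here; this is essentially a one-page real-analysis argument. The only point needing a little care is that the derivative condition is assumed only \emph{almost everywhere on the level set} $\{f>C_1\}$, and one must check it still holds almost everywhere on the subinterval $(s,t_0)$ — this is immediate once one notes that $\{f>C_1\}$ is open and contains $(s,t_0)$, so a null subset of $\{f>C_1\}$ intersects $(s,t_0)$ in a null set. A secondary bookkeeping point is the passage from "locally Lipschitz on $[0,\infty)$" to "absolutely continuous on $[s,t_0]$," which is where the fundamental theorem of calculus is legitimately invoked.
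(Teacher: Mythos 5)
Your proof is correct: the paper itself quotes this lemma from \cite{GH20} without reproducing a proof, and your argument (first-crossing point $s$, openness of $\{f>C_1\}$, and the fundamental theorem of calculus for the locally Lipschitz, hence locally absolutely continuous, function $f$ on $[s,t_0]$) is essentially the standard proof given in that reference. The reduction of the second inequality to the first via $\tilde f=-f$ is also fine, so nothing is missing.
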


	We denote $u_i^{[n]}(t)$ be the solution of \eqref{flow_finite}, Lemma \ref{flow_finite_exi}  guarantees the existence and uniqueness of $u_i^{[n]}(t)$ on $[0,\infty)$.

\begin{lemma}\label{hyper_lower_bound}
	If  the character $0<\hat{c}\leq\frac{\mathcal{L}_i(\mathcal{D}, \Theta)-2 \pi}{d_i}$ and $\hat{u_1}\leq u_i(0)$,  for any vertex $i \in V$ ($\hat c$ and -$\hat u$ are a positive constants), then there exists a constant $C_1(\hat{c},\hat{u_1} )<0$ such that
	$$
	C_1(\hat{c},\hat{u_1} ) \leq u_i^{[n]}(t) 
	$$0
\end{lemma}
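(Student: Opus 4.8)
The plan is to bound each finite flow $u^{[n]}(t)$ of \eqref{flow_finite} from below by a constant independent of $n$, via a minimum principle: I want to show that whenever the smallest of the values $u_i^{[n]}(t)$, $v_i\in V^{[n]}$, drops below a threshold $\bar u$ depending only on $\hat c$ (and the ambient $\delta$ with $\Theta\in(\delta,\pi-\delta)^E$), the curvature at the minimizing vertex is non-positive, so this smallest value cannot decrease further. Since $u^{[n]}$ exists for all $t\ge 0$ by Lemma \ref{flow_finite_exi} and $u_i^{[n]}\equiv u_i(0)$ for $v_i\notin V^{[n]}$, it is enough to control $m(t):=\min_{v_i\in V^{[n]}}u_i^{[n]}(t)$, which is locally Lipschitz on $[0,\infty)$ as a minimum of finitely many smooth functions; the conclusion will then follow from Lemma \ref{ghlemma}.

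The first step is the geometric input: an edge-uniform lower bound $\theta_{ij}(t,t)\ge\tfrac{\Theta_{ij}}{2}-\omega(t)$, where $\omega$ is increasing with $\omega(0^{+})=0$ and is the same for every edge. I would extract this from the explicit formula for $\sin^2\theta_{ij}(t,t)$ obtained in the proof of Lemma \ref{character_theta_dec}, which gives
\[
\sin^2\tfrac{\Theta_{ij}}{2}-\sin^2\theta_{ij}(t,t)\ \le\ \tfrac14\sinh^2 t .
\]
Writing the left-hand side as $\sin(\tfrac{\Theta_{ij}}{2}-\theta_{ij})\sin(\tfrac{\Theta_{ij}}{2}+\theta_{ij})$ and using $\Theta_{ij}\le\pi-\delta$, so that $\sin(\tfrac{\Theta_{ij}}{2}+\theta_{ij})\ge\sin\tfrac{\delta}{2}$ and $\tfrac{\Theta_{ij}}{2}-\theta_{ij}\in[0,\tfrac{\pi}{2})$, this turns into $\tfrac{\Theta_{ij}}{2}-\theta_{ij}(t,t)\le\frac{\pi\sinh^2 t}{8\sin(\delta/2)}=:\omega(t)$. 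I expect this to be the one non-soft point of the argument: the edge-uniformity is exactly what forces $C_1$ to be $n$-independent, and it genuinely needs $\Theta$ bounded away from $\pi$, since for $\Theta_{ij}$ near $\pi$ the rate of convergence $\theta_{ij}(t,t)\to\Theta_{ij}/2$ degenerates.

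Next, suppose $v_i\in V^{[n]}$ realizes $m(t)$ and $m(t)<\hat u_1$. Then every neighbour $v_j$ satisfies $u_i^{[n]}(t)\le u_j^{[n]}(t)$ — for $v_j\in V^{[n]}$ because $u_j^{[n]}\ge m$, for $v_j\notin V^{[n]}$ because those are frozen at $u_j(0)\ge\hat u_1>m(t)$ — hence $r_i\le r_j$ and, by Lemma \ref{character_min_ineq}, $\theta_{ij}(u_i^{[n]},u_j^{[n]})\ge\theta_{ij}(r_i,r_i)$ for every $j\sim i$. Combining with the previous step and the hypothesis $\mathcal L_i-2\pi\ge\hat c\,d_i$,
\[
K_i^{[n]}(t)=2\pi-2\sum_{j\sim i}\theta_{ij}(u_i^{[n]},u_j^{[n]})\ \le\ 2\pi-\mathcal L_i+2\omega(r_i)d_i\ \le\ \big(2\omega(r_i)-\hat c\big)d_i .
\]
Choosing $t^{\ast}>0$ with $\omega(t^{\ast})\le\hat c/2$ and setting $\bar u:=\ln\tanh(t^{\ast}/2)<0$ (which depends only on $\hat c$ and $\delta$), we obtain $K_i^{[n]}(t)\le 0$ whenever $u_i^{[n]}(t)<\bar u$, i.e. $r_i<t^{\ast}$; note the degree $d_i$ cancels, so no upper bound on $d_i$ is needed.

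Finally, at a point $t_0$ where $m$ is differentiable and $m(t_0)<\min\{\bar u,\hat u_1\}$, pick $i$ with $u_i^{[n]}(t_0)=m(t_0)$; since $u_i^{[n]}-m\ge 0$ with equality at $t_0$ and both are differentiable there, $m'(t_0)=(u_i^{[n]})'(t_0)=-K_i^{[n]}(t_0)\ge 0$ by the previous step. Applying Lemma \ref{ghlemma} with $f=m$ and the constant $\min\{\bar u,\hat u_1\}$ yields $m(t)\ge\min\{m(0),\bar u,\hat u_1\}$ for all $t\ge 0$; since $m(0)=\min_{V^{[n]}}u_i(0)\ge\hat u_1$, this reads $u_i^{[n]}(t)\ge\min\{\bar u,\hat u_1\}=:C_1(\hat c,\hat u_1)<0$ for $v_i\in V^{[n]}$, while $u_i^{[n]}(t)=u_i(0)\ge\hat u_1\ge C_1$ for $v_i\notin V^{[n]}$. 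As $\bar u$ and $\hat u_1$ are independent of $n$, this is the claimed uniform lower bound.
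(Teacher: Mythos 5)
Your proposal is correct and follows essentially the same route as the paper's proof: a minimum principle for $m(t)=\min_{v_i\in V^{[n]}}u_i^{[n]}(t)$, the comparison $\theta_{ij}(u_i^{[n]},u_j^{[n]})\ge\theta_{ij}(u_i^{[n]},u_i^{[n]})$ from Lemma \ref{character_min_ineq} at a minimizing vertex, the lower bound $\theta_{ij}(t,t)\ge\Theta_{ij}/2-\hat c$ for small $t$ coming from Lemma \ref{character_theta_dec}, and then Lemma \ref{ghlemma}, with the added (welcome) care about vertices frozen outside $V^{[n]}$. The only real difference is that your quantitative, edge-uniform threshold $\omega(t)$ uses the two-sided bound $\delta\le\Theta_e\le\pi-\delta$ (available in Theorem \ref{ch_hyper}, though not stated in the lemma itself) and so your $C_1$ also depends on $\delta$; in fact the identity $\sin^2\theta_{ij}(t,t)=\sin^2(\Theta_{ij}/2)\big/\bigl(1+\cos^2(\Theta_{ij}/2)\sinh^2 t\bigr)$ gives $\sin\bigl(\tfrac{\Theta_{ij}}{2}-\theta_{ij}(t,t)\bigr)\le\sinh^2 t$ after dividing by $\sin(\tfrac{\Theta_{ij}}{2}+\theta_{ij})\ge\min\{\sin\tfrac{\Theta_{ij}}{2},\sin\Theta_{ij}\}$, hence $\tfrac{\Theta_{ij}}{2}-\theta_{ij}(t,t)\le\tfrac{\pi}{2}\sinh^2 t$ uniformly for all $\Theta_{ij}\in(0,\pi)$, so contrary to your side remark no bound away from $\pi$ is actually needed and the lemma holds as stated.
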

\begin{proof}
	Since $V^{[n]}$ is finite, let 
	$$g(t):=\min _{i \in V^{[n]}} u_i^{[n]}(t).$$
	Then $g(t)$ is a locally Lipschitz function and for a.e. $t \in[0, T]$.
	
	If $ \hat{u_1} \leq g(t)$, from the flow equation \eqref{flow_finite} and $\hat{u_1}\leq u_i(0)$, it is obvious that for any $i \in V$, 
	$$
	\hat{u_1} \leq u_i^{[n]}(t).
	$$
	
	If $ g(t)\leq \hat{u_1}$, for $g(t)$,  there exists a special vertex $i \in V^{[n]}$ depending on $t$, such that
	$$
	g(t)=u_i^{[n]}(t), g^{\prime}(t)=u_i^{[n]^\prime}(t).
	$$
	Since $u_i^{[n]}(t)\leq u_j^{[n]}(t)$ for any $j \sim i$. From Lemma \ref{character_min_ineq}, we have
	\begin{equation}\label{ch1}
		\theta_{ij}(u^{[n]}(t)) \geq \theta_{ij}(u_i^{[n]}(t)\vec{1}).
	\end{equation}
	Let $f(x)= \theta_{ij}(x\vec{1})$, from Lemma \ref{character_theta_dec}, we know $f(x)$ is differential and $\lim_{x\rightarrow 0}= {\Theta_{i j}}/{2}$ and $\lim_{x\rightarrow +\infty}= 0$. Therefore, for any $\hat{c}>0$,there is a constant $\tilde C(\hat c)$ such that, if $x\leq\tilde C(\hat c)$, then
	$$f(x)\geq\frac{\Theta_{i j}}{2}-\hat c,$$
	Hence, when $g(t)\leq \tilde C(\hat c)$, we have
	\begin{equation}\label{ch2}
		\theta_{ij}(u_i^{[n]}(t)\vec{1})\geq \frac{\Theta_{i j}}{2}-\hat c
	\end{equation}
	By combining \eqref{ch1} and \eqref{ch2}, we have
	\begin{equation}
		\begin{aligned}
			K_i(u^{[n]}(t)) &= 2\pi - 2\sum_{j \sim i}\theta_{ij}(u^{[n]}(t))\\
			&\leq 2\pi - 2\sum_{j \sim i}\theta_{ij}(u_i^{[n]}(t)\vec{1})\\
			&\leq 2\pi - 2\sum_{j \sim i}( \frac{\Theta_{i j}}{2}-c)\\
			& =c d_j-(\mathcal{L}(\mathcal{D}, \Theta)_i-2 \pi) \\
			& \leq 0
		\end{aligned}
	\end{equation}
	By the flow equation \eqref{flow_finite} we know, if $g(t)\leq \tilde C(\hat c)$, we have  $g^{\prime}(t)=u_i^{[n]^{\prime}}(t)=-K_i(u^{[n]}(t))\geq 0$.  Then from Lemma \ref{ghlemma}, we know $g(t) \geq \min \{g(0), \hat{C}(\hat c)\}= \hat{C}(\hat c) , \forall t \in[0,+\infty)$.
	
	So, set $C_1(\hat{c},\hat{u_1} ) = \min \{\hat{C}(\hat c), \hat{u_1} \}$, by combining the case where $ \hat{u_1} \leq g(t)$ and $ g(t)\leq \hat{u_1}$, we have $  C_1(\hat{c},\hat{u_1} ) \leq u_i^{[n]}(t) $.
\end{proof}

\begin{lemma}\label{hyper_upper_bound}
 If the degree $d_i\leq \hat d$ and $u_i(0)\leq\hat u_2$, for any vertex $i \in V$ ($d$ and -$\hat u$ are positive constants). Then there exists a constant $C_2( \hat d, \hat u_2)<0$ such that
	$$
	 u_i^{[n]}(t) \leq C_2( \hat d, \hat u_2).
	$$
\end{lemma}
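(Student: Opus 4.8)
The plan is to mirror the proof of Lemma~\ref{hyper_lower_bound}, but now to track the \emph{maximum} of $u^{[n]}$ over $V^{[n]}$ and to replace the character estimate by Lemma~\ref{hyperconstr_u}; the latter is exactly the right tool here, since in hyperbolic geometry a large circle subtends a small angle at each of its neighbours, so no lower bound on the character is needed. First I would fix $\epsilon := \frac{\pi}{2\hat d}$ and apply Lemma~\ref{hyperconstr_u} to obtain a constant $\delta(\epsilon) < 0$, depending only on $\hat d$, such that $\theta_{ij}(u^{[n]}(t)) < \epsilon$ whenever the central radius satisfies $0 > u_i^{[n]}(t) > \delta(\epsilon)$, irrespective of the neighbouring radii. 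Then set $h(t) := \max_{v_i \in V^{[n]}} u_i^{[n]}(t)$; since each $u_i^{[n]}$ is $C^1$, $h$ is locally Lipschitz on $[0,\infty)$, and $h(0) \le \hat u_2 < 0$ because $u_i(0) \le \hat u_2$ for every $v_i \in V$.

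Next, at almost every $t$ with $h(t) > \delta(\epsilon)$, pick a vertex $v_i \in V^{[n]}$ attaining the maximum, so that $h(t) = u_i^{[n]}(t) \in (\delta(\epsilon), 0)$ (recall the hyperbolic configuration space is $(-\infty,0)^V$) and $h'(t) = \frac{d}{dt} u_i^{[n]}(t) = -K_i(u^{[n]}(t))$ by \eqref{flow_finite}. Lemma~\ref{hyperconstr_u} then yields $\theta_{ij}(u^{[n]}(t)) < \epsilon$ for every $j \sim i$, and since $v_i$ has at most $d_i \le \hat d$ neighbours,
\[
K_i(u^{[n]}(t)) = 2\pi - 2\sum_{j\sim i}\theta_{ij}(u^{[n]}(t)) > 2\pi - 2 d_i\epsilon \ge 2\pi - 2\hat d\epsilon = \pi ,
\]
so $h'(t) = -K_i(u^{[n]}(t)) < -\pi < 0$ on the set $\{h > \delta(\epsilon)\}$. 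Applying Lemma~\ref{ghlemma} with $C_1 = \delta(\epsilon)$ gives
\[
h(t) \le \max\{h(0), \delta(\epsilon)\} \le \max\{\hat u_2, \delta(\epsilon)\} =: C_2(\hat d, \hat u_2) < 0 , \qquad \forall t \ge 0 .
\]
Finally, for $v_i \notin V^{[n]}$ we have $u_i^{[n]}(t) = u_i(0) \le \hat u_2 \le C_2$, while for $v_i \in V^{[n]}$ we have $u_i^{[n]}(t) \le h(t) \le C_2$; hence $u_i^{[n]}(t) \le C_2(\hat d, \hat u_2)$ for all $v_i \in V$, which is the claim.

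The computation is essentially routine once $h$ is chosen as the monotone quantity and Lemma~\ref{ghlemma} is invoked; the only point requiring care is that the uniform lower bound on the curvature at the maximizing vertex must come from the geometric Lemma~\ref{hyperconstr_u} rather than from any combinatorial character hypothesis, since none is assumed here, so I do not anticipate a genuine obstacle. Combined with Lemma~\ref{hyper_lower_bound}, this produces uniform two-sided bounds on $u^{[n]}(t)$, which after passing to the Arzel\`a--Ascoli limit underlie the uniqueness and exponential convergence asserted in Theorem~\ref{ch_hyper}.
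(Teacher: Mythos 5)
Your proof is correct and relies on the same two ingredients as the paper's: Lemma~\ref{hyperconstr_u} to force $K_i>0$ once $u_i^{[n]}$ comes too close to $0$, and a one-sided differential bound (Lemma~\ref{ghlemma}) to convert this into an a priori upper bound. The one small stylistic difference is that you mirror Lemma~\ref{hyper_lower_bound} by tracking the running maximum $h(t)=\max_{v_i\in V^{[n]}}u_i^{[n]}(t)$, whereas the paper observes that since the angle bound in Lemma~\ref{hyperconstr_u} depends only on $u_i$ and not on the neighbouring $u_j$, one can apply the same reasoning vertex-by-vertex to each scalar ODE $\dot u_i^{[n]}=-K_i$ directly, yielding $u_i^{[n]}(t)\leq\max\{u_i(0),\delta\}$ without ever introducing $h$. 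Both are valid; the per-vertex version is a touch cleaner precisely because, unlike the lower-bound case, no comparison with the neighbours is required.
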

\begin{proof}
	From Lemma \ref{hyperconstr_u}, we know there exists a constant $\delta<0$ such that if $ \delta \leq u_i^{[n]}$, then 
	$$
	\theta_{ij}(u_i^{[n]}) < \frac{ \pi}{\hat d}, \quad \forall j\sim i.
	$$
	Thus, 
	$$
	\begin{aligned}
		K_i(u^{[n]}(t))&= 2\pi -2\sum_{j\sim i}\theta_{ij}(u_i^{[n]})\\
		&> 2\pi -2\sum_{j\sim i}\frac{ \pi}{\hat d}\\
		&\geq 2\pi -2\sum_{j\sim i}\frac{ \pi}{d_i}=0\\
	\end{aligned}
	$$
	For any $i \in V^{[n]}$, the flow equation \eqref{flow_finite} implies
	$$
	u_i^{[n]}(t) \leq \frac{1}{2} \max \{u_i(0), \delta\}\leq \frac{1}{2} \max \{\hat u_2, \delta\}
	$$
	For any $i \in V\setminus V^{[n]}$, we have
	$$u_i^{[n]}(t)= u_i(0)\leq\hat u_2$$
	Since $\delta$ is only depending on $\hat d$, therefore, set $C=\frac{1}{2} \max \{\hat u_2, \delta\}$ and we complete the proof.
\end{proof}
Theorem \ref{ch_hyper} is equivalent to
\begin{lemma}
	If for any $i \in V$, there are two positive constants $\hat{c}, \hat{d}$, and two negative constants $\hat u_1, \hat u_2$ such that
	\begin{itemize}
		\item [(1)] $\frac{\mathcal{L}_i(\mathcal{D}, \Theta)-2 \pi}{d_i}\geq \hat{c}.$
		\item [(2)] $d_i\leq \hat d.$
		\item [(3)]$\hat u_1 \leq u_i(0) \leq \hat u_2$
	\end{itemize}
then there is a unique solution $u(t)$ exist on $[0, +\infty)$, and exponentially approach to a zero discrete Gaussian curvature metric.
\end{lemma}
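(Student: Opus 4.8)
The plan is to marry the long-time existence and uniqueness we already have with the two a~priori bounds of Lemmas~\ref{hyper_lower_bound} and~\ref{hyper_upper_bound}, and then to run a maximum-principle comparison on the evolving curvature to squeeze out an exponential rate. First, Theorem~\ref{existence_u} gives a solution $u(t)$ to \eqref{flow2} for all $t\ge 0$; since $d_i\le\hat d$ and $\theta_{ij}<\tfrac{\pi}{2}$ we have $2\pi-\hat d\,\pi< K_i(u(t))<2\pi$, hence $|K_i(u(t))|\le M_0:=\pi(\hat d+2)$ uniformly, and uniqueness follows from the Corollary after Theorem~\ref{uniq}. Applying Lemmas~\ref{hyper_lower_bound} and~\ref{hyper_upper_bound} to the finite approximations $u^{[n]}(t)$ from the proof of Theorem~\ref{existence_u} and letting $n\to\infty$, I would obtain constants $C_1=C_1(\hat c,\hat u_1)<0$ and $C_2=C_2(\hat d,\hat u_2)<0$ with
\[
C_1\le u_i(t)\le C_2<0,\qquad\forall\,v_i\in V,\ \forall\,t\ge 0,
\]
i.e.\ $r_i(t)$ stays in a fixed compact subinterval $[\rho_1,\rho_2]\subset(0,\infty)$ along the whole flow. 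Hypothesis~(1) (the character bound) is what yields the lower bound on $u$, and hypothesis~(2) (the degree bound) the upper bound; both are used.

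I would then exploit this confinement to control the linearized operator uniformly. On the compact set $\{(r_i,r_j,\Theta_{ij}):r_i,r_j\in[\rho_1,\rho_2],\ \Theta_{ij}\in[\delta,\pi-\delta]\}$ the edge length $l_{ij}$ and altitude $d_{ij}$ of the two-circle configuration are continuous and therefore bounded, with $l_{ij}\in[l_-,l_+]$, $d_{ij}\in[d_-,d_+]$ and $l_-,d_->0$. Differentiating $K_i=2\pi-\sum_{j\sim i}2\theta_{ij}$ and using Lemma~\ref{vari}, the curvature evolution \eqref{curvature_finite_flow} along the limit solution becomes
\[
\frac{d}{dt}K_i=\Delta_{\omega(t)}K_i+g_i(t)K_i,\qquad\omega_{ij}(t)=2\frac{\partial\theta_{ij}}{\partial u_j}\ge 0,\qquad g_i(t)=\sum_{j\sim i}\frac{2\sinh d_{ij}}{\sinh l_{ij}}\bigl(1-\cosh l_{ij}\bigr).
\]
Each summand of $g_i$ is negative and, by compactness, bounded away from $0$; since $d_i\ge 1$ this gives a uniform coercivity $g_i(t)\le -c_0<0$ with $c_0=c_0(\rho_1,\rho_2,\delta)$, while $\sum_{j\sim i}\omega_{ij}(t)\le C$ uniformly because $d_i\le\hat d$. (The same bound $g_i^{[n]}\le-c_0$ holds for the truncated operator on $V^{[n]}$, so one may equivalently argue with the finite flows and pass to the limit.)

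With these uniform bounds the exponential decay of curvature is routine. Fixing $T>0$ and setting $f_i:=K_i-M_0e^{-c_0t}$ on $V\times[0,T]$, we have $f$ bounded, $f_i(0)\le 0$, and, using $g_i+c_0\le 0$,
\[
\frac{d}{dt}f_i=\Delta_{\omega(t)}f_i+g_if_i+(g_i+c_0)M_0e^{-c_0t}\le\Delta_{\omega(t)}f_i+g_if_i,
\]
so Lemma~\ref{maximumprin} yields $K_i(u(t))\le M_0e^{-c_0t}$; the same argument applied to $-K_i$ gives $|K_i(u(t))|\le M_0e^{-c_0t}$ for all $v_i$ and $t$. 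Consequently $\bigl|\tfrac{d u_i}{dt}\bigr|=|K_i|\le M_0e^{-c_0t}$, so each $u_i(t)$ converges to some $u_i(\infty)\in[C_1,C_2]$ at rate $O(e^{-c_0t})$; since $u(\infty)$ lies in the interior of the configuration space and $K$ is continuous in $u$, we get $K_i(u(\infty))=0$ for every $v_i$, so $u(\infty)$ is a good ideal circle pattern and $u(t)$ reaches it exponentially fast, which is the assertion.

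The main obstacle, to my mind, is the second step: the exponential rate depends on the uniform coercivity $g_i\le-c_0<0$ of the time-dependent linearization, and this estimate only becomes available once the flow has been pinned inside a compact region of $(-\infty,0)^V$ bounded away from both $0$ and $-\infty$. Securing that confinement, with all constants (in particular $c_0$) independent of the truncation level $n$ so that the estimate survives the Arzel\`a--Ascoli limit, is exactly what the two hypotheses deliver through Lemmas~\ref{hyper_lower_bound} and~\ref{hyper_upper_bound}, and is where the combinatorial input of the theorem is entirely spent.
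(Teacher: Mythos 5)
Your proposal is correct and follows essentially the same route as the paper: uniqueness from the bounded-degree corollary, the two-sided confinement $C_1\le u_i(t)\le C_2<0$ from Lemmas~\ref{hyper_lower_bound} and~\ref{hyper_upper_bound} applied to the finite flows, uniform bounds $\sum_j\omega_{ij}\le C$ and $g_i\le-\gamma<0$ for the linearized curvature equation, and then the maximum principle to force $|K_i(t)|\le Ce^{-\gamma t}$. The only differences are cosmetic: the paper obtains the decay by rescaling $\tilde K=e^{\gamma t}K$ and invoking the two-sided corollary of Lemma~\ref{maximumprin}, whereas you use the equivalent barrier $M_0e^{-c_0t}$ directly (and your explicit formula for $g_i$ makes the coercivity concrete, where the paper argues by smoothness plus compactness).
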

\begin{proof}
	Since $d_i\leq \hat d$, from Theorem \ref{unique_for_uni_degree}, we know there is a unique solution $u(t)$ exist on $[0, +\infty)$.
	
	Recall that $u_i^{[n]}(t)$ are the solution of equation \eqref{flow_finite}. Lemma \ref{existence_u} tell us that, $u(t)$ is a limit point of sequence $\{u^{[n]}(t)\}_{n=1}^{\infty}$. From Lemma \ref{hyper_lower_bound} and \ref{hyper_upper_bound}, we know, there are two negative constant $C_1(\hat{c},\hat{u_1} ), C_2( \hat d, \hat u_2)$ such that 
	\begin{equation}\label{u_bound}
		C_1(\hat{c},\hat{u_1} ) \leq u_i(t) \leq C_2( \hat d, \hat u_2), \forall i \in V.
	\end{equation}
	Since $K_i$ is a smooth function for $u$, we know there are constant $C_3(C_1,C_2)<0$ and $C_4(C_1,C_2)>0$ such that 
	\begin{equation}\label{K_bound}
		C_3(\hat{c},\hat{u_1} ) \leq K_i(t) \leq C_4( \hat d, \hat u_2), \forall i \in V.
	\end{equation}
	
	From Flow equation \ref{flow2}, we have
		\begin{equation}\label{keypf1}
		\begin{aligned}
			\frac{d}{d t} K_i(t)=&-\frac{\partial K_i}{\partial u_i} K_i(t)-\sum_{j \sim i} \frac{\partial K_i}{\partial u_j} K_j(t) \\
			=&-\sum_{j \sim i} \frac{\partial K_i}{\partial u_j} (K_j(t)-K_i(t))\\
			&-\Big(\frac{\partial K_i}{\partial u_i} +\sum_{j\sim i} \frac{\partial K_i}{\partial u_j}\Big)K_i(t).
		\end{aligned}
	\end{equation}
	Let $\omega_{ij}=-\dfrac{\partial K_i}{\partial u_j}$, $g_i=-\Big(\dfrac{\partial K_i}{\partial u_i} +\sum\limits_{j\sim i} \dfrac{\partial K_i}{\partial u_j}\Big)$, then equation \eqref{keypf1} is equivalent to 
	\begin{equation}\label{hyper_ch_basic}
		\frac{d K}{dt} = \Delta_{\omega} K + gK.
	\end{equation}
	From Lemma \ref{vari}, we know $\omega_{ij}=\omega_{ji}\geq 0$ and $g_i= -\dfrac{\partial (K_i+ \sum_{j\sim i} K_j)}{\partial u_i} <0$. Since $\dfrac{\partial K_i}{\partial u_i}$ and  $\dfrac{\partial K_i}{\partial u_j}$ are smooth functions for $u$, together with \eqref{u_bound}, we know, there are two constants $C_5(C_1, C_2)>0, \gamma= \gamma(C_1, C_2)>0$ such that
	\begin{equation}\label{omega_bound}
		\omega_{ij} \leq C_5(C_1, C_2), \forall [i,j] \in E,
	\end{equation}
	and
	$$g<-\gamma <0.$$ 
	
	Let $\tilde K(t) = e^{\gamma t}K(t)$, from \eqref{hyper_ch_basic}, we know 
		\begin{equation}\label{hyper_ch_basic2}
		\frac{d \tilde K}{dt} = \Delta_{\omega} \tilde K + (g+\gamma)\tilde K,
	\end{equation}
	where $\tilde K(0) = e^{\gamma 0}K(0)= K(0)$. So, from \eqref{K_bound}, we have
		\begin{equation}\label{tildeK0_bound}
		C_3(\hat{c},\hat{u_1} ) \leq \tilde K_i(0) \leq C_4( \hat d, \hat u_2), \forall i \in V.
	\end{equation}
	
	For any $T>0$, from \eqref{K_bound}, $\tilde K(t)$ is a bounded function in $V \times[0, T]$. From \eqref{omega_bound} and  $d_i\leq \hat d$, we have
		$$
	\sum_{j \sim i} \omega_{i j}(t) < C_5(C_1, C_2)\hat d.
	$$
	Thus, from Corollary \ref{equequi}, we have 
			\begin{equation}\label{tildeK_bound}
		C_3(\hat{c},\hat{u_1} ) \leq \tilde K_i(t) \leq C_4( \hat d, \hat u_2), \forall i \in V, t\in [0,T].
	\end{equation}
	Since $T$ is arbitrary, we know 
		\begin{equation}\label{tildeK_bound2}
		C_3(\hat{c},\hat{u_1} ) \leq \tilde K_i(t) \leq C_4( \hat d, \hat u_2), \forall i \in V, 
	\end{equation}
	which equals to
			\begin{equation}\label{K_bound2}
		C_3(\hat{c},\hat{u_1} )e^{-\gamma t} \leq  K_i(t) \leq C_4( \hat d, \hat u_2)e^{-\gamma t}, \forall i \in V.
	\end{equation}
	
\end{proof}

\subsection{Character in Euclidean background geometry }
In this section, we will give the proof of Theorem \ref{ch_euclid}.
We denote $u_i^{[n]}(t)$ be the solution of \eqref{flow_finite}, Lemma \ref{flow_finite_exi} tell us $u_i^{[n]}(t)$ exist on $[0,\infty)$ and is unique.

\begin{lemma}\label{euc_lower_bound}
	If  the character $2\pi \leq \mathcal{L}_i(\mathcal{D}, \Theta)$ and $\hat{u_1}\leq u_i(0)$,  for any vertex $i \in V$ ($\hat u_1$ is a negative constant), then there exists a constant $C_1(\hat{c},\hat{u_1} )<0$ such that
	$$
	\hat{u_1}  \leq u_i^{[n]}(t) 
	$$
\end{lemma}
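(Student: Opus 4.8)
The plan is to mimic the proof of Lemma~\ref{hyper_lower_bound}, simplified by the Euclidean identity $\theta_{ij}(t,t)\equiv\Theta_{ij}/2$ from Lemma~\ref{character_theta_dec}. Recall from Lemma~\ref{flow_finite_exi} that $u^{[n]}(\cdot)$ is $C^1$ on $[0,\infty)$, so that, since $V^{[n]}$ is finite, the function $g(t):=\min_{v_i\in V^{[n]}}u_i^{[n]}(t)$ is locally Lipschitz on $[0,\infty)$; at a.e.\ point of differentiability one has $g'(t)=(u_i^{[n]})'(t)=-K_i^{[n]}(t)$ for any vertex $v_i\in V^{[n]}$ realizing the minimum at time $t$ (the standard envelope fact for a minimum of finitely many $C^1$ functions).

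I would then apply the second part of Lemma~\ref{ghlemma} with $C_1=\hat u_1$, so it suffices to show $g'(t)\geq 0$ for a.e.\ $t\in\{g<\hat u_1\}$. Fix such a $t$ and a minimizing vertex $v_i\in V^{[n]}$, so $u_i^{[n]}(t)=g(t)<\hat u_1$. For every neighbour $v_j\sim v_i$: if $v_j\in V^{[n]}$ then $u_j^{[n]}(t)\geq g(t)=u_i^{[n]}(t)$; if $v_j\notin V^{[n]}$ then $u_j^{[n]}(t)=u_j(0)\geq\hat u_1>g(t)=u_i^{[n]}(t)$. In either case $u_i^{[n]}(t)\leq u_j^{[n]}(t)$, i.e.\ $r_i^{[n]}(t)\leq r_j^{[n]}(t)$, for all $j\sim i$. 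Hence Lemma~\ref{character_min_ineq}(1) together with Lemma~\ref{character_theta_dec} gives
\[
\theta_{ij}\bigl(u^{[n]}(t)\bigr)\;\geq\;\theta_{ij}\bigl(u_i^{[n]}(t),u_i^{[n]}(t)\bigr)\;=\;\frac{\Theta_{ij}}{2},
\]
and therefore, using the hypothesis $\mathcal{L}_i(\mathcal{D},\Theta)=\sum_{j\sim i}\Theta_{ij}\geq 2\pi$,
\[
K_i^{[n]}(t)\;=\;2\pi-2\sum_{j\sim i}\theta_{ij}\bigl(u^{[n]}(t)\bigr)\;\leq\;2\pi-\sum_{j\sim i}\Theta_{ij}\;=\;2\pi-\mathcal{L}_i(\mathcal{D},\Theta)\;\leq\;0,
\]
so $g'(t)=-K_i^{[n]}(t)\geq 0$, as needed.

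Lemma~\ref{ghlemma} then yields $g(t)\geq\min\{g(0),\hat u_1\}$ for all $t\geq 0$; since $g(0)=\min_{v_i\in V^{[n]}}u_i(0)\geq\hat u_1$ by hypothesis, we get $g(t)\geq\hat u_1$, hence $u_i^{[n]}(t)\geq g(t)\geq\hat u_1$ for $v_i\in V^{[n]}$, while $u_i^{[n]}(t)=u_i(0)\geq\hat u_1$ for $v_i\notin V^{[n]}$. This establishes $\hat u_1\leq u_i^{[n]}(t)$ for all $v_i\in V$ and $t\geq 0$. The only slightly delicate point is the a.e.\ identity $g'(t)=-K_i^{[n]}(t)$ at a minimizing vertex (a standard Danskin-type fact), together with the bookkeeping that neighbours outside $V^{[n]}$ are still $\geq u_i^{[n]}(t)$ precisely because we are in the regime $g(t)<\hat u_1$; the curvature estimate itself is immediate from the Euclidean two-circle geometry, where the comparison configuration has equal radii $r_i$ and half-angle exactly $\Theta_{ij}/2$.
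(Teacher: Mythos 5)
Your proposal is correct and follows essentially the same route as the paper's proof: the minimum function $g(t)=\min_{v_i\in V^{[n]}}u_i^{[n]}(t)$, the comparison via Lemmas~\ref{character_min_ineq} and~\ref{character_theta_dec} giving $\theta_{ij}\geq\Theta_{ij}/2$ and hence $K_i^{[n]}\leq 2\pi-\mathcal{L}_i\leq 0$ at a minimizing vertex, and then the monotonicity conclusion (the paper argues it directly, its hyperbolic analogue via Lemma~\ref{ghlemma} as you do). Your explicit treatment of neighbours outside $V^{[n]}$ (frozen at $u_j(0)\geq\hat u_1>g(t)$ in the regime $g<\hat u_1$) is a point the paper leaves implicit, and is handled correctly.
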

\begin{proof}
	Since $V^{[n]}$ is finite, let 
	$$g(t):=\min _{i \in V^{[n]}} u_i^{[n]}(t).$$
	Then $g(t)$ is a locally Lipschitz function and for a.e. $t \in[0, T]$.
	
	If $ \hat{u_1} \leq g(t)$, from the flow equation \eqref{flow_finite} and $\hat{u_1}\leq u_i(0)$, it is obvious that for any $i \in V$, 
	$$
	\hat{u_1} \leq u_i^{[n]}(t).
	$$
	
	If $ g(t)\leq \hat{u_1}$, for $g(t)$,  there exists a special vertex $i \in V^{[n]}$ depending on $t$, such that
	$$
	g(t)=u_i^{[n]}(t), g^{\prime}(t)=u_i^{[n]^\prime}(t).
	$$
	Since $u_i^{[n]}(t)\leq u_j^{[n]}(t)$ for any $j \sim i$. From Lemma \ref{character_min_ineq}, we have
	\begin{equation}\label{ch_eu1}
		\theta_{ij}(u^{[n]}(t)) \geq \theta_{ij}(u_i^{[n]}(t)\vec{1}).
	\end{equation}
	From Lemma \ref{character_theta_dec}, we know 
	\begin{equation}\label{ch_eu2}
		\theta_{ij}(u_i^{[n]}(t)\vec{1})=\frac{\Theta_{i j}}{2}
	\end{equation}
	By combining \eqref{ch_eu1} and \eqref{ch_eu2}, we have
	\begin{equation}
		\begin{aligned}
			K_i(u^{[n]}(t)) &= 2\pi - 2\sum_{j \sim i}\theta_{ij}(u^{[n]}(t))\\
			&\leq 2\pi - 2\sum_{j \sim i}\theta_{ij}(u_i^{[n]}(t)\vec{1})\\
			&\leq 2\pi - 2\sum_{j \sim i}( \frac{\Theta_{i j}}{2})\\
			& =2\pi -\mathcal{L}_i(\mathcal{D}, \Theta)\\
			& \leq 0
		\end{aligned}
	\end{equation}
	By the flow equation \eqref{flow_finite} we know $g^{\prime}(t)=u_i^{[n]^{\prime}}(t)=-K_i(u^{[n]}(t))\geq 0$.  Hence,  $g(t) \geq g(0)\geq \hat{u_1} , \forall t \in[0,+\infty)$.
\end{proof}

\begin{lemma}\label{eucid_noninc}
	If $K_i(u(0))\geq 0$,  for any vertex, then 
	$$K_i(u^{[n]}(t))\geq 0$$
	and 
	$$
			u_i^{[n]}(t) \leq u_i(0).
	$$
\end{lemma}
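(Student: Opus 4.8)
The plan is to reproduce, with the opposite sign, the curvature-propagation argument used in the proof of Theorem~\ref{convergence1}: the finite flow \eqref{flow_finite} carries the condition $K\ge 0$ forward in time via a maximum principle. The starting point is the curvature evolution identity \eqref{curvature_finite_flow} along \eqref{flow_finite}.

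First I would extend the curvature to the whole (infinite) vertex set by setting $f_i^{[n]}(t)=K_i^{[n]}(t)$ for $v_i\in V^{[n]}$ and $f_i^{[n]}(t)=0$ for $v_i\notin V^{[n]}$, exactly as in \eqref{pf1}--\eqref{pf2}. Since $u_j^{[n]}$ is frozen at $u_j(0)$ for $v_j\notin V^{[n]}$, the computation there goes through verbatim and gives
\[
\frac{d f^{[n]}}{dt}=\Delta_{\omega(t)}f^{[n]}+g(t)\,f^{[n]},
\]
with $\omega_{ij}(t)=-\partial K_i/\partial u_j$ for $v_i,v_j\in V^{[n]}$ (and $\omega_{ij}=0$ otherwise) and $g_i(t)=-\bigl(\partial K_i/\partial u_i+\sum_{v_j\sim v_i,\,v_j\in V^{[n]}}\partial K_i/\partial u_j\bigr)$ for $v_i\in V^{[n]}$ ($g_i=0$ otherwise). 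Using $K_i=2\pi-\sum_{v_j\sim v_i}2\theta_{ij}$ and Lemma~\ref{vari} in the Euclidean case, one has $\partial\theta_{ij}/\partial u_j>0$, hence $\omega_{ij}=2\,\partial\theta_{ij}/\partial u_j\ge 0$; and the Euclidean identity $\partial\theta_{ij}/\partial u_i=-\partial\theta_{ij}/\partial u_j$ from the same lemma reduces $g_i$ to $2\sum_{v_j\sim v_i,\,v_j\notin V^{[n]}}\partial\theta_{ij}/\partial u_i\le 0$. Since $V^{[n]}$ is finite, $f^{[n]}$ is bounded on each $[0,T]$ and the row sums $\sum_{v_j\sim v_i}\omega_{ij}(t)$ are uniformly bounded there (only finitely many vertices are involved and the relevant derivatives are continuous on the compact interval, cf.\ \eqref{keyesti2}).

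Because $u^{[n]}(0)=u(0)$ and $K_i(u(0))\ge 0$ for every $v_i\in V$, we have $f^{[n]}(0)\ge 0$; applying Lemma~\ref{maximumprin} to $-f^{[n]}$, whose initial data is $\le 0$ and for which $g\le 0$, yields $-f^{[n]}(t)\le 0$ on $[0,T]$, and since $T$ is arbitrary, $K_i(u^{[n]}(t))\ge 0$ for every $v_i\in V^{[n]}$ and all $t\ge 0$. The bound on the radii then follows at once from \eqref{flow_finite}: for $v_i\in V^{[n]}$ one has $\frac{d}{dt}u_i^{[n]}(t)=-K_i(u^{[n]}(t))\le 0$, so $u_i^{[n]}(t)\le u_i(0)$, while for $v_i\notin V^{[n]}$ the radius stays at $u_i(0)$; hence $u_i^{[n]}(t)\le u_i(0)$ for all $v_i\in V$. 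Finally, for $v_i\notin V^{[n]}$ one also obtains $K_i(u^{[n]}(t))=2\pi-\sum_{v_j\sim v_i}2\theta_{ij}(u_i(0),u_j^{[n]}(t))\ge 2\pi-\sum_{v_j\sim v_i}2\theta_{ij}(u_i(0),u_j(0))=K_i(u(0))\ge 0$, using $\partial\theta_{ij}/\partial u_j>0$ together with $u_j^{[n]}(t)\le u_j(0)$.

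The only step requiring any care is the derivation of the evolution equation for $f^{[n]}$ and, above all, the sign $g\le 0$, where the Euclidean cancellation $\partial_{u_i}\theta_{ij}+\partial_{u_j}\theta_{ij}=0$ enters (in the hyperbolic case of Theorem~\ref{convergence1} one has strict negativity instead). Since $V^{[n]}$ is finite, boundedness of $f^{[n]}$ and of the weights — hence all the hypotheses of the maximum principle — come for free, so I expect no genuine analytic obstacle beyond faithfully transcribing the computation already carried out for Theorem~\ref{convergence1}.
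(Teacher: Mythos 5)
Your proposal is correct and follows essentially the same route as the paper's proof: extend the curvature by zero outside $V^{[n]}$, write its evolution as $\frac{d f^{[n]}}{dt}=\Delta_{\omega(t)}f^{[n]}+g\,f^{[n]}$ with nonnegative weights, apply the maximum principle (Lemma \ref{maximumprin}), and then integrate \eqref{flow_finite} to get $u_i^{[n]}(t)\le u_i(0)$. Your additional observations — the Euclidean cancellation $\partial_{u_i}\theta_{ij}+\partial_{u_j}\theta_{ij}=0$ giving $g\le 0$ (the paper only needs $g$ bounded above) and the separate check that $K_i\ge 0$ at vertices outside $V^{[n]}$ — are correct but not essential to the argument.
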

\begin{proof}
	Recall that $K^{[n]}(t)=K(u^{[n]}(t))$, we denote 
	\begin{equation}
		f_i^{[n]}(t)= 
		\begin{cases}
			K_i^{[n]}(t)& \text{if } i \in V^{[n]}, \\[5pt]
			0 & \text{if }  i \notin V^{[n]}.
		\end{cases}
	\end{equation}
	Since $u^{[n]}(t)$ evolves according to the flow \eqref{flow_finite}, similar to the proof of Theorem \ref{convergence1}, we have 
	\begin{equation}
	\frac{df^{[n]}}{dt} = \Delta_{\omega(t)} f^{[n]}+g f^{[n]},
	\end{equation}
	where 
	\begin{equation}
		\omega_{ij}(t)= 
		\begin{cases}
			-\dfrac{\partial f_i^{[n]}}{\partial u_j} & \text{if } i,j \in V^{[n]}, \\[5pt]
			0 & \text{other cases.}
		\end{cases}
	\end{equation}
	and 
	\begin{equation}
		g(t) = 
		\begin{cases}
			-\Big(\dfrac{\partial f_i^{[n]}}{\partial u_i} +\sum\limits_{v_j \sim v_i, v_j \in V^{[n]}} \dfrac{\partial f_i^{[n]}}{\partial u_j}\Big) & \text{if } i \in V^{[n]}, \\[5pt]
			0 &\text{if } i \notin V^{[n]}.
		\end{cases}
	\end{equation}
	From Lemma \ref{vari}, we know $\omega_{ij}=\omega_{ij}\geq 0$. 
	
	For any $T>0$, since $\{\mathcal{D}^{[n]}=(V^{[n]}, E^{[n]}, F^{[n]})\}_{n=1}^{\infty}$ is  a finite cellular decomposition, $\frac{\partial K_i}{\partial u_i}$ and  $\frac{\partial K_i}{\partial u_j}$ are smooth functions for $u$, by \eqref{e2}, we know there is a uniform constant $C(\mathcal{D}^{[n]})$ such that 
	$$
	\sum_{v_j: v_j \sim v_i} \omega_{i j}(t) < C(\mathcal{D}^{[n]}),
	$$
	and 
	$$g(t)\leq C(\mathcal{D}^{[n]}),$$
	for any $v_i \in V,~t \in[0, T]$. 
	
	Since $f^{[n]}(0) \geq 0$, by Lemma \ref{maximumprin}, we know $f^{[n]}(t) \geq 0$, which implies $K_i(u^{[n]}(t))\geq 0$ for any $v_i \in V$. 
	$$
	u_i^{[n]}(t) \leq u_i^{[n]}(0).
	$$
\end{proof}

Theorem \ref{ch_euclid} is equivalent to

\begin{lemma}
	If for any $i \in V$, there is a negative constant $\hat u_1$ such that
	\begin{itemize}
		\item [(1)] $2\pi \leq \mathcal{L}_i(\mathcal{D}, \Theta);$
		\item [(2)]$\hat u_1 \leq u_i(0);$
		\item[(3)] $K_i(u(0))\geq 0.$
	\end{itemize}
	then there exists a solution $u(t)$ on $[0, +\infty)$, and  converges to a zero discrete Gaussian curvature metric.
\end{lemma}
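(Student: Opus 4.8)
The plan is to combine long-time existence of the flow with the two a priori estimates already established (Lemma~\ref{euc_lower_bound} and Lemma~\ref{eucid_noninc}) in order to trap the solution inside a compact region of configuration space, and then to upgrade the resulting monotone convergence of $u$ to the vanishing of the curvature at infinity. This is the Euclidean analogue of the argument used to prove Theorem~\ref{convergence1}, with the crucial difference that here no upper bound on $u$ comes for free, so hypothesis (3) must be used precisely to generate monotonicity.

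First I would invoke Theorem~\ref{existence_u}: there is a global solution $u(t)$ on $[0,+\infty)$ to \eqref{flow2}, realized (as in the proof of that theorem) as the limit, locally uniformly in $t$ and along a subsequence, of the finite-dimensional flows $u^{[n]}(t)$ solving \eqref{flow_finite}. Since $\hat u_1\le u_i(0)$ and the character bound $2\pi\le\mathcal{L}_i(\mathcal{D},\Theta)$ holds, Lemma~\ref{euc_lower_bound} gives $\hat u_1\le u_i^{[n]}(t)$ for all $n$, $v_i$, $t$; passing to the limit yields $\hat u_1\le u_i(t)$ for all $v_i\in V$, $t\ge 0$. Likewise, since $K_i(u(0))\ge 0$, Lemma~\ref{eucid_noninc} gives $K_i(u^{[n]}(t))\ge 0$ and $u_i^{[n]}(t)\le u_i(0)$, so in the limit $K_i(u(t))\ge 0$ and $u_i(t)\le u_i(0)$. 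Combining with \eqref{flow2}, each $u_i(t)$ is non-increasing and satisfies $\hat u_1\le u_i(t)\le u_i(0)$; hence the limit $u_i(\infty):=\lim_{t\to\infty}u_i(t)$ exists and is finite, lying in $[\hat u_1,u_i(0)]$.

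Next I would identify $u(\infty)$ as a good ideal circle pattern. For a fixed $v_i$, the curvature $K_i(u)$ is a smooth function of the finitely many coordinates $u_j$ with $v_j\in V(v_i)$; since each such $u_j(t)$ converges, $K_i(u(t))\to K_i(u(\infty))=:K_i(\infty)$, and $K_i(\infty)\ge 0$ because $K_i(u(t))\ge 0$ for all $t$. Suppose for contradiction that $K_i(\infty)>0$ for some $v_i$. Then there is $T>0$ with $K_i(u(t))\ge K_i(\infty)/2>0$ for $t\ge T$, and by \eqref{flow2},
\[
u_i(t)=u_i(T)-\int_T^t K_i(u(s))\,ds\le u_i(T)-\frac{K_i(\infty)}{2}(t-T)\to-\infty,
\]
contradicting $u_i(t)\ge\hat u_1$. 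Therefore $K_i(\infty)=0$ for every $v_i\in V$, so $r(\infty)=e^{u(\infty)}\in(0,+\infty)^V$ (in the hyperbolic variable, the analogous finite limit) is a zero discrete Gaussian curvature metric, and $r(t)\to r(\infty)$, completing the proof.

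\textbf{Main obstacle.} The logical skeleton is routine once Lemmas~\ref{euc_lower_bound} and~\ref{eucid_noninc} are in hand; the only real care needed is in legitimately transferring those finite-level estimates through the subsequential limit that defines $u(t)$, and in interchanging $\lim_{t\to\infty}$ with the evaluation of $K_i$. Both are available from the construction in Theorem~\ref{existence_u}: the former from the pointwise convergence $u^{[n]}(t)\to u(t)$ on each finite time interval, the latter from the fact that $K_i$ depends only on a finite neighborhood of $v_i$. The conceptual point to emphasize is that, unlike the hyperbolic case, the upper bound $u_i(t)\le u_i(0)$ is genuinely needed and is supplied only by hypothesis (3) via the monotonicity in Lemma~\ref{eucid_noninc}.
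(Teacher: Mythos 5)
Your proposal is correct and follows essentially the same route as the paper: use Lemma~\ref{eucid_noninc} for $K_i\ge 0$ (hence $u_i(t)$ non-increasing and bounded above by $u_i(0)$), Lemma~\ref{euc_lower_bound} for the lower bound $\hat u_1$, conclude convergence of $u(t)$, and then run the same contradiction argument to force $K_i(\infty)=0$. Your extra care in passing the finite-level estimates $u^{[n]}$ through the subsequential limit is a point the paper treats implicitly, but the argument is the same.
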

\begin{proof}
	Let $u(t)$ be the solution which is obtained by a sequence $\{u^{[n]}(t)\}_{i=1}^{\infty}$ as shown in the proof of Theorem \ref{existence_u} with the initial value $u(0)$ such that $K(u(0)) \leq 0$.
	
	From Lemma \ref{eucid_noninc}, we know $K_i(u(t))\geq 0$. Then, by \eqref{flow2}, we know $u_i(t)$ is non-increasing. From Lemma \ref{euc_lower_bound}, $u(t)$ converges.
	
		In the end, we prove that $K_i(\infty)=0$. From $K_i(u(t))\geq 0$, we know $K_i(\infty)\geq 0$. By contradiction, if $K_i(\infty) >  0$, then there is a $T$ such that $K_i(t) \geq K_i(\infty)/2>0$ for any $t \geq T$. Thus from \eqref{flow2}, we have $du_i(t)/dt =- K_i(t)\leq - K_i(\infty)/2 <0$ for any $t \geq T$. Since $ K_i(\infty)/2$ is a constant, so $u_i(t)\rightarrow - \infty$ as $t\rightarrow \infty$, which contradicts to the fact that $u_i(t)$ has a lower bound.
\end{proof}

\section{Ideal hyperbolic polyhedra}
\label{ideal_polyhe}

In this section, we will study infinite ideal polyhedra in hyperbolic 3-space.

\subsection{Correspondence}\label{Correspondence}

An infinite (ideal) hyperbolic polyhedra is a (ideal) hyperbolic polyhedra with infinite faces in $\mathbb{H}^3$. 

Thurston \cite[Chapter 13]{Th76} had observed that there is a correspondence between circle patterns and hyperbolic polyhedra. For sphere $\mathbb{S}^2$, we observe the Poincar\'e ball model of three-dimensional hyperbolic space $\mathbb{H}^3$, where $\partial \mathbb{H}^3=\mathbb{S}^2$. Therefore, for any ideal circle pattern $\mathcal{P}=\left\{D_v\right\}_{v \in V}$ on $\mathbb{S}^2$, with the intersection angle function $\Theta$ taking values in $(0, \pi)$, we can associate each disk $D_v$ from $\mathcal{P}$ with a hyperbolic halfspace $H_v$, which is generated by the convex hull of the ideal set $\mathbb{S}^2 \backslash D_v \subset \partial \mathbb{H}^3$.  The polyhedron $P$ can be constructed by
$$
P=\bigcap_{v\in V} H_{v}.
$$
In the Poincar\'e ball model, the hyperbolic half-space $H_v$ corresponds to a Euclidean ball, so the dihedral angle between the faces $\partial H_{v_1}$ and $\partial H_{v_2}$ is equal to the intersection angle between the discs $D_{v_1}$ and $D_{v_2}$. Therefore, the polyhedron $(P, \Theta)$ corresponds with a circle pattern $(\mathcal P, \Theta)$.

Thus the question about the existence and uniqueness of a polyhedron with the given dihedral angle is equivalent to the existence and uniqueness of a circle pattern in the unit sphere with the given dihedral angle.

Since sphere $\mathbb{S}^2$ is compact, it can be seen that there are limit points of vertices for infinite circle pattern. If we give a picture for such infinite circle pattern, there are many infinitesimal small circles.  

\subsection{Ideal polyhedra in the infinite case}\label{ideal_poly_subsec}
\begin{figure}[h]
	\centering
	\includegraphics[width=0.88\textwidth]{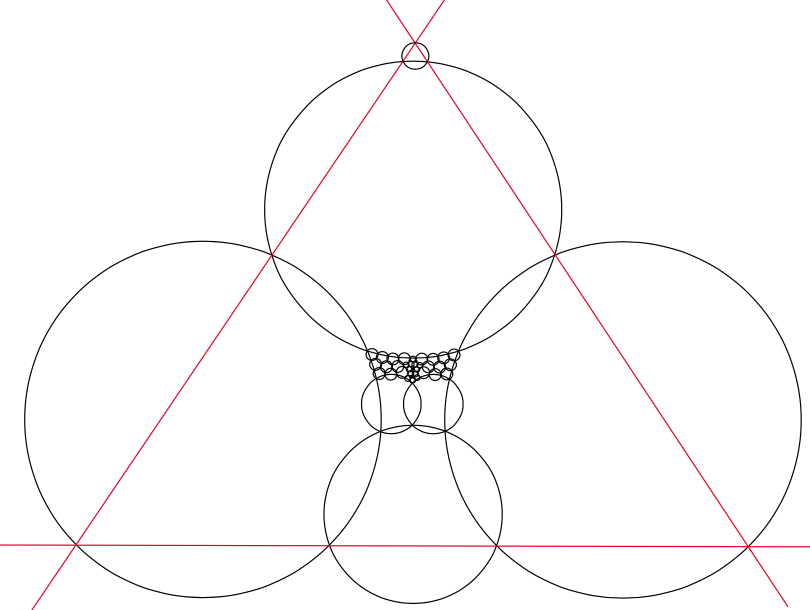}
	\caption{projected circle pattern in the plane}
	\label{projected_circle_pattern_in_sphere}
\end{figure}

For a circle pattern in the sphere, choose a 2-cell $f_{\infty} \in F$. Let $v_{f_\infty}$, which is the star vertex of $f_{\infty} $, as the center of the projection, we can project the circle pattern stereographically to the plane. In Figure \ref{projected_circle_pattern_in_sphere}, we give an example of the projected  circle pattern in the plane.

Let $V_\infty$ ($F_\infty$) be the set of vertices (faces) that incidence to $f_{\infty}$, $E_\infty$ be the set of edges that incidence to $V_\infty$.  The circle $C_v, v\in V_\infty$ is projected to the straight line and circle $C_v, v\in V \setminus V_\infty$ is projected to a circle. Since stereographic projection is conformal, the intersection angles are the same. Furthermore, a M\"obius-equivalent circle pattern in the sphere leads to a planar pattern which is similar, provided the same vertex is chosen as the center of projection.

The idea is to construct a circle pattern in the sphere by constructing the corresponding planar pattern using the Euclidean functional and then projecting it onto the sphere. So, the existence of an infinite circle pattern in the unit sphere is equivalent to the existence of the infinite circle pattern with finite straight lines in the plane. 

Denote $\mathcal{D^\prime}=(V \setminus V_\infty, E\setminus E_\infty, F\setminus F_\infty)$, which is an infinite cellular decomposition on $\mathbb{S}^2 \setminus F_\infty$. Let 
	\begin{equation}\label{K_euc}
	\hat	K_v= 
	\begin{cases}
		\hspace{6.5em}0, & \text{if }v \in  \operatorname{int}(V \setminus V_\infty), \\[5pt]
		\sum\limits_{v^\prime\in V_\infty,\; v\sim v^\prime }2 \Theta_{[v^\prime, v]},  & \text{if } v \in \operatorname{\partial}(V \setminus V_\infty).
	\end{cases}
	\end{equation}
By Thurston's construction, we can delete straight lines, which reforms this mathematical problem as the following problem:\\
\noindent{\bf Circle pattern problem in plane: }{\em In Euclidean background geometry, is there an ideal circle pattern with prescribed curvatures $\hat K$, which realizes $(\mathcal{D^\prime}, \Theta)$}?

Similar to flow \ref{flow}, we can introduce the following flow with prescribed curvatures $\hat K$:
\begin{equation}\label{flowwithprescribe}
	\frac{\mathrm{d} r_i}{\mathrm{~d} t}= -(K_i-\hat K_i) r_i, \quad \forall  v_i\in V
\end{equation}

By same reason in proving Theorem \ref{ch_euclid}, we can have the following theorem:

\begin{theorem}
	In Euclidean background geometry, let $\delta$ be any arbitrary small positive constant. If $\Theta \in (\delta, \pi-\delta)^{E}$ and satisfies (C1), then there exists a $\mathcal{D^\prime}$-type circle packing metric with zero discrete Gaussian curvature, if 
	\begin{itemize}
		\item [(1)] $2\pi \leq \mathcal{L}_i(\mathcal{D^\prime}, \Theta)+ \hat K_i$ for any vertex $i \in V$;
		\item[(2)] there exists a $\mathcal{D^\prime}$-type circle packing metric $\tilde r$ such that $K_i(\tilde r)\geq \hat K_i$.
	\end{itemize}
	Moreover, if there exists a positive constant $\hat r_1$ such that $\hat r_1 \leq r_i(0)$, $\forall i \in V$, then there exists a solution of $r(t)$ of equation \eqref{flow} with $r(0)= \tilde r$ such that $r(t)$ converges to a $\mathcal{D}$-type circle pattern $\tilde r ^{\prime}$ with prescribed curvatures $\hat K$.
\end{theorem}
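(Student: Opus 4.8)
The plan is to run the prescribed-curvature flow~\eqref{flowwithprescribe} from the metric $\tilde r$ supplied by hypothesis~(2) and to show that it exists for all time, is monotone, and converges to a $\mathcal{D}'$-type circle packing metric whose discrete Gaussian curvature is identically $\hat K$; by the correspondence of Sections~\ref{Correspondence}--\ref{ideal_poly_subsec}, deleting the straight lines and pushing the resulting planar pattern back to $\mathbb{S}^2$ by inverse stereographic projection then supplies the circle pattern needed for the infinite ideal polyhedron. I would work with $u_i=\ln r_i$, so that~\eqref{flowwithprescribe} becomes $\frac{du_i}{dt}=-(K_i-\hat K_i)$, and the entire argument runs parallel to the proof of Theorem~\ref{ch_euclid} and Lemmas~\ref{euc_lower_bound} and~\ref{eucid_noninc}, the only new feature being the time-independent source term $\hat K$. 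Long-time existence comes first: taking an exhaustion of $\mathcal{D}'$ by finite connected subcomplexes and freezing the radii outside each of them, the finite-dimensional analogues of~\eqref{flow_finite} for~\eqref{flowwithprescribe} are solved by Picard--Lindel\"of, and the a priori bounds $2\pi(1-d_i)\le K_i<2\pi$ together with the finiteness of each individual $\hat K_i$ (finite because $\mathcal{D}$ is locally finite, so $v_i$ has only finitely many neighbours in $V_\infty$) give uniform $C^1[0,T]$ estimates at every fixed vertex, exactly as in the proof of Theorem~\ref{existence_u}; Arzel\`a--Ascoli and a diagonal argument then yield a global solution $u(t)$.

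Second, I would prove monotonicity. With $u(0)=\ln\tilde r$, hypothesis~(2) says $f_i(0):=K_i(u(0))-\hat K_i\ge 0$. Since $\hat K$ does not depend on $t$, differentiating $f^{[n]}:=K^{[n]}-\hat K$ (extended by $0$ outside the $n$-th truncation) along the flow and applying the chain rule with Lemma~\ref{vari} reproduces verbatim the parabolic identity in the proof of Lemma~\ref{eucid_noninc}, namely $\frac{d f^{[n]}}{dt}=\Delta_{\omega(t)}f^{[n]}+g\,f^{[n]}$ with $\omega_{ij}(t)=-\partial K_i/\partial u_j\ge 0$ and, by the Euclidean scale invariance of the curvature on $\mathcal{D}'$, $g\le 0$. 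The maximum principle (Lemma~\ref{maximumprin}) then forces $f^{[n]}(t)\ge 0$, hence $K_i(u^{[n]}(t))\ge\hat K_i$ and, via the flow, $u^{[n]}_i(t)\le u_i(0)$; passing to the limit gives $K_i(u(t))\ge\hat K_i$ for all $i$ and $t$, so $u(t)$ is non-increasing in $t$.

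Third, I would establish a uniform lower bound and pass to the limit, mimicking Lemma~\ref{euc_lower_bound}. With $g^{[n]}(t)=\min_i u^{[n]}_i(t)$ over the $n$-th truncation, at a.e.\ $t$ a minimizing vertex $v_i$ satisfies $u^{[n]}_i(t)\le u^{[n]}_j(t)$ for every $j\sim i$, so Lemma~\ref{character_min_ineq} and the Euclidean identity $\theta_{ij}(t,t)=\Theta_{ij}/2$ of Lemma~\ref{character_theta_dec} give
\[
K_i(u^{[n]}(t))\le 2\pi-2\sum_{j\sim i}\theta_{ij}\!\left(u^{[n]}_i(t)\vec 1\right)=2\pi-\mathcal{L}_i(\mathcal{D}',\Theta),
\]
whence $K_i(u^{[n]}(t))-\hat K_i\le 2\pi-\mathcal{L}_i(\mathcal{D}',\Theta)-\hat K_i\le 0$ by hypothesis~(1); thus $(g^{[n]})'(t)\ge 0$ there, and Lemma~\ref{ghlemma} yields $g^{[n]}(t)\ge g^{[n]}(0)\ge\ln\hat r_1$, which passes to the limit as $u_i(t)\ge\ln\hat r_1$ uniformly. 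Combined with monotonicity and $u_i(t)\le u_i(0)$, the limit $u_i(\infty):=\lim_{t\to\infty}u_i(t)$ exists in $[\ln\hat r_1,u_i(0)]$. Since $K_i$ is continuous and depends on only the finitely many coordinates $\{u_j:j\in V(v_i)\}$, we get $K_i(u(t))\to K_i(u(\infty))\ge\hat K_i$; were this inequality strict for some $i$, then $\frac{du_i}{dt}=-(K_i-\hat K_i)\le-\varepsilon<0$ for all large $t$ would force $u_i(t)\to-\infty$, contradicting the lower bound. Hence $K_i(u(\infty))=\hat K_i$ for every $i$, and $r(\infty)=e^{u(\infty)}$ is a $\mathcal{D}'$-type circle packing metric with prescribed curvature $\hat K$.

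I expect most of these steps to be direct transcriptions of the finite-case arguments. The one point that genuinely requires care is the reduction to the truncated complex $\mathcal{D}'$ performed in Section~\ref{ideal_poly_subsec}: one must verify that after deleting the straight lines the Euclidean curvature function on $\mathcal{D}'$ is still scale invariant (so that $g\le 0$ in the maximum principle step) and that the prescribed values $\hat K$ are finite at every vertex (so that the finite-dimensional flows and the limiting procedure are legitimate). Granting this, the existence of the $\mathcal{D}'$-type circle packing metric with curvature $\hat K$ follows, which is precisely the input needed in Section~\ref{ideal_poly_subsec} for the construction of the infinite ideal hyperbolic polyhedron; the convergence assertion of the theorem is exactly the content of the third paragraph.
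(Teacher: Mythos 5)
Your proposal is correct and follows essentially the same route as the paper, which proves this theorem simply by repeating the argument of Theorem~\ref{ch_euclid} (Lemmas~\ref{euc_lower_bound} and~\ref{eucid_noninc} plus the limiting argument of Theorem~\ref{existence_u}) with the time-independent source $\hat K$ inserted, i.e.\ applying the maximum principle to $f^{[n]}=K^{[n]}-\hat K$ and running the minimum-vertex estimate with hypothesis (1) exactly as you do. Your two flagged points are indeed the right ones to check, and both hold: in Euclidean geometry $\theta_{ij}$ depends only on $u_j-u_i$, so $g\le 0$ even in the truncated flows (dropping frozen neighbours only helps the sign), and local finiteness makes each $\hat K_i$ a finite sum.
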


Thus, we can have the theorem:

\begin{theorem}\label{infiniteICP}
	Let \( \mathcal{D} = (V, E, F) \)  be an infinite   cellular decomposition of the sphere. Suppose an angle $\Theta_e$ is assigned to each edge $e$. 
	
	Suppose $f_\infty\in F$ , and let  $V_\infty$ be the set of vertices that incident to $f_{\infty}$, $\hat K$ be defined as in \eqref{K_euc}.   Suppose the following condition hold.
	\begin{itemize}
		\item[(1)] $0<\Theta_e<\pi$ for any edge $e$.
		\item[(2)] $\sum_{e \in \gamma} \Theta_e = (|\gamma|-2) \pi$ if $\gamma$ is the boundary of a face.
		\item [(3)] $\sum_{j\sim i} \Theta_{ij}  + \hat K_i \geq 2 \pi$ for any vertex $i\in V \setminus V_\infty$.
		\item[(4)] there exist $\tilde r \in (0, + \infty)^{V \setminus V_\infty}$ such that $K_i(\tilde r)\geq \hat K_i$.
	\end{itemize}
	Then there exists an infinite ideal circle pattern in the sphere with intersection angles $\Theta_e$.
\end{theorem}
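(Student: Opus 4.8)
The plan is to reduce the statement to the planar \emph{Circle pattern problem} formulated in Section~\ref{ideal_poly_subsec} and then invoke the prescribed-curvature existence theorem stated just above. By the stereographic correspondence of Section~\ref{Correspondence}, choosing the interstice point $v_{f_\infty}$ of $f_\infty$ as the center of projection carries an infinite ideal circle pattern on $\mathbb S^2$ realizing $(\mathcal D,\Theta)$ to a planar configuration in which the disks indexed by $V\setminus V_\infty$ remain disks, while the circles $C_v$ with $v\in V_\infty$ — all of which pass through $v_{f_\infty}$, since the interstice of $f_\infty$ is a single point — become straight lines. Deleting those lines and recording their angular contribution as the prescribed curvature $\hat K$ of \eqref{K_euc} (the coefficient $2\Theta_{[v',v]}$ being exactly the total wedge an infinite-radius neighbour $v'\in V_\infty$ subtends at a boundary vertex $v$ across its two incident faces) shows that it suffices to construct a $\mathcal D'$-type \emph{ideal} circle pattern in $\mathbb R^2$, where $\mathcal D'=(V\setminus V_\infty,E\setminus E_\infty,F\setminus F_\infty)$, whose discrete Gaussian curvature equals $\hat K_i$ at every $i\in V\setminus V_\infty$.

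Next I would observe that hypotheses (3) and (4) are precisely the hypotheses of the preceding theorem applied to $(\mathcal D',\Theta)$ with target curvature $\hat K$: reading $\sum_{j\sim i}\Theta_{ij}$ in (3) as the character $\mathcal L_i(\mathcal D',\Theta)$, condition (3) becomes $\mathcal L_i(\mathcal D',\Theta)+\hat K_i\ge 2\pi$, and (4) supplies a sub-solution metric $\tilde r$ with $K_i(\tilde r)\ge\hat K_i$; moreover (2) is (C1) and (1) gives $\Theta\in(0,\pi)^E$. Although that theorem is stated for $\Theta\in(\delta,\pi-\delta)^E$, its \emph{existence} half uses only $\Theta\in(0,\pi)^E$ and (C1): starting the prescribed-curvature flow~\eqref{flowwithprescribe} from an initial metric dominating $\hat K$ as in (4), the $\hat K$-modified analogues of Lemmas~\ref{euc_lower_bound} and~\ref{eucid_noninc} (together with Lemma~\ref{character_min_ineq} and the maximum principle, exactly as in the proof of Theorem~\ref{ch_euclid}) show that the solution exists for all time, that $K_i(t)\ge\hat K_i$ while $u_i(t)$ is non-increasing, and that the lower bound furnished by condition (3) forces convergence to a $\mathcal D'$-type circle packing metric with $K_i\equiv\hat K_i$. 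Developing the associated cone metric — flat on $\operatorname{int}(V\setminus V_\infty)$, with cone angle $2\pi-\hat K_i$ on $\partial(V\setminus V_\infty)$ — over the simply connected region $\mathbb S^2\setminus f_\infty$ then produces the desired planar circle configuration.

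Finally I would run the correspondence in reverse: at each boundary vertex I re-insert the straight lines $C_{v'}$, $v'\in V_\infty$, filling the deficit $\hat K$ with the prescribed intersection angles $\Theta_{[v',v]}$, and then project the whole planar figure back to $\mathbb S^2$ from the point at infinity, so that these lines become circles through $v_{f_\infty}$ and the single unbounded complementary region becomes the interstice of $f_\infty$; by (C1) every interstice — the old ones of $\mathcal D'$ and the new ones created near $\partial f_\infty$ — is a single point, and the result is an infinite ideal circle pattern on $\mathbb S^2$ with intersection angles $\Theta_e$. I expect the main obstacle to be precisely this last, geometric, step: one must check that the cone metric produced by the flow develops to an embedded planar configuration, that a line $C_{v'}$ with $v'\in V_\infty$ can be inserted consistently along \emph{both} endpoints of the edge $[v,v']$ with exactly the prescribed angle, and that after projection the family $\{C_v\}_{v\in V_\infty}$ closes up so that $\partial f_\infty$ carries the prescribed dihedral data — which is where the specific form of $\hat K$ in \eqref{K_euc} and condition (2) are essential. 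The flow-theoretic input, by contrast, is a routine transcription of the already-established Euclidean results.
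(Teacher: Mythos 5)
Your proposal follows essentially the same route as the paper: stereographic projection from the star vertex of $f_\infty$ (so the circles indexed by $V_\infty$ become lines), reduction to the planar prescribed-curvature problem for $(\mathcal{D}',\Theta,\hat K)$ with $\hat K$ as in \eqref{K_euc}, solution by the prescribed-curvature flow \eqref{flowwithprescribe} argued exactly as in Theorem \ref{ch_euclid}, and then reversal of the correspondence back to $\mathbb{S}^2$. Your closing concern about embeddedness goes beyond what this statement asserts in the paper --- possible self-intersections are explicitly acknowledged in the remark following Theorem \ref{infiniteICP} and deferred to \cite{GYZ} --- so your argument is in line with the paper's proof.
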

Therefore, we can get an infinite ideal circle pattern on the sphere. According to the correspondence described in Subsection \ref{Correspondence}, we can prove Theorem \ref{polyhedra_euc}.

\begin{remark}
     It is worth noting that those infinite ideal circle patterns may have self-intersections, see \cite{schramm}, which do not correspond to a real IIP. However, in the forthcoming paper \cite{GYZ}, we can prove that the self-intersections can be avoided.
\end{remark}

\subsection{Half Ideal polyhedra in the infinite case}\label{half_ideal_poly_subsec}
\begin{figure}[htbp]
	\centering
	\includegraphics[width=0.7\textwidth]{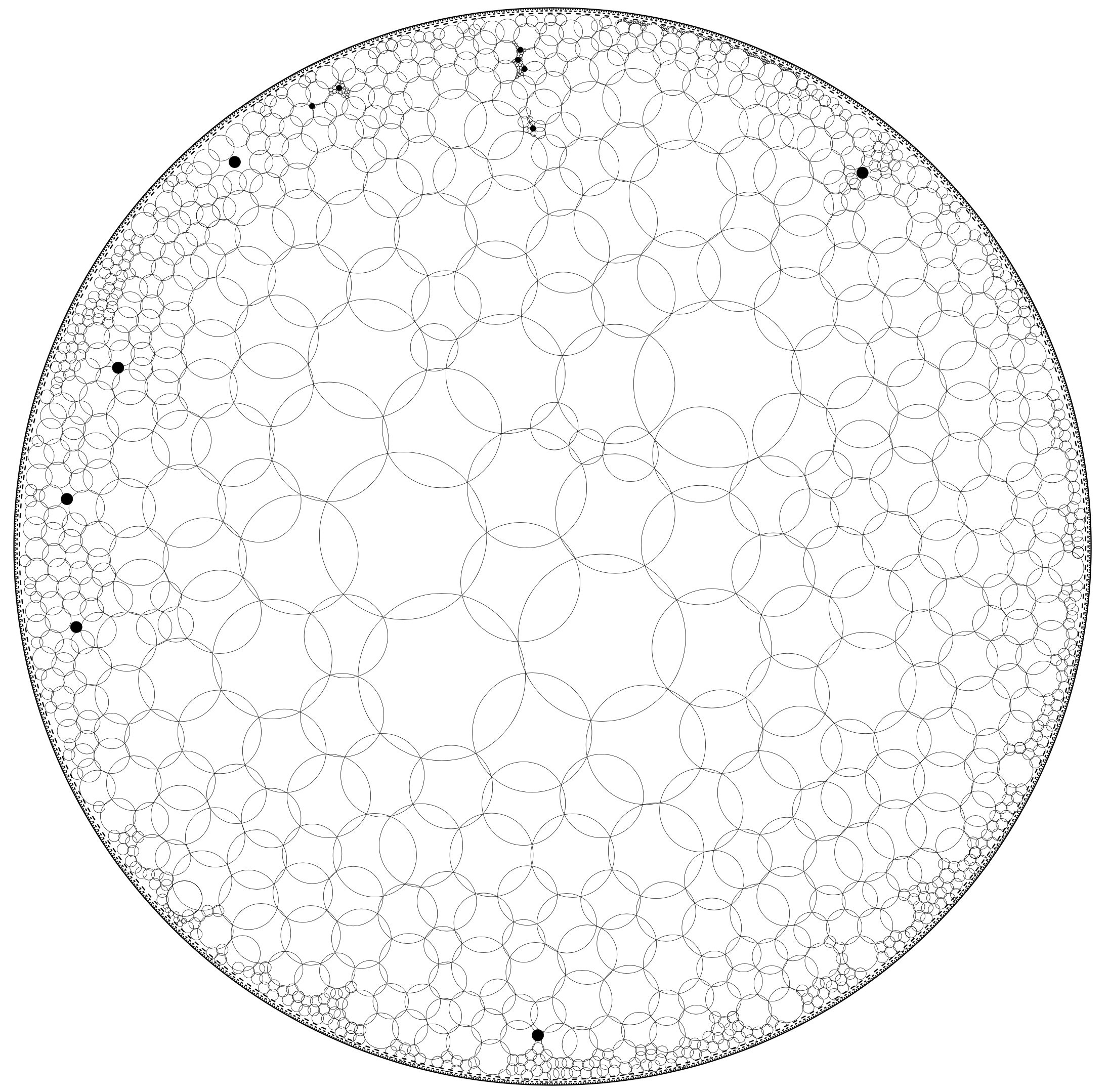}
	\caption{An example of circle pattern in a disc}
	\label{fig:infinite_circle_pattern_in_a_disk}
\end{figure}
In Subsection \ref{ideal_poly_subsec}, we are using the circle pattern in Euclidean background geometry to get the circle pattern in the sphere. In this section, we use the circle pattern in hyperbolic background geometry.

In infinite ideal polyhedra, there is a special type, which is 
\begin{definition}
	In the Poincaré ball model of three-dimensional hyperbolic space $\mathbb{H}^3$, where $\partial \mathbb{H}^3=\mathbb{S}^2$. An infinite ideal polyhedron $P=(V,E,F)$ is called \emph{half-infinite ideal polyhedra}, if 
	\begin{itemize}
		\item  There is one and only one face $f_\infty$ such that $\partial f_\infty$ are all the limit points  of $V$ in $\mathbb{S}^2$.
	\end{itemize}
\end{definition}

For a half-infinite ideal polyhedron $P=(V,E,F)$, assuming that $f_\infty$ passes through the center of the sphere $\mathbb{S}^2$, let $\mathcal{P}$ be the corresponding ideal circle pattern in the sphere for $P$. Choose any point $p \in f_\infty$ as the center of projection, we can project the circle pattern stereographically to the plane, which would give a circle pattern in a disk.

If we consider the dual infinite   cellular decomposition, according to the correspondence described in Subsection \ref{Correspondence}, from Theorem \ref{ch_hyper_exist}, we can prove Theorem \ref{polyhedra_hyp}.

It is worth noting that, in support of Theorem \ref{ch_hyper}, we also provide an algorithm for finding half-infinite ideal polyhedra with a fast convergence rate.

\bigskip
\textbf{Acknowledgements.} The authors express their gratitude to Bennett Chow, Feng Luo, Ze Zhou, and Lang Qin for their valuable discussions and insightful suggestions on combinatorial Ricci flows and circle packings. H. Ge is supported by NSFC, no.12341102, no.12122119. B. Hua is supported by NSFC, no.12371056, and by Shanghai Science and Technology Program [Project No. 22JC1400100]. 

\noindent Huabin Ge, hbge@ruc.edu.cn\\
\emph{School of Mathematics, Renmin University of China, Beijing 100872, P. R. China.}\\[-8pt]

\noindent Bobo Hua, bobohua@fudan.edu.cn\\[2pt]
\emph{School of Mathematical Sciences, LMNS, Fudan University, Shanghai, 200433, P.R. China.}\\[-8pt]

\noindent Hao Yu, b453@cnu.edu.cn\\
\emph{Academy for multidisciplinary studies, Capital Normal University, Beijing, 100048, P. R. China.}\\[-8pt]

\noindent Puchun Zhou, pczhou22@m.fudan.edu.cn\\
\emph{School of Mathematical Sciences, Fudan University, Shanghai, 200433, P.R. China.}
\end{document}